\numberwithin{equation}{section} 
\numberwithin{figure}{section} 
\theoremstyle{plain}
\DeclareMathOperator{\dist}{dist}
\DeclareMathOperator{\diam}{diam}
\DeclareMathOperator{\supp}{supp}
\DeclareMathOperator{\vol}{vol}
\DeclareMathOperator{\essinf}{essinf}
\DeclareMathOperator{\id}{id}
\newcommand{\e}{\varepsilon}
\newcommand{\NN}{\mathbb{N}}
\newcommand{\RR}{\mathbb{R}}
\newtheorem{thm}{Theorem}[section]
  \theoremstyle{plain}
  \newtheorem{conjecture}[thm]{Conjecture}
  \theoremstyle{plain}
  \newtheorem{lem}[thm]{Lemma}
  \theoremstyle{plain}
  \newtheorem{prop}[thm]{Proposition}
  \theoremstyle{plain}
  \newtheorem{cor}[thm]{Corollary}
  \theoremstyle{definition}
  \newtheorem{defn}[thm]{Definition}
\begin{document}

\title{Local entropy averages and projections of fractal measures}

\author{Michael Hochman}
\address{Michael Hochman\\
         Princeton university\\
         Fine Hall\\
         Washington Rd.\\
         Princeton, NJ 08544\\
         USA}
\email{hochman@math.princeton.edu}

\author{Pablo Shmerkin}
\address{Pablo Shmerkin\\
         University of Manchester. Oxford Road\\
         School of Mathematics. Alan Turing Building\\
         Manchester M13 9PL\\
         UK }
\email{Pablo.Shmerkin@manchester.ac.uk}

\thanks{This research was partially supported by the NSF under agreement No. DMS-0635607. M.H. supported by NSF grant 0901534. P.S. acknowledges support from EPSRC grant EP/E050441/1 and the University of Manchester.}

\subjclass[2010]{Primary 28A80; Secondary 37C45.}

\keywords{invariant measures, convolutions, Hausdorff dimension, orthogonal projections}
\date{\today}

\begin{abstract}
We show that for families of measures on Euclidean space which satisfy an ergodic-theoretic form of ``self-similarity'' under the operation of re-scaling, the dimension of linear images of the measure behaves in a semi-continuous way. We apply this to prove the following conjecture of Furstenberg: if $X,Y\subseteq [0,1]$ are closed and invariant, respectively, under $\times m\bmod 1$ and $\times n\bmod 1$, where $m,n$ are not powers of the same integer, then, for any $t\neq0$, \[
\dim(X+t Y)=\min\{1,\dim X+\dim Y\}.\]
A similar result holds for invariant measures, and gives a simple proof of the Rudolph-Johnson theorem. Our methods also apply to many other classes of conformal fractals and measures. As another application, we extend and unify results of Peres, Shmerkin and Nazarov, and of Moreira, concerning projections of products self-similar measures and Gibbs measures on regular Cantor sets. We show that under natural irreducibility assumptions on the maps in the IFS, the image measure has the maximal possible dimension under any linear projection other than the coordinate projections. We also present applications to Bernoulli convolutions and to the images of fractal measures under differentiable maps.
\end{abstract}

\maketitle

\markboth{M. Hochman and P. Shmerkin}{Local entropy averages and projections}

\section{\label{sec:Introduction}Introduction }

\subsection{\label{sub:Background}Background and history}

Let $\dim$ denote the Hausdorff dimension of a set and let $\Pi_{d,k}$
denote the space of orthogonal projections from $\mathbb{R}^{d}$
to $k$-dimensional subspaces, with the natural measure. Then it is
a classical fact, due in various versions to Marstrand, Mattila and
others, that for any Borel set $X\subseteq\RR^{d}$, almost every $\pi\in\Pi_{d,k}$
satisfies \begin{equation}
\dim(\pi X)=\min(k,\dim X).\label{eq:projections-no-not-increase-dimension}\end{equation}
Indeed, the right hand side is a trivial upper bound: Lipschitz maps
cannot increase dimension, so $\dim\pi X\leq\dim X$, and $\pi X$
is a subset of a $k$-dimensional subspace, hence $\dim\pi X\leq k$.
Since this equality holds almost everywhere, we shall use the term
\emph{exceptional} for projections for which equality fails, and call
the right hand side the \emph{expected} dimension.

While \eqref{eq:projections-no-not-increase-dimension} tells us what
happens for a typical projection, it is far more difficult to analyze
the image of even the simplest fractals under individual projections. See Kenyon \cite{Kenyon97} for a particularly simple and frustrating example.

There are, however, a number of well-known conjectures to the effect
that, for certain sets of combinatorial, arithmetic or dynamical origin,
phenomena which hold typically in the general setting should, for
these special sets, always hold, except in the presence of some evident obstruction.
The present work was motivated by a conjecture of this kind concerning
projections of product sets whose marginals are invariant under arithmetically
{}``independent'' dynamics.

Denote the $m$-fold map of the unit interval by $T_{m}:x\mapsto mx\bmod1$
and let $\pi_{x},\pi_{y}\in\Pi_{2,1}$ denote the coordinate projections
onto the axes.

\begin{conjecture} [Furstenberg] \label{con:Furstenberg-conjecture}Let
$X,Y\subseteq[0,1]$ be closed sets which are invariant under $T_{2}$
and $T_{3}$, respectively. Then \[
\dim\pi(X\times Y)=\min\{1,\dim(X\times Y)\}\]
 for any $\pi\in\Pi_{2,1}\setminus\{\pi_{x},\pi_{y}\}$. \end{conjecture}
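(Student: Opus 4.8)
The plan is to reduce the conjecture to a statement about the behavior of dimension under linear projections for measures that are "ergodic-theoretically self-similar" under rescaling — the main abstract principle announced in the abstract — and then verify that $T_2$- and $T_3$-invariant sets (equivalently, measures supported on them) fit into that framework with the required arithmetic independence, so that the only exceptional projections are the coordinate ones. First I would pass from sets to measures: by a standard argument (e.g. the variational principle for Hausdorff dimension together with the fact that a closed $T_m$-invariant set $X$ carries, for every $s<\dim X$, a $T_m$-invariant measure $\mu$ with $\dim\mu>s$ — here one can use the measure of maximal dimension, or Frostman-type measures built from the combinatorial structure of the base-$m$ expansion), it suffices to show that for $T_2$-invariant $\mu$ and $T_3$-invariant $\nu$ one has $\dim\pi(\mu\times\nu)=\min\{1,\dim\mu+\dim\nu\}$ for every $\pi\in\Pi_{2,1}\setminus\{\pi_x,\pi_y\}$; taking a supremum over $s$ recovers the set statement, since $\dim(X\times Y)=\dim X+\dim Y$ for these dynamically defined products (the base-$m$ / base-$n$ structure gives coincidence of Hausdorff and box dimension on each factor).

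Next I would set up the local-entropy-average machinery. The key point is that the base-$m$ expansion exhibits $\mu$ as generated by the dynamics $T_m$: conditioning $\mu$ on a level-$k$ $m$-adic cell and rescaling by $m^k$ reproduces, in a measure-theoretic sense along a generic orbit, a stationary family of measures (a "CP-distribution" / fractal distribution in the sense developed in the body of the paper), whose typical dimension is $\dim\mu$. The analogous statement holds for $\nu$ with base $n$. The abstract projection theorem from the paper then says: for such a self-similar family, the projected dimension $\dim\pi(\mu\times\nu)$ is the expected value $\min\{1,\dim\mu+\dim\nu\}$ unless $\pi$ is "degenerate" for the family — and the degenerate directions are exactly those that are simultaneously compatible with the scaling symmetry in base $m$ and in base $n$. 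Here is where the arithmetic hypothesis that $2$ and $3$ are not powers of a common integer enters: it forces the group generated by the scaling ratios to be dense, so that the only common degenerate directions are the horizontal and vertical ones, i.e.\ $\pi_x$ and $\pi_y$. Concretely, a non-coordinate projection $\pi$ has a well-defined nonzero slope; pushing $\mu\times\nu$ through $\pi$ and using that the $m$-adic rescalings act on one coordinate while the $n$-adic rescalings act on the other, the orbit of the slope under the combined $\log m$–$\log n$ scaling action is dense, which is precisely the non-degeneracy needed to apply the abstract theorem and conclude maximal dimension.

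The main obstacle I expect is establishing that $T_m$-invariant measures genuinely fall under the hypotheses of the abstract "self-similar under rescaling" theorem with the correct identification of the exceptional set of projections — that is, proving that the scenery/CP-process generated by an invariant measure in base $m$ has the stationarity and dimension-conservation properties the theorem requires, and that the only projections killing dimension for the product process are the coordinate ones. This has two sub-difficulties: (i) handling the fact that $m$-adic cells are not literally dilates of a single set but one must still extract a genuine stationary distribution (controlling boundary effects of cells and the slight non-conformality coming from working with dyadic/triadic partitions rather than balls), and (ii) the arithmetic step that "$m,n$ not powers of a common integer $\Rightarrow$ $\{\, (j\log m, k\log n): j,k\in\NN\,\}$ generates a dense subgroup of $\RR$", which by a theorem on irrationality of $\log m/\log n$ (itself a consequence of unique factorization) rules out any nontrivial common period and hence any non-coordinate exceptional direction. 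Once these are in place, the abstract theorem applies verbatim and gives $\dim\pi(\mu\times\nu)=\min\{1,\dim\mu+\dim\nu\}$, and the reduction above upgrades this to the set statement of Conjecture~\ref{con:Furstenberg-conjecture}.
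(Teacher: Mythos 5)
Your proposal is correct and follows essentially the same route as the paper: reduce to the measure statement (Theorem \ref{thm:projection-of-product-invariant-measures}) via the variational principle and the coincidence of Hausdorff and box dimension for these products, build a CP-chain from the product measure using an adapted partition into rectangles of bounded eccentricity to get an open dense set of nearly-good projections, and then use the density of the multiplicative semigroup $\{3^i/2^j\}$ (from irrationality of $\log m/\log n$) to propagate this to all non-coordinate projections. The difficulties you flag — boundary/eccentricity issues for the adapted cells and the arithmetic density step — are exactly the ones the paper addresses in Sections \ref{sub:Products-of-xmxn-measures} and \ref{sub:Proof-of-x2x3}.
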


In the situation above it is evident that $\pi_{x},\pi_{y}$ are exceptions,
since they map $X\times Y$ to $X$ or $Y$, respectively, and a drop
in dimension is to be expected.

Note that this conjecture can also be formulated
as a result on sumsets:%
\footnote{In the sumset formulation we relied on the identity \[
\dim(X\times Y)=\dim(X)+\dim(Y)\]
This holds in the present case because $X$ has coinciding Hausdorff
and box dimension (see e.g. \cite[Theorem 5.1]{Furstenberg08}); in
general one only has the inequality $\dim(X\times Y)\geq \dim(X)+\dim(Y)$. %
} for $X,Y$ as above and all $s\neq0$,\[
\dim(X+sY)=\min\{1,\dim X+\dim Y\}.\]
 Here $A+B=\{a+b\,:\, a\in A\,,\, b\in B\}$.

Conjecture \ref{con:Furstenberg-conjecture} originates in the late 1960s. Although it has apparently not appeared in print, it is related to another conjecture of Furstenberg's from around the same time, which appears in \cite{Furstenberg70}:

\begin{conjecture}[Furstenberg, \cite{Furstenberg70}]\label{con:intersections}
Let
$X,Y\subseteq[0,1]$ be closed sets which are invariant under $T_{2}$
and $T_{3}$, respectively. Then for any $s,t$, $t\neq 0$,\[
   \dim X \cap (s+tY) \leq \max\{\dim X +\dim Y -1, 0\}
\]
\end{conjecture}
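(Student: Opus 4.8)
Conjecture~\ref{con:intersections} is the natural ``slicing'' counterpart of Conjecture~\ref{con:Furstenberg-conjecture}, so the plan is to deduce it from the projection/sumset theorem that is the main result of this paper, via a Marstrand-type slicing argument made \emph{uniform in the parameters} $(s,t)$ by exploiting the self-similarity of $X$ and $Y$. Fix $t\neq0$ and $s$, put $Z=X\cap(s+tY)$, and note that $z\mapsto(z-s)/t$ is affine of nonzero slope, hence bi-Lipschitz, so $\dim Z=\dim\bigl(Y\cap t^{-1}(X-s)\bigr)$ and $Z$ is simultaneously a subset of $X$ and, up to an affine change of coordinates, a subset of $Y$. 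Since a closed $T_m$-invariant set has coinciding Hausdorff and box dimension (as recalled in the footnote above), for every scale $\delta>0$ the set $X$ is covered by $\delta^{-\dim X+o(1)}$ intervals of length $\delta$ and $Y$ by $\delta^{-\dim Y+o(1)}$ of them. Covering $Z$ by the length-$\delta$ intervals that meet it, the number of such intervals is, up to an $O(1)$ factor, the number of pairs consisting of a scale-$\delta$ cylinder of $X$ and a scale-$\delta$ cylinder of $Y$ that are aligned under $z\mapsto(z-s)/t$, and so the whole problem reduces to showing that the $m$-adic cylinder pattern of $X$ and the $n$-adic cylinder pattern of $Y$ are \emph{asymptotically independent} at most scales, in the quantitative sense that the number of aligned pairs is $\delta^{-\max\{0,\,\dim X+\dim Y-1\}+o(1)}$.

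We would package this alignment count as an entropy and attack it with the local entropy averages method of this paper run ``in reverse''. Using the variational principle for closed invariant sets, choose ergodic measures $\mu$ on $X$ with $\dim\mu$ close to $\dim X$ and $\nu$ on $Y$ with $\dim\nu$ close to $\dim Y$; along a suitable sequence of scales the local structure of $\mu$, resp. $\nu$, is governed by a CP-distribution in the $m$-adic, resp. $n$-adic, filtration, and the aligned-cylinder count above is controlled by the entropy of a linear slice of $\mu\times\nu$ in the direction of slope $1/t$. Feeding the local entropy averages estimate, together with the projection theorem of this paper in its version for invariant measures --- which gives $\dim\pi_\theta(\mu\times\nu)=\min\{1,\dim\mu+\dim\nu\}$ for every non-axis direction $\theta$, since $\mu$ and $\nu$ are exact dimensional --- into a dimension-conservation argument of the type introduced by Furstenberg, we would obtain, for Lebesgue-typical $s$, the bound
\[
\dim Z\ \le\ \dim\mu+\dim\nu-\dim\pi_\theta(\mu\times\nu)\ =\ \max\{0,\,\dim\mu+\dim\nu-1\},
\]
which, after letting $\dim\mu\to\dim X$ and $\dim\nu\to\dim Y$, is the conjectured estimate for generic offsets $s$.

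\textbf{The main obstacle} is the upgrade from ``generic offset'' to ``every offset''. The natural idea is that an anomalously large slice reproduces itself under magnification: $T_m^k$ carries a cylinder of $X$ meeting $Z$ onto a subset of $X$, and the matching rescaling carries the relevant piece of $Y$ onto a subset of $Y$, so a single bad pair $(s,t)$ would force anomalously large intersections of $X$ with a positive-dimensional family of affine copies of $Y$ across a whole range of scales and slopes --- contradicting the generic bound, \emph{provided} one can show that the $m$-adic magnifications of $X$ and the $n$-adic magnifications of $Y$ ``mix'' with one another. It is exactly here that the hypothesis that $m$ and $n$ are not powers of a common integer must be used, through a change-of-base equidistribution argument of the kind underlying Furstenberg's $\times2,\times3$ results. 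We expect this step to be considerably harder than the corresponding point in the projection theorem, where one deals with the single measure $\mu\times\nu$ and its one scenery flow: here the self-similar replications of $X$ and of $Y$ take place on \emph{incompatible} grids and must be made to interact uniformly with the affine slicing map. For this reason we do not claim to settle Conjecture~\ref{con:intersections} in this paper, and offer the above only as the natural line of attack.
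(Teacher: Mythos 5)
You are attempting to prove a statement that this paper does not prove: Conjecture~\ref{con:intersections} is stated here only as background and is explicitly left open --- the authors write that ``the uniform upper bounds needed for Conjecture~\ref{con:intersections} still seem out of reach of current methods.'' So there is no proof in the paper to compare against, and your proposal, by your own admission in the final paragraph, is a plan of attack rather than a proof. The gap you flag is real and is exactly the crux: passing from ``Lebesgue-a.e.\ $s$'' to ``every $s$.'' Nothing in your sketch closes it.

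Two further points about the parts you treat as the ``easy'' half. First, the generic-$s$ upper bound $\dim\bigl(X\cap(s+tY)\bigr)\le\max\{\dim X+\dim Y-1,0\}$ for a.e.\ $s$ is classical (it follows from the Fubini-type inequality for Hausdorff measures, cf.\ \cite[Theorem 10.11]{Mattila95} cited in the introduction) and does not require the projection theorem of this paper at all; invoking Theorem~\ref{thm:projection-of-product-invariant-measures} there adds nothing. Second, the step where you ``feed'' the projection theorem into ``a dimension-conservation argument'' to bound slices from above is unsound as stated: the identity \eqref{eq:dimension-conservation} is exactly what the paper warns is \emph{not} generally true, and knowing that $\dim\pi_\theta(\mu\times\nu)$ attains the expected value gives no upper bound on individual fibers. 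The paper's entire machinery (local entropy averages, CP-chains, semicontinuity of $\pi\mapsto\dim_*\pi\mu$) is built to produce \emph{lower} bounds on images, and lower bounds on images do not convert into the \emph{uniform upper} bounds on fibers that the conjecture demands. Your self-replication idea for bad pairs $(s,t)$ is a reasonable heuristic, but until it is carried out it is not a proof, and it cannot be graded as one.
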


The relation between these conjecture is as follows. The sets $X\cap (s+tY)$ are, up to affine coordinate change, the intersections of $X\times Y$ with the fibers of the projections $\pi\in\Pi_{2,1}\setminus\{\pi_x,\pi_y\}$. Heuristically, one expects the following relation between the dimension of the image and the fibers: \begin{equation}
   \dim X\times Y = \dim \pi(X\times Y) + \sup_z\{\dim ((X\times Y)\cap \pi^{-1}(z))\} \label{eq:dimension-conservation}
\end{equation}
This is, for example, the way affine subspaces in $\mathbb{R}^d$ behave under linear maps, as do generic sub-manifolds, and if it were true then Conjectures \ref{con:Furstenberg-conjecture} and \ref{con:intersections} would be equivalent: if the image under a projection has the expected dimension then the fibers would behave as expected as well.

Equation \eqref{eq:dimension-conservation} is a very strong statement, and simple examples show that it is not generally true. For quite general sets  $A\subseteq \mathbb{R}^d$ it is known that, under a natural distribution on the $k$-dimensional subspaces which intersect $A$, almost every such subspace intersects $A$ in the expected dimension, i.e. the larger of $\dim A - k$ and $0$; see \cite[Theorem 10.11]{Mattila95}. For certain special sets $A\subseteq\mathbb{R}^2$, related in some ways to the product sets we are discussing, a stronger result was obtained by Furstenberg \cite{Furstenberg08}: for every $t\neq 0$ there are many (in the sense of dimension) values of $s$ such that $A$ intersects the line $x-ty=s$ in \emph{at least} the expected dimension. However, the uniform upper bounds needed for Conjecture \ref{con:intersections} still seems out of reach of current methods.

We refer the reader to \cite{Furstenberg70} for a more detailed discussion of Conjecture \ref{con:intersections} and some related questions.

\subsection{Iterated Function Systems}

A related circle of questions concerns projections of product sets whose marginals are attractors of iterated function systems (IFSs) on the line. Here again it is believed that, in the absence of some evident ``resonance'' between the IFSs, projections should behave as expected.

There was little progress on Conjecture \ref{con:Furstenberg-conjecture} until fairly recently. The first result of this kind is a theorem by C. G. Moreira \cite{Moreira98}\footnote{The proof in \cite{Moreira98} is incomplete, see e.g. \cite{PeresShmerkin09}} for pairs of regular IFSs, i.e. systems of $C^{1+\varepsilon}$ contractions on the line satisfying the strong separation condition (see Section \ref{sec:convolutions-Gibbs-measures}). Moreira assumes that at least one of the IFSs is strictly non-linear, i.e. cannot be conjugated to a linear one, and that a certain irrationality condition is satisfied between the IFSs. Under these hypotheses he shows that if $X,Y$ are the attractors then the sumset $X+Y$, which is the projection of $X\times Y$ under $\pi(x,y)=x+y$, has the expected dimension.

More recently Y. Peres and P. Shmerkin \cite{PeresShmerkin09} solved the problem for projections of $X\times Y$ when $X,Y$ are attractors of linear IFSs satisfying an irrationality condition, namely, that the logarithms of some pair of contraction ratios is rationally independent. This class of examples includes some special cases of \ref{con:Furstenberg-conjecture}. For example, the standard middle-third Cantor set is both $T_3$-invariant and the attractor of an IFS with contraction ratio $1/3$. With an eye to Furstenberg's conjecture, these methods can be pushed to apply to $T_{m}$-invariant subsets of $[0,1]$ which are shifts of finite type with respect to the base-$m$ coding. See also \cite{FergusonJordanShmerkin09} for an extension to some non-conformal attractors on the plane.

With regard to the question of projecting measures rather than sets,  Nazarov, Peres and Shmerkin \cite{NazarovPeresShmerkin09} recently established some results for projections of Hausdorff measure on $X\times Y$, where $X,Y$ are now attractors of linear IFSs in which, additionally, all contracting maps have the same contraction ratio (while these assumptions are quite special, in this case they also establish a stronger result using correlation dimension rather than Hausdorff dimension).

It is interesting to note that the methods of Moreira and of Peres-Shmerkin are quite different and rely heavily on their respective assumptions, i.e. strict non-linearity and linearity of the IFSs. This leaves open the case of a pair of IFSs which are both non-linearly conjugated to linear IFSs. Their methods also do not give any information about behavior of measures on regular IFSs.

Finally, similar questions may be asked about multidimensional attractors of IFSs rather than products one-dimensional ones. For the case of 2-dimensional linear IFSs Peres and Shmerkin \cite{PeresShmerkin09} showed that, assuming that the orthogonal part of the contractions include at least one irrational rotation, all projections behave as expected. Unfortunately these methods do not work for dimension $d\geq 3$, and again give no information about measures.

\subsection{\label{sub:Results}Results}

In this work we develop a method for bounding from below the dimension
of projections of measures which exhibit certain statistical self-similarity.
Before describing the general result, we summarize our main applications.

The first is a resolution of Conjecture \ref{con:Furstenberg-conjecture} in its full generality. In fact, we establish
a stronger statement concerning invariant measures. Recall that for
a probability measure $\mu$ on a metric space, the lower Hausdorff dimension $\dim_*\mu$ is defined as
\[
\dim_*\mu = \inf\{\dim(A):\mu(A)>0\}.
\]
Also, write $\dim\mu=\alpha$ to indicate that \[
  \lim_{r\downarrow 0}\frac{\mu(B_r(x))}{\log r}=\alpha\quad\textrm{for } \mu\text{-a.e. } x.
\]
In this case $\alpha$ is the exact dimension of $\mu$ and $\dim_*\mu=\dim\mu$, but note that $\dim\mu$ is not always defined. See Section \ref{sec:Dimension-and-entropy} for a discussion of dimension.

\begin{thm} \label{thm:projection-of-product-invariant-measures}Let
$\mu,\nu$ be Borel probability measures on $[0,1]$ which are invariant
under $T_{m},T_{n}$, respectively, and $m,n$ are not powers of the
same integer. Then for every $\pi\in\Pi_{2,1}\setminus\{\pi_{x},\pi_{y}\}$,
\[
\dim_*\pi(\mu\times\nu)=\min\{1,\dim_*(\mu\times\nu)\}
\]
If $\mu,\nu$ are exact dimensional then the above holds for $\dim$ instead of $\dim_*$.
 \end{thm}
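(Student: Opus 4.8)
The plan is to reduce the statement about projections of $T_m\times T_n$-invariant measures to a statement about \emph{CP-chains} (or, in the language we will develop in later sections, measures that are ``self-similar'' under the re-scaling dynamics) and then invoke a general semi-continuity theorem for projections of such measures. First I would recall the basic dictionary: a $T_m$-invariant measure $\mu$ on $[0,1]$, when we pass to the symbolic coding and look at the family of conditional measures on $m$-adic cells re-scaled back to $[0,1]$, generates a distribution on measures that is stationary under the ``zoom-in'' map; this is the $m$-adic CP-chain (in the sense of Furstenberg) associated to $\mu$. The product $\mu\times\nu$ then has an associated object built from the $m$-adic chain in the $x$-direction and the $n$-adic chain in the $y$-direction, and the hypothesis that $m,n$ are not powers of a common integer is exactly what guarantees that $\log m$ and $\log n$ are rationally independent, so the two zooming dynamics are ``non-resonant.''

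The second step is the heart of the matter: I would prove (or quote, if it is the main general theorem of the paper) that for a measure $\eta$ on $[0,1]^2$ which is exact-dimensional and arises as the base measure of such a non-resonant product CP-chain, one has $\dim_*(\pi\eta)=\min\{1,\dim_*\eta\}$ for \emph{every} $\pi\in\Pi_{2,1}\setminus\{\pi_x,\pi_y\}$. The mechanism is the ``local entropy averages'' of the title: the dimension of $\pi\eta$ is bounded below by the expectation, over the CP-chain, of the local entropy contribution $\frac{1}{\log b}H_b(\pi\eta_\omega)$ of the projected conditional measures at a fine scale. One shows that if this average were strictly smaller than $\min\{1,\dim_*\eta\}$ along some projection $\pi_\theta$, then by the Markov/stationarity structure the conditional measures themselves would have to be concentrated near affine subspaces in a direction compatible with $\theta$ at a definite proportion of scales --- and, crucially, these directions would have to be consistent across the two independent zooming rates $\log m$ and $\log n$. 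The rational independence forces this consistency to fail unless the measure is supported on a single line, i.e. unless one is at a coordinate projection or the dimension is already zero. This is the step I expect to be the main obstacle: making the ``entropy deficiency propagates to an affine concentration'' argument quantitative and uniform, and then deriving the contradiction from non-resonance, rather than just from a single scale. In the excerpt's own framing this is precisely the semi-continuity of dimension of linear images under re-scaling self-similarity.

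The third step is bookkeeping to get back to the stated theorem. For the $\dim$ (exact-dimensional) version, exact dimensionality of $\mu$ and $\nu$ passes to $\mu\times\nu$ (the dimension of a product of exact-dimensional measures is the sum), and exact dimensionality of the projection follows once we know the lower bound matches the trivial upper bound $\dim\pi(\mu\times\nu)\le\min\{1,\dim(\mu\times\nu)\}$ (projections are Lipschitz, and the image sits in a line). For the general $\dim_*$ version, a $T_m$-invariant measure need not be exact-dimensional, but by the ergodic decomposition we may assume $\mu$ and $\nu$ are ergodic; ergodic $T_m$-invariant measures \emph{are} exact-dimensional (the dimension equals $h(\mu,T_m)/\log m$ by Shannon--McMillan--Breiman plus Billingsley), so the exact-dimensional case applies to each ergodic component, and then one checks $\dim_*$ of the projected measure is the infimum of the component contributions, giving the claimed formula. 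The coordinate-projection exceptions are genuine and need no argument: $\pi_x(\mu\times\nu)=\mu$ and $\pi_y(\mu\times\nu)=\nu$, whose dimensions can of course be smaller than $\min\{1,\dim_*\mu+\dim_*\nu\}$.

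One technical caveat worth flagging in the writeup: the CP-chain attached to $\mu$ is naturally a chain over the fixed $m$-adic partition, and re-scaling an $m$-adic cell of generation $k$ back to the unit cube is an affine map whose linear part is a scalar; so ``self-similarity under re-scaling'' here really means invariance of the distribution on measures under the $m$-adic magnification map, and one must be slightly careful that the general projection theorem is stated for exactly this kind of discrete-time, scalar-magnification self-similarity (which it is, in the sections that follow). No continuous scaling flow or genericity in the scaling parameter is needed --- the whole point is that the two \emph{different} discrete rates $\log m,\log n$ already suffice.
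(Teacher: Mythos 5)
Your outline goes wrong at what you yourself identify as the heart of the matter. You propose to show that \emph{every} non-coordinate projection has large entropy averages by a contradiction argument: entropy deficiency at a positive proportion of scales would force the conditional measures to concentrate near affine subspaces in a direction tied to $\pi$, and rational independence of $\log m,\log n$ would make this concentration inconsistent across the two zooming rates. No such ``inverse theorem'' for entropy deficiency appears in the paper, and you admit you do not know how to make it quantitative; as written this is a missing idea, not a proof. The paper's actual mechanism is much softer and entirely different: (i) the Marstrand--Hunt--Kaloshin theorem gives the expected dimension for \emph{almost every} $\pi$; (ii) local entropy averages give lower semicontinuity of $\pi\mapsto\dim_*\pi\theta$ (at the price of restricting to a set of measure $1-\varepsilon$ and losing $\varepsilon$ in dimension), so the set $\mathcal{V}_\varepsilon$ of $\varepsilon$-good slopes has dense interior; (iii) the identities $\mu\times\nu=(T_2\mu_0\times\nu)+(T_2\mu_1\times\nu)$ and its base-$3$ analogue show $\mathcal{V}_\varepsilon$ is invariant under the multiplicative semigroup $\{n^i/m^j\}$ acting on slopes, which is dense in $\mathbb{R}^+$ precisely because $\log m/\log n\notin\mathbb{Q}$. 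An invariant set with nonempty interior in each ray must then be all of $\mathbb{R}\setminus\{0\}$. So the arithmetic hypothesis enters only through density of this semigroup, never through any ``non-resonance inside the entropy estimate.''

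A second, more technical gap is your treatment of the CP-chain for the product. Your caveat that the magnification maps are scalar is exactly what fails here: when $\log m/\log n\notin\mathbb{Q}$ the natural cells for $\mu\times\nu$ are products of an $m$-adic and an $n$-adic interval, the iterated product partitions have unbounded eccentricity, and no common power of $T_m$ and $T_n$ acts as a local homothety. The paper resolves this by building a bespoke partition operator on rectangles $[0,1]\times[0,e^w]$, $w\in[0,\log 3)$, whose eccentricity parameter evolves by the irrational rotation $w\mapsto w+\log 2\bmod\log 3$, and by randomizing over $w$ and over the conditional measures $\mu^x\times\nu^y$ of the natural extensions to obtain a stationary (generally non-ergodic) CP-chain, to which the non-ergodic version of the open-dense-set theorem is applied. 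Your reduction to ergodic components and the passage from $\dim_*$ to $\dim$ at the end are fine and match the paper.
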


Note that both the conjecture above and the theorem hold trivially
in dimension zero. From the theorem one proves the conjecture using the variational principle
to relate the dimension of sets and measures; see Section \ref{sub:Proof-of-topological-conj}.

Theorem \ref{thm:projection-of-product-invariant-measures} also leads
to a very short proof of the Rudolph-Johnson theorem (see Section \ref{sub:Rudolph-Johnson-theorem}): If $m,n$ are not powers of the same integer, $\mu$ is a probability measure
on $[0,1]$ invariant under both $T_{m}$ and $T_{n}$, and all ergodic components have positive entropy for one (equivalently both) of the maps, then $\mu=$Lebesgue measure. Unfortunately, neither this proof nor our methods provide a hint on
how to approach the long-standing conjecture about the entropy zero
case.

For the next result we require some notation. For general definitions regarding IFSs see Sections \ref{sec:self-similar-measures}, \ref{sec:convolutions-Gibbs-measures}. Given a contracting smooth map $f$ on $[0,1]$, we let 
\begin{equation} \label{eq:def-lambda}
\lambda(f) = -\log(f'(x)), \text{where }x\text{ is the fixed point of }f.
\end{equation}
Furthermore, if $\mathcal{I}= \{ f_i :i\in\Lambda\}$ is a regular IFS (see Section \ref{sec:convolutions-Gibbs-measures} for the definition), we let
\[
L(\mathcal{I}) = \{  \lambda(f_{x_1}\circ\cdots \circ f_{x_n}): n\in\NN, \,x_1,\ldots, x_n\in \Lambda\}.
\]

\begin{thm} \label{thm:convolutions-of-Gibbs-measures}  Let $\mathcal{I}^{(i)}=\{ f_{j}^{(i)}:j\in\Lambda_{i}\} $, $i=1,\ldots, d$
be regular IFSs with attractor $X_{i}$. Suppose the following holds:

\textbf{Minimality assumption}. The set  $L(\mathcal{I}^{(1)}) \times\cdots \times L(\mathcal{I}^{(d)})$ is dense in the quotient space $(\mathbb{R}^d,+)/\Delta$, where $\Delta$ is the diagonal subgroup of $\mathbb{R}^d$.

Then for any globally supported Gibbs measures $\mu_{i}$ on
$X_{i}$ corresponding to arbitrary H\"{o}lder potentials, and for any projection $\pi(x)=\sum_i t_i x_i$ with all $t_i$ nonzero,
\[
\dim\left(\pi(\mu_{1}\times\cdots \times \mu_{d})\right)=\min(1,\dim(\mu_{1})+\ldots+\dim(\mu_{d})).\]
\end{thm}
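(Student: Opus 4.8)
The plan is to deduce Theorem~\ref{thm:convolutions-of-Gibbs-measures} from the general semicontinuity principle for projections of ``dynamically self-similar'' measures (the main technical tool announced in the abstract), of which Theorem~\ref{thm:projection-of-product-invariant-measures} is another instance. The first step is to package each Gibbs measure $\mu_i$ on the attractor $X_i$ as a \emph{CP-distribution} (or ``fractal distribution'') on $\mathbb{R}$: because $\mathcal{I}^{(i)}$ is a regular IFS satisfying strong separation and $\mu_i$ is Gibbs for a H\"older potential, the family of rescaled restrictions of $\mu_i$ to small neighborhoods of a $\mu_i$-typical point generates an ergodic-theoretic ``scenery'' process, and the product $\mu_1\times\cdots\times\mu_d$ inherits a corresponding distribution on $\mathbb{R}^d$. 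The local-entropy-averages machinery then says that $\dim \pi(\mu_1\times\cdots\times\mu_d)$ is controlled from below by an average, over the scenery, of the dimensions of $\pi$ applied to the ``micro-measures'' (weak-* limits of rescalings), so it suffices to show that for \emph{every} nontrivial micro-measure $\theta$ of the product, and every $\pi$ with all $t_i\neq 0$, one has $\dim \pi\theta = \min(1,\sum_i \dim\mu_i)$ — or rather, that the set of exceptional $\pi$ for which this fails is empty.

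The second and central step is to exploit the Minimality assumption to rule out exceptional directions. Since $\mu_i$ is Gibbs, the micro-measures arising from $\mu_i$ are, up to bounded distortion coming from the $C^{1+\e}$ nonlinearity, again Gibbs-like measures on pieces of $X_i$, and the collection of rescalings one may perform before passing to a limit is governed precisely by the additive semigroup generated by the contraction exponents $\lambda(f_{x_1}\circ\cdots\circ f_{x_n})$, i.e.\ by $L(\mathcal{I}^{(i)})$. The key point is that the exceptional set of projections for a CP-distribution is closed under the natural action of these rescalings on the Grassmannian / on the slopes $(t_1:\cdots:t_d)$, and this action on the space of directions factors through translation by $L(\mathcal{I}^1)\times\cdots\times L(\mathcal{I}^d)$ modulo the diagonal $\Delta$ (the diagonal corresponds to an overall scaling of $\pi$, which does not change its dimension behavior). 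Minimality makes this action \emph{minimal} on the relevant parameter space, so the exceptional set, being closed and invariant, is either empty or everything; since a.e.\ $\pi$ is non-exceptional by the Marstrand--Mattila projection theorem, it must be empty. Combined with the trivial upper bound $\dim \pi\theta \le \min(1,\dim\theta)$ and the fact that $\dim(\mu_1\times\cdots\times\mu_d)=\sum_i\dim\mu_i$ for exact-dimensional measures, this gives the theorem.

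The main obstacle I expect is the second step's bookkeeping: making rigorous the claim that the exceptional-direction set is closed and invariant under the semigroup generated by $L(\mathcal{I}^{(i)})$ when the IFS is genuinely nonlinear. In the linear case the rescaling maps act on $\mathbb{R}^d$ by honest diagonal linear maps and the induced action on directions is transparent; in the $C^{1+\e}$ case one only has bounded distortion, so a single micro-measure is not literally a rescaled copy of $\mu_i$, and one must argue that the \emph{set of attainable directions} in the limit is nonetheless exactly the closure of the orbit under $L(\mathcal{I}^1)\times\cdots\times L(\mathcal{I}^d)/\Delta$. This requires a careful Furstenberg-type ``CP chain'' analysis showing that distortion is washed out in the limit and that the exceptional set is genuinely a closed invariant set for the limiting (distortion-free) action. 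The remaining ingredients — exact dimensionality of Gibbs measures, the product dimension formula, and the reduction of $\dim\pi\mu$ to local entropy averages — are either standard or supplied by the general theory developed earlier in the paper, so the nonlinearity/minimality interplay is where the real work lies.
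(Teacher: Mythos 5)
Your overall strategy is the same as the paper's: associate a CP-chain to the product measure, use the local-entropy-averages semicontinuity to get an open dense set of $\varepsilon$-good projections, and then use invariance under a rescaling semigroup together with the minimality assumption to show this set is everything. However, there is a genuine gap exactly where you predict one: the invariance step in the nonlinear case is asserted, not proved, and it is the heart of the argument. The difficulty is that for a $C^{1+\e}$ IFS the map $f_{a_1}\times\cdots\times f_{a_d}$ does not act on slopes by multiplication by ratios of contraction factors; it is not affine at all, so the ``action on directions factoring through translation by $L(\mathcal{I}^1)\times\cdots\times L(\mathcal{I}^d)$ mod $\Delta$'' has no literal meaning. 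The paper's mechanism for repairing this is Sullivan's theorem on limit geometries: the rescaled compositions $(f_{x_{-n}}\cdots f_{x_{-1}})^*$ converge in $C^1$ to limit diffeomorphisms $F_x$, and conjugating by $F_{\overline{a}}$ makes the map $f_a$ genuinely affine with the correct derivative $\lambda(f_a)$ at scale one (Corollary \ref{cor:consequences-Sullivan}). Only after this linearization does one obtain an honest affine-diagonal semigroup acting on slopes, and even then the quasi-product (Gibbs) property is needed to identify the restriction $\mu_{I(a)}$ with the image $f_a\mu$ up to bounded Radon--Nikodym derivative (Lemma \ref{lem:restrictions-of-quasi-product-measures}); without that identification the good set is not invariant in any usable sense. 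One then still has to track eccentricities of the affine-diagonal maps and run a compactness argument over the sets $I_K$, and transfer the conclusion from one limit diffeomorphism to all of them and finally back to $\mu$ itself by a covering argument. None of this is in your proposal beyond the statement that ``distortion is washed out in the limit.''

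A secondary imprecision: you argue that ``the exceptional set, being closed and invariant, is either empty or everything.'' The set $\{\pi:\dim_*\pi\theta<\min(1,\dim\theta)\}$ is not known to be topologically closed; what the semicontinuity machinery actually gives is that for each $\varepsilon>0$ the $\varepsilon$-good set $\{\pi:\dim_*\pi\theta>\min(1,\dim\theta)-\varepsilon\}$ contains an open dense set. One must therefore run the minimality argument at each level $\varepsilon$ and let $\varepsilon\to0$ at the end, which is what the paper does with the sets $\mathcal{U}_{\gamma-\e}$. Also note that minimality is used here in the form ``a nonempty open invariant set is dense, hence (after intersecting with a compact $I_K$ and a finite covering argument) everything,'' not via a closed-invariant-set dichotomy.
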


A classical result of R. Bowen shows that Hausdorff measure on a regular Cantor set is equivalent to a Gibbs measure for an appropriate potential. We therefore have

\begin{cor}If $X_i$ are attractors of IFSs satisfying the hypotheses of the theorem, then for any $\pi\in\Pi_{d,k}$,
\begin{equation}
\dim\pi(X_1\times \cdots\times X_d)=\min\{1,\dim X_1+\ldots +\dim X_d\} \label{eq:projections-of-sets}\end{equation}
\end{cor}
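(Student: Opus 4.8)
The plan is to derive the corollary from Theorem \ref{thm:convolutions-of-Gibbs-measures} by placing on each factor $X_i$ a Gibbs measure of maximal dimension, and then transferring the dimension identity from the projected measure to the projected set.

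Fix $i$ and write $s_i=\dim X_i$. Since the regular IFS $\mathcal{I}^{(i)}$ satisfies the strong separation condition, $\mathcal{H}^{s_i}|_{X_i}$ is positive and finite; by Bowen's theorem quoted above it is equivalent, after normalization, to a globally supported Gibbs measure $\mu_i$ for a H\"older potential (concretely, $-s_i$ times the logarithm of the derivative along the IFS, whose topological pressure vanishes). Being a Gibbs measure on a regular IFS, $\mu_i$ is exact dimensional, and its dimension equals the ratio of its entropy to its Lyapunov exponent, which for this potential is exactly $s_i$; equivalently, $\dim\mu_i=\dim\mathcal{H}^{s_i}|_{X_i}=s_i$. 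Thus $\mu_1,\dots,\mu_d$ satisfy all the hypotheses of Theorem \ref{thm:convolutions-of-Gibbs-measures} --- the minimality assumption being precisely the hypothesis of the corollary --- and, since all coefficients of $\pi$ are nonzero,
\[
\dim\bigl(\pi(\mu_1\times\cdots\times\mu_d)\bigr)=\min\Bigl(1,\ \sum_{i}\dim\mu_i\Bigr)=\min\Bigl(1,\ \sum_{i}\dim X_i\Bigr).
\]

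It remains to compare the two sides. For the lower bound: $\mu_1\times\cdots\times\mu_d$ is a probability measure carried by $X_1\times\cdots\times X_d$, so the image measure $\pi(\mu_1\times\cdots\times\mu_d)$ is carried by $\pi(X_1\times\cdots\times X_d)$; since the image measure is exact dimensional, $\dim\pi(X_1\times\cdots\times X_d)\geq\dim_*\pi(\mu_1\times\cdots\times\mu_d)=\min(1,\sum_i\dim X_i)$. For the upper bound: $\pi(X_1\times\cdots\times X_d)$ lies on a line, so has dimension at most $1$, while $\pi$ is Lipschitz and each $X_i$ has coinciding Hausdorff and box dimension, so $\dim\pi(X_1\times\cdots\times X_d)\leq\dim(X_1\times\cdots\times X_d)\leq\sum_i\dim X_i$ by the standard estimate for the dimension of a product. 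Together these give \eqref{eq:projections-of-sets}.

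All the real work is in Theorem \ref{thm:convolutions-of-Gibbs-measures}, which we are taking as given; the rest is bookkeeping. The one point requiring a citation rather than an argument is that the chosen Gibbs measure (equivalently $\mathcal{H}^{s_i}|_{X_i}$) is exact dimensional with dimension exactly $\dim X_i$ --- a classical fact for self-conformal sets under the strong separation condition --- and one could sidestep even this by taking for $\mu_i$ any Gibbs measure attaining $\dim\mu_i=\dim X_i$, whose existence is Bowen's variational formula for $\dim X_i$.
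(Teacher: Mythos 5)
Your proposal is correct and is exactly the argument the paper intends: the corollary is stated there as an immediate consequence of Bowen's theorem (equivalence of $\mathcal{H}^{s_i}|_{X_i}$ with a full-dimension Gibbs measure) combined with Theorem \ref{thm:convolutions-of-Gibbs-measures}, and your lower bound via the support of the projected measure and upper bound via the coincidence of Hausdorff and box dimension for regular Cantor sets are the standard bookkeeping the authors leave implicit.
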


The minimality condition in Theorem \ref{thm:convolutions-of-Gibbs-measures} is satisfied, for example, when there are $d$ rationally independent numbers among the numbers $\lambda(f^{(i)}_j)$. Thus Theorem \ref{thm:convolutions-of-Gibbs-measures} (and its corollary) generalizes and extends the aforementioned results of Moreira \cite{Moreira98}, Peres-Shmerkin \cite{PeresShmerkin09} and Nazarov-Peres-Shmerkin \cite{NazarovPeresShmerkin09}. We note that Theorem \ref{thm:convolutions-of-Gibbs-measures}  does not make any assumptions about the linear or non-linear nature of the IFSs, and neither does the proof, which provides a unified treatment of the known cases. We also remark that Moreira [private communication] has shown that, for $d=2$, the minimality assumption holds automatically when one of the IFS is not conjugated to a linear IFS. In the linear case, however, the minimality assumption may fail to hold and is necessary; see \cite{PeresShmerkin09} for a discussion.

For self-similar sets and measures in $\mathbb{R}^d$ (see Section \ref{sec:self-similar-measures}), we have:

\begin{thm} \label{thm:projections-of-self-similar-measures-with-irrational-rotation}
Let $\{f_{i}:i\in\Lambda\}$ be an iterated function system on $\mathbb{R}^d$ with the strong separation condition consisting
of similarities, and $\mu$ a self-similar measure on its attractor. Let $O_i$ denote the orthogonal part of the similarity $f_i$ and suppose that:

\textbf{Minimality assumption}. The action (by right composition) of
the semigroup generated by $O_{i}$ on $\Pi_{d,k}$ is topologically
minimal, i.e. for some (equivalently any)  $\pi\in\Pi_{d,k}$ the orbit
\[
\{\pi O_{i_1}\cdots O_{i_k}:i_1,\ldots, i_k\in\Lambda\}
\]
is dense in $\Pi_{d,k}$.

Then for every $C^{1}$  map $g:\supp\mu\rightarrow\RR^k$ without singular points, $g\mu$ is exact dimensional  and
\[
\dim(g\mu)=\min(k,\dim\mu).
\]
 \end{thm}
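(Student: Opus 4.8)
The plan is to reduce to linear maps, establish the projection statement for \emph{every} $\pi\in\Pi_{d,k}$ by combining the local entropy average machinery with the Minimality assumption, and then recover the $C^{1}$ case and exact dimensionality by a localisation argument. For the reduction: since $g$ has no singular points, $Dg_{x}\colon\RR^{d}\to\RR^{k}$ has rank $k$ for every $x$, hence factors as $Dg_{x}=L_{x}\circ\pi_{x}$ with $\pi_{x}\in\Pi_{d,k}$ the orthogonal projection onto $(\ker Dg_{x})^{\perp}$ and $L_{x}\in\mathrm{GL}_{k}(\RR)$. Linear isomorphisms of $\RR^{k}$ preserve Hausdorff dimension, and on a sufficiently small ball $g$ is bi-Lipschitz onto its image with constants arbitrarily close to those of $Dg_{x}$; by the homogeneity of a self-similar measure these estimates are uniform in $x$. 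So it is enough to show that for every $\pi\in\Pi_{d,k}$ the measure $\pi\mu$ is exact dimensional with $\dim(\pi\mu)=\min(k,\dim\mu)$, and then reassemble $g\mu$ from finitely many such pieces. (Equivalently, one can feed $g$ directly into the CP-chain formalism below, the $n$-th magnification of $(\mu,g)$ about a typical point being modelled on a rotated copy of $\mu$ together with the linear map $Dg_{x}$.)

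Next I would analyse the rotation dynamics. The closure in $O(d)$ of the semigroup generated by $\{O_{i}\}$ is in fact a closed \emph{subgroup} $G$, since the closure of a sub-semigroup of a compact group is a group. By the Minimality assumption $G$ acts minimally on $\Pi_{d,k}$; as $G$ is compact, every $G$-orbit is compact and dense, hence equals $\Pi_{d,k}$, so the action is transitive and therefore uniquely ergodic. Its unique invariant measure must coincide with the $O(d)$-invariant measure $\rho$ on $\Pi_{d,k}$ (that measure is $G$-invariant, and there is only one such). Consequently the classical Marstrand--Mattila projection theorem applies verbatim with respect to $\rho$: for $\rho$-a.e.\ $\pi$ one has $\dim(\pi\mu)=\min(k,\dim\mu)$, and since the integrand is everywhere $\le\min(k,\dim\mu)$,
\[
\int_{\Pi_{d,k}}\dim(\pi'\mu)\,d\rho(\pi')=\min(k,\dim\mu).
\]

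The crux is then the local entropy averages. A self-similar measure satisfying strong separation is classically exact dimensional, and --- this is where strong separation is essential --- it generates an ergodic CP-distribution (fractal distribution) whose ``orientation'' factor is precisely the uniquely ergodic $G$-system above: magnifying $\mu$ around a $\mu$-typical point to scale $\approx 2^{-n}$ yields an affine copy of $O_{i_{1}}\cdots O_{i_{\ell}}\mu$, with $(i_{1},\dots,i_{\ell})$ distributed along the random walk driven by the defining probabilities, whose products equidistribute on average for $\rho$. The local entropy average inequality then gives, for \emph{every} $\pi\in\Pi_{d,k}$,
\[
\dim(\pi\mu)\ \ge\ \min\Bigl(k,\ \int_{\Pi_{d,k}}\dim(\pi'\mu)\,d\rho(\pi')\Bigr)\ =\ \min(k,\dim\mu),
\]
the first inequality being the content of the machinery (scale-by-scale averages of conditional projected entropies along the CP-chain converge, by the ergodic theorem, to the $\rho$-average of the projected dimensions) and the equality being the display above. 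With the trivial upper bound $\dim(\pi\mu)\le\min(k,\dim\mu)$, this is an equality for every $\pi$, and ergodicity of the generated distribution also gives exact dimensionality of $\pi\mu$. Feeding this back into the reduction step yields that $g\mu$ is exact dimensional with $\dim(g\mu)=\min(k,\dim\mu)$.

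The soft points are the reduction to linear maps and the group-theoretic step. The main obstacle is the construction underlying the third step: setting up the CP-chain/fractal-distribution formalism, proving that a strongly separated self-similar measure generates an ergodic one with the expected orientation factor, establishing the local entropy average inequality itself (a delicate martingale/subadditivity estimate bounding the dimension of a linear image from below by spatial-and-scale averages of local projected entropies), and matching the dyadic scales used there with the IFS cylinder scales.
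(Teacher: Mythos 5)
Your core strategy for the linear case is sound but genuinely different from the paper's. You prove that for \emph{every} $\pi$ the local entropy averages converge to the $\rho$-average of projected entropies, using equidistribution of the right random walk $\pi O_{i_1}\cdots O_{i_\ell}$ on $\Pi_{d,k}$ (closure of the semigroup is a compact group acting transitively, hence uniquely ergodic with invariant measure $\rho$, plus a Breiman-type law of large numbers for the walk), and then invoke Marstrand--Mattila/Hunt--Kaloshin for $\rho$-a.e.\ $\pi'$. The paper instead never proves equidistribution: it uses only the \emph{topological} minimality hypothesis, showing that the set $\mathcal{U}_{\e}$ of projections with $\dim_*\pi\mu>\min(k,\dim\mu)-\e$ has nonempty interior (via the semicontinuity of $E(\pi)$ for the associated CP-chain, Corollary \ref{cor:open-dense-set}) and is invariant under right composition with the $O_i$ (via strong separation and $\mu_{f_a(X)}=f_a\mu$), hence equals $\Pi_{d,k}$. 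Your route is more direct here but buys less: it relies on the acting maps being isometries of a compact space (so that minimality upgrades to unique ergodicity of the walk), whereas the paper's soft argument transfers to the $\times m,\times n$ and Gibbs settings where no such equidistribution is available. Both routes, yours and the paper's, outsource the same technical point you flag as the main obstacle: relating the $b$-adic sceneries $\mu^{x,n}$ to rotated affine copies of $\mu$ (the paper does this through Proposition \ref{prop:CP-chain-for-self-similar-measure}, cited to \cite{Gavish09,Hochman09}), so flagging it without resolving it is acceptable.

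The one step that does not work as written is your reduction of the $C^1$ case to the linear case by localisation. A rank-$k$ map $g:\RR^d\to\RR^k$ with $d>k$ is not bi-Lipschitz onto its image on any ball, and, more importantly, even knowing that $g$ is $C^1$-close to $L_x\circ\pi_x$ on a small ball does not by itself give $\dim_* g(\mu|_{B})\ge\dim_*\pi_x(\mu|_{B})-\e$: Hausdorff dimension of images is \emph{not} continuous under $C^1$ perturbation for general measures --- that failure is the raison d'\^{e}tre of the whole paper, and the stability statement is exactly Theorem \ref{thm:lower-semicontiunity-in-C1}/Proposition \ref{prop:smooth-projections-of-CP-chains}, which is only proved for measures generating a CP-chain. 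Your parenthetical alternative (``feed $g$ directly into the CP-chain formalism, with the $n$-th magnification modelled on a rotated copy of $\mu$ together with $D_xg$'') is the correct fix and is what the paper does via Theorem \ref{thm:bound-for-smooth-images}, combined with the observation that $E\equiv\min(k,\dim\mu)$ once the linear case is known; so you should promote that remark from an aside to the actual argument. Finally, exact dimensionality of $g\mu$ does not come from ``ergodicity of the generated distribution'' but simply from the two-sided bound: the upper local dimension of $g\mu$ is everywhere at most $\min(k,\dim\mu)$ since $\mu$ is exact dimensional and $g$ is Lipschitz, so the matching lower bound on $\dim_*$ forces exactness (Corollary \ref{cor:proving-expected-dim}).
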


It is well known that, under the strong separation condition, for a self-similar set of dimension $\alpha$, the $\alpha$-dimensional Hausdorff measure on $X$ is equivalent to a self-similar measure. Therefore the theorem implies a version for sets:

\begin{cor}If $X$ is an attractor of an IFS satisfying the hypotheses of the theorem above, then for every $g\in C^1(X)$ without singular points, \[
  \dim g(X)=\min\{k,\dim X\}.
\]
\end{cor}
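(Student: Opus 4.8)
The plan is to derive this from Theorem \ref{thm:projections-of-self-similar-measures-with-irrational-rotation} by producing, on the attractor $X$, a self-similar measure whose dimension equals $\dim X$, and then pushing that measure forward by $g$ (I read the $\pi$ in the statement as a typo for $g$, so the goal is $\dim g(X)=\min\{k,\dim X\}$). Write $r_i$ for the contraction ratio of $f_i$, $i\in\Lambda$, and set $\alpha=\dim X$. First I would use that the strong separation condition implies the open set condition, so Moran's formula applies and $\alpha$ is the similarity dimension: $\sum_{i\in\Lambda}r_i^{\alpha}=1$. I would then take $\mu$ to be the self-similar measure with probability weights $p_i=r_i^{\alpha}$. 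Since every $p_i>0$ we have $\supp\mu=X$, and $\mathcal{H}^{\alpha}|_X$ is, up to a constant, exactly this measure $\mu$; in particular $\mu$ is exact dimensional and, either from Ahlfors regularity of $\mathcal{H}^\alpha|_X$ or from the entropy/Lyapunov formula for self-similar measures under the open set condition,
\[
\dim\mu=\frac{\sum_{i\in\Lambda}p_i\log p_i}{\sum_{i\in\Lambda}p_i\log r_i}=\alpha=\dim X.
\]

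Next I would apply Theorem \ref{thm:projections-of-self-similar-measures-with-irrational-rotation}: the IFS satisfies the minimality assumption by hypothesis, and $g\in C^1(X)=C^1(\supp\mu)$ has no singular points, so the theorem gives that $g\mu$ is exact dimensional with $\dim(g\mu)=\min(k,\dim\mu)=\min(k,\dim X)$. Because $\mu(X)=1$ we have $g\mu(g(X))=1$, hence $\dim g(X)\ge\dim_*(g\mu)=\dim(g\mu)=\min(k,\dim X)$, the middle equality using exact dimensionality of $g\mu$. For the matching upper bound I would use that $g$ is $C^1$, hence Lipschitz, on the compact set $X$, so $\dim g(X)\le\dim X$, while $g(X)\subseteq\RR^k$ forces $\dim g(X)\le k$; together these give $\dim g(X)\le\min(k,\dim X)$, completing the argument.

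I do not expect a serious obstacle: the corollary is essentially a repackaging of Theorem \ref{thm:projections-of-self-similar-measures-with-irrational-rotation}, which already carries all the analytic content. The one point that genuinely invokes the strong separation hypothesis directly — rather than inheriting it through the theorem — is the construction of the auxiliary measure $\mu$: strong separation (via the open set condition) is what guarantees both $\sum_{i\in\Lambda}r_i^{\alpha}=1$ and that the natural self-similar measure has dimension exactly $\alpha$, as opposed to only bounding $\dim X$ from one side. Everything else is routine.
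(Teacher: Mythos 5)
Your argument is correct and is exactly the route the paper takes: the paper justifies the corollary by noting that under strong separation the $\alpha$-dimensional Hausdorff measure on $X$ is equivalent to a self-similar measure, so Theorem \ref{thm:projections-of-self-similar-measures-with-irrational-rotation} applies; you have simply spelled out the construction of that measure (weights $p_i=r_i^\alpha$), the resulting lower bound via $\dim_*(g\mu)$, and the trivial Lipschitz upper bound. You are also right that the $\pi$ in the statement is a typo for $g$.
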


Finally, our methods also apply to certain problems involving non-smooth
maps. Recall that the biased Bernoulli convolution with contraction $0<t<1$
and weight $0<p<1$ is the probability measure $\nu_{t}^{p}$ that
is the distribution of the random real number $\sum_{n=0}^{\infty}\pm t^{n}$,
where the sign is chosen i.i.d. with probability $p,1-p$. One may
view this as the image of the product measure $(p,1-p)^{\mathbb{N}}$
on $\{+,-\}^{\mathbb{N}}$ under the Lipschitz maps $\varphi_{t}(x)=\sum x_{n}t^{n}$.

The following theorem may be inferred from deep existing results,
but follows easily from our methods:

\begin{thm} \label{thm:BC}The lower Hausdorff dimension $\dim_*\nu_{p}^{t}$ is lower
semi-continuous in $(p,t)$. \end{thm}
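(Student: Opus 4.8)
The plan is to deduce Theorem~\ref{thm:BC} from the local-entropy-averages machinery underlying the other results; no projection is really involved, the content being the semicontinuity of $\dim_*$ itself within a family of self-similar measures, which is the instance of the paper's semicontinuity principle for the identity ``linear image''.

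If $p_0\in\{0,1\}$ then $\nu_{p_0}^{t_0}$ is an atom, $\dim_*=0$, and lower semicontinuity at $(p_0,t_0)$ is trivial. So fix $(p_0,t_0)$ with $p_0\in(0,1)$ and work in a small neighbourhood of it. There all the measures $\nu_p^t$ are supported in one fixed compact interval and satisfy the self-similarity relation
\[
\nu_p^t = p\,(f_{t,+})_*\nu_p^t + (1-p)\,(f_{t,-})_*\nu_p^t,\qquad f_{t,\pm}(x)=tx\pm 1 ,
\]
so $\nu_p^t$ is the self-similar measure of an IFS whose maps and weights vary real-analytically with $(p,t)$. The IFS has overlaps, so the separation hypotheses of Theorems~\ref{thm:convolutions-of-Gibbs-measures} and~\ref{thm:projections-of-self-similar-measures-with-irrational-rotation} are unavailable --- but they are not needed here.

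I would then invoke two facts from the theory developed for the earlier results. First, every self-similar measure (with or without separation) is exact dimensional and is a uniformly scaling measure: zooming into a $\nu_p^t$-typical point, coded by $\omega\in\{+,-\}^{\mathbb{N}}$, one sees rescaled restrictions that are sums of boundedly many translated copies of $\nu_p^t$, and their empirical distribution along the orbit converges to an ergodic CP/scenery distribution $P_{p,t}$; moreover $\dim_*\nu_p^t=\int\Phi\,dP_{p,t}$, where $\Phi$ is the normalised per-scale entropy functional on pointed micro-measures (the lower bound is the local-entropy-averages estimate, the reverse inequality comes from exact dimensionality). After the standard randomisation of the grid, $\Phi$ is weak-$*$ continuous, so $P\mapsto\int\Phi\,dP$ is continuous for weak-$*$ convergence of scenery distributions. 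Second, the semicontinuity: as $(p_k,t_k)\to(p_0,t_0)$ the measures $\nu_{p_k}^{t_k}\to\nu_{p_0}^{t_0}$ weak-$*$, and any subsequential weak-$*$ limit $P'$ of the $P_{p_k,t_k}$ is again carried by micro-measures of $\nu_{p_0}^{t_0}$ and satisfies $\int\Phi\,dP'\ge\int\Phi\,dP_{p_0,t_0}$. The heuristic for this last inequality is that passing to the limit can only resolve near-coincidences among cylinders present at parameter $t_0$ into genuinely distinct --- hence less concentrated, higher-entropy --- superpositions; it cannot create new coincidences. Combining the two facts gives $\liminf_{(p,t)\to(p_0,t_0)}\dim_*\nu_p^t\ge\int\Phi\,dP_{p_0,t_0}=\dim_*\nu_{p_0}^{t_0}$, as required.

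I expect the main obstacle to be exactly the behaviour of the scenery distribution $P_{p,t}$ at resonant parameters --- for instance $t$ the reciprocal of a Pisot number, where exact overlaps push $\dim_*\nu_p^t$ strictly below its values at nearby parameters. It is this that makes the conclusion merely lower semicontinuity and not continuity: $P_{p,t}$ genuinely jumps there, so the one-sided entropy inequality $\int\Phi\,dP'\ge\int\Phi\,dP_{p_0,t_0}$ must be extracted from a soft compactness argument about superpositions of translates of $\nu_{p_0}^{t_0}$, rather than from continuity of $(p,t)\mapsto P_{p,t}$. Two further points are routine within the framework: the book-keeping of scales, since the natural contraction ratio $t^n$ varies with $t$ (handled by working in a fixed integer base, in which $\nu_p^t$ still generates an ergodic CP-distribution with $\dim_*$ given by the corresponding entropy integral, all continuous in $(p,t)$); and the partition-boundary discontinuity of the entropy functional, removed by the grid randomisation already mentioned, which affects none of the dimension identities.
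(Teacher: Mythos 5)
Your overall instinct---bound $\dim_*$ from below by a fixed-scale entropy that varies continuously in $(p,t)$---is the right one, but the route you take through scenery/CP distributions has a genuine gap exactly where you flag the ``main obstacle''. The inequality $\int\Phi\,dP'\ge\int\Phi\,dP_{p_0,t_0}$ for a subsequential weak-$*$ limit $P'$ of the $P_{p_k,t_k}$ is not established by the heuristic about resolving near-coincidences, and in the form you state it there is no reason for it to hold: zooming in does not commute with weak-$*$ limits, so $P'$ need not be comparable to the scenery distribution of $\nu_{t_0}^{p_0}$ at all; and even a CP-distribution genuinely supported on the micromeasures $\langle\nu_{t_0}^{p_0}\rangle$ can have mean entropy strictly smaller than the dimension of $\nu_{t_0}^{p_0}$, since micromeasures can concentrate (Theorem \ref{thm:furstenberg} only asserts the \emph{existence} of one good CP-chain among those supported on $\langle\mu\rangle$, not that all of them are good). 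So the key step is missing. Your first ``fact'' also quietly imports the exact dimensionality of $\nu_t^p$ (a deep result of Feng--Hu), which the paper's argument explicitly avoids.

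The paper closes the argument much more cheaply, with no CP-chains at all. Since $\dim_*\le\underline{\dim}_e$ (Proposition \ref{prop:relation-dim-measure}), one chooses $N$ with $H_{t_0^N}(\nu_{t_0}^{p_0})/(N\log(1/t_0))>\underline{\dim}_e(\nu_{t_0}^{p_0})-\e\ge\dim_*\nu_{t_0}^{p_0}-\e$; this fixed-scale quantity is continuous in $(p,t)$. For the lower bound at nearby parameters one writes $\nu_t^p=\pi_t\mu^p$ with $\mu^p$ a \emph{product} measure on the tree over $\Lambda=\{-1,1\}^N$ with metric $d_{t^N}$: the product structure forces every local entropy $H_{\rho^{|a|+1}}(\pi_t\mu^p_{[a]})$ to coincide with the single global quantity $H_{t^N}(\nu_t^p)$, so the local entropy averages are constant, and Theorem \ref{thm:dimension-via-local-entropy-for-rho-trees} (via the lifting Theorem \ref{thm:lifting-maps-to-tre-morphisms} and Proposition \ref{prop:dim-tree-to-euclidean}) gives $\dim_*\nu_t^p\ge H_{t^N}(\nu_t^p)/(N\log(1/t))-O(1/N)$ for \emph{all} $(t,p)$, not merely typical ones. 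No ergodic theorem, no weak-$*$ limit of scenery distributions, and no comparison between $P_{p,t}$ and $P_{p_0,t_0}$ is needed; the jump at Pisot parameters is absorbed entirely by the one-sided inequality $\dim_*\le\underline{\dim}_e$ at the base point. If you want to salvage your framework, replace the limit $\int\Phi\,dP'$ by the fixed-$q$ average $\int e_q\,dP_{p,t}$, note that by self-similarity it is $H_{\rho^q}(\nu_t^p)/(q\log(1/\rho))$ up to $O(1/q)$, and you recover the paper's proof.
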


\subsection{\label{sub:Local-dynamics-and-continuity-of-dim}Local dynamics and
continuity of dimension}

The proofs of Theorems \ref{thm:projection-of-product-invariant-measures},
\ref{thm:convolutions-of-Gibbs-measures}, \ref{thm:projections-of-self-similar-measures-with-irrational-rotation} consist of two independent parts. The first is a
semicontinuity result for the map $\pi\mapsto\dim_{*}\pi\mu$
when $\mu$ is a measure displaying a certain {}``local dynamics''.
Coupled with the general result that $\dim_*\pi\mu$ has the expected dimension for  almost-every $\pi$,
this provides an open dense set of projections which project to nearly
the expected dimension. The second part of the proof relies on some
invariance of the measures (and hence of the set of good projections) under a sufficiently large group to show that this open set, being invariant, is in fact all of
$\Pi_{d,k}$  or some large part of it. While all related works utilize largeness of this action in one way or another, the continuity result is new and is perhaps the main technical innovation of this paper. We outline these results next.

The ``local dynamics'' which we require of a measure $\mu$ is, briefly, that as one zooms in to
a typical point $x$, all the while re-scaling the measure, one sees a
sequence of measures which display stationary dynamics. More precisely, given a measure $\mu$
and $x\in\supp\mu$, one can form a sequence of cubes $B_{n}$ descending to $x$. For example, given an integer $b$ one can choose the $b$-adic cells containing $x$ (later it will be necessary to work with more general cells; see Section \ref{sec:CP-chains-and-local-dynamics}).
Form the sequence of measures $\mu^{x,n}$ obtained by restricting
$\mu$ to $B_{n}$, normalizing, and re-scaling it back to the unit
cube. The sequence $\mu^{x,n}$ is sometimes called the \emph{scenery} at $x$. Our
assumption about $\mu$ will be that for $\mu$-typical $x$, the scenery sequence displays statistical
regularity, i.e. is generic for some distribution $P$ on measures,
and $P$ is independent of $x$.

The limiting distribution $P$ above is the distribution of a so-called CP-chain (or a slight generalization of one), which were introduced by Furstenberg in \cite{Furstenberg70,Furstenberg08} for the purpose of studying some related problems. A CP-chain is a measure-valued Markov process $(\mu_{n})_{n=1}^{\infty}$, in which $\mu_{n}$ are
probability measures on the unit cube (or some other fixed compact
set), and,  conditioned on $\mu_1$,  the sequence $\mu_2,\mu_3 \ldots$ is the scenery of $\mu_1$ at a $\mu_1$-typical point. See Section \ref{sec:CP-chains-and-local-dynamics}. Although CP-chains are quite special objects, in fact many measures
that arise in conformal dynamics have CP-chains associated to them
in a natural way, and are often equal to typical measures for CP-chains
after slight distortion.

Similar notions for measures and sets have been studied by many authors, mostly with the
aim of classifying measures and sets by their limiting local behavior
\cite{Graf95,BedfordFisher96,BedfordFisher97,KriegMorters98,MortersPreiss98,Morters98,BedfordFisherUrbanski02}. See \cite{Hochman09} for a systematic discussion
of CP-chains and their relation to other models of {}``fractal''
measures.

Returning to dimension of projections, one might say that the discontinuity of $\dim\pi\mu$ in $\pi$ is a result of the infinitesimal nature of Hausdorff dimension. It is therefore desirable to express, or at least bound, the dimension in terms of a finite-scale quantity. For this purpose a useful quantity to consider is entropy: For a measure $\nu$ on $\mathbb{R}^{k}$ define the $\rho$-scale entropy
of $\nu$ by \[
  H_{\rho}(\nu)=-\int\log\left(\nu(B_{\rho}(t))\right)d\nu(t)\]
This measures how ``spread out'' $\nu$ is, and its behavior as $\rho\to 0$ has been studied as an alternative notion of dimension (this is so-called entropy or information dimension, which again behaves discontinuously under projections). Our key innovation is to observe that, in the presence of local dynamics, the  (mean) behavior of this entropy at a \emph{fixed finite scale} can be used to bound the dimension of projections of the measure.

\begin{thm} \label{thm:into-local-entropy-averages}Let $\mu$ be a measure on $\mathbb{R}^d$, fix  an integer base $b\geq 2$ and let $\pi\in\Pi_{d,k}$. Suppose that for $\mu$-a.e. $x$, \begin{equation}
  \liminf \frac{1}{N}\sum_{n=1}^{N}H_{1/b}(\pi\mu^{x,n}) > \alpha\label{eq:intro-local-entropy-averages}
\end{equation}
where $\mu^{x,n}$ are the scenery of $\mu$ at $x$ along $b$-adic cells. Then \[
  \dim_*\pi\mu > \frac{\alpha}{\log b} - \frac{C_{d,k}}{\log b},
\]
where the constant depends only on $d,k$.
\end{thm}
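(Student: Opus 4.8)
The plan is to connect the finite-scale entropy average in \eqref{eq:intro-local-entropy-averages} to the Hausdorff dimension of $\pi\mu$ by passing through the $b$-adic (or dyadic, in the projected coordinate) entropy of $\pi\mu$ restricted to cells, and then upgrading an entropy lower bound to a dimension lower bound. First I would fix a point $x$ typical for \eqref{eq:intro-local-entropy-averages} and rewrite the quantity $H_{1/b}(\pi\mu^{x,n})$ in terms of the conditional entropy of $\pi\mu$ with respect to a refinement of the partition into $b$-adic cells of scale $b^{-n}$. The key observation is that $\pi\mu^{x,n}$ is, up to an affine rescaling that expands the $n$-th level cell $B_n$ back to the unit cube, the measure $\pi$ applied to the normalized restriction $\mu_{B_n}$; so the $1/b$-scale entropy of $\pi\mu^{x,n}$ is comparable (up to a bounded additive error $C_{d,k}$ coming from the mismatch between balls and cells, and between cubes and their images under $\pi$) to the entropy of the push-forward of $\mu_{B_n}$ under $\pi$ measured at scale $b^{-(n+1)}$ relative to scale $b^{-n}$. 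Summing the telescoping conditional entropies, $\frac{1}{N}\sum_{n=1}^N H_{1/b}(\pi\mu^{x,n})$ is therefore comparable to $\frac{1}{N}$ times the entropy of $\pi\mu$ with respect to the $b^{-N}$-scale partition, localized near $\pi x$ — this is the standard "entropy telescoping" idea, but one has to be careful that the rescaling in the definition of the scenery is exactly what makes the local pieces add up to a global quantity.

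Next I would integrate over $x$ and use the assumed lower bound: since the $\liminf$ exceeds $\alpha$ for $\mu$-a.e. $x$, Fatou's lemma (or a Borel–Cantelli / martingale argument along $b$-adic cells) gives that, for a positive-measure set of $x$ and all large $N$, the $N$-scale entropy of $\pi\mu$ near $\pi x$ is at least $(\alpha - o(1)) N - C_{d,k} N / \log b \cdot \log b$; dividing by $N\log b$ this says the local lower entropy dimension of $\pi\mu$ at $\pi$-typical points is at least $\alpha/\log b - C_{d,k}/\log b$. Then I would invoke the standard comparison between entropy dimension and Hausdorff dimension: if for a set of positive $\nu$-measure the normalized $b^{-N}$-cell entropy of $\nu$ around a point stays above $\beta$, then $\dim_* \nu \ge \beta$ (this is essentially the Billingsley/Frostman lemma — a lower bound on $-\log \nu(Q_N(y))/\log(b^N)$ at $\nu$-a.e. point forces dimension $\ge \beta$). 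Applying this to $\nu = \pi\mu$ yields $\dim_*\pi\mu \ge \alpha/\log b - C_{d,k}/\log b$, and since the hypothesis is a strict inequality, the conclusion is strict as stated.

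The main obstacle I anticipate is the bookkeeping in the first step: the scenery $\mu^{x,n}$ is defined via an affine map sending a $b$-adic cell of $\mathbb{R}^d$ to the unit cube, but after projecting by $\pi$ this becomes an affine map on $\mathbb{R}^k$ that is \emph{not} the cell-expansion map for the $b$-adic partition of $\mathbb{R}^k$ (the image of a $b$-adic cell under $\pi$ is a tilted parallelepiped, not a $b$-adic cell, and $\pi$ need not commute with the rescaling in a way that respects cells). Controlling the resulting discrepancy — between $H_{1/b}$ of the rescaled projected measure and a clean conditional-entropy increment of $\pi\mu$ along the $b$-adic filtration of $\mathbb{R}^k$ — is where the uniform constant $C_{d,k}$ must be extracted, and one needs that this error is \emph{additive and bounded per scale} (so that it contributes only $O(1/\log b)$ after normalization, not a multiplicative loss). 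The standard tools here are: (i) any ball of radius $\rho$ meets a bounded number ($\le c_k$) of $\rho$-cells and vice versa, so entropies at scale $\rho$ with respect to balls versus cells differ by $O_k(1)$; and (ii) $\pi$ is Lipschitz and its image of a cube of side $r$ fits inside $O_{d,k}(1)$ many cubes of side $r$, so conditioning is only boundedly lossy. Assembling these carefully, while keeping the dependence on $b$ only through the single $1/\log b$ factor, is the technical heart of the argument; everything else is a routine application of the entropy–dimension dictionary.
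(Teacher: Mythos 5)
There is a genuine gap, and it sits at the conceptual heart of the theorem rather than in the bookkeeping you flag as the main obstacle. Your claim that $\frac{1}{N}\sum_{n=1}^{N}H_{1/b}(\pi\mu^{x,n})$ is "comparable to $\frac1N$ times the entropy of $\pi\mu$ with respect to the $b^{-N}$-scale partition, localized near $\pi x$" is not a valid telescoping. Each $H_{1/b}(\pi\mu^{x,n})$ is the entropy of the projection of $\mu$ conditioned on a $b$-adic cell of the \emph{domain} $\mathbb{R}^d$; the quantity that actually telescopes to $-\log(\pi\mu)(Q_N(\pi x))$ is a sum of information functions of $\pi\mu$ conditioned on cells of the \emph{range} $\mathbb{R}^k$, whose $\pi$-preimages are unions of many domain cells. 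These two conditionings are genuinely different, and the passage from one to the other is an inequality, not an identity: by concavity of entropy, the range-conditional entropy $H(Y_n\mid y_1^{n-1})$ dominates the fiber-average $\mathbb{E}\bigl(H(Y_n\mid x_1^{n-1})\mid\mathcal{G}_{n-1}\bigr)$, and one then needs a law of large numbers for bounded martingale differences to replace these conditional expectations by the functions themselves in the Ces\`{a}ro average. This is exactly the content of the paper's Theorem \ref{thm:dim-of-projection-via-local-entropies}, and without it the argument does not close.

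The second gap is in your last step. A lower bound on scale-by-scale entropies gives a lower bound on the \emph{entropy} dimension of $\pi\mu$, but entropy is an average of $-\log$ of cell masses and says nothing pointwise; the Billingsley/Frostman lemma requires a lower bound on $-\log\nu(Q_N(y))/(N\log b)$ at $\nu$-a.e.\ $y$, and in general $\dim_*\nu\le\underline{\dim}_e\nu$ can be strict (Proposition \ref{prop:relation-dim-measure}), so one cannot deduce the Hausdorff bound from the entropy bound. What rescues this is again the martingale LLN: the information functions $-\log\nu(y_n\mid y_1^{n-1})$ differ from the conditional entropies $H(Y_n\mid y_1^{n-1})$ by uniformly $L^2$-bounded martingale differences, so their Ces\`{a}ro averages agree a.e.\ (Lemma \ref{lem:local-entropy-lemma}); this converts the averaged entropy bound into genuine pointwise mass decay. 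By contrast, the geometric bookkeeping you identify as the technical heart (tilted parallelepipeds, balls versus cells, the single $C_{d,k}/\log b$ loss) is handled in the paper by factoring $\pi$ through a tree morphism composed with a faithful map (Theorem \ref{thm:lifting-maps-to-tre-morphisms}, Propositions \ref{prop:dim-tree-to-euclidean} and \ref{prop:local-entropy-bound-for-faithful-maps}); your instincts there are correct, but that is the routine part.
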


The use of $b$-adic cells here is somewhat arbitrary, and we can (and will) sometimes use other filtrations.

Observe now that $H_\rho(\pi\nu)$ is (almost) jointly continuous in $\pi\in\Pi_{d,k}$ and $\nu$, where $\nu$ is a probability measures on the unit cube (actually, it is discontinuous at atomic measures, but the error tends to zero as $\rho\to 0$). Hence for $b$ large enough, as $\pi'\to\pi$, if one  replaces $\pi$ with $\pi'$ in \eqref{eq:intro-local-entropy-averages}, then the new averages also exceed $\alpha$ in the limit, and we obtain a lower bound  $\dim\pi'\mu > \alpha$ for $\pi'$ close enough to $\pi$. Also note that for measures displaying good local dynamics, the limit \eqref{eq:intro-local-entropy-averages} converges to the mean value of $H_{1/b}(\pi\nu)$, with $\nu$ distributed according to the limiting CP-chain of $\mu$. Combining Theorem \ref{thm:into-local-entropy-averages} with some additional analysis leads to the following theorem.

\begin{thm} \label{thm:semicontinuity}Let $P$ be the distribution
of an ergodic $d$-dimensional CP-chain. Then for every $k$ there
is a lower semi-continuous function $E:\Pi_{d,k}\rightarrow\mathbb{R}^{+}$
such that:
\begin{enumerate}
\item $E(\pi) = \min(k,\alpha)$ for almost every $\pi\in\Pi_{d,k}$, where $\alpha$ is the $P$-almost sure dimension of measures in the chain (See Lemma \ref{lem:dimension-of-CP-chains}).
\item For a fixed $\pi\in\Pi_{d,k}$,
\[
\dim_*\pi\mu = E(\pi) \quad\textrm{ for } P-\textrm{a.e. } \mu.
\]
\item There is a set $M$ of measures with $P(M)=1$, such that, for $\mu\in M$,
\[
\dim_*\pi\mu \ge E(\pi) \quad \textrm{ for all } \pi\in \Pi_{d,k}.
\]
\end{enumerate}
\end{thm}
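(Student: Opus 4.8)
The plan is to build the function $E$ directly from the local entropy averages of Theorem~\ref{thm:into-local-entropy-averages} and then upgrade the $\mu$-by-$\mu$ statements to a statement that holds simultaneously for all $\pi$ on a single full-measure set of measures. First I would observe that for an ergodic CP-chain, by the ergodic theorem applied to the Markov process $(\mu_n)$, for $P$-a.e.\ $\mu$ the averages
\[
  \frac{1}{N}\sum_{n=1}^{N} H_{1/b}(\pi\mu_n)
\]
converge, for every fixed $\pi$ and every fixed base $b$, to the constant $\int H_{1/b}(\pi\nu)\,dP(\nu)$. (One must be slightly careful that the scenery $\mu^{x,n}$ of a typical measure $\mu$ in the chain agrees with the chain's own dynamics — this is exactly the defining property of a CP-chain recalled in the excerpt, so $H_{1/b}(\pi\mu^{x,n})$ is the same process as $H_{1/b}(\pi\mu_n)$ up to the initial conditioning.) Define
\[
  E(\pi) = \sup_{b\ge 2}\left( \frac{1}{\log b}\int H_{1/b}(\pi\nu)\,dP(\nu) - \frac{C_{d,k}}{\log b} \right),
\]
with $C_{d,k}$ the constant from Theorem~\ref{thm:into-local-entropy-averages}. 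The integrand $\int H_{1/b}(\pi\nu)\,dP(\nu)$ is lower semi-continuous in $\pi$ — $H_{\rho}$ is continuous away from atoms and only jumps downward in the limit, so integrating and taking a supremum over $b$ preserves lower semicontinuity — which gives the lower semicontinuity of $E$ claimed in the statement. Plugging $\alpha = E(\pi) - \delta$ into Theorem~\ref{thm:into-local-entropy-averages} for each $b$ and letting $\delta\downarrow 0$ then $b$ vary yields $\dim_*\pi\mu \ge E(\pi)$ for $P$-a.e.\ $\mu$, for each fixed $\pi$; this is part (2) in one direction, and part (1) will pin down $E(\pi)$ as the expected value for a.e.\ $\pi$.

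For the matching upper bound in (1) and (2) I would invoke the classical projection theorem \eqref{eq:projections-no-not-increase-dimension}: $\dim_*\pi\mu \le \min(k, \dim_*\mu) = \min(k,\alpha)$ always, and for Lebesgue-a.e.\ $\pi$ equality holds (the Marstrand–Mattila theorem for measures). Since for a.e.\ $\pi$ we have $E(\pi) \le \dim_*\pi\mu = \min(k,\alpha)$ and $E(\pi)\ge$ the a.e.-$\mu$ value of $\dim_*\pi\mu$, a Fubini argument over $\Pi_{d,k}\times(\text{measures})$ forces $E(\pi) = \min(k,\alpha)$ for a.e.\ $\pi$, giving (1); and then (2) follows because we have both $\dim_*\pi\mu \ge E(\pi)$ a.e.\ and, by lower semicontinuity of $E$ together with the a.e.\ identity, $E(\pi)$ dominates the generic value of $\dim_*\pi\mu$ in the appropriate sense — the clean way to phrase this is that the set of $\mu$ for which $\dim_*\pi\mu = E(\pi)$ has full $P$-measure for each fixed $\pi$, which is what (2) asserts.

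The real work is part (3): passing from "for each $\pi$, a full-measure set of $\mu$" to "a single full-measure set of $\mu$, good for all $\pi$ simultaneously." Here lower semicontinuity of $E$ is the key leverage. I would fix a countable dense set $D\subseteq\Pi_{d,k}$ and a countable set of bases (all $b\ge 2$); for each $\pi\in D$ and each $b$, the set of $\mu$ on which $\liminf_N \frac{1}{N}\sum_{n\le N} H_{1/b}(\pi\mu^{x,n})$ attains its generic value $\int H_{1/b}(\pi\nu)dP(\nu)$ at $\mu$-a.e.\ $x$ is $P$-full; intersect all of these over the countable index set to get $M$ with $P(M)=1$. Now for $\mu\in M$ and an arbitrary $\pi\in\Pi_{d,k}$, approximate $\pi$ by $\pi_j\in D$ with $\pi_j\to\pi$. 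One would like to say $H_{1/b}(\pi_j\mu^{x,n}) \to H_{1/b}(\pi\mu^{x,n})$ uniformly enough in $n$ to pass to the limit inside the liminf-average; this is where the "almost continuity" of $H_\rho$ matters — there is a genuine discontinuity at atoms, but the defect is bounded by a quantity going to $0$ with $\rho=1/b$, so one gets, for $\mu\in M$,
\[
  \liminf_N \frac{1}{N}\sum_{n=1}^{N} H_{1/b}(\pi\mu^{x,n}) \ \ge\ \int H_{1/b}(\pi'\nu)\,dP(\nu) - (\text{error}(b))
\]
for $\pi'$ ranging over $D$ near $\pi$, and then lower semicontinuity of $\pi'\mapsto\int H_{1/b}(\pi'\nu)dP(\nu)$ lets one take $\pi'\to\pi$ to recover $\int H_{1/b}(\pi\nu)dP(\nu)$ in the limit inferior. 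Feeding this into Theorem~\ref{thm:into-local-entropy-averages} gives $\dim_*\pi\mu \ge \frac{1}{\log b}\int H_{1/b}(\pi\nu)dP(\nu) - \frac{C_{d,k}}{\log b}$ for every $\pi$, and taking the sup over $b$ yields $\dim_*\pi\mu \ge E(\pi)$ for all $\pi$ and all $\mu\in M$.

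The main obstacle I anticipate is precisely the interchange of limits in (3): controlling $H_{1/b}(\pi_j\mu^{x,n})$ versus $H_{1/b}(\pi\mu^{x,n})$ with enough uniformity in $n$ (and in $x$) to push the perturbation through a Cesàro liminf. The atomic discontinuity of $H_\rho$ is what forces the $C_{d,k}/\log b$ loss and the supremum over bases $b$; one cannot avoid it, but one can make it harmless by noting its size is $O(1)$ independent of everything and is divided by $\log b$. A secondary technical point is justifying that the scenery of a $P$-typical measure is governed by the CP-chain dynamics (so that the ergodic theorem applies to $H_{1/b}(\pi\mu^{x,n})$ as a function on the chain) — this should be immediate from the construction of CP-chains recalled in Section~\ref{sec:CP-chains-and-local-dynamics}, but it needs to be stated cleanly, perhaps as a preliminary lemma, before the three-part argument above is carried out.
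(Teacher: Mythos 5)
Your overall architecture matches the paper's: $E$ is built from the $P$-means of finite-scale entropies of projections, the local-entropy-averages theorem supplies the lower bound $\dim_*\pi\mu\ge E(\pi)$, and Hunt--Kaloshin plus Fubini is used to pin down the a.e.\ value. But there is a genuine gap in the upper-bound half of parts (1) and (2). You assert that ``$E(\pi)\ge$ the a.e.-$\mu$ value of $\dim_*\pi\mu$'' and later that (2) ``follows from lower semicontinuity of $E$ together with the a.e.\ identity,'' but neither claim is proved, and the second is circular: the a.e.\ identity in (1) is what you are trying to establish, and lower semicontinuity of $E$ can only bound $E(\pi)$ from above by $\min(k,\alpha)$ --- it says nothing about bounding $\dim_*\pi\mu$ from above by $E(\pi)$. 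The missing step is the comparison with entropy dimension: since $\dim_*\pi\mu\le\underline{\dim}_e(\pi\mu)=\liminf_b\frac{1}{\log b}H_{1/b}(\pi\mu)$ (Proposition \ref{prop:relation-dim-measure}), integrating and applying Fatou gives
\[
\int\dim_*\pi\mu\,dP(\mu)\ \le\ \liminf_{b\to\infty}\frac{1}{\log b}\int H_{1/b}(\pi\nu)\,dP(\nu)\ \le\ E(\pi),
\]
and squeezing this against the a.e.\ lower bound $\dim_*\pi\mu\ge E(\pi)$ forces $\dim_*\pi\mu=E(\pi)$ a.e., which is (2); the same squeeze, together with the entropy form of Hunt--Kaloshin (Corollary \ref{cor:hunt-kaloshin-entropy}), gives the $\ge$ direction of (1) and shows, as a by-product, that your supremum over $b$ actually equals $\lim_b$ of the normalized mean entropies. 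Without this Fatou step, (1) and (2) are only proved as one-sided inequalities.

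On part (3) you are working harder than necessary, and the detour creates a second (fixable) problem. The set $M$ of measures whose sceneries are generic for $P$ already has full measure by the ergodic theorem (Proposition \ref{prop:CP-chain-measures-generate-the-process}) and is defined with no reference to $\pi$; for $\mu\in M$ the hypothesis of Theorem \ref{thm:into-local-entropy-averages} is then verified for \emph{every} $\pi$ simultaneously, so $\dim_*\pi\mu\ge E(\pi)$ for all $\pi$ with no dense-set approximation and no interchange of limits. Your route through a countable dense set $D$ incurs an extra additive $O(1)$ loss at each scale $b$ when replacing $\pi$ by a nearby $\pi_j\in D$ (this is Lemma \ref{lem:perturbed_entropy}), so the bound you obtain for a general $\pi$ has the form $\frac{1}{\log b}\int H_{1/b}(\pi\nu)\,dP(\nu)-C'/\log b$ with $C'>C_{d,k}$; its supremum over $b$ need not dominate the $E(\pi)$ you defined with the constant $C_{d,k}$, unless you first prove that the supremum equals the limit --- which again requires the Fatou argument above.
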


In fact, $E(\pi)$ is the limit, as $\rho\to 0$, of the mean values of the $H_\rho(\pi\nu)$, the mean being over $\nu$ under the distribution $P$.

The following corollary is then immediate:
\begin{cor} \label{cor:open-dense-set}
In the setting of Theorem \ref{thm:semicontinuity}, there exists a set $M$ of measures with $P(M)=1$, such that for every $\varepsilon>0$
there is a dense open set $\mathcal{U}_{\varepsilon}\subseteq\Pi_{d,k}$
satisfying \[
\dim_{*}\pi\mu>\min(k,\alpha)-\varepsilon \quad\mbox{ for all }\pi\in \mathcal{U}_{\e}, \mu\in M.\]
\end{cor}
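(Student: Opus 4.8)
The plan is to extract the corollary directly from the three conclusions of Theorem~\ref{thm:semicontinuity}, taking $M$ to be \emph{precisely} the set of measures produced in part~(3). With this choice $P(M)=1$ and, for every $\mu\in M$, we have $\dim_*\pi\mu\ge E(\pi)$ \emph{simultaneously} for all $\pi\in\Pi_{d,k}$; the point is that this single set $M$ works for every $\varepsilon>0$ at once, which is what the statement demands. (Had we used part~(2) instead we would only obtain, for each fixed $\pi$, a full-measure set of good $\mu$, and these could not be intersected over the uncountably many $\pi$.)

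Given $\varepsilon>0$, I would set
\[
\mathcal{U}_\varepsilon=\bigl\{\pi\in\Pi_{d,k}:E(\pi)>\min(k,\alpha)-\varepsilon\bigr\}.
\]
Openness of $\mathcal{U}_\varepsilon$ is immediate from the lower semi-continuity of $E$. For density I would invoke part~(1): the set $\{\pi:E(\pi)=\min(k,\alpha)\}$ has full measure for the natural measure on $\Pi_{d,k}$, and since that measure has full support this set is dense; being contained in $\mathcal{U}_\varepsilon$, the latter is dense as well. It then remains only to observe that for any $\pi\in\mathcal{U}_\varepsilon$ and any $\mu\in M$, the defining inequality of $\mathcal{U}_\varepsilon$ together with property~(3) of $M$ give $\dim_*\pi\mu\ge E(\pi)>\min(k,\alpha)-\varepsilon$, which is precisely the assertion.

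I do not expect any genuine obstacle here: all of the analytic content resides in Theorem~\ref{thm:semicontinuity}, and the only points deserving a line of care are the choice of part~(3) over part~(2) above and the elementary fact that a full-measure subset of a manifold carrying a smooth measure is dense. As a free bonus, intersecting the $\mathcal{U}_{1/j}$ over $j\in\NN$ produces a dense $G_\delta$ set of projections on which $\dim_*\pi\mu\ge\min(k,\alpha)$ --- hence equals it, by the trivial upper bound for projections of measures --- for every $\mu\in M$.
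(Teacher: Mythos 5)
Your proof is correct and is exactly the argument the paper intends: the corollary is stated there as an immediate consequence of Theorem \ref{thm:semicontinuity}, with $M$ taken from part (3), $\mathcal{U}_\varepsilon=\{\pi:E(\pi)>\min(k,\alpha)-\varepsilon\}$ open by lower semi-continuity and dense by part (1) together with the full support of the measure class on $\Pi_{d,k}$. Your remarks on why part (3) rather than part (2) must be used, and on the resulting dense $G_\delta$, are accurate.
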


A weaker result for non-ergodic processes is available, see Theorem
\ref{thm:open-dense-set-of-good-directions-non-ergodic-case} below.

Since differentiable maps are locally close to linear ones, and limits of averages along sceneries at $x$ depend only on the local behavior of the measure near $x$. From this we obtain results on non-linear images of measures:

\begin{thm} \label{thm:lower-semicontiunity-in-C1}Let $P$ be the distribution of an ergodic
CP-chain. Fix $\pi\in\Pi_{d,k}$. Then  for $P$-almost every $\mu$, the map $g \mapsto\dim_{*}g\mu$ is lower semi-continuous at $\pi$
in the $C^{1}$ topology.

Furthermore, the modulus of continuity is
uniform in $\mu$: for every $\varepsilon>0$ there is a $\delta>0$
so that for a.e. $\mu$ if $g\in C^{1}(\supp\mu,\mathbb{R}^{k})$
and $\left\Vert g-\pi\right\Vert_{C^1} <\delta$ then $\dim_{*}g\mu\geq\dim_{*}\pi\mu-\varepsilon$.
\end{thm}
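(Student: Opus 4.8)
The plan is to leverage Theorem~\ref{thm:semicontinuity} (or rather Theorem~\ref{thm:into-local-entropy-averages}, which underlies it) together with the observation that a $C^1$ map that is $C^1$-close to a linear projection $\pi$ behaves, at small scales and after rescaling, almost exactly like $\pi$. First I would recall the mechanism: for $P$-a.e.\ $\mu$ the scenery sequence $\mu^{x,n}$ of $\mu$ is generic for the CP-chain $P$, so the Birkhoff-type averages $\frac1N\sum_{n\le N}H_{1/b}(\pi\mu^{x,n})$ converge $\mu$-a.e.\ to the mean $E_b(\pi) := \int H_{1/b}(\pi\nu)\,dP(\nu)$; by Theorem~\ref{thm:into-local-entropy-averages} (applied with $\alpha$ slightly below $E_b(\pi)$) this gives $\dim_*\pi\mu \ge E_b(\pi)/\log b - C_{d,k}/\log b$, and by Theorem~\ref{thm:semicontinuity}(2) the left side equals $E(\pi) = \dim_*\pi\mu$ for $P$-a.e.\ $\mu$; in particular $E_b(\pi)/\log b \to E(\pi)$ as $b\to\infty$. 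Fix $\varepsilon>0$ and pick $b$ large enough that both $C_{d,k}/\log b < \varepsilon/3$ and $E_b(\pi)/\log b > E(\pi) - \varepsilon/3$.

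Next I would make precise the comparison between $g\mu^{x,n}$ and $\pi\mu^{x,n}$. If $g\in C^1(\supp\mu,\RR^k)$ with $\|g-\pi\|_{C^1}<\delta$, then on the $b$-adic cell $B_n$ of side $b^{-n}$ containing $x$, the rescaled-to-unit-cube pushforward of $\mu|_{B_n}$ under $g$ differs from that under $\pi$ by a map which is $(C\delta)$-close to the identity in the $C^0$ sense after rescaling (the derivative term $Dg$ contributes a linear map within $C\delta$ of $D\pi$, which at the unit scale is harmless up to composing with a bounded linear distortion, and the fact that $g$ is not exactly affine contributes an error that vanishes with the cell size by the mean value theorem and uniform continuity of $Dg$). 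Consequently $H_{1/b}(g\mu^{x,n})$ and $H_{1/b}(\pi'\mu^{x,n})$ differ by at most $\eta(\delta)$ for a suitable fixed linear $\pi'$ within $O(\delta)$ of $\pi$, plus an error $o_n(1)$ coming from the nonlinearity of $g$ at scale $b^{-n}$; here $\eta(\delta)\to0$ as $\delta\to0$ because $H_{1/b}$ is (almost) jointly continuous in the measure and the projection, as noted right before Theorem~\ref{thm:semicontinuity}. Taking Cesàro averages, the $o_n(1)$ term disappears in the liminf, so $\liminf_N \frac1N\sum_{n\le N} H_{1/b}(g\mu^{x,n}) \ge E_b(\pi') - \eta(\delta) \ge E_b(\pi) - \eta'(\delta)$ for a modulus $\eta'$ independent of $\mu$, using continuity of $\pi\mapsto E_b(\pi)$.

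Then I would apply Theorem~\ref{thm:into-local-entropy-averages} directly to the map $g$ in place of the linear projection (the theorem as stated is for $\pi\in\Pi_{d,k}$, but its proof only uses that at each scale one pushes the rescaled measure forward by a map; alternatively, replace $g$ near $x$ by its linearization and absorb the discrepancy into the $o_n(1)$ term above). This yields $\dim_* g\mu \ge (E_b(\pi) - \eta'(\delta))/\log b - C_{d,k}/\log b \ge E(\pi) - \varepsilon/3 - \eta'(\delta)/\log b - \varepsilon/3$, and choosing $\delta$ small enough that $\eta'(\delta)/\log b < \varepsilon/3$ gives $\dim_* g\mu \ge E(\pi) - \varepsilon = \dim_*\pi\mu - \varepsilon$, for every such $g$ and for $P$-a.e.\ $\mu$ (the exceptional null set depending only on $\pi$, $\varepsilon$, $b$, and hence only on $\pi$). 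Since $\delta$ was chosen independently of $\mu$, the modulus of continuity is uniform, which is the ``furthermore'' clause.

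The main obstacle is the second step: controlling $H_{1/b}(g\mu^{x,n})$ uniformly across \emph{all} scales $n$ by a quantity depending only on $\|g-\pi\|_{C^1}$, since the nonlinearity of $g$ introduces a scale-dependent distortion. The resolution is that this distortion is $O(b^{-n}\|g\|_{C^1})$-small on the cell of size $b^{-n}$ (the image of $\mu|_{B_n}$ rescaled to the unit cube sees $g$ as nearly affine with linear part $Dg$ evaluated near $x$), so it contributes only an $o_n(1)$ error to $H_{1/b}$; this error is killed by the Cesàro average in the liminf, which is exactly the quantity fed into Theorem~\ref{thm:into-local-entropy-averages}. The one subtlety to handle carefully is that $Dg(x)$ need not be an orthogonal projection even if $\|g-\pi\|_{C^1}$ is small—it is merely a rank-$k$ linear map close to $\pi$—but a bounded linear change of coordinates on $\RR^k$ changes $H_{1/b}$ by a bounded amount and $\dim_*$ not at all, so this is immaterial for the lower bound.
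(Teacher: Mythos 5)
Your proposal is correct and follows essentially the same route as the paper: the paper derives this theorem by combining Theorem \ref{thm:semicontinuity-1}\eqref{itm:dim-equals-E-ae} with Proposition \ref{prop:smooth-projections-of-CP-chains}, whose proof is precisely your second and third steps --- linearize $g$ on the cell of scale $b^{-n}$, compare $H_{1/b}((D_xg)\mu^{x,n})$ with $H_{1/b}(\pi\mu^{x,n})$, let the nonlinearity contribute an error vanishing in the Ces\`{a}ro average, and feed the resulting local entropy averages into the tree-lifting machinery. The one imprecision is your claim that the entropy-comparison error $\eta(\delta)\to 0$ as $\delta\to 0$: uniformly over measures this fails near the critical scale (the paper's Lemma \ref{lem:perturbed_entropy} gives only an $O_{d,k}(1)$ bound when $\sup_x\|D_xg-\pi\|<1/b$), but this is harmless because the error is divided by $\log b$ and can be absorbed by taking $b$ large at the outset, exactly as you already do with the $C_{d,k}/\log b$ term.
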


\begin{thm} \label{thm:bound-for-smooth-images}Let $E:\Pi_{d,k}\rightarrow\mathbb{R}$
be the function associated to an ergodic CP-chain as in Theorem \ref{thm:semicontinuity}
and let $\mu$ be typical measure for the chain.\footnote{A priori the function $E$ is defined only on $\Pi_{d,k}$. However it can be extended to all linear functions in a straightforward way so that Theorem \ref{thm:semicontinuity} still holds. Alternatively, one can identify $D_x g$ with the projection onto the $k$-plane orthogonal to the kernel of $D_ x g$.} Then for every $C^1$ map $g:\supp\mu\to\mathbb{R}^k$ without singular points,
\[
\dim_{*}g\mu\geq\essinf_{x\in\supp\mu}E(D_{x}g)\]
\end{thm}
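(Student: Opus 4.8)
The plan is to reduce the non-linear statement to the linear semicontinuity result of Theorem \ref{thm:semicontinuity}(3) via a standard localization argument, using the fact that a $C^1$ map without singular points is, at small scales, arbitrarily close to its differential. First I would fix a typical measure $\mu$ for the chain; by Theorem \ref{thm:semicontinuity}(3) we may assume $\mu$ lies in the full-measure set $M$ on which $\dim_*\pi'\mu\ge E(\pi')$ holds \emph{simultaneously} for all $\pi'\in\Pi_{d,k}$. Let $g:\supp\mu\to\RR^k$ be $C^1$ without singular points, and set $\beta = \essinf_{x\in\supp\mu} E(D_xg)$; we want $\dim_*g\mu\ge\beta$. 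Since $E$ is only defined on $\Pi_{d,k}$, I would use the identification mentioned in the footnote: replace $D_xg$ by the orthogonal projection $\pi_x$ onto the $k$-plane orthogonal to $\ker D_xg$, noting that $D_xg = A_x\pi_x$ for an invertible linear map $A_x$ on $\RR^k$ depending continuously on $x$ (here is where ``no singular points'' is used), and that $\dim_*g\mu$ is unchanged by postcomposing $g$ with a diffeomorphism, so it suffices to handle the case where $D_xg$ is itself a projection up to bounded distortion.

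Next comes the localization. Fix $\e>0$. By Theorem \ref{thm:lower-semicontiunity-in-C1} there is $\delta>0$ (uniform over a.e. $\mu$) so that if $h\in C^1(\supp\mu,\RR^k)$ and $\|h-\pi'\|_{C^1}<\delta$ then $\dim_*h\mu\ge\dim_*\pi'\mu-\e$, for any fixed $\pi'$; by compactness of $\Pi_{d,k}$ one can take a single $\delta$ working for all $\pi'$. Because $g\in C^1$, for every $x_0\in\supp\mu$ there is a small ball $B=B_r(x_0)$ on which $\|g - L_{x_0}\|_{C^1(B\cap\supp\mu)}$ is small, where $L_{x_0}$ is the affine map $y\mapsto g(x_0)+D_{x_0}g(y-x_0)$; after rescaling $B$ back to the unit cube and translating, $g|_B$ becomes a $C^1$ map that is $\delta$-close to the \emph{linear} map $D_{x_0}g$ (equivalently, to $\pi_{x_0}$ up to the bounded distortion $A_{x_0}$). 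The key structural point — which is exactly the localization property of CP-chains emphasized in Section \ref{sec:CP-chains-and-local-dynamics} — is that $\dim_*(g\mu|_B)$ equals the lower dimension of $g$ applied to the rescaled restriction $\mu^{x_0}$, and this rescaled restriction is again a typical measure for the chain (possibly after passing to a subsequence of scales / to the ergodic decomposition of the induced chain, which is harmless since $P$ is ergodic). Applying the uniform semicontinuity to this rescaled picture gives $\dim_*(g\mu|_B) \ge E(\pi_{x_0}) - \e' = E(D_{x_0}g)-\e'$ for a small error $\e'$ absorbing the bounded distortion $A_{x_0}$ and the $C^1$-closeness. Since $E(D_{x_0}g)\ge\beta$, we get $\dim_*(g\mu|_B)\ge\beta-\e'$.

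Finally I would globalize: cover $\supp\mu$ by countably many such balls $B_i$ with $\mu(\bigcup B_i)=1$, each satisfying $\dim_*(g\mu|_{B_i})\ge\beta-\e'$; since lower Hausdorff dimension of a measure is determined by its restrictions to a countable cover of full measure ($\dim_*\nu = \essinf$ of the pointwise lower local dimension, which only sees local behavior), conclude $\dim_*g\mu\ge\beta-\e'$, and let $\e'\to0$. The main obstacle I anticipate is making the ``rescaled restriction of $\mu$ is again typical for the chain'' step rigorous while simultaneously controlling the $C^1$-error: one must verify that after zooming in, the averaged local entropies $\frac1N\sum H_{1/b}(h\mu^{x,n})$ for the nonlinear $h=g|_B$ still dominate those for the linear comparison map $\pi_{x_0}$ up to a vanishing error — i.e. that the $C^1$-perturbation estimate behind Theorem \ref{thm:lower-semicontiunity-in-C1} survives the passage to a rescaled sub-measure uniformly in the scale $n$. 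This is where the uniform modulus of continuity in Theorem \ref{thm:lower-semicontiunity-in-C1} (rather than mere semicontinuity) is essential, and the bounded-distortion factor $A_x$ must be shown to contribute only an $O(1/\log b)$-type error that is killed by first taking $b$ large, as in Theorem \ref{thm:into-local-entropy-averages}.
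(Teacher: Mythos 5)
Your proposal follows essentially the same route as the paper: localize $g$ near each point to its differential, invoke the $C^{1}$-perturbation machinery behind Theorem \ref{thm:lower-semicontiunity-in-C1} (namely Proposition \ref{prop:smooth-projections-of-CP-chains}) to bound $\dim_*(g\mu|_{B})$ below by $E$ at the differential, and then globalize over a countable cover using Lemma \ref{lem:properties-of-lower-dim}. The one obstacle you flag --- whether the ``rescaled restriction of $\mu$ is again typical for the chain'' --- is a red herring in the form you state it: the paper never needs the rescaled restriction to be $P$-typical. Instead, the (unrescaled) conditional measure $\mu_{B_r(x)}$ still \emph{generates} $P$ along the same filtration by Lemma \ref{lem:restricting-preserves-generation}, and Proposition \ref{prop:smooth-projections-of-CP-chains} is stated for \emph{all} generating measures and all $\pi$ simultaneously, giving $\dim_*(g\mu_{B_r(x)})\ge E_q(D_xg)-O(1/q)$ directly; one then lets $q\to\infty$. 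This also sidesteps your ``fixed $\pi$, a.e.\ $\mu$'' versus ``a.e.\ $\mu$, all $\pi$'' compactness worry, and the invertible factor $A_x$ costs nothing since postcomposition by a bi-Lipschitz map leaves $\dim_*$ unchanged rather than contributing an error to be absorbed.
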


In fact, the above theorem as well as other results involving smooth functions require only differentiability, but for simplicity we present the proofs in the $C^1$ case.

A number of natural questions arise in connection with Theorem \ref{thm:semicontinuity} and the almost-everywhere nature of results for CP-chains. In particular, results which, for each $\pi$, hold almost surely do not a priori hold almost surely for all $\pi$. Another issue we do not resolve here is the behavior of the upper Hausdorff dimension. Some of these issues are addressed in \cite{Hochman09}.

\subsection{\label{sub:outline}Outline of the paper}

Section \ref{sec:General-notation-and-conventions} introduces some
general notation.

Section \ref{sec:Dimension-and-entropy} recalls the notions of entropy
and dimension and some of their properties.

In Section \ref{sec:local-entropy-and-dimension} we study measures
on trees and obtain bounds on the image of such a measure under a
tree morphism.

In Section \ref{sec:Lifting-maps-to-tree-morphisms} we develop machinery
for lifting geometric maps between Euclidean spaces to morphisms between
trees.

Section \ref{sec:Bernoulli-convolutions} discusses Bernoulli convolutions
and Theorem \ref{thm:BC}.

In Section \ref{sec:CP-chains-and-local-dynamics} we define (generalized)
CP-chains.

Section \ref{sec:Dimension-of-projections-and-CP-chains} contains
semicontinuity results for images of typical measures for CP-chains.

In Sections \ref{sec:self-similar-measures}, \ref{sec:Measures-invariant-under-xm} and \ref{sec:convolutions-Gibbs-measures} we prove Theorems \ref{thm:projections-of-self-similar-measures-with-irrational-rotation},
\ref{thm:projection-of-product-invariant-measures} and \ref{thm:convolutions-of-Gibbs-measures}, respectively.

\section{\label{sec:General-notation-and-conventions}General notation and
conventions}

$\mathbb{N}=\{1,2,3\ldots\}$. In a metric space, $B_{r}(x)$ denotes
the closed ball of radius $r$ around $x$. We endow $\mathbb{R}^{d}$
with the sup norm $\left\Vert x\right\Vert =\max\{|x_{1}|,\ldots|x_{d}|\}$
and the induced metric.

All measures are Borel probability measures and all sets and functions
that we work with are Borel unless otherwise noted. The family of Borel probability measures on a metric space $X$ (with the Borel $\sigma$-algebra) will be denoted $\mathcal{P}(X)$.

Given a finite measure $\mu$ on some space and a measurable set $A$,
we write\[
\mu_{A}=\frac{1}{\mu(A)}\mu|_{A}.\]
This is the conditional probability of $\mu$ on $A$.

We use the standard ``big O'' notation for asymptotics: $x=O_p (y) $ means that
$x\leq Cy$, where $C$ depends on the parameter $p$. Similarly $x=\Omega_p(y)$ means $y=O_p(x)$, and $x=\Theta_p(y)$ means $x=O_p(y)$ and $y=O_p(x)$.

For the reader's convenience, we summarize our main notation and typographical conventions in the following table.

\begin{tabular}{lll}
\hline
$d$ &  & Dimension of the ambient Euclidean space.\tabularnewline
$k$ &  & Dimension of the range of a projection.\tabularnewline
$\Pi_{d,k}$ &  & Space of projections from $\mathbb{R}^{d}$ to $k$-dimensional subspaces.\tabularnewline
$\pi$ & & Orthogonal projections. \tabularnewline
$f,g,h$ & & Morphisms between trees (Section \ref{sub:Trees-and-tree-morphisms}), differentiable maps.\tabularnewline
$\alpha,\beta,\gamma$ &  & Dimension of fractal sets and measures.\tabularnewline
$\mu,\nu,\eta,\tau$ &  & Probability measures.\tabularnewline
$P,Q$ &  & Probability distributions on large spaces (e.g. spaces of measures).\tabularnewline
$\dim$ &  & Hausdorff dimension of a set, exact dimension of a measure.\tabularnewline
$\dim_{*},\dim^{*}$ &  & Upper/lower Hausdorff dimension of measures (Section \ref{sec:dimensions-of-measures}).\tabularnewline
$\underline{\dim},\overline{\dim}$ &  & Upper/lower pointwise dimension of a measure (Section \ref{sec:dimensions-of-measures} ).\tabularnewline
$\underline{\dim}_{e}$, $\overline{\dim}_{e}$ &  & Upper/lower Entropy dimension of a measure (Section \ref{sub:Entropy-and-entropy-dim}).\tabularnewline
$H_{\rho}(\cdot)$ &  & $\rho$-scale entropy of a measure (Section \ref{sub:Entropy-and-entropy-dim}).\tabularnewline
$H(\mu,\mathcal{P}),H(X)$ &  & Entropy of $\mu$ w.r.t. partition $\mathcal{P}$ {[}of random variable
$X${]} (Section \ref{sub:Entropy-and-entropy-dim}).\tabularnewline
$\Lambda,\Lambda_{X}$ &  & Symbol set of a tree {[}or of the tree $X${]}; index set of an IFS.\tabularnewline
$\mathcal{D}_b,\mathcal{D}_b (x)$ &  & $b$-adic cell [containing $x$] (Section \ref{sub:-padic-cells-and-filtrations}).\tabularnewline
$X,Y$ &  & Tree (Section \ref{sub:Trees-and-tree-morphisms}) or attractors of an IFS (Section \ref{sub:self-similar-sets-and-measures}, \ref{sub:Gibbs-preliminaries}).\tabularnewline
$x,y$ &  & Points in tree or attractors of an IFS.\tabularnewline
$a,b$ &  & Finite words.\tabularnewline
$[a],[b]$ &  & Cylinder sets in a tree.\tabularnewline
$A^{*}$ &  & Re-scaled version of $A$ (Section \ref{sub:Boxes-and-scaled-measures}).\tabularnewline
$T_{A}$  &  & Re-scaling homothety, mapping $A$ to $A^{*}$ (Section \ref{sub:Boxes-and-scaled-measures}).\tabularnewline
$\mu_{A}$ &  & Conditional measure on $A$ (Section \ref{sub:Dynamics-along-filtrations}).\tabularnewline
$\mu^{A}$ &  & Conditional measure on $A$, re-scaled to $A^{*}$ (Section \ref{sub:Dynamics-along-filtrations}).\tabularnewline
$\Delta,\mathcal{E}$ &  & Partition operator and family of boxes (Section \ref{sub:CP-chains}).\tabularnewline
$\mathcal{U}, \mathcal{U}_\e$ & & Subsets (often open) of $\Pi_{d,k}$ or $C^1(\RR^d,\RR^k)$.\tabularnewline
 &  & \tabularnewline
\hline
\end{tabular}

\section{\label{sec:Dimension-and-entropy}Dimension and entropy}

We denote the Hausdorff dimension of a set $A$ by $\dim A$. Falconer's
books \cite{Falconer90}, \cite{Falconer97} are good introductions
to Hausdorff measure and dimension. In this section we collect some
basic facts about dimension and entropy of measures.

\subsection{\label{sec:dimensions-of-measures}Hausdorff and local dimension
of measures}

Let $\mu$ be a Borel measure on a metric space $X$. The \emph{upper and
lower Hausdorff dimensions} of $\mu$ are given by \begin{align*}
\dim^{*}(\mu) & =\inf\{\dim(A)\;:\;\mu(X\backslash A)=0\},\\
\dim_{*}(\mu) & =\inf\{\dim(A)\;:\;\mu(A)>0\}.\end{align*}
 The \emph{upper and lower local dimensions} of $\mu$ at a point
$x$ are given by \begin{align*}
\overline{\dim}(\mu,x) & =\limsup_{r\rightarrow0}\frac{\log\mu(B_{r}(x))}{\log r},\\
\underline{\dim}(\mu,x) & =\liminf_{r\rightarrow0}\frac{\log\mu(B_{r}(x))}{\log r}.\end{align*}
Clearly $\underline{\dim}(\mu,x)\leq\overline{\dim}(\mu,x)$. The local
dimension of $\mu$ at $x$ exists if $\overline{\dim}(\mu,x)=\underline{\dim}(\mu,x)$,
and is equal to their common value. If the local dimension of $\mu$
exists and is constant $\mu$-almost everywhere, $\mu$ is \emph{exact
dimensional}, and the almost sure local dimension is denoted $\dim\mu$.
Whenever we write $\dim\mu$, we are implicitly assuming that
$\mu$ is exact dimensional. Note that $\dim_{*}\mu=\dim^{*}\mu$ does not imply that $\mu$ is exact dimensional.

We record a few basic facts about lower dimension.

\begin{lem} \label{lem:properties-of-lower-dim}Let $\mu$ be a Borel measure
on a metric space.
\begin{enumerate}
\item \label{eq:Hausdorff-dim-in-terms-of-local-dim}
\begin{equation*}
\dim_{*}(\mu)=\sup\{\alpha\;:\;\underline{\dim}(\mu,x)\ge\alpha\text{ for }\mu\text{-a.e.}x\}.\end{equation*}
\item \label{itm:dim-at-least-inf-over-components} If $P$ is a distribution on measures and $\mu=\int\nu dP(\nu)$,
then \[
\dim_{*}\mu\geq\essinf_{\nu\sim P}\dim_{*}\nu\]
\item If $\mu,\nu$ are equivalent measures (i.e. mutually absolutely continuous), then $\dim_{*}\mu=\dim_{*}\nu$.
\end{enumerate}
\end{lem}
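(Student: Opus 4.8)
\textbf{Proof plan for Lemma \ref{lem:properties-of-lower-dim}.}

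For part (1), the plan is a standard covering/mass-distribution argument. For the ``$\geq$'' direction, suppose $\underline{\dim}(\mu,x)\geq\alpha$ for $\mu$-a.e.\ $x$; I must show $\dim_*\mu\geq\alpha$, i.e.\ every set $A$ with $\mu(A)>0$ has $\dim A\geq\alpha$. By discarding a null set I may assume $\underline{\dim}(\mu,x)\geq\alpha$ for every $x\in A$. Fix $\alpha'<\alpha$ and $\delta>0$, and let $A_\delta=\{x\in A:\mu(B_r(x))\leq r^{\alpha'}\text{ for all }r<\delta\}$; then $A_\delta\uparrow A$ (up to measure), so $\mu(A_\delta)>0$ for $\delta$ small. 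Now the mass distribution principle applied to $\mu|_{A_\delta}$ (any cover of $A_\delta$ by sets of diameter $<\delta$ can be inflated to balls centered in $A_\delta$, whose $\mu$-mass is controlled by the radius to the power $\alpha'$) gives $\mathcal{H}^{\alpha'}(A_\delta)\geq\mu(A_\delta)>0$, hence $\dim A\geq\dim A_\delta\geq\alpha'$; let $\alpha'\uparrow\alpha$. For the ``$\leq$'' direction, if $\alpha>\sup\{\ldots\}$ then the set $B=\{x:\underline{\dim}(\mu,x)<\alpha\}$ has $\mu(B)>0$; one covers $B$ efficiently using Vitali-type selection of balls on which $\mu$-mass exceeds (radius)$^\alpha$, obtaining $\mathcal{H}^\alpha(B)<\infty$ so $\dim_*\mu\leq\dim B\leq\alpha$. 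Both halves are classical (see Falconer \cite{Falconer97}), so I would state them briefly and cite.

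For part (2), use the characterization from part (1). Let $\beta=\essinf_{\nu\sim P}\dim_*\nu$; fix $\alpha<\beta$. For $P$-a.e.\ $\nu$ we have $\dim_*\nu>\alpha$, hence by part (1) $\underline{\dim}(\nu,x)\geq\alpha$ for $\nu$-a.e.\ $x$. Integrating, the set $E=\{x:\underline{\dim}(\mu,x)<\alpha\}$ satisfies $\mu(E)=\int\nu(E)\,dP(\nu)=0$, since $\nu(E)=0$ for $P$-a.e.\ $\nu$ --- here one must note that $\underline{\dim}(\nu,x)\geq\alpha$ whenever $\nu(B_r(x))\leq r^{\alpha''}$ eventually for some $\alpha''>\alpha$, and more importantly that being in $E$ is a property of $x$ only (the balls $B_r(x)$ are the same), so $\nu(E)\leq\nu\{x:\underline{\dim}(\nu,x)<\alpha\}=0$ whenever $\dim_*\nu\geq\alpha$. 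Thus $\underline{\dim}(\mu,x)\geq\alpha$ for $\mu$-a.e.\ $x$, and part (1) gives $\dim_*\mu\geq\alpha$; let $\alpha\uparrow\beta$. One small point to address is measurability of $\nu\mapsto\dim_*\nu$ so that ``$\essinf$'' makes sense, but this follows from part (1) expressing it as a countable combination of the measurable quantities $\nu\mapsto\nu\{x:\underline{\dim}(\nu,x)\geq\alpha\}$.

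For part (3), suppose $\mu,\nu$ are mutually absolutely continuous. If $\mu(A)>0$ then $\nu(A)>0$ and vice versa, so $\{A:\mu(A)>0\}=\{A:\nu(A)>0\}$ as families of Borel sets, and taking the infimum of $\dim A$ over this common family gives $\dim_*\mu=\dim_*\nu$ immediately. The main obstacle in the whole lemma is really just part (1): one must be a little careful with the Vitali covering step and with the direction of the mass distribution principle, but none of this is deep, and for the paper I would compress part (1) to a reference and a one-line indication, spending essentially no effort on parts (2) and (3).
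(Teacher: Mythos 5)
Parts (1) and (3) of your proposal are fine and match the paper, which simply cites \cite{Falconer97} for (1) and treats (2)--(3) as immediate consequences of the definitions; your sketch of the mass-distribution and Vitali arguments for (1) is the standard one.

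Part (2), however, contains a genuine error. Your key step is the inclusion $E=\{x:\underline{\dim}(\mu,x)<\alpha\}\subseteq\{x:\underline{\dim}(\nu,x)<\alpha\}$, i.e.\ the implication $\underline{\dim}(\nu,x)\ge\alpha\Rightarrow\underline{\dim}(\mu,x)\ge\alpha$ for an individual component $\nu$. This is false: the balls $B_r(x)$ are indeed the same, but $\mu(B_r(x))=\int\nu'(B_r(x))\,dP(\nu')$ averages over \emph{all} components, so it can be vastly larger than $\nu(B_r(x))$ for the particular $\nu$ you have fixed --- mixing in other components can only increase the mass of balls and hence \emph{decrease} the local dimension of $\mu$ at $x$. (Concretely, take $P=\tfrac12\delta_{\delta_0}+\tfrac12\delta_{\lambda}$ with $\lambda$ Lebesgue on $[0,1]$: then $\underline{\dim}(\lambda,0)=1$ while $\underline{\dim}(\mu,0)=0$, so $0\in E$ for $\alpha=\tfrac12$ even though $\underline{\dim}(\lambda,0)\ge\tfrac12$.) Since, by part (1), the assertion $\mu(E)=0$ is essentially equivalent to the conclusion you are proving, the argument becomes circular once this false inclusion is removed. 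The repair is to bypass local dimensions entirely and argue directly from the definition of $\dim_*$ as an infimum over sets of positive measure: if $\mu(A)>0$ then $\int\nu(A)\,dP(\nu)>0$, so there is some $\nu$ with $\nu(A)>0$ and $\dim_*\nu\ge\essinf_{\nu'\sim P}\dim_*\nu'$, whence $\dim A\ge\dim_*\nu\ge\essinf_{\nu'\sim P}\dim_*\nu'$; taking the infimum over such $A$ gives the claim. This one-line argument is what the authors mean by ``easy consequence of the definition,'' and it requires no appeal to part (1).
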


\begin{proof} See e.g. \cite[Proposition 10.2]{Falconer97} for the first assertion; the last two are easy consequences of the definition. \end{proof}

\subsection{\label{sub:-padic-cells-and-filtrations}$p$-adic cells and regular
filtrations}

Given an integer base $p\geq2$, we denote by $\mathcal{D}_{p}$ the
partition of $\mathbb{R}^{d}$ into cubes of the form $I_{1}\times\ldots\times I_{d}$
with $I_{i}=[\frac{k}{p},\frac{k+1}{p})$. Note that $\mathcal{D}_{p^{k}}$,
$k=1,2,\ldots$ form a refining sequence of partitions that separates
points. A cube in $\bigcup_{k=1}^{\infty}\mathcal{D}_{p^{k}}$ is
called a $p$-adic cube.

A sequence $\mathcal{F}=(\mathcal{F}_{n})_{n=1}^{\infty}$ of partitions
of a region in $\mathbb{R}^{d}$ is $\rho$-\emph{regular} if, for
some constant $C>1$, every $B\in\mathcal{F}_{n}$ contains a ball
of radius $\rho^{n}/C$ and is contained in a ball of radius $C\cdot\rho^{n}$.
For example, this assumption is satisfied by $\mathcal{F}_{n}=\mathcal{D}_{p^{n}}$
with $\rho=1/p$.

For a partition $\mathcal{D}$ of $E\subseteq\mathbb{R}^{d}$ and
$x\in E$, we write $\mathcal{D}(x)$ for the unique partition element
containing $x$.

The proof of the following can be found in \cite[Theorem B.1]{KaenmakiRajalaSuomala2010}: \begin{lem}
\label{lem:entropy-along-filtration}Let $\mu$ be a measure on $\mathbb{R}^{d}$
and $\mathcal{F}=(\mathcal{F}_{n})_{n=1}^{\infty}$ a $\rho$-regular
filtration. Then for $\mu$-a.e. $x$,\begin{eqnarray*}
\overline{\dim}(\mu,x) & = & \limsup_{n\rightarrow\infty}\frac{\log\mu(\mathcal{F}_{n}(x))}{n\log\rho},\\
\underline{\dim}(\mu,x) & = & \liminf_{n\rightarrow\infty}\frac{\log\mu(\mathcal{F}_{n}(x))}{n\log\rho}.\end{eqnarray*}
 \end{lem}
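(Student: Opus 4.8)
The plan is to control the cell masses $\mu(\mathcal{F}_n(x))$ by comparison with the balls $B_{2C\rho^n}(x)$, where $C$ is the regularity constant, and then to pass from the definition of local dimension (via $r\to 0$) to the geometric subsequence $r=\rho^n$. One inclusion is immediate: since $x\in\mathcal{F}_n(x)$ and $\mathcal{F}_n(x)$ is contained in a ball of radius $C\rho^n$, we have $\mathcal{F}_n(x)\subseteq B_{2C\rho^n}(x)$, hence $\mu(\mathcal{F}_n(x))\le\mu(B_{2C\rho^n}(x))$ for every $x$ and $n$. Together with the elementary facts that $\log(2C\rho^n)/(n\log\rho)\to 1$ and that, for every $x$, $\liminf_n \log\mu(B_{2C\rho^n}(x))/\log(2C\rho^n)=\underline{\dim}(\mu,x)$ and likewise with $\limsup$ and $\overline{\dim}$ — the latter because $r\mapsto\mu(B_r(x))$ is monotone, so restricting to the geometric sequence $r=\rho^n$ does not change the $\liminf$ or $\limsup$ of $\log\mu(B_r(x))/\log r$ — this already yields $\liminf_n \tfrac{\log\mu(\mathcal{F}_n(x))}{n\log\rho}\ge\underline{\dim}(\mu,x)$ and $\limsup_n \tfrac{\log\mu(\mathcal{F}_n(x))}{n\log\rho}\ge\overline{\dim}(\mu,x)$, valid at every $x$.

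The real content is the reverse estimate: that $\mathcal{F}_n(x)$ does not carry substantially less mass than the ball $B_{2C\rho^n}(x)$ that contains it. This fails pointwise — a point may lie arbitrarily close to $\partial\mathcal{F}_n(x)$ for infinitely many $n$ — which is precisely why the statement holds only almost everywhere, and I would prove it by Borel--Cantelli. Fix $\delta>0$ and set $\hat B_n=\{x:\mu(\mathcal{F}_n(x))<\rho^{\delta n}\mu(B_{2C\rho^n}(x))\}$. The geometric input is bounded overlap: since every cell of $\mathcal{F}_n$ contains a ball of radius $\rho^n/C$, a ball of radius $O_C(\rho^n)$ meets at most $N=N(C,d)$ cells of $\mathcal{F}_n$ (compare the disjoint inscribed balls by volume). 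Hence if $x\in Q\cap\hat B_n$ with $Q=\mathcal{F}_n(x)$, and $Q_1',\dots,Q_m'$ ($m<N$) are the other level-$n$ cells meeting $B_{2C\rho^n}(x)$, then $\mu(B_{2C\rho^n}(x))\le\mu(Q)+\sum_i\mu(Q_i')$, whence $\mu(Q)<2\rho^{\delta n}\sum_i\mu(Q_i')$ once $\rho^{\delta n}<1/2$. Summing over the cells $Q$ meeting $\hat B_n$, and noting each cell is counted $O_C(1)$ times on the right, gives $\mu(\hat B_n)=O_{C,d}(\rho^{\delta n})$, so $\sum_n\mu(\hat B_n)<\infty$; thus for $\mu$-a.e. $x$ one has $\mu(\mathcal{F}_n(x))\ge\rho^{\delta n}\mu(B_{2C\rho^n}(x))$ for all sufficiently large $n$.

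To conclude, I would intersect the full-measure sets obtained for $\delta=1/k$, $k\in\NN$, getting a full-measure set on which, for each $k$ and all large $n$,
\[
\rho^{n/k}\,\mu(B_{2C\rho^n}(x)) \;\le\; \mu(\mathcal{F}_n(x)) \;\le\; \mu(B_{2C\rho^n}(x)).
\]
Applying $\log(\cdot)/(n\log\rho)$ (which reverses inequalities, since $\log\rho<0$) and using the limit identities from the first paragraph, both the $\liminf$ and the $\limsup$ of $\tfrac{\log\mu(\mathcal{F}_n(x))}{n\log\rho}$ are caught within $1/k$ of $\underline{\dim}(\mu,x)$ and $\overline{\dim}(\mu,x)$ respectively; letting $k\to\infty$ gives the two equalities. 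The step I expect to be the main obstacle is the covering/double-counting bound on $\mu(\hat B_n)$: this is where the lower bound on cell size in the definition of $\rho$-regularity is genuinely used, and where one must keep track of the geometric constants carefully; the remaining manipulations with $\liminf$ and $\limsup$ are routine.
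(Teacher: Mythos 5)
Your proof is correct. The paper itself gives no argument for this lemma --- it simply cites \cite[Theorem 15.3]{Pesin97} --- and your Borel--Cantelli argument (bounding the mass of the ``bad'' set $\hat B_n$ where the cell carries much less mass than the comparable ball, via bounded overlap of cells containing disjoint inscribed balls of radius $\rho^n/C$) is essentially the standard proof of that cited result. The one point you leave slightly implicit, that the family of neighboring cells depends on the witness $x$ rather than only on $Q=\mathcal{F}_n(x)$, is harmless: those cells all lie within distance $2C\rho^n$ of $Q$, so each cell serves as a neighbor for at most $O_{C,d}(1)$ cells $Q$, and your double-counting bound $\mu(\hat B_n)=O_{C,d}(\rho^{\delta n})$ goes through.
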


\subsection{\label{sub:Entropy-and-entropy-dim}Entropy and entropy dimension}

The various notions of dimension aim to quantify the degree to which
a measure is {}``spread out''. One can also quantify this using
entropy. Given a probability measure $\mu$ on a metric space $X$,
the $r$-scale entropy of $\mu$ is \[
H_{r}(\mu)=-\int\log(\mu(B_{r}(x))\, d\mu(x).\]
 The \emph{upper and lower entropy dimensions} of $\mu$ are defined
as \begin{align*}
\overline{\dim}_{e}(\mu) & =\limsup_{r\rightarrow0}\frac{H_{r}(\mu)}{-\log r},\\
\underline{\dim}_{e}(\mu) & =\liminf_{r\rightarrow0}\frac{H_{r}(\mu)}{-\log r}.\end{align*}
 Clearly $\underline{\dim}_{e}\mu\le\overline{\dim}_{e}\mu$.

Entropy dimensions can also be defined in terms of entropies of partitions.
Recall that if $\mu$ is a probability measure and $\mathcal{Q}$
is a finite or countable partition, then \[
H(\mu,\mathcal{Q})=- \sum_{Q\in\mathcal{Q}}\mu(Q)\log\mu(Q),\]
 with the convention that $0\log0=0$. This quantity is called the \emph{Shannon entropy} of the partition $\mathcal{Q}$.  When $X$ is a random variable taking finitely many values, it induces a finite partition of the underlying probability space. We then write $H(X)$ for the Shannon entropy of this partition, with respect to the associated probability measure. For the basic properties of Shannon entropy, and in particular the definition and properties of conditional entropy, see \cite{CoverThomas06}.

Next, we specialize to $\RR^{d}$.
\begin{lem} \label{lem:entropy-dimension} Let $\mu$ be a probability
measure on $\RR^{d}$ and $n\in\mathbb{N}$. Then there exists a constant
$C=C(d)$ such that \[
|H_{n}(\mu)-H(\mu,\mathcal{D}_{n})|\leq C.
\]
 In particular, for any integer $p\geq 2$, \begin{align*}
\overline{\dim}_{e}(\mu) & =\limsup_{k\rightarrow\infty}\frac{H(\mu,\mathcal{D}_{p^{k}})}{k\log p},\\
\underline{\dim}_{e}(\mu) & =\liminf_{k\rightarrow\infty}\frac{H(\mu,\mathcal{D}_{p^{k}})}{k\log p}.\end{align*}
 \end{lem}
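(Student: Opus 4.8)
The plan is to establish the inequality $|H_{p^{-k}}(\mu)-H(\mu,\mathcal{D}_{p^{k}})|\le C$ directly, using only the elementary fact that, in the sup metric we have fixed on $\RR^{d}$, a $p$-adic cell of level $k$ and a closed ball of radius $p^{-k}$ are comparable objects; the two identities for $\overline{\dim}_{e}$ and $\underline{\dim}_{e}$ will then follow from this by dividing through by $k\log p$ and a routine interpolation between the scales $p^{-k}$ and a general $r\to0$. Throughout I write $r=p^{-k}$ and recall that $H(\mu,\mathcal{D}_{p^{k}})=-\int\log\mu(\mathcal{D}_{p^{k}}(x))\,d\mu(x)$.

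First I would dispose of the easy inequality $H_{r}(\mu)\le H(\mu,\mathcal{D}_{p^{k}})$: the cell $\mathcal{D}_{p^{k}}(x)$ is a cube of side $r$ containing $x$, hence every point of it lies within sup-distance $r$ of $x$, so $\mathcal{D}_{p^{k}}(x)\subseteq B_{r}(x)$; therefore $\mu(\mathcal{D}_{p^{k}}(x))\le\mu(B_{r}(x))$ and the inequality follows by taking $-\log$ and integrating against $\mu$.

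For the reverse inequality, given a cell $Q\in\mathcal{D}_{p^{k}}$ let $\mathcal{N}(Q)$ be the block of at most $3^{d}$ cells of $\mathcal{D}_{p^{k}}$ whose index vector differs from that of $Q$ by at most one in each coordinate; this relation is symmetric and of degree at most $3^{d}$. For $x\in Q$ the ball $B_{r}(x)$ is contained in the union of the cells in $\mathcal{N}(Q)$, so $\mu(B_{r}(x))\le\sum_{Q'\in\mathcal{N}(Q)}\mu(Q')$. Taking $-\log$, integrating over $Q$, and summing over cells (discarding those of zero mass) gives
\[
H(\mu,\mathcal{D}_{p^{k}})-H_{r}(\mu)\le\sum_{Q:\,\mu(Q)>0}\mu(Q)\log\Bigl(1+\sum_{Q'\in\mathcal{N}(Q)\setminus\{Q\}}\frac{\mu(Q')}{\mu(Q)}\Bigr).
\]
I would then bound the right-hand side using the elementary inequalities $\log\bigl(1+\sum_{i=1}^{m}a_{i}\bigr)\le\log(m+1)+\sum_{i}\log^{+}a_{i}$ for $a_{i}\ge0$, and $t\log^{+}(s/t)\le s/e$ for $0<t\le s$ (maximize $t\mapsto t\log(s/t)$). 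Since $|\mathcal{N}(Q)\setminus\{Q\}|\le 3^{d}-1$, the first inequality contributes $d\log 3$; the second, applied with $t=\mu(Q)$, $s=\mu(Q')$, converts $\sum_{Q}\sum_{Q'\in\mathcal{N}(Q)\setminus\{Q\}}\mu(Q)\log^{+}(\mu(Q')/\mu(Q))$ into at most $\frac1e\sum_{Q'}\mu(Q')\,|\{Q:Q'\in\mathcal{N}(Q)\}|\le 3^{d}/e$, using symmetry and the degree bound. This yields $|H_{p^{-k}}(\mu)-H(\mu,\mathcal{D}_{p^{k}})|\le C$ with $C=d\log 3+3^{d}/e$; in fact $C$ may be taken to depend only on $d$.

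For the ``in particular'': dividing the displayed estimate by $k\log p$ makes the discrepancy $O(1/k)$, so $\limsup_{k}\frac{1}{k\log p}H(\mu,\mathcal{D}_{p^{k}})=\limsup_{k}\frac{1}{k\log p}H_{p^{-k}}(\mu)$ and likewise for $\liminf$. To replace this subsequence of scales by arbitrary $r\to0$, note that $\rho\mapsto-\log\mu(B_{\rho}(x))$ is nonincreasing in $\rho$, so for $p^{-k}\le r<p^{-k+1}$ one has $H_{r}(\mu)\le H_{p^{-k}}(\mu)$ while $-\log r>(k-1)\log p$, giving $\frac{H_{r}(\mu)}{-\log r}\le\frac{k}{k-1}\cdot\frac{H_{p^{-k}}(\mu)}{k\log p}$; conversely the values $\frac{H_{p^{-k}}(\mu)}{k\log p}$ occur among the values of $\rho\mapsto\frac{H_{\rho}(\mu)}{-\log\rho}$. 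Since $k/(k-1)\to1$, taking $\limsup$ (resp.\ $\liminf$) as $r\to0$ identifies $\overline{\dim}_{e}(\mu)$ (resp.\ $\underline{\dim}_{e}(\mu)$) with the corresponding limit of $\frac{1}{k\log p}H(\mu,\mathcal{D}_{p^{k}})$. The only step needing genuine care is the reverse inequality: near cell boundaries the ratio $\mu(Q')/\mu(Q)$ is unbounded, and the argument works precisely because this ratio enters weighted by $\mu(Q)$, so that $t\log(s/t)\le s/e$ applies and makes the final constant uniform in $\mu$ (and even in $p$).
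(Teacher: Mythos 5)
Your argument is correct and complete. Note that the paper does not actually prove this lemma: it simply cites \cite[Lemma 2.3]{PeresSolomyak00} for $p=2$ and asserts the general case is analogous, so you have supplied a self-contained proof where the paper offers only a reference. The structure you use is the standard one for such comparisons: the inclusion $\mathcal{D}_{p^{k}}(x)\subseteq B_{p^{-k}}(x)$ gives $H_{p^{-k}}(\mu)\le H(\mu,\mathcal{D}_{p^{k}})$ for free, and the reverse direction is handled by covering $B_{p^{-k}}(x)$ by the $3^{d}$ neighboring cells and controlling the cross terms $\mu(Q)\log^{+}(\mu(Q')/\mu(Q))$ via $t\log^{+}(s/t)\le s/e$ together with the symmetry and bounded degree of the neighbor relation. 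All the inequalities you invoke check out, including $\log(1+\sum_{i}a_{i})\le\log(m+1)+\sum_{i}\log^{+}a_{i}$ via $1+\sum a_{i}\le(m+1)\prod\max(1,a_{i})$; and your constant $d\log 3+3^{d}/e$ is independent of $p$, which is slightly stronger than the statement requires. One small point in the interpolation step: your displayed inequality $\frac{H_{r}(\mu)}{-\log r}\le\frac{k}{k-1}\cdot\frac{H_{p^{-k}}(\mu)}{k\log p}$ together with the subsequence observation settles the $\limsup$ identity and gives $\liminf_{r}\le\liminf_{k}$, but the remaining direction $\liminf_{r}\ge\liminf_{k}$ needs the symmetric bound $H_{r}(\mu)\ge H_{p^{-(k-1)}}(\mu)$ and $-\log r\le k\log p$ for $p^{-k}\le r<p^{-(k-1)}$; this follows from the same monotonicity of $\rho\mapsto H_{\rho}(\mu)$ and is as routine as you indicate, but it is worth stating explicitly.
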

\begin{proof} This is proved for $n=2$ in \cite[Lemma 2.3]{PeresSolomyak00};
the general case is exactly analogous. \end{proof}

The following proposition summarizes some of the relations between
the different notions of dimension. Proofs can be found in \cite{FanLauRao02}.

\begin{prop} \label{prop:relation-dim-measure} Let $\mu$ be a measure
on $\RR^{n}$. Then:
\begin{equation}
\dim_{*}(\mu)\le\underline{\dim}_{e}(\mu)\le\overline{\dim}_{e}(\mu).\label{eq:ineq-dimensions}\end{equation}
If $\mu$ is exact dimensional, then \begin{equation}
\dim_{*}(\mu)=\dim^{*}(\mu)=\underline{\dim}_{e}(\mu)=\overline{\dim}_{e}(\mu)=\dim\mu.\label{eq:eq:exact-dimensional}\end{equation}
 \end{prop}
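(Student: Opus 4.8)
The plan is to reduce everything to the statements already collected in this section, so that the proposition becomes a bookkeeping exercise. For the first chain \eqref{eq:ineq-dimensions}, the inequality $\underline{\dim}_e(\mu)\le\overline{\dim}_e(\mu)$ is immediate from the definitions of the entropy dimensions as a $\liminf$ and a $\limsup$ of the same quantity. The only real content is $\dim_*(\mu)\le\underline{\dim}_e(\mu)$. Here I would work along the dyadic filtration $\mathcal{F}_n=\mathcal{D}_{2^n}$ (which is $1/2$-regular) and compare two expressions: by Lemma \ref{lem:entropy-along-filtration}, $\underline{\dim}(\mu,x)=\liminf_n \frac{\log\mu(\mathcal{D}_{2^n}(x))}{-n\log 2}$ for $\mu$-a.e.\ $x$, while by Lemma \ref{lem:entropy-dimension}, $\underline{\dim}_e(\mu)=\liminf_n \frac{1}{n\log 2}\sum_{Q\in\mathcal{D}_{2^n}}\mu(Q)\log\frac{1}{\mu(Q)} = \liminf_n \frac{1}{n\log 2}\int -\log\mu(\mathcal{D}_{2^n}(x))\,d\mu(x)$. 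The first step is therefore to establish the pointwise-to-average comparison $\essinf_x \underline{\dim}(\mu,x) \le \underline{\dim}_e(\mu)$; combined with Lemma \ref{lem:properties-of-lower-dim}(1), which says exactly that $\dim_*(\mu)=\sup\{\alpha:\underline{\dim}(\mu,x)\ge\alpha\text{ a.e.}\}=\essinf_x\underline{\dim}(\mu,x)$, this gives $\dim_*(\mu)\le\underline{\dim}_e(\mu)$.

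The crux is thus the inequality $\essinf_x \underline{\dim}(\mu,x) \le \liminf_n \frac{1}{n\log 2}\int f_n\,d\mu$ where $f_n(x)=-\log\mu(\mathcal{D}_{2^n}(x))$. I would prove this by contraposition: suppose $\liminf_n \frac1n\int f_n\,d\mu < \beta\log 2$; I want to produce a positive-measure set on which $\underline{\dim}(\mu,x)<\beta$, i.e.\ on which $f_n(x)<\beta n\log 2$ for infinitely many $n$. Fix a subsequence $n_j$ along which $\frac{1}{n_j}\int f_{n_j}\,d\mu < \beta'\log 2$ for some $\beta'<\beta$. For each $j$, Markov's inequality applied to the nonnegative function $f_{n_j}$ gives $\mu\{x: f_{n_j}(x) \ge \beta n_j\log 2\} \le \frac{\int f_{n_j}\,d\mu}{\beta n_j\log 2} \le \beta'/\beta < 1$, so the complementary set $A_j=\{x: f_{n_j}(x)<\beta n_j\log 2\}$ has $\mu(A_j)\ge 1-\beta'/\beta=:\delta>0$ uniformly in $j$. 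Then $\limsup_j A_j$ (the set of $x$ lying in infinitely many $A_j$) has $\mu(\limsup_j A_j)\ge\delta>0$ by the reverse Fatou lemma, and every such $x$ satisfies $\underline{\dim}(\mu,x)\le\liminf_j \frac{f_{n_j}(x)}{-n_j\log 2}\cdot(-1)$ — being careful with signs, since $f_n=-\log\mu(\cdot)$ and $\log\mu<0$: $\frac{\log\mu(\mathcal{D}_{2^{n_j}}(x))}{-n_j\log 2} = \frac{f_{n_j}(x)}{n_j\log 2} < \beta$ along this subsequence, hence $\underline{\dim}(\mu,x)<\beta$. This contradicts $\essinf_x\underline{\dim}(\mu,x)\ge\beta$. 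The mild subtlety I anticipate is handling the constant $C=C(d,2)$ from Lemma \ref{lem:entropy-dimension} and the a.e.\ clause in Lemma \ref{lem:entropy-along-filtration}: the constant washes out after dividing by $n$ and taking $n\to\infty$, and the a.e.\ exceptional set has measure zero so does not affect the essinf.

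For the exact-dimensional case \eqref{eq:eq:exact-dimensional}: if $\dim\mu=\alpha$ then $\underline{\dim}(\mu,x)=\overline{\dim}(\mu,x)=\alpha$ for $\mu$-a.e.\ $x$. From $\underline{\dim}(\mu,x)=\alpha$ a.e.\ and Lemma \ref{lem:properties-of-lower-dim}(1) we get $\dim_*(\mu)=\alpha$. For $\dim^*(\mu)=\alpha$: the upper bound $\dim^*(\mu)\le\alpha$ follows from the standard mass-distribution/Billingsley-type argument that a set on which $\overline{\dim}(\mu,x)\le\alpha$ everywhere has Hausdorff dimension $\le\alpha$ (I can cite \cite{Falconer97} for this, or Lemma \ref{lem:entropy-along-filtration} with the analogous covering argument); the lower bound $\dim^*(\mu)\ge\dim_*(\mu)=\alpha$ is trivial since $\dim_*\le\dim^*$ always. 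Finally, the two entropy dimensions are squeezed: \eqref{eq:ineq-dimensions} already gives $\alpha=\dim_*(\mu)\le\underline{\dim}_e(\mu)\le\overline{\dim}_e(\mu)$, and for the reverse $\overline{\dim}_e(\mu)\le\alpha$ I would again use Markov's inequality in the other direction — since $\overline{\dim}(\mu,x)\le\alpha$ a.e., for every $\e>0$ the sets $\{x: \mu(\mathcal{D}_{2^n}(x))\ge 2^{-(\alpha+\e)n}\}$ have measure tending to $1$, and splitting the integral $\int f_n\,d\mu$ over this set and its complement, using that on the complement $f_n$ is controlled and the measure of the complement is small while $f_n\le (\text{const})\cdot n$ pointwise by the regularity of $\mathcal{D}_{2^n}$ (each cell has measure at least... — actually one needs a crude lower bound $\mu(\mathcal{D}_{2^n}(x))$ can be arbitrarily small, so instead I truncate: replace $f_n$ by $\min(f_n, Mn)$ for large $M$ and note the tail contribution of $\{f_n>Mn\}$ is small, again by an $L^1$/Markov estimate applied to a further comparison, or simply invoke that this is the standard proof that $\overline{\dim}_e\le\overline{\dim}^*$ — here I would just cite \cite{FanLauRao02} as the statement itself does). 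In fact, since the proposition explicitly says proofs can be found in \cite{FanLauRao02}, the cleanest write-up is to prove $\dim_*(\mu)\le\underline{\dim}_e(\mu)$ in detail as above (the one inequality that is genuinely used later), deduce the equalities in the exact-dimensional case from Lemma \ref{lem:properties-of-lower-dim} plus \eqref{eq:ineq-dimensions} plus the Billingsley upper bound for $\dim^*$, and cite \cite{FanLauRao02} for the remaining routine squeeze $\overline{\dim}_e(\mu)\le\dim^*(\mu)$. The main obstacle is simply keeping the signs straight in the $\liminf$/$\limsup$ comparisons and correctly invoking the reverse Fatou lemma to pass from "$\mu(A_j)\ge\delta$ for all $j$" to "$\mu(\limsup A_j)\ge\delta$".
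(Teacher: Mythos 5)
Your proposal is correct, but it is worth noting that the paper does not actually prove this proposition: it simply refers the reader to \cite{FanLauRao02} for all of the inequalities. You instead give a self-contained argument for the one inequality that carries real weight later, namely $\dim_*(\mu)\le\underline{\dim}_e(\mu)$, and your argument is sound: the reduction via Lemma \ref{lem:properties-of-lower-dim}(1), Lemma \ref{lem:entropy-along-filtration} (with $\rho=1/2$) and Lemma \ref{lem:entropy-dimension} is exactly right, and the Markov-plus-reverse-Fatou step correctly produces a set of measure at least $\delta=1-\beta'/\beta$ on which $\underline{\dim}(\mu,x)\le\beta$, contradicting the essential infimum; the $O(1)$ discrepancy between ball entropy and partition entropy indeed washes out after dividing by $n$. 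Two small remarks. First, in the exact-dimensional case the Billingsley-type upper bound is most naturally stated with the \emph{lower} local dimension ($\underline{\dim}(\mu,x)\le s$ on $A$ implies $\dim A\le s$); your version with $\overline{\dim}$ has a stronger hypothesis and so follows from it, and in the exact-dimensional setting the two coincide a.e., so no harm is done. Second, your sketch of $\overline{\dim}_e(\mu)\le\alpha$ correctly identifies the obstruction (cells of very small measure prevent a naive dominated-convergence argument) but does not complete the truncation; since you ultimately delegate this step to \cite{FanLauRao02}, which is precisely what the paper does for the entire proposition, this is not a gap relative to the paper. Net effect: your write-up is strictly more informative than the paper's, at the cost of reproducing a standard argument from the cited reference.
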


The next two lemmas establish some continuity properties of $H_{r}(\mu)$

\begin{lem} \label{lem:entropy_comparable_radius} Let $\mu$ be
a probability measure on $\mathbb{R}^{d}$, and fix a constant $C>0$.
Then \[
|H_{r}(\mu)-H_{Cr}(\mu)|=O_{C,d}(1)\quad\text{ for all }r>0.\]
 \end{lem}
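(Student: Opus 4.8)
The plan is to reduce the comparison of $H_r(\mu)$ and $H_{Cr}(\mu)$ to a counting estimate: a ball of radius $Cr$ in $\RR^d$ (with the sup norm) can be covered by a bounded number $N = N(C,d)$ of balls of radius $r$, and conversely every ball of radius $r$ sits inside a ball of radius $Cr$. It suffices to treat $C \geq 1$, since the case $C < 1$ follows by replacing $r$ with $Cr$ and $C$ with $1/C$. So assume $C \geq 1$, and fix $r > 0$.

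First I would prove the easy inequality $H_{Cr}(\mu) \leq H_r(\mu)$: since $C \geq 1$ we have $B_r(x) \subseteq B_{Cr}(x)$, hence $\mu(B_r(x)) \leq \mu(B_{Cr}(x))$, so $-\log \mu(B_{Cr}(x)) \leq -\log\mu(B_r(x))$ pointwise, and integrating against $\mu$ gives the claim (no error term needed in this direction).

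For the reverse direction, the key point is the covering estimate: with the sup norm, $B_{Cr}(x)$ is a cube of side $2Cr$, which can be partitioned into at most $N = (\lceil 2C \rceil)^d$ cubes of side $2r$, each contained in a ball of radius $r$ about its center. Fix such centers $y_1,\dots,y_N$ (depending on $x$) so that $B_{Cr}(x) \subseteq \bigcup_{i=1}^N B_r(y_i)$. Then $\mu(B_{Cr}(x)) \leq \sum_{i=1}^N \mu(B_r(y_i)) \leq N \cdot \max_i \mu(B_r(y_i))$. Taking $-\log$ gives $-\log\mu(B_{Cr}(x)) \geq -\log N + \min_i(-\log\mu(B_r(y_i)))$. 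The difficulty — and really the only subtlety — is that the minimizing center $y_i$ need not be $x$ itself, so to integrate this we want to replace $\min_i(-\log\mu(B_r(y_i)))$ by something anchored at $x$; but since $x \in B_{Cr}(x)$ lies in one of the $N$ sub-cubes, we have $x \in B_r(y_{i_0})$ for some $i_0$, hence $B_r(y_{i_0}) \subseteq B_{2r}(x) \subseteq B_{2Cr}(x)$, giving $\mu(B_r(y_{i_0})) \leq \mu(B_{2Cr}(x))$ and therefore $-\log\mu(B_{Cr}(x)) \geq -\log N - \log\mu(B_{2Cr}(x))$. Integrating against $\mu$ yields $H_{Cr}(\mu) \geq H_{2Cr}(\mu) - \log N$. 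This bounds $H_{Cr}$ below by $H_{2Cr}$ rather than by $H_r$; to close the loop one iterates the easy inequality together with this one over the finitely many dyadic scales between $r$ and $2Cr$ — more cleanly, apply the same argument with $C$ replaced by any $C' \in [1,2]$: then $2C'r \leq 4r$, and a symmetric version (covering $B_{C'r}(x)$ directly by balls $B_{r/2}(y_i)$ and noting $B_{r/2}(y_{i_0}) \subseteq B_r(x)$) gives $H_{C'r}(\mu) \geq H_r(\mu) - \log N'$ with $N' = N'(d)$. Writing a general $C \geq 1$ as $C = 2^k C'$ with $C' \in [1,2)$ and chaining $k = O_C(1)$ applications of this step together with the monotone inequality handles all cases, producing a bound $|H_r(\mu) - H_{Cr}(\mu)| \leq K(C,d)$ with $K$ independent of $r$ and of $\mu$. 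I expect no serious obstacle here; the only thing to be careful about is keeping the comparison radius from drifting, which the iteration over a bounded number of scales resolves cleanly.
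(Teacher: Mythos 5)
There is a genuine gap in the reverse inequality, and it is not a fixable detail but a sign that the whole pointwise strategy cannot work. Your key step is the claim that covering $B_{C'r}(x)$ by balls $B_{r/2}(y_i)$ and noting $B_{r/2}(y_{i_0})\subseteq B_r(x)$ for the ball containing $x$ yields $-\log\mu(B_{C'r}(x))\ge -\log N' -\log\mu(B_r(x))$ pointwise, hence $H_{C'r}(\mu)\ge H_r(\mu)-\log N'$ after integrating. But the covering only gives $\mu(B_{C'r}(x))\le N'\max_i\mu(B_{r/2}(y_i))$, and the maximum need not be attained at $i_0$: the other covering balls lie as far as $C'r$ from $x$ and can carry essentially all the mass. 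Concretely, take $d=1$, $C'=2$, $\mu=\tfrac12\delta_0+\tfrac12\lambda|_{[0,1]}$, and $x$ with $r<x\le 2r$: then $\mu(B_{2r}(x))\ge\tfrac12$ while $\mu(B_r(x))\le r$, so $\log\bigl(\mu(B_{2r}(x))/\mu(B_r(x))\bigr)\ge\log(1/2r)\to\infty$ as $r\to 0$. No pointwise inequality of the form you need can hold, so the "symmetric version", and with it the dyadic chaining built on it, collapses. (Your earlier, correctly justified pointwise bound $H_{Cr}\ge H_{2Cr}-\log N$ is true but, as you yourself observe, vacuous since $H_{2Cr}\le H_{Cr}$ anyway.)

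The inequality is only true on average, and the standard proof (this is [PeresSolomyak00, Lemma 2.3], which the paper cites, and the same device appears in the paper's proof of Proposition \ref{prop:local-entropy-bound-for-faithful-maps}) exploits exactly that. Write $H_r(\mu)-H_{Cr}(\mu)=\int\log\bigl(\mu(B_{Cr}(x))/\mu(B_r(x))\bigr)\,d\mu(x)$, bound $\log t\le t$, and then compute by Fubini that $\int \mu(B_{Cr}(x))\mu(B_r(x))^{-1}d\mu(x)=\int\Bigl[\int_{B_{Cr}(y)}\mu(B_r(x))^{-1}d\mu(x)\Bigr]d\mu(y)$. For fixed $y$, cover $B_{Cr}(y)$ by $N(C,d)$ balls $B_{r/2}(z_j)$; on each of these, $\mu(B_r(x))\ge\mu(B_{r/2}(z_j))$ for every $x\in B_{r/2}(z_j)$, so the inner integral over $B_{r/2}(z_j)$ is at most $1$, and the whole expression is at most $N$. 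Your covering count $N(C,d)$ and your treatment of the easy monotone direction are fine; it is the passage from the covering bound to a per-point entropy comparison that has to be replaced by this averaging argument.
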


\begin{proof} See \cite[Lemma 2.3]{PeresSolomyak00}. \end{proof}

\begin{lem} \label{lem:perturbed_entropy} Let $\mu$ be a probability
measure on the unit ball $B_{1}(0)$, and let $\pi\in\Pi_{d,k}$.
Then for any $C^{1}$ function $g:\mathbb{R}^{d}\rightarrow\mathbb{R}^{k}$
such that $\sup_{x}\|D_{x}g-\pi\|_\infty <r$, \[
|H_{r}(\pi\mu)-H_{r}(g\mu)|=O_{d,k}(1).\]
 \end{lem}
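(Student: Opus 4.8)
The plan is to compare the two measures $\pi\mu$ and $g\mu$ on $\RR^k$ directly by exhibiting a bijection of their supports that moves every point by at most a bounded multiple of $r$, and then invoke Lemma \ref{lem:entropy_comparable_radius} together with a standard "entropy does not change much under small perturbations" argument. First I would observe that $g$ and $\pi$ differ by a map whose derivative is everywhere smaller than $r$ in norm on $B_1(0)$; since $B_1(0)$ is convex, the mean value inequality gives $\|g(x)-g(y) - \pi(x-y)\| \le r\|x-y\|$ for all $x,y\in B_1(0)$, and in particular, fixing any basepoint and using $\diam B_1(0) = O_d(1)$, one gets $\|g(x)-\pi(x) - c\| \le O_d(r)$ for a fixed constant vector $c = g(0)-\pi(0)$ and all $x\in\supp\mu\subseteq B_1(0)$. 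Thus, writing $\tau$ for the translation by $c$, the map $\tau\circ\pi$ and $g$ differ pointwise by at most $O_d(r)$ on $\supp\mu$. Since translating the measure does not change any $H_r$, it suffices to bound $|H_r(\pi\mu) - H_r(g\mu)|$ in the case where $g$ is a map with $\|g(x)-\pi(x)\| \le \kappa r$ on $\supp\mu$ for a dimensional constant $\kappa$.

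Next I would push this pointwise closeness through to the entropies. The key point is that if $\|g(x)-\pi(x)\|\le\kappa r$ for all $x\in\supp\mu$, then for every $t\in\RR^k$ we have the inclusions
\[
\{x : \|\pi x - t\| \le r\} \subseteq \{x : \|g x - t\| \le (1+\kappa) r\}
\]
and symmetrically with $g$ and $\pi$ interchanged. Consequently $\pi\mu\big(B_r(t)\big) \le g\mu\big(B_{(1+\kappa)r}(g x \text{ near } t)\big)$ — more precisely, integrating, one gets $H_r(\pi\mu)$ and $H_r(g\mu)$ sandwiched between $H_{(1+\kappa)r}$ and $H_{(1+\kappa)^{-1}r}$ of each other up to relabeling the center points, which costs nothing because in the definition of $H_r$ the integration variable is tied to the measure being integrated. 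The cleanest way to organize this is: for $\nu_1 = \pi\mu$, $\nu_2 = g\mu$ with the coupling given by $x\mapsto(\pi x, g x)$, one has $\nu_1(B_r(\pi x))\le \nu_2(B_{(1+\kappa)r}(g x))$ for $\mu$-a.e. $x$, hence $-\int\log\nu_1(B_r(\pi x))\,d\mu(x) \ge -\int\log\nu_2(B_{(1+\kappa)r}(g x))\,d\mu(x) = H_{(1+\kappa)r}(\nu_2)$, which gives $H_r(\pi\mu)\ge H_{(1+\kappa)r}(g\mu)$, and then Lemma \ref{lem:entropy_comparable_radius} replaces $H_{(1+\kappa)r}(g\mu)$ by $H_r(g\mu)$ at the cost of $O_{d,k}(1)$. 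The reverse inequality is identical, so $|H_r(\pi\mu)-H_r(g\mu)| = O_{d,k}(1)$.

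I expect the main (minor) obstacle to be bookkeeping the passage from $\sup_x\|D_xg - \pi\| < r$ on all of $\RR^d$ (or on $B_1(0)$) to a uniform pointwise bound $\|g(x)-\pi(x)\| = O_d(r)$ on $\supp\mu$: one must be slightly careful that the constant absorbs $\diam\supp\mu \le 2$ and the value $g(0)$, and that translating by the fixed vector $g(0)-\pi(0)$ genuinely leaves all $H_r$ invariant (it does, since $H_r$ depends only on the measure up to isometry, and balls are translation-invariant). Everything else is a soft comparison argument using monotonicity of $t\mapsto\nu(B_t(\cdot))$ and the already-proved Lemma \ref{lem:entropy_comparable_radius}; no delicate estimate is needed, and in particular the hypothesis $\mu(B_1(0))=1$ is used only to make $\diam\supp\mu$ bounded so that the constant is genuinely dimensional.
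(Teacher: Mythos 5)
Your proposal is correct and follows essentially the same route as the paper's proof: translate so that $g(0)=\pi(0)$, use the derivative bound plus convexity of $B_1(0)$ to get pointwise closeness $\|g(x)-\pi(x)\|=O(r)$ on the support, deduce the ball/preimage inclusions at comparable radii, and finish with Lemma \ref{lem:entropy_comparable_radius}. The paper's version is just a terser statement of the same argument.
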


\begin{proof} After a translation we can assume $g(0)=0$. Then it
is easy to see that\[
\pi^{-1}(B_{r}(x))\subseteq g^{-1}(B_{O_{d,k}(r)}(x))\]
and similarly with $\pi$ and $g$ exchanged. The lemma now follows
from Lemma \ref{lem:entropy_comparable_radius}. \end{proof}

\subsection{\label{sub:orth-projections-of-measures}Behavior of measures under
orthogonal projections}

The family $\Pi_{d,k}$ of orthogonal projections from $\mathbb{R}^{d}$
to its $k$-dimensional subspaces may be identified with the Grassmanian
of $k$-planes in $\RR^{d}$, with $V\subseteq\mathbb{R}^{d}$ corresponding
to the projection to $V$. This endows $\Pi_{d,k}$ with a smooth
structure, and hence a measure class (there is also a natural measure,
invariant under the action by the orthogonal group, but we do not
have use for it). Different projections have different images, but
it is convenient to identify them all with $\RR^{k}$ via an affine
change of coordinates, which we specify as needed. Such an identification
is harmless since it does not affect any of the notions of dimension that we use.

If $E\subseteq\mathbb{R}^{n}$ is a Borel set, then the well-known projection
theorem of Marstrand-Mattila says that \[
\dim(\pi(E))=\min(k,\dim(E)),\]
 for almost every projection $\pi\in\Pi_{d,k}$. Hunt and Kaloshin
\cite{HuntKaloshin97} established the analogous results for dimensions
of a measure \cite[Theorem 4.1]{HuntKaloshin97}. In particular we
shall use the following:

\begin{thm} \label{thm:hunt-kaloshin} Let $\mu$ be an exact-dimensional
measure on $\RR^{d}$. Then for almost every projection $\pi\in\Pi_{d,k}$
the projection $\pi\mu$ is exact dimensional, and \[
\dim(\pi\mu)=\min(k,\dim\mu).\]
 \end{thm}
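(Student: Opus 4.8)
The plan is to split the statement into an upper bound valid for every $\pi$ and a lower bound valid for a.e.\ $\pi$ coming from the potential-theoretic (energy integral) method behind the Marstrand--Mattila projection theorem, and then to squeeze the two bounds together to obtain exactness.

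For the upper bound: orthogonal projection is $1$-Lipschitz, so $B_{r}(x)\subseteq\pi^{-1}(B_{r}(\pi x))$ and hence $\pi\mu(B_{r}(\pi x))\ge\mu(B_{r}(x))$; dividing logarithms by $\log r<0$ and taking $\limsup$ gives $\overline{\dim}(\pi\mu,\pi x)\le\overline{\dim}(\mu,x)=\dim\mu$ for $\mu$-a.e.\ $x$, hence $\overline{\dim}(\pi\mu,z)\le\dim\mu$ for $\pi\mu$-a.e.\ $z$. Separately, any finite measure $\nu$ on $\RR^{k}$ satisfies $\overline{\dim}(\nu,z)\le k$ for $\nu$-a.e.\ $z$: if the set of $z$ with $\nu(B_{r}(z))\le r^{k+\delta}$ for arbitrarily small $r$ had positive measure, one could cover it (inside a bounded region, by balls of bounded overlap, Vitali/Besicovitch) and bound its $\nu$-measure by a constant times $(\mathrm{mesh})^{\delta}\to0$. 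Thus $\overline{\dim}(\pi\mu,z)\le\min(k,\dim\mu)$ for $\pi\mu$-a.e.\ $z$, for every $\pi$.

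For the lower bound, fix a rational $s<\min(k,\dim\mu)$. Since $\underline{\dim}(\mu,x)=\dim\mu>s$ for $\mu$-a.e.\ $x$, Egorov's theorem lets us write $\RR^{d}=\bigcup_{n}A_{n}$ with $\mu(A_{n})\to1$ and, for a fixed $\e>0$, $\mu(B_{r}(x))\le r^{s+\e}$ for all $x\in A_{n}$ and all $r<1/n$; this classical Frostman-type bound makes the $s$-energy $I_{s}(\mu|_{A_{n}}):=\iint|x-y|^{-s}\,d(\mu|_{A_{n}})(x)\,d(\mu|_{A_{n}})(y)$ finite. Using the identity $\int_{\Pi_{d,k}}|\pi v|^{-s}\,d\pi=c(d,k,s)\,|v|^{-s}$, whose left side is finite precisely because $s<k$, Fubini gives $\int_{\Pi_{d,k}}I_{s}(\pi(\mu|_{A_{n}}))\,d\pi=c\,I_{s}(\mu|_{A_{n}})<\infty$; hence outside a $\pi$-null set---which stays null after intersecting over all $n$ and all rational $s<\min(k,\dim\mu)$---we have $I_{s}(\pi(\mu|_{A_{n}}))<\infty$, and this implies $\underline{\dim}(\pi(\mu|_{A_{n}}),z)\ge s$ for $\pi(\mu|_{A_{n}})$-a.e.\ $z$.

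It remains to transfer this from the restrictions back to $\pi\mu$, and this is the step I expect to be the main obstacle, since restricting a measure can only \emph{raise} local dimensions, so a lower bound on $\pi(\mu|_{A_{n}})$ does not by itself bound $\pi\mu$ from below. The fix: $\pi(\mu|_{A_{n}})\le\pi\mu$ as measures, so $\pi(\mu|_{A_{n}})\ll\pi\mu$ with density $g_{n}\le1$; the two measures are equivalent on $\{g_{n}>0\}$, and by Besicovitch differentiation $\pi(\mu|_{A_{n}})(B_{r}(z))/\pi\mu(B_{r}(z))\to g_{n}(z)\in(0,1]$ for $\pi\mu$-a.e.\ $z\in\{g_{n}>0\}$, whence $\underline{\dim}(\pi\mu,z)=\underline{\dim}(\pi(\mu|_{A_{n}}),z)\ge s$ there. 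Since $\pi\mu(\{g_{n}>0\})\ge\pi(\mu|_{A_{n}})(\{g_{n}>0\})=\mu(A_{n})\to1$, the sets $\{g_{n}>0\}$ exhaust $\pi\mu$, so $\underline{\dim}(\pi\mu,z)\ge s$ $\pi\mu$-a.e.; letting $s\uparrow\min(k,\dim\mu)$ gives $\underline{\dim}(\pi\mu,z)\ge\min(k,\dim\mu)$ $\pi\mu$-a.e. Combined with the upper bound and $\underline{\dim}\le\overline{\dim}$, this forces $\underline{\dim}(\pi\mu,z)=\overline{\dim}(\pi\mu,z)=\min(k,\dim\mu)$ for $\pi\mu$-a.e.\ $z$, i.e.\ $\pi\mu$ is exact dimensional with $\dim\pi\mu=\min(k,\dim\mu)$. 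Apart from the restriction-to-whole transfer, the only nonroutine ingredient is the a.e.\ pointwise bound $\overline{\dim}(\nu,z)\le k$ for measures on $\RR^{k}$ used in the second paragraph, which is needed for genuine exactness rather than merely for $\dim_{*}(\pi\mu)=\dim^{*}(\pi\mu)$.
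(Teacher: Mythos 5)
The paper does not actually prove this theorem; it is quoted from Hunt and Kaloshin \cite[Theorem 4.1]{HuntKaloshin97}, so there is no in-paper argument to compare against. Your proposal is a correct, self-contained proof along the classical potential-theoretic lines (which is also the spirit of the cited source): the upper bound $\overline{\dim}(\pi\mu,\pi x)\le\overline{\dim}(\mu,x)$ from the Lipschitz property is exactly the observation the paper itself records just after Corollary \ref{cor:hunt-kaloshin-entropy}, the a.e.\ bound $\overline{\dim}(\nu,z)\le k$ via a Besicovitch covering is standard, and the generic lower bound via finiteness of $\int_{\Pi_{d,k}}|\pi v|^{-s}\,d\pi$ for $s<k$ together with Fubini is the Kaufman--Mattila energy argument. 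You correctly identified the one genuinely delicate step --- that a lower local-dimension bound for the restrictions $\mu|_{A_n}$ does not automatically pass to $\mu$ --- and your fix via Besicovitch differentiation of $\pi(\mu|_{A_n})$ against $\pi\mu$ (the ratio of ball measures converges to the density $g_n$, which is positive and finite on a set of $\pi\mu$-measure at least $\mu(A_n)$, so the two local dimensions coincide there) is sound. Two small points worth making explicit if you write this up: the null set of exceptional $\pi$ must be assembled as a countable union over the rational $s$ \emph{and} over $n$ (the sets $A_n$ depend on $s$), which you essentially do; and one should fix $\e$ with $s+\e<\dim\mu$ before invoking the Egorov-type uniformization so that the Frostman exponent strictly exceeds $s$ and the $s$-energy of $\mu|_{A_n}$ is indeed finite.
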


\begin{cor} \label{cor:hunt-kaloshin-entropy}Let $\mu$ be an exact-dimensional
probability measure on $\RR^{d}$. Then for almost every $\pi\in\Pi_{d,k}$,
\begin{equation}
\lim_{r\rightarrow0}-\frac{H_{r}(\pi\mu)}{\log r}=\min(k,\dim\mu)\label{eq:marstrand-for-entropy-dimension}\end{equation}
\end{cor}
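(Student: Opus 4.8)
The goal is to deduce Corollary~\ref{cor:hunt-kaloshin-entropy} from Theorem~\ref{thm:hunt-kaloshin} together with the dictionary between entropy at scale $r$ and mass of $r$-balls already collected in this section. The plan is as follows.

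First I would fix an exact-dimensional measure $\mu$ on $\RR^d$ and apply Theorem~\ref{thm:hunt-kaloshin}: for almost every $\pi\in\Pi_{d,k}$ the projection $\pi\mu$ is exact dimensional, with $\dim(\pi\mu)=\min(k,\dim\mu)$. It therefore suffices to show that for any exact-dimensional probability measure $\nu$ on $\RR^k$ one has $\lim_{r\to 0} -H_r(\nu)/\log r = \dim\nu$; applying this with $\nu=\pi\mu$ (and noting that $\min(k,\dim\mu)=\dim(\pi\mu)$) gives \eqref{eq:marstrand-for-entropy-dimension}. But this last statement is exactly the content of Proposition~\ref{prop:relation-dim-measure}: for an exact-dimensional measure, $\underline{\dim}_e(\nu)=\overline{\dim}_e(\nu)=\dim\nu$, and by definition $\overline{\dim}_e(\nu)=\limsup_{r\to0}H_r(\nu)/(-\log r)$ and $\underline{\dim}_e(\nu)=\liminf_{r\to0}H_r(\nu)/(-\log r)$, so the limit exists and equals $\dim\nu$. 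Hence the proof is essentially a two-line citation: invoke Theorem~\ref{thm:hunt-kaloshin} to get exact dimensionality of $\pi\mu$ with the right value, then invoke the exact-dimensional case of Proposition~\ref{prop:relation-dim-measure} to convert this into the statement about $H_r$.

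If one wanted to avoid citing the entropy-dimension identity as a black box, the alternative route is to note that $\frac{1}{-\log r}H_r(\nu) = \frac{1}{-\log r}\int -\log \nu(B_r(x))\,d\nu(x)$, and that exact dimensionality says the integrand, divided by $-\log r$, converges pointwise $\nu$-a.e.\ to $\dim\nu$ as $r\to0$; one then needs a uniform integrability / domination argument to pass the limit inside the integral. Using a $\rho$-regular filtration (Lemma~\ref{lem:entropy-along-filtration}) and Lemma~\ref{lem:entropy-dimension} to replace $H_r(\nu)$ by the partition entropy $H(\nu,\mathcal{D}_{p^k})$ up to an $O(1)$ error, this becomes a statement about the Shannon entropies of a refining sequence of partitions, where the Shannon–McMillan type convergence can be controlled. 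But since all of this is already packaged in Proposition~\ref{prop:relation-dim-measure}, there is no need to reproduce it.

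The only mild subtlety — and the one place a careful reader should pause — is the identification of the images of the various $\pi\in\Pi_{d,k}$ with a fixed copy of $\RR^k$: one must check that the affine coordinate change used does not disturb either exact dimensionality or the quantity $H_r(\cdot)$. The former is standard (bi-Lipschitz maps preserve exact dimension and its value), and the latter follows from Lemma~\ref{lem:entropy_comparable_radius}, since a linear isomorphism of $\RR^k$ distorts radii by a bounded factor and hence changes $H_r$ by $O(1)$, which is negligible after dividing by $-\log r$. So I do not expect any real obstacle; the corollary is a direct packaging of Theorem~\ref{thm:hunt-kaloshin} and the exact-dimensional clause of Proposition~\ref{prop:relation-dim-measure}.
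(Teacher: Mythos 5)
Your proposal is correct and is exactly the paper's argument: the paper's proof of Corollary \ref{cor:hunt-kaloshin-entropy} is the one-line citation of Theorem \ref{thm:hunt-kaloshin} (to get that $\pi\mu$ is exact dimensional with dimension $\min(k,\dim\mu)$ for a.e.\ $\pi$) combined with Proposition \ref{prop:relation-dim-measure} (to identify the entropy dimension of an exact-dimensional measure with its dimension). Your additional remarks on uniform integrability and on the harmlessness of the affine identification of images with $\RR^k$ are sound but not needed beyond what the cited results already provide.
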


\begin{proof} This follows from Proposition \ref{prop:relation-dim-measure}
and Theorem \ref{thm:hunt-kaloshin}. \end{proof}

If $\mu$ is a measure on $\mathbb{R}^d$, then  for every $\pi\in\Pi_{d,k}$ and $x\in\supp(\pi\mu)$ we have  $\overline{\dim}(\pi\mu,\pi x)\leq \overline{\dim}(\pi,x)$. In particular, if $\dim\mu=\alpha$, then $\overline{\dim}(\pi\mu,y)\le \alpha$ for $(\pi\mu)$-a.e. $y$. Also, $\overline{\dim}(\mu,x)\leq d$ almost everywhere.

Finally, combining the above with Lemma \ref{lem:properties-of-lower-dim}\eqref{eq:Hausdorff-dim-in-terms-of-local-dim}, we have:

\begin{cor}\label{cor:proving-expected-dim}
If $\mu$ is an exact-dimensional measure on $\mathbb{R}^d$ and $f:\mathbb{R}^d\to\mathbb{R}^k$ is a differentiable map, then in order to prove that \[
  \dim f\mu = \min\{k,\dim\mu\},
\]
it suffices to prove that \[
  \dim_* f\mu \geq \min\{k,\dim\mu\}.
\]
\end{cor}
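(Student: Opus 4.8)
The plan is to squeeze the upper and lower pointwise local dimensions of $f\mu$ together: I will bound $\overline{\dim}(f\mu,\cdot)$ from above almost everywhere (this holds unconditionally, using only differentiability of $f$ and that $f\mu$ lives in $\mathbb{R}^k$), and bound $\underline{\dim}(f\mu,\cdot)$ from below almost everywhere (this is where the hypothesis $\dim_*f\mu\ge\min\{k,\dim\mu\}$ enters, via Lemma \ref{lem:properties-of-lower-dim}). Throughout write $\alpha=\dim\mu$ and $\beta=\min\{k,\alpha\}$.

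\emph{Upper bound.} First I would show $\overline{\dim}(f\mu,y)\le\beta$ for $f\mu$-a.e. $y$. On one hand $f\mu$ is a Borel measure on $\mathbb{R}^k$, so $\overline{\dim}(f\mu,y)\le k$ for $f\mu$-a.e. $y$, as recalled in the paragraph preceding the statement. On the other hand, since $f$ is differentiable, for each $x$ there are $\delta_x>0$ and $L_x<\infty$ with $|f(z)-f(x)|\le L_x|z-x|$ whenever $|z-x|<\delta_x$; hence $B_r(x)\subseteq f^{-1}(B_{L_x r}(f(x)))$ and so $f\mu(B_{L_x r}(f(x)))\ge\mu(B_r(x))$ for all $r<\delta_x$. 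Taking logarithms, dividing by $\log(L_x r)$ (which is negative for small $r$), and letting $r\to 0$ — using $\log(L_x r)/\log r\to 1$ together with $\log\mu(B_r(x))/\log r\to\alpha$ for $\mu$-a.e. $x$, which holds because $\mu$ is exact dimensional — gives $\overline{\dim}(f\mu,f(x))\le\alpha$ for $\mu$-a.e. $x$. Since a property holds $f\mu$-a.e. exactly when its $f$-pullback holds $\mu$-a.e. (the set $\{y:\overline{\dim}(f\mu,y)>\alpha\}$ has $f\mu$-measure equal to the $\mu$-measure of its $f$-preimage, a $\mu$-null set), we get $\overline{\dim}(f\mu,y)\le\alpha$ for $f\mu$-a.e. $y$, hence $\overline{\dim}(f\mu,y)\le\beta$ a.e.

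\emph{Lower bound and conclusion.} Next I would convert the hypothesis $\dim_*f\mu\ge\beta$ into a pointwise statement. By Lemma \ref{lem:properties-of-lower-dim}\eqref{eq:Hausdorff-dim-in-terms-of-local-dim}, the set $S=\{\gamma:\underline{\dim}(f\mu,y)\ge\gamma\text{ for }f\mu\text{-a.e. }y\}$ is downward closed with $\sup S=\dim_*f\mu\ge\beta$; therefore $\beta-\varepsilon\in S$ for every $\varepsilon>0$, and intersecting the corresponding full-measure sets over $\varepsilon=1/n$ yields $\underline{\dim}(f\mu,y)\ge\beta$ for $f\mu$-a.e. $y$. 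Combining with the upper bound, for $f\mu$-a.e. $y$ one has $\beta\le\underline{\dim}(f\mu,y)\le\overline{\dim}(f\mu,y)\le\beta$, so the local dimension of $f\mu$ exists and equals $\beta$ almost everywhere; that is, $f\mu$ is exact dimensional with $\dim f\mu=\beta=\min\{k,\dim\mu\}$, which is the claim.

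I do not expect a genuine obstacle here — the corollary is essentially a bookkeeping lemma — but two points call for a little care. First, the local Lipschitz constant $L_x$ of $f$ near $x$ varies with $x$, so the upper bound must be carried out pointwise, using only that each $L_x$ is finite and that $\log(L_x r)/\log r\to 1$ as $r\to 0$. Second, $\dim_*f\mu=\sup S$ need not be attained, which is why the lower bound passes through $\beta-\varepsilon$ and a countable intersection rather than reading off $\underline{\dim}(f\mu,\cdot)\ge\beta$ directly. Note that only differentiability of $f$ is used — neither $C^1$ regularity nor absence of singular points is needed for this reduction.
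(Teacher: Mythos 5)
Your proof is correct and follows essentially the same route as the paper, which obtains the corollary by combining the a.e.\ upper bound $\overline{\dim}(f\mu,y)\le\min\{k,\dim\mu\}$ (stated just before the corollary for projections, and extended by you to differentiable maps via the local Lipschitz estimate) with Lemma \ref{lem:properties-of-lower-dim}\eqref{eq:Hausdorff-dim-in-terms-of-local-dim} for the lower bound. You have merely spelled out the details that the paper leaves implicit.
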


\section{\label{sec:local-entropy-and-dimension}Trees, local entropy and
dimension }

In this section we use entropy and martingale methods to bound from
below the dimension of the image $f\mu$ of a measure $\mu$ under
a map $f$. While simple, the result appears to be new; it may be
viewed as a variant of the relative Shannon-McMillan-Breiman with stationarity
replaced by an assumption of convergence of certain averages, and with very few requirements of the ``factor'' map. It may also be viewed as a relative version of \cite[Theorem 2.1]{Furstenberg08}.
Here we formulate the result for morphisms between trees. In the
next section we discuss how to lift more general maps to such morphisms.

\subsection{\label{sub:Trees-and-tree-morphisms}Trees, tree morphisms and metric trees}

A tree is a closed subset $X\subseteq\Lambda^{\mathbb{N}}$. Here
$\Lambda$ is a finite set called the \emph{alphabet} or the \emph{symbol
set}. We usually do not specify the symbol set of a tree and write
it generically as $\Lambda$; if we wish to be specific we write $\Lambda_{X},\Lambda_{Y}$,
etc.

A tree is a compact metrizable totally disconnected space. A basis
of closed and open sets of $\Lambda^{\mathbb{N}}$ (and, in the relative
topology, for $X$) is provided by the cylinder sets. An $n$-cylinder
is a set of the form\[
[a]=\{x\in X\,:\, x_{1}\ldots x_{n}=a\}\]
for $a\in\Lambda^{n}$, and an $n$-cylinder in a tree $X$ is the
intersection of $X$ with such a set.

The cylinders form a countable
family which is partially ordered by inclusion, and this order structure
determines $X$ up to isomorphism (see below for the definition of
a tree morphism). Conversely, any countable partially ordered set
satisfying the obvious axioms gives rise to a tree, and we shall sometimes
represent a tree in this way.

There is a closely related representation of trees as sets of words.
Write $\Lambda^{*}=\bigcup_{n=0}^{\infty}\Lambda^{n}$ (we write $\varnothing$
for the empty word). A tree $X\subseteq\Lambda^{*}$ is characterized
by the set\[
\{a\in\Lambda^{*}\,:\,[a]\cap X\neq\varnothing\}.\]
 In this representation we call the sequence $a\in\Lambda^{n}$ a
node of the tree. Its length is $n$ (denoted $|a|$). If $b\in\Lambda$
and $ab$ is in the tree then $ab$ is the child of $a$ and $a$
is the parent of $ab$. We shall sometimes use the notation\[
a_{1}^{k}=a_{1}a_{2}\ldots a_{k}\]
 to represent the initial $k$-segment of a longer word $a$.

\begin{defn} \label{def:tree-morphism}If
$X$,$Y$ are trees then a morphism is a map $f:X\rightarrow Y$ that
maps $n$-cylinders into $n$-cylinders, i.e.  for all $a\in\Lambda_{X}^{n}$
there exists $b\in\Lambda_{Y}^{n}$ such that $f[a]\subseteq[b]$.
\end{defn} In the symbolic representation of trees this corresponds
to a map $g:\Lambda_{X}^{*}\rightarrow\Lambda_{Y}^{*}$ satisfying
$g(a_{1}\ldots a_{n})=g(a_{1}\ldots a_{n-1})b$ for some $b\in\Lambda_{Y}$.

For $0<\rho<1$, a $\rho$-tree is a tree $X$ together with the compatible
metric \[
d_{\rho}(x,y)=\rho^{\min\{n\,:\, x_{n}\neq y_{n}\}}.\]
 Note that if $X$ is a $\rho$-tree and $Y$ is a $\tau$-tree then
a tree morphism $X\rightarrow Y$ is $\log \tau/\log \rho$-H\"{o}lder. In particular
if $\tau=\rho$ then it is Lipschitz.

In a $\rho$-tree the diameter of an $n$-cylinder is $\rho^{n}$,
and \[
B_{r}(x)=[x_{1}\ldots x_{k}],\]
 where $k=\left\lceil \frac{\log r}{\log\rho}\right\rceil $. Thus
if $\mu$ is a measure on $X$ then\[
\liminf_{n\rightarrow\infty}\frac{\log\mu(B_{r}(x))}{\log r}=\liminf_{n\rightarrow\infty}\frac{\log\mu([x_{1}\ldots x_{n}])}{n\log\rho},\]
and likewise for $\limsup$. In particular, \ref{lem:properties-of-lower-dim}\eqref{eq:Hausdorff-dim-in-terms-of-local-dim} yields

\begin{lem} \label{lem:decay-and-dimension-in-rho-trees}If $\mu$
is a measure on a $\rho$-tree $X$ such that\[
\liminf_{n\rightarrow\infty}\left(-\frac{\log\mu([x_{1}\ldots x_{n}])}{n}\right)\geq\alpha \quad \mu\textrm{-a.e.},
\]
then $\dim_{*}\mu\geq\frac{\alpha}{\log\rho}$. \end{lem}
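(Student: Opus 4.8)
The plan is to reduce the statement directly to Lemma \ref{lem:properties-of-lower-dim}\eqref{eq:Hausdorff-dim-in-terms-of-local-dim}, which expresses $\dim_*\mu$ as the supremum of all $\alpha$ for which $\underline{\dim}(\mu,x)\ge\alpha$ holds $\mu$-a.e. So it suffices to show that the hypothesis controls the lower \emph{local} dimension $\underline{\dim}(\mu,x)=\liminf_{r\to 0}\frac{\log\mu(B_r(x))}{\log r}$ from below.

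First I would use the description of balls in a $\rho$-tree recalled just above the statement: for $x\in X$ and $0<r<1$, one has $B_r(x)=[x_1\ldots x_k]$ with $k=\lceil \log r/\log\rho\rceil$, since the diameter of an $n$-cylinder is exactly $\rho^n$ and cylinders are both open and closed. Consequently, as $r\downarrow 0$ the relevant cylinder index $k=k(r)$ runs through all sufficiently large integers, and
\[
\liminf_{r\to 0}\frac{\log\mu(B_r(x))}{\log r}=\liminf_{n\to\infty}\frac{\log\mu([x_1\ldots x_n])}{n\log\rho},
\]
which is exactly the identity stated in the excerpt (the point being that $\log r$ and $n\log\rho$ are comparable up to a bounded factor, and both tend to $-\infty$). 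Rewriting, $\underline{\dim}(\mu,x)=\frac{1}{\log\rho}\cdot\liminf_{n\to\infty}\bigl(-\frac{1}{n}\log\mu([x_1\ldots x_n])\bigr)$ — note $\log\rho<0$, so dividing by it reverses the $\liminf$ appropriately and keeps signs consistent.

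Now the hypothesis says $\liminf_{n\to\infty}\bigl(-\frac{1}{n}\log\mu([x_1\ldots x_n])\bigr)\ge\alpha$ for $\mu$-a.e. $x$. Plugging this into the formula above gives $\underline{\dim}(\mu,x)\ge\alpha/\log\rho$ for $\mu$-a.e. $x$ — wait, I must be careful with the sign: since $\log\rho<0$, writing $\frac{\alpha}{\log\rho}$ with $\alpha\ge 0$ would be negative, so the intended reading is that $\dim_*\mu\ge \frac{\alpha}{-\log\rho}=\frac{\alpha}{\log(1/\rho)}$; I would phrase the conclusion carefully and match the paper's convention (here $\log\rho$ in the denominator together with the negative sign coming from $-\log\mu$ in the numerator produces a positive quantity). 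Then applying Lemma \ref{lem:properties-of-lower-dim}\eqref{eq:Hausdorff-dim-in-terms-of-local-dim} with this value of $\alpha$ yields $\dim_*\mu\ge\alpha/\log\rho$ (in the paper's sign convention), completing the proof.

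There is no real obstacle here — the entire content is the elementary observation relating balls to cylinders in a $\rho$-tree, already spelled out in the excerpt, combined with the measure-theoretic characterization of $\dim_*$ in Lemma \ref{lem:properties-of-lower-dim}. The only thing to be vigilant about is the direction of inequalities when dividing by the negative quantity $\log\rho$, and the fact that the a.e. statement in the hypothesis transfers to an a.e. statement about $\underline{\dim}(\mu,\cdot)$ on the same full-measure set, so that Lemma \ref{lem:properties-of-lower-dim}\eqref{eq:Hausdorff-dim-in-terms-of-local-dim} applies verbatim.
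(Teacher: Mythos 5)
Your proof is correct and is essentially identical to the paper's: the paper likewise combines the ball--cylinder identity $B_r(x)=[x_1\ldots x_k]$ in a $\rho$-tree with the characterization of $\dim_*$ in Lemma \ref{lem:properties-of-lower-dim}\eqref{eq:Hausdorff-dim-in-terms-of-local-dim}. Your care with the sign of $\log\rho$ is warranted -- the intended normalization is $\alpha/\log(1/\rho)$, as the paper itself writes in Theorem \ref{thm:dimension-via-local-entropy-for-rho-trees} -- and you resolve it correctly.
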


The metric we choose for a tree is somewhat arbitrary, and we may
change it at our convenience, but one must note that this leads to
a re-scaling of dimensions.

\subsection{\label{sub:Local-entropy-averages-and-dim}Local entropy averages
and mass decay}

Let $\mu$ be a Borel probability measure on a tree $X$. For $x\in X$
denote by $\mu(\cdot|x_{1}^{n})$ the conditional measure on the symbol
set $\Lambda$ given by \[
\mu(x_{n+1}|x_{1}^{n})=\frac{\mu[x_{1}^{n+1}]}{\mu[x_{1}^{n}]}.\]
This is defined only when $\mu[x_{1}^{n}]>0$. The $n$-th information
function is \[
I_{n}(x)=-\log\mu(x_{n}|x_{1}^{n-1}).\]
Thus for $x\in X$,\begin{equation}
-\log\mu[x_{1}\ldots x_{n}]=\sum_{k=1}^{n}I_{k}(x).\label{eq:product-fomula-for-cylinder-measures}\end{equation}

Let $X_{n}$ be the random variable given by projection from $X$
to the $n$-th coordinate and $\mathcal{F}_{n}$ the
$\sigma$-algebra generated by the $X_1,X_2,\ldots,X_n$, i.e. by the $n$-cylinders. Then \[
H(X_{n+1}|x_{1}^{n})=\mathbb{E}(I_{n+1}|\mathcal{F}_{n})(x).\]

\begin{lem} \label{lem:local-entropy-lemma}For $\mu$-a.e. $x$,
\[
-\frac{1}{N}\log\mu[x_{1}^{n}]-\frac{1}{N}\sum_{n=1}^{N}H(X_{n}|x_{1}^{n-1})\rightarrow0.\]
 \end{lem}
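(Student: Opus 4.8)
The plan is to recognize the quantity $-\log\mu[x_1^n] - \sum_{n=1}^N H(X_n \mid x_1^{n-1})$ as a martingale sum and apply a suitable law of large numbers. Using the product formula \eqref{eq:product-fomula-for-cylinder-measures}, we have $-\log\mu[x_1^N] = \sum_{n=1}^N I_n(x)$, so the expression in the lemma equals $\frac{1}{N}\sum_{n=1}^N \bigl(I_n(x) - H(X_n \mid x_1^{n-1})\bigr)$. Since $H(X_n \mid x_1^{n-1}) = \mathbb{E}(I_n \mid \mathcal{F}_{n-1})(x)$, each term $D_n(x) := I_n(x) - \mathbb{E}(I_n \mid \mathcal{F}_{n-1})(x)$ is the increment of a martingale: setting $M_N = \sum_{n=1}^N D_n$, we have $\mathbb{E}(D_n \mid \mathcal{F}_{n-1}) = 0$, so $(M_N, \mathcal{F}_N)$ is a martingale with $M_0 = 0$. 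The lemma is precisely the assertion that $M_N / N \to 0$ almost surely.

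First I would invoke the standard strong law of large numbers for martingale differences (e.g. the version of Chow, or the Azuma-type / $L^2$ argument combined with Kronecker's lemma): if $\sum_{n=1}^\infty \mathbb{E}(D_n^2)/n^2 < \infty$, then $M_N/N \to 0$ almost surely. Actually, because the increments here need not be bounded (the information function $I_n$ can be large when $\mu[x_1^n]$ is small), I would prefer the sharper criterion: it suffices that $\sup_n \mathbb{E}(D_n^2 \mid \mathcal{F}_{n-1})$ be controlled, or more robustly that $\sum \mathbb{E}(|D_n|^p \mid \mathcal{F}_{n-1})/n^p < \infty$ for some $p \in (1,2]$. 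The key observation that makes this work is that the alphabet $\Lambda$ is \emph{finite}: the conditional distribution $\mu(\cdot \mid x_1^{n-1})$ lives on a set of size $|\Lambda_X|$, and the entropy of a distribution on $|\Lambda|$ points is at most $\log|\Lambda|$, so $H(X_n \mid x_1^{n-1}) = \mathbb{E}(I_n \mid \mathcal{F}_{n-1}) \le \log|\Lambda_X|$ uniformly. This bounds the "predictable" part of each increment; the only source of unboundedness is $I_n$ itself on small-probability cylinders.

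To handle the unboundedness of $I_n$, I would use the classical fact that for a random variable taking finitely many values, the centered information function has uniformly bounded second (indeed, any) moment: if $Y$ takes values in a set of size $s$, then $\mathbb{E}\bigl((-\log\mathbb{P}(Y) )^2\bigr) \le C(s)$ for a constant depending only on $s$, since $p(\log p)^2$ is bounded on $[0,1]$ and there are at most $s$ atoms. Applying this conditionally, $\mathbb{E}(I_n^2 \mid \mathcal{F}_{n-1}) \le C(|\Lambda_X|)$, hence $\mathbb{E}(D_n^2 \mid \mathcal{F}_{n-1}) \le 4C(|\Lambda_X|)$ uniformly in $n$. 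Therefore $\sum_n \mathbb{E}(D_n^2)/n^2 \le 4C(|\Lambda_X|)\sum 1/n^2 < \infty$, and the martingale strong law (via the $L^2$ maximal inequality on dyadic blocks plus Kronecker's lemma, or directly) gives $M_N/N \to 0$ $\mu$-a.e., which is the claim.

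The main obstacle is the a.e.-definedness issue together with the unbounded increments: $I_n(x)$ is only defined on $\{x : \mu[x_1^{n-1}] > 0\}$, but this set has full measure for each $n$ (cylinders of zero measure are $\mu$-null), and their intersection over all $n$ still has full measure, so this is a minor technical point rather than a real difficulty. The substantive point — and the place where the finiteness of $\Lambda$ is essential — is obtaining the uniform bound on $\mathbb{E}(D_n^2 \mid \mathcal{F}_{n-1})$; without a finite alphabet the conditional second moments of $I_n$ could blow up and the martingale law of large numbers would not apply in this clean form. I expect the write-up to be short: identify the martingale, bound the conditional variances using $|\Lambda_X| < \infty$, quote the martingale SLLN.
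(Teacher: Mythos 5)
Your proposal is correct and follows exactly the paper's argument: rewrite the expression via \eqref{eq:product-fomula-for-cylinder-measures} as an average of the martingale differences $I_n-\mathbb{E}(I_n\mid\mathcal{F}_{n-1})$ and apply the strong law of large numbers for martingale differences. The only difference is that you supply the justification for the uniform $L^2$ bound (boundedness of $p(\log p)^2$ together with finiteness of $\Lambda_X$), which the paper simply asserts.
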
 \begin{proof} Using \eqref{eq:product-fomula-for-cylinder-measures}
we can write this expression as \[
\frac{1}{N}\sum_{n=1}^{N}\left(I_{n}-\mathbb{E}(I_{n}|\mathcal{F}_{n-1})\right)(x).\]
 This is an average of uniformly $L^{2}$-bounded martingale differences,
so by the Law of Large Numbers for martingale differences (see e.g. \cite[Theorem 3 in Chapter VII.9]{Feller71}),
it converges to $0$ a.e. \end{proof}

A related result, which recovers the lemma in the case of doubling measures, can be found in the paper of Llorente and Nicolau \cite[Corollary 3.8 and the discussion following Equation 0.10]{LlorenteNicolau2004}.

Now suppose $f:X\rightarrow Y$ is a tree morphism,
and denote by $f$ also the induced symbolic map $\Lambda_{X}^{*}\rightarrow\Lambda_{Y}^{*}$.
Let $\nu=f\mu$, define the conditional measures $\nu(\cdot|y_{1}^{n})$,
$y_{i}\in\Lambda_{Y}$, as above, and let $Y_{n}$ denote the coordinate
functions on $Y$. Since $f$ is a morphism, we also have the conditional
measures $\nu(\cdot|x_{1}^{n})$ on $\Lambda_{Y}$ given by\[
\nu(b|x_{1}^{n-1})=\sum_{a\in\Lambda_{X}\,:\, f(x_{1}^{n-1}a)=f(x_{1}^{n-1})b}\mu(a|x_{1}^{n-1}),\]
 and we have the corresponding entropy\[
H(Y_{n}|x_{1}^{n-1}).\]
 Note that $\nu(\cdot|x_{1}^{n-1})$ can also be thought of as the
push-forward $f\mu(\cdot|x_{1}^{n-1})$, which is well defined as a measure on $\Lambda_Y$ because
$f$ is a tree morphism.

In the case of a $\rho$-tree $X$ with a probability measure $\mu$ we see, writing again $X_{n}$ for the coordinate functions,
that
\[
H_{\rho}(\mu)=H(X_{1}),
\]
and more generally,
\[
H_{\rho^{n+1}}(\mu_{[x_{1}\ldots x_{n}]})=H(X_{n+1}|x_{1}\ldots x_{n}),
\]
where $\mu_{A}=\frac{1}{\mu(A)}\mu|_{A}$ as usual. If $f:X\rightarrow Y$ is a morphism of $\rho$-trees and $\nu=f\mu$, we have
\[
H_{\rho^{n+1}}(f\mu_{[x_{1}\ldots x_{n}]})=H(Y_{n+1}|x_{1}\ldots x_{n}).
\]

\begin{thm} \label{thm:dimension-via-local-entropy-for-rho-trees}Let
$X$,$Y$ be $\rho$-trees and $f:X\rightarrow Y$ a tree morphism, and
let $\mu$ be a probability measure on $X$. If
\begin{equation} \label{eq:image-entropy-averages}
\liminf\frac{1}{N}\sum_{n=1}^{N}H_{\rho^{n+1}}(f\mu_{[x_{1}\ldots x_{n}]})\geq\alpha
\end{equation}
 for $\mu$-a.e. $x$, then \[
\dim_{*}f\mu\geq\frac{\alpha}{\log(1/\rho)}.\]
\end{thm}
\begin{proof}
We are going to construct a random measure $\nu$ on the tree $X$ satisfying the following properties:
\begin{enumerate}
\item \label{it:same-entropies} Conditioned on $\nu[x_{1}\ldots x_{n}]>0$,
\[
H_{\rho^{n+1}}(\nu_{[x_{1}\ldots x_{n}]}) = H_{\rho^{n+1}}(f\mu_{[x_{1}\ldots x_{n}]})
\]
\item \label{it:isometry-on-support} The restriction of $f$ to $\supp(\nu)$ is injective (and therefore an isometry).
\item \label{it:intensity-is-mu} $\mathbb{E}(\nu(A)) = \mu(A)$ for all Borel sets $A$.
\end{enumerate}
Before proceeding to the construction, we explain how to conclude the proof from these properties. Let $G$ be the (Borel) set where \eqref{eq:image-entropy-averages} holds. By assumption $G$ has full measure, so \eqref{it:intensity-is-mu} tells us that $\nu(G)$ has full measure, almost surely. In view of \eqref{it:same-entropies}, Theorem \ref{thm:dimension-via-local-entropy-for-rho-trees} yields $\dim_* \nu\ge \alpha/\log(1/\rho)$ almost surely. But, by \eqref{it:isometry-on-support}, $f|_{\supp(\nu)}$ is an isometry, and in particular it preserves the dimension of any measure, so we deduce that $\dim_* f\nu\ge \alpha/\log(1/\rho)$ almost surely. Since, using again \eqref{it:intensity-is-mu}, $f\mu=\mathbb{E}(f\nu)$, the desired lower bound on $\dim_*f\mu$ follows from Lemma \ref{lem:properties-of-lower-dim}\eqref{itm:dim-at-least-inf-over-components}.

We now describe the random construction. For a node $a$ on $X$, and a symbol $j\in\Lambda_Y$, we write $F_{a,j}$ for the fiber $\{i\in\Lambda_X: f(ai)=f(a)j\}$. For all $a,j$ such that $F_{a,j}$ is nonempty, we let $\mathbb{P}_{a,j}$ denote the probability distribution on $F_{a,j}$ given by
\[
\mathbb{P}_{a,j}(i)= \frac{\mu(i|a)}{\mu(F_{a,j}|a)}.
\]
To define $\nu$, it is enough to specify the conditional measures $\nu(\cdot|a)$ at each node $a$ in $X$. We proceed by selecting one element from each nonempty fiber $F_{a,j}$, according to $\mathbb{P}_{a,j}$, and passing to it all the mass of the fiber (so that $\nu(i|a)=\mu(F_{a,j}|a)$ whenever $i$ was the chosen element in $F_{a,j}$, and $0$ otherwise), with all the selections independent.

Properties \eqref{it:same-entropies} and \eqref{it:isometry-on-support} are immediate from the construction. It is enough to verify \eqref{it:intensity-is-mu} when $A$ is a cylinder set $[a_1\ldots a_N]$. In this case, writing $b_{n+1}$ for the element of $\Lambda_Y$ such that $f(a_1\ldots a_{n+1})=f(a_1\ldots a_n)b_{n+1}$,

\begin{align*}
\mathbb{E}(\nu[a_1\ldots a_N]) &= \mathbb{P}(\nu[a_1\ldots a_N]>0)\cdot \mathbb{E}(\nu[a_1\ldots a_N]|\nu[a_1\ldots a_N]>0)\\
&=   \prod_{n=0}^{N-1} \frac{\mu(a_{n+1}|a_1\ldots a_n)}{\mu(F_{(a_1,\ldots,a_n),b_{n+1}}|a_1\ldots a_n)}\cdot \prod_{n=0}^{N-1} \mu(F_{(a_1,\ldots,a_n),b_{n+1}}|a_1\ldots a_n)\\
&= \mu[a_1\ldots a_N],
\end{align*}
as desired.
\end{proof}

The significance of this theorem is that one obtains a lower bound
on the dimension of the \textit{image measure} $f\mu$, in terms of
an asymptotic property of the measure $\mu$ \textit{in the domain}.
Because the map is not one-to-one, a lot of structure is destroyed
in the passage from $\mu$ to $f\mu$, and it may be impossible to
find enough structure in $f\mu$ to analyze it directly. Instead,
this theorem allows one to use structural information about $\mu$
and the way $f$ acts on cylinder sets in order to bound $\dim_{*}f\mu$
from below.

\section{\label{sec:Lifting-maps-to-tree-morphisms}Lifting maps to tree morphisms}

In order to make use of the last section's results it is necessary
to lift topological maps (between trees, Euclidean domains, or a mixture
of the two) to tree morphisms. This technical section provides
the tools for this.

\subsection{\label{sub:Base-p-representation}Base-$p$ representation}

Given $p\geq2$ one can represent $[0,1]$ using a $p$-regular tree:
$u:\{0,\ldots,p-1\}^{\mathbb{N}}\rightarrow[0,1]$ is given by\[
u(x)=\sum_{n=1}^{\infty}p^{-n}x_{n}.\]
 We shall always give the full tree $\{0,\ldots,p-1\}^{\mathbb{N}}$
the metric $d_{1/p}$, under which $u$ becomes $1$-Lipschitz. Similarly
the base-$p$ representation of the cube $[0,1]^{d}$ is given by
the tree $(\{0,\ldots,p-1\}^{d})^{\mathbb{N}}$ with the map $(x^{1},\ldots,x^{d})\mapsto(u(x^{1}),\ldots,u(x^{d}))$
and the metric $d_{1/p}$, under which this map is again $1$-Lipschitz
with respect to the $\left\Vert \cdot\right\Vert _{\infty}$-norm
on the range.

\subsection{\label{sub:Faithful-maps}Faithful maps}

Below we introduce a class of maps which do not distort dimension
very much. \begin{defn} \label{def:faithful-maps}Let $X$ be a $\rho$-tree.
A map $f:X\rightarrow\mathbb{R}^{d}$ is \emph{$C$-faithful} if
for each $n$ and each $a\in\Lambda^{n}$ the following conditions hold:
\begin{enumerate}
\item Multiplicity: \label{enum:multiplicity}No point in $f[a]$ is covered
by more than $C$ sets $f[a b],b\in\Lambda$.
\item Decay: \label{enum:decay} $f[a]$ contains a ball of radius $(C^{-1}\rho)^{n}$
and is contained in a ball of radius $(C\rho)^{n}$.
\end{enumerate}
\end{defn} For example, the base-$p$ coding of $[0,1]^{d}$ is $C$-faithful
for $C=2^{d}$.

The second condition in the definition implies that a $C$-faithful
map of a $\rho$-tree is $(1-\frac{\log C}{\log(1/\rho)})$-H\"{o}lder.
Therefore, if $\mu$ is a measure on $X$, then \[
\dim_{*}(f\mu)\leq\frac{\log(1/\rho)}{\log(1/\rho)-\log C}\,\dim_{*}\mu.\]
 In applications, $C$ will be independent of $\rho$ and we will be free to choose $\rho$ to be very small
(for example, representing points in $[0,1]^{k}$ using a large base); then the
bound above approaches $\dim_{*}\mu$. Similarly, the following provides
a lower bound:

\begin{prop} \label{prop:dim-tree-to-euclidean}Let $\mu$ be a measure
on a $\rho$-tree $X$ and suppose $f:X\rightarrow\mathbb{R}^{d}$
is $C$-faithful. Then \[
\dim_{*}(f\mu)\geq\dim_{*}(\mu)-\frac{O_{C,d}(1)}{\log(1/\rho)}.\]
 \end{prop}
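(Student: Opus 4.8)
The plan is to produce a measure $\tilde\mu$ on a metric $\rho$-tree $Y$ together with a tree morphism $h\colon Y\to Y'$ onto a $\rho$-tree encoding the range, so that $f\mu$ is (bi-Lipschitz equivalent to) the pushforward $h\tilde\mu$, and then invoke Lemma \ref{lem:decay-and-dimension-in-rho-trees} together with the mass-decay estimate already available for $\mu$. The key point to exploit is condition \eqref{enum:decay} (decay) in the definition of $C$-faithful: the cylinders $f[a]$ of generation $n$ sit inside balls of radius $(C\rho)^n$ and contain balls of radius $(C^{-1}\rho)^n$, so up to a fixed multiplicative constant they behave like the cells of a $\rho$-regular filtration. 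Hence by Lemma \ref{lem:entropy-along-filtration} (applied to the filtration $\mathcal{F}_n$ whose $n$-th level consists of the sets $f[a]$, $a\in\Lambda^n$, suitably disjointified), for $(f\mu)$-a.e.\ point $y=f(x)$ one has
\[
\underline{\dim}(f\mu,y)\;=\;\liminf_{n\to\infty}\frac{\log (f\mu)(f[x_1^n])}{n\log\rho}.
\]
The numerator is controlled by condition \eqref{enum:multiplicity} (multiplicity): because no point of $f[a]$ lies in more than $C$ of the children $f[ab]$, the pushforward mass satisfies $(f\mu)(f[x_1^n])\le C\,\mu[x_1^n]$ — more carefully, summing over the at most $C$ preimage-cylinders at each point gives $(f\mu)(f[x_1^n])\le C^n\mu[x_1^n]$ in the worst case, but in fact a uniform-in-$n$ bound holds if one argues level by level; this is the one place where a little care is needed.

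More precisely, first I would set up the combinatorics: disjointify the family $\{f[a]:a\in\Lambda^n\}$ into a Borel partition $\mathcal{F}_n$ refining $\mathcal{F}_{n-1}$ up to the fixed multiplicity $C$, check that each cell still contains a ball of radius $(C^{-1}\rho)^n/C'$ and lies in a ball of radius $(C\rho)^n$, so that $\mathcal{F}=(\mathcal{F}_n)$ is $\rho$-regular (the constant absorbing $C$). Then I would estimate $(f\mu)(\mathcal{F}_n(y))$ from above by $C\cdot\mu[x_1^n]$ for a suitable $x\in f^{-1}(y)$: indeed $\mathcal{F}_n(y)\subseteq f[x_1^n]$ for some such $x$, and $\mu(f^{-1}(f[x_1^n]))\le$ (multiplicity $C$)$\cdot\max$ over the cylinders $[a]$, $|a|=n$, with $f[a]\subseteq f[x_1^n]$ — but all these have the same image, so by multiplicity there are at most $C$ of them mapping into a given point, giving $(f\mu)(\mathcal{F}_n(y))\le C\,\mu[x_1^n]$. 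Feeding this into Lemma \ref{lem:entropy-along-filtration}:
\[
\underline{\dim}(f\mu,y)\;\ge\;\liminf_{n\to\infty}\frac{\log\big(C\,\mu[x_1^n]\big)}{n\log\rho}\;=\;\liminf_{n\to\infty}\frac{\log\mu[x_1^n]}{n\log\rho},
\]
since the $\log C$ term is killed by the division by $n$.

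Finally I would translate the right-hand side back to $\dim_*\mu$. By Lemma \ref{lem:decay-and-dimension-in-rho-trees} (or directly by Lemma \ref{lem:entropy-along-filtration} applied to the cylinder filtration on $X$ itself, which is trivially $\rho$-regular), for $\mu$-a.e.\ $x$ we have $\liminf_{n}\frac{\log\mu[x_1^n]}{n\log\rho}\ge \underline{\dim}(\mu,x)$, and by Lemma \ref{lem:properties-of-lower-dim}\eqref{eq:Hausdorff-dim-in-terms-of-local-dim} this is $\ge\dim_*\mu$ on a set of full measure. Combining, $\underline{\dim}(f\mu,y)\ge\dim_*\mu$ for $(f\mu)$-a.e.\ $y$, and another application of Lemma \ref{lem:properties-of-lower-dim}\eqref{eq:Hausdorff-dim-in-terms-of-local-dim} gives $\dim_*(f\mu)\ge\dim_*\mu$. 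Tracking the genuine constants: the filtration $\mathcal{F}$ is $\rho$-regular with a constant of size $O_{C,d}(1)$, and the only loss over the clean inequality comes from the discrepancy between the radii $(C\rho)^n$ and $(C^{-1}\rho)^n$ of the inner and outer balls of $\mathcal{F}_n(y)$, which contributes a term $\frac{\log(C^2)}{\log(1/\rho)}$ to the estimate; absorbing this gives exactly $\dim_*(f\mu)\ge\dim_*(\mu)-\frac{O_{C,d}(1)}{\log(1/\rho)}$. The main obstacle is the bookkeeping in the disjointification step: one must verify that replacing the overlapping cover $\{f[a]\}$ by an honest partition does not destroy $\rho$-regularity and that the multiplicity bound still controls the mass of partition cells uniformly in $n$; everything else is a direct application of the lemmas already in hand.
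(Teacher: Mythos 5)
Your overall strategy (control the mass of small balls in the image and appeal to Lemma \ref{lem:properties-of-lower-dim}\eqref{eq:Hausdorff-dim-in-terms-of-local-dim}) is the right one, and it is what the paper does, but two of your key steps fail as stated. First, the family $\{f[a]:a\in\Lambda^n\}$ is \emph{not} a $\rho$-regular filtration in the sense of Section \ref{sub:-padic-cells-and-filtrations}: the decay condition gives inner and outer radii $(C^{-1}\rho)^n$ and $(C\rho)^n$, i.e.\ $\rho^n/C^n$ and $C^n\rho^n$, so the regularity constant degrades exponentially in $n$ rather than being uniform. Consequently Lemma \ref{lem:entropy-along-filtration} does not apply, and no disjointification can repair this; this exponential degradation is precisely why the proposition carries a loss of $O_{C,d}(1)/\log(1/\rho)$ rather than being a clean inequality. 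Your closing claim that the error term ``comes from the discrepancy between the radii'' and equals $\log(C^2)/\log(1/\rho)$ is an attempt to recover this after the fact, but it is not supported by the argument you give, which as written would prove the (false in general) clean bound $\dim_*(f\mu)\geq\dim_*\mu$.

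Second, the mass estimate $(f\mu)(\mathcal{F}_n(y))\le C\,\mu[x_1^n]$ is wrong on two counts. The multiplicity hypothesis is per generation (no point of $f[a]$ lies in more than $C$ of the children $f[ab]$), so for $n$-cylinders it compounds to $C^n$, not $C$; the paper's proof uses $C^n$ explicitly. More importantly, $f^{-1}(B_{\rho^n}(y))$ is covered by \emph{all} $n$-cylinders $[a]$ with $f[a]\cap B_{\rho^n}(y)\neq\varnothing$; their images need not coincide, their number is not bounded by the pointwise multiplicity but only by a volume/packing argument (comparing the inner balls of radius $(C^{-1}\rho)^n$, the multiplicity $C^n$, and the outer radius $(1+2C^n)\rho^n$), which yields $e^{O_{C,d}(n)}$ cylinders, and their measures are not controlled by $\mu[x_1^n]$. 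To convert the count into a mass bound one therefore needs a \emph{uniform} upper bound $\mu[a]\le\rho^{(\dim_*\mu-\varepsilon)n}$ over all these cylinders, which the paper obtains via an Egorov reduction before the ball-counting step; your argument, which tracks only the single cylinder $[x_1^n]$ along a typical $x$, has no substitute for this. The correct accounting is $(f\mu)(B_{\rho^n}(y))\le e^{O_{C,d}(n)}\max_a\mu[a]$, and it is the factor $e^{O_{C,d}(n)}$ that produces the stated error term.
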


\begin{proof} We use the characterization of $\dim_{*}$ given in Lemma \ref{lem:properties-of-lower-dim}\eqref{eq:Hausdorff-dim-in-terms-of-local-dim}.
Given $\varepsilon>0$, Egorov's theorem yields a closed set $E_{\varepsilon}\subseteq X$
of measure at least $1-\varepsilon$ such that \[
\liminf_{r\downarrow0}\frac{\log\mu|_{E_{\varepsilon}}(B_{r}(x))}{\log r}\ge\dim_{*}(\mu)\quad\text{uniformly in }x\in E_\varepsilon.\]
 Since it is enough to prove the desired result for $f(\mu|_{E_{\varepsilon}})$
for each $\varepsilon>0$, we can assume without loss of generality
that there is uniformity already for $\mu$ and, in particular, there
is $N\in\mathbb{N}$ such that \begin{equation}
\mu[a]\le\rho^{(\dim_{*}(\mu)-1)n}\label{eq:uniform-measure-bound}\end{equation}
 whenever $a\in\Lambda^{n}$ with $n\ge N$.

Fix $x\in X$ and write $y=f(x)$. Pick $n\ge N$, and let \[
\Phi=\{a\in\Lambda^{n}\,:\, f[a]\cap B_{\rho^{n}}(y)\neq\varnothing\}.\]
 By the decay hypothesis, each $f[a],a\in\Phi$, contains a ball of
radius $(C^{-1}\rho)^{n}$ and is contained in a ball of radius $(C\rho)^{n}$. In particular, $f[a]\subset B_{(1+2C^n)\rho^n}(y)$.
On the other hand, by the multiplicity assumption, no point can be
covered by more than $C^{n}$ of the sets $f[a],a\in\Phi$. Hence,
writing $\lambda$ for Lebesgue measure on $\mathbb{R}^{d}$, we have
\[
\frac{1}{C^n}(C^{-1}\rho)^{nd}|\Phi|\le\frac{\lambda\left(\bigcup_{a\in\Phi}f[a]\right)}{\lambda(B_{1}(0))} \le \left((1+2C^n)\rho^n\right)^d.
\]
 Therefore $|\Phi|\le \exp(O_{C,d}(n))$, and using \eqref{eq:uniform-measure-bound}
we deduce that \[
(f\mu)(B_{\rho^{n}}(y))\le|\Phi|\max_{a\in\Phi}\mu[a]\le\rho^{n\cdot(\dim_{*}(\mu)-\frac{O_{C,d}(1)}{\log(1/\rho)})}.\]
 Letting $n\rightarrow\infty$ we conclude that \[
\underline{\dim}(f\mu,y)\ge\dim_{*}(\mu)-\frac{O_{C,d}(1)}{\log(1/\rho)}.\]
 In light of Lemma \ref{lem:properties-of-lower-dim}\eqref{eq:Hausdorff-dim-in-terms-of-local-dim}, this
yields the desired result. \end{proof}

Note that if $X$ is a $\rho$-tree then for $a\in\Lambda^{n}$, \[
H_{\rho^{n+1}}(\mu_{[a]})=-\sum_{b\in\Lambda}\mu_{[a]}[ab]\log\mu_{[a]}[ab],\]
i.e. this is the Shannon entropy of $\mu_{[a]}$ with respect to the partition
induced from $a$'s children.

The importance of the following estimate is that it is independent
of $\rho$.

\begin{prop} \label{prop:local-entropy-bound-for-faithful-maps}Let
$\mu$ be a measure on a $\rho$-tree $X$ and suppose $f:X\rightarrow\mathbb{R}^{k}$
is $C$-faithful. Then for any $n$-cylinder $a$, \[
\left|H_{\rho^{n+1}}(\mu_{[a]})-H_{\rho^{n+1}}(f\mu_{[a]})\right|<O_{C,k}(1).\]
 \end{prop}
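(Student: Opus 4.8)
The plan is to compare the two entropies term by term via the partitions they compute. The quantity $H_{\rho^{n+1}}(\mu_{[a]})$ is exactly the Shannon entropy $H(\mu_{[a]},\mathcal{P}_a)$, where $\mathcal{P}_a$ is the partition of $[a]$ into its children $[ab]$, $b\in\Lambda$. On the other side, $H_{\rho^{n+1}}(f\mu_{[a]})$ is the $\rho^{n+1}$-scale entropy of a measure living in $\mathbb{R}^k$, so it is within $O_{k}(1)$ (by Lemma \ref{lem:entropy-dimension}, or directly by Lemma \ref{lem:entropy_comparable_radius}) of $H(f\mu_{[a]},\mathcal{D})$ for a suitable dyadic-type partition $\mathcal{D}$ of $\mathbb{R}^k$ at scale $\rho^{n+1}$. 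I would then reduce both sides to comparing $H(\mu_{[a]},\mathcal{P}_a)$ with $H(f\mu_{[a]},\mathcal{Q})$, where $\mathcal{Q}$ is the partition of the children: $\mathcal{Q} = \{f^{-1}[b'] : b'\text{ a child of the node }f(a)\text{ in }Y\}$, pulled back to a partition of $[a]$ by $f$. Since $f$ is a tree morphism, $f\mu_{[a]}$ assigns to the child-cylinder $[b']$ of $f(a)$ exactly the $\mu_{[a]}$-mass of the union of those children $[ab]$ with $f(ab)=b'$; hence $H(f\mu_{[a]},\mathcal{Q}) = H(\mu_{[a]},f^{-1}\mathcal{Q})$, and $f^{-1}\mathcal{Q}$ is a coarsening of $\mathcal{P}_a$. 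By monotonicity of entropy under coarsening, $H(\mu_{[a]},f^{-1}\mathcal{Q}) \le H(\mu_{[a]},\mathcal{P}_a)$, which already gives one direction up to the $O$-error, and the reverse direction needs an upper bound on how much $\mathcal{P}_a$ refines $f^{-1}\mathcal{Q}$.

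The key step, then, is to control $H(\mathcal{P}_a \mid f^{-1}\mathcal{Q})$ — the extra entropy in the finer partition. This is where the faithfulness hypothesis enters. The multiplicity condition says no point of $f[a]$ lies in more than $C$ of the sets $f[ab]$; the decay condition says each $f[ab]$ sits inside a ball of radius $(C\rho)^{n+1}$ and contains a ball of radius $(C^{-1}\rho)^{n+1}$. I would argue that for a fixed child-cylinder $[b']$ of $f(a)$, the images $f[ab]$ with $f(ab)=b'$ all lie inside a single ball of radius $O_C((C\rho)^{n+1})$ (since they all meet $[b']$, which has diameter $\le 2(C\rho)^{n+1}$), and by the volume-packing argument of Proposition \ref{prop:dim-tree-to-euclidean} (lower bound on the volume of each $f[ab]$, bounded multiplicity $C^{n+1}$) there can be at most $\exp(O_{C,k}(n))$ such children — wait, that bound grows with $n$, which is too weak. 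Here is the fix: $f(a)$ has at most $|\Lambda_Y| = O(1)$ children $[b']$, so it suffices to bound, for each $b'$, the number of $[ab]$ mapping into it; and each such $[ab]$ has $f[ab]$ contained in the $b'$-cylinder $[b']$, which has diameter $\le 2(C\rho)^{n+1}$, while $f[ab]$ contains a ball of radius $(C^{-1}\rho)^{n+1}$, so a volume-and-multiplicity count inside $[b']$ gives at most $(2C\rho^{n+1})^k \cdot C^{n+1} / (\lambda(B_1) (C^{-1}\rho^{n+1})^k) = (2C^2)^k C^{n+1}$ of them. This is still exponential in $n$. So the honest conclusion is only $H(\mathcal{P}_a \mid f^{-1}\mathcal{Q}) \le \log(\text{number of children of }a) \le \log|\Lambda_X| = O(1)$ — which is in fact exactly what we want, and requires no geometry at all beyond the trivial fact that $a$ has at most $|\Lambda_X|$ children. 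So the clean argument is: $\mathcal{P}_a$ has at most $|\Lambda_X|$ atoms, hence $H(\mu_{[a]},\mathcal{P}_a) \le \log|\Lambda_X|$ and $H(\mu_{[a]},f^{-1}\mathcal{Q}) \le \log|\Lambda_X|$, and both are nonnegative, so they differ by at most $\log|\Lambda_X| = O_{C,k}(1)$ — but wait, this needs $|\Lambda_X|$ bounded in terms of $C,k$, which is not given.

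Let me reconsider: the cleanest route avoids bounding $|\Lambda_X|$. We have $H_{\rho^{n+1}}(f\mu_{[a]})$, the $\rho^{n+1}$-ball entropy of a measure on $\mathbb{R}^k$. I would instead bound this directly against $H_{\rho^{n+1}}(\mu_{[a]}) = H(\mu_{[a]},\mathcal{P}_a)$ using the faithfulness geometry: balls $B_{\rho^{n+1}}(y)$ in $\mathbb{R}^k$ meet only $O_{C,k}(1)$ of the sets $f[ab]$ (by decay giving a lower volume bound $(C^{-1}\rho)^{(n+1)k}$ on each and multiplicity $\le C^{n+1}$ — no, again exponential). The genuine resolution, which I expect the authors use, is that a ball of radius $\rho^{n+1}$ is comparable to the diameter $\rho^{n+1}$ of a single child-cylinder, and by the decay bound each $f[ab]$ has diameter between $(C^{-1}\rho)^{n+1}$ and $(C\rho)^{n+1}$, so $B_{\rho^{n+1}}(y)$ is sandwiched between containing and being contained in unions of boundedly many — in a metric-entropy sense — children relative to the next level; applying Lemma \ref{lem:entropy_comparable_radius} to pass between scales $\rho^{n+1}$, $(C\rho)^{n+1}$, $(C^{-1}\rho)^{n+1}$ contributes only $O_{C,k}(1)$ (note the scale ratios are $C^{\pm(n+1)}$, so this is the delicate point: Lemma \ref{lem:entropy_comparable_radius}'s error is $O_{C,d}(1)$ only for a fixed ratio $C$, not for ratio $C^{n+1}$). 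So the main obstacle is precisely making the scale-comparison uniform in $n$; I expect it is handled by comparing $H_{\rho^{n+1}}(f\mu_{[a]})$ not to $H_{\rho^{n}}(f\mu_{[a]})$ but by noting that after applying the rescaling homothety $T_{[a]}$ that blows $[a]$ up to the unit cube, $f\mu_{[a]}$ becomes a measure on (a bounded neighborhood of) the unit cube, the partition into children becomes a $\rho$-scale partition with the faithfulness constants now at scale $1$ (i.e. $\rho^1$), and the single-scale version of the comparison — exactly Lemma \ref{lem:entropy_comparable_radius} with fixed ratio depending only on $C$ — applies. I would therefore structure the proof as: (1) rescale by $T_{[a]}$ to reduce to $n=0$; (2) observe $H_{\rho}(\mu_{[a]})$ at level $0$ is $H(\mu_{[a]},\mathcal{P}_a)$; (3) use that a $\rho$-ball meets at most $O_{C,k}(1)$ of the (now unit-scale, faithful) sets $f[ab]$ and each contains a $\Omega_C(\rho)$-ball, so $H_\rho(f\mu_{[a]})$ equals $H(f\mu_{[a]},f\mathcal{P}_a)$ up to $O_{C,k}(1)$; (4) conclude since $H(f\mu_{[a]},f\mathcal{P}_a) = H(\mu_{[a]},f^{-1}f\mathcal{P}_a)$ and $\mathcal{P}_a$ refines $f^{-1}f\mathcal{P}_a$ with the refinement multiplicity bounded by the morphism property, so the two Shannon entropies differ by at most $O_{C,k}(1)$. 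The one genuinely load-bearing estimate is step (3)'s bounded-multiplicity claim, and it follows cleanly from faithfulness at the single fixed scale $\rho$ after rescaling — that is the heart of why the bound is independent of $\rho$ failing, and independent of $n$ holding.
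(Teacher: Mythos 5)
Your reduction to the case $n=0$ by rescaling $[a]$ is exactly the paper's first move, and your single-scale volume/multiplicity count (a $\rho$-ball can meet only $O_{C,k}(1)$ of the sets $f[b]$, since each contains a ball of radius $C^{-1}\rho$ and the cover has multiplicity $\le C$) is the correct geometric input. The easy direction is also within reach of what you write: by decay $[x_1]\subseteq f^{-1}(B_{C\rho}(f(x)))$, so $H_{C\rho}(f\mu)\le H_\rho(\mu)$ and Lemma \ref{lem:entropy_comparable_radius} finishes. But the final assembly (your steps (3)--(4)) has a genuine gap: $f\mathcal{P}_a=\{f[ab]\}$ is an \emph{overlapping cover} of $f[a]$, not a partition, so $H(f\mu_{[a]},f\mathcal{P}_a)$ and $H(\mu_{[a]},f^{-1}f\mathcal{P}_a)$ are not defined, and the "morphism property" you invoke does not exist here --- in this proposition $f$ maps into $\mathbb{R}^k$ and is only faithful, not a tree morphism (you appear to be importing the setup of Theorem \ref{thm:dim-of-projection-via-local-entropies}). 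Disjointifying the cover by taking the transitive closure of the overlap relation can merge unboundedly many cylinders, so the "bounded refinement multiplicity" argument does not survive; and the pointwise ratio $\mu(f^{-1}B_\rho(f(x)))/\mu[x_1]$ can be arbitrarily large, so no pointwise comparison of the integrands works either.

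The paper's resolution of the hard direction avoids partitions entirely: writing $B^f_\rho(x)=f^{-1}(B_\rho(f(x)))$, one has
\[
H_\rho(\mu)-H_\rho(f\mu)=\int\log\Bigl(\frac{\mu B^f_\rho(x)}{\mu[x_1]}\Bigr)d\mu(x)\le\int\frac{\mu B^f_\rho(x)}{\mu[x_1]}\,d\mu(x)\le\sum_{a\in\Lambda}\sum_{b\in\Lambda(a)}\mu[b],
\]
where $\Lambda(a)=\{b:\dist(f[a],f[b])<\rho\}$; the elementary bound $\log t\le t$ converts the problem from controlling a logarithm of a mass ratio into a double sum, and your volume estimate (each $b$ lies in at most $O_{C,k}(1)$ of the sets $\Lambda(a)$) then bounds the double sum by $O_{C,k}(1)$. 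This $\log t\le t$ plus double-counting step is the idea your proposal is missing; without it, the bounded-multiplicity geometry cannot be turned into an entropy inequality along the route you describe.
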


\begin{proof} Since $f|_{[a]}$ is a $C$-faithful map on the $\rho$-tree
$[a]$ (with the re-scaled metric $\rho^{-n}d_{\rho}(\cdot,\cdot)$)
we may without loss of generality assume that $n=0$, $a=$ empty
word, so we must prove \[
\left|H_{\rho}(\mu)-H_{\rho}(f\mu)\right|<O_{C,k}(1).\]
 Notice that \begin{align*}
H_{\rho}(\mu) & =-\int\log(\mu[x_{1}])d\mu(x),\\
H_{\rho}(f\mu) & =-\int\log\mu(B_{\rho}^{f}(x))d\mu(x),\end{align*}
 where \[
B_{\rho}^{f}(x)=f^{-1}(B_{\rho}(f(x)).\]
 By the decay assumption in Definition \ref{def:faithful-maps}, $f[x_{1}]$
is contained in a ball of radius $C\rho$, and therefore $[x_{1}]\subseteq B_{2C\rho}^{f}(x)$,
whence $H_{2C\rho}(f\mu)\le H_\rho(\mu)$. It then follows from Lemma
\ref{lem:entropy_comparable_radius} that
\[
H_{\rho}(f\mu)-H_\rho(\mu)\le O_{C,k}(1).
\]
For the other inequality, let \[
\Lambda(a)=\{b\in\Lambda:\dist(f[a],f[b])<\rho\}.\]
 A volume argument like the one in the proof of Proposition \ref{prop:dim-tree-to-euclidean}
yields that $b\in\Lambda$ belongs to at most $O_{C,k}(1)$ of the
sets $\Lambda(a)$. Since clearly \[
B_{\rho}^{f}(x)\subseteq\bigcup_{a\in\Lambda(x_{1})}[a],\]
 we can estimate \begin{align*}
H_\rho(\mu)-H_{\rho}(f\mu) & =\int\log\left(\frac{\mu B_{\rho}^{f}(x)}{\mu[x_{1}]}\right)d\mu(x)\\
 & \le\int\frac{\mu B_{\rho}^{f}(x)}{\mu[x_{1}]}d\mu(x)\\
 & \le\int\frac{\sum_{b\in\Lambda(x_{1})}\mu[b]}{\mu[x_{1}]}d\mu(x)\\
 & =\sum_{a\in\Lambda}\sum_{b\in\Lambda(a)}\mu[b]\\
 & \le O_{C,k}(1).\qedhere\end{align*}

\end{proof}

\subsection{\label{sub:Lifting-maps-to-tree-morphisms}Lifting maps to tree morphisms}

The following technical result decomposes a map into a tree morphism,
which is easier to analyze, and a faithful map, which by Proposition \ref{prop:dim-tree-to-euclidean} has little effect
on dimension.  \begin{thm} \label{thm:lifting-maps-to-tre-morphisms}Let
$X$ be a $\rho$-tree and $f:X\rightarrow\mathbb{R}^{k}$ an $L$-Lipschitz
map. Then there is a commutative diagram \begin{diagram}[nohug] X & \rTo^g  &Y\\ & \rdTo<f  & \dTo>h \\ & & \mathbb{R}^k \end{diagram}where:
\begin{enumerate}
\item $Y$ is a $\rho$-tree.
\item $g$ is a tree morphism.
\item $h$ is $O_{k, L}(1)$-faithful.
\item If $\mu$ is a measure on $X$, then for any $n$-cylinder $[a]\subseteq X$,
\[
\left|H_{\rho^{n+1}}(f\mu_{[a]})-H_{\rho^{n+1}}(g\mu_{[a]})\right|=O_{k, L}(1).\]

\end{enumerate}
\end{thm}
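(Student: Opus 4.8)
The idea is to build the tree $Y$ so that the $n$-th level of $Y$ records exactly which $n$-adic cell of $\mathbb{R}^k$ the image $f[a]$ meets, but packaged so that a given cell corresponds to at most $O_{k,L}(1)$ nodes of $Y$. Since $f$ is $L$-Lipschitz on a $\rho$-tree, $f[a]$ has diameter at most $L\rho^n$ for $a\in\Lambda^n$, so $f[a]$ meets at most $O_{k,L}(1)$ of the cells in a $p$-adic grid of $\mathbb{R}^k$ of side $\rho^n$ (for a suitable base $p$ comparable to $1/\rho$, or working directly with $\rho^n$-grids). First I would fix, for each $n$, the partition $\mathcal{Q}_n$ of $\mathbb{R}^k$ into half-open cubes of side-length $\rho^n$, noting $\mathcal{Q}_{n+1}$ refines $\mathcal{Q}_n$, so that $(\mathcal{Q}_n)$ is a $\rho$-regular filtration. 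Then I would define $Y\subseteq(\Lambda_Y)^{\mathbb{N}}$ where a node of $Y$ at level $n$ is a pair $(a_1^n, Q)$ recording the address $a\in\Lambda^n$ together with the cube $Q\in\mathcal{Q}_n$ with $f(x)\in Q$ — but we must collapse this so that the tree structure is genuinely a tree over a finite alphabet and so that the faithfulness multiplicity is bounded. The cleanest way: let $g(x)_n$ encode, at each step, the pair consisting of the block of $x$ read so far together with the index (among the $O_{k,L}(1)$ possible cubes refining the parent's cube) of the cube of $\mathcal{Q}_n$ containing $f(x)$. Because $f[a]$ meets boundedly many cubes of $\mathcal{Q}_n$ and each such cube is subdivided into a fixed finite number of cubes of $\mathcal{Q}_{n+1}$, the alphabet $\Lambda_Y$ can be taken finite, and $g$ is visibly a tree morphism: $g(x_1^n)$ determines $g(x_1^{n-1})$ by truncation.

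Next I would define $h:Y\to\mathbb{R}^k$ by sending a point $y\in Y$ to the unique point of $\bigcap_n \overline{Q_n(y)}$, where $Q_n(y)\in\mathcal{Q}_n$ is the cube recorded at level $n$ of $y$; this intersection is a single point since diameters shrink like $\rho^n$, and $h$ is well-defined and continuous. Commutativity $h\circ g = f$ is immediate: $g(x)$ records at each level a cube containing $f(x)$, so $h(g(x))$ is the point in $\bigcap_n\overline{Q_n(g(x))}$, which must be $f(x)$. For faithfulness of $h$: the decay estimate (condition (2) of Definition \ref{def:faithful-maps}) holds because $h[a']$ for $a'\in\Lambda_Y^n$ is contained in a cube of side $\rho^n$ (so in a ball of radius $\rho^n$) and — here one must be slightly careful — contains a definite ball; this is arranged by choosing the alphabet so that each node of $Y$ "claims" a sub-cube of $\mathcal{Q}_n$ of comparable size, or more simply by replacing the exact-containment requirement by noting that $h[a']$ contains the image under $h$ of a sub-tree realizing a full sub-cube of $\mathcal{Q}_{n+1}$, of radius $\Omega_k(\rho^n)$. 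The multiplicity estimate (condition (1)) is exactly the bounded-overlap fact: a point $z\in\mathbb{R}^k$ lies in $h[a'b']$ only if $z\in\overline{Q_{n+1}}$ for the level-$(n+1)$ cube of the child, and there are $O_k(1)$ children cubes per parent cube, plus the $O_{k,L}(1)$ bound on how many parent cubes $f[a]$ meets — so $z$ is covered by at most $O_{k,L}(1)$ of the $h[a'b']$.

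For part (4), the key comparison: for an $n$-cylinder $[a]\subseteq X$ with $g(a)=a'\in\Lambda_Y^n$, the measure $g\mu_{[a]}$ on $\Lambda_Y$ (children of $a'$) and the measure $f\mu_{[a]}$ on $\mathbb{R}^k$ are related because the level-$(n+1)$ label of $g(x)$ for $x\in[a]$ records precisely which cube of $\mathcal{Q}_{n+1}$ contains $f(x)$, up to the bounded ambiguity in how cubes are assigned to alphabet symbols. Concretely, $H_{\rho^{n+1}}(g\mu_{[a]})$ is the Shannon entropy of the partition of $[a]$ into children-cylinders (Proposition-style identity from Section \ref{sub:Metric-trees}), which equals (up to $O_{k,L}(1)$, by the bounded multiplicity, exactly as in Lemma \ref{lem:entropy_comparable_radius}-type estimates) the entropy of the partition of $f\mu_{[a]}$ into cubes of $\mathcal{Q}_{n+1}$, which in turn is within $O_{k,L}(1)$ of $H_{\rho^{n+1}}(f\mu_{[a]})$ by the same reasoning as Lemma \ref{lem:entropy-dimension} and Lemma \ref{lem:entropy_comparable_radius} (comparing $\rho^{n+1}$-scale entropy to entropy of a $\rho^{n+1}$-cube partition). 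Rescaling $[a]$ to the unit $\rho$-tree via $\rho^{-n}d_\rho$ reduces all of this to the $n=0$ case, so all constants are uniform in $n$ and $a$.

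\textbf{Main obstacle.} The delicate point is getting \emph{both} faithfulness conditions to hold simultaneously with a single finite alphabet $\Lambda_Y$ independent of $n$: the decay lower bound wants each node's image to contain a ball of radius $\Omega(\rho^n)$, which pushes toward "one node per cube of $\mathcal{Q}_n$", while we also need $g$ to be a genuine morphism refining level by level and need the whole alphabet finite. The resolution is bookkeeping — each level-$n$ symbol of $Y$ is a pair (symbol of $\Lambda_X$, local index of the chosen $\mathcal{Q}_n$-cube inside the parent's chosen $\mathcal{Q}_{n-1}$-cube), the latter index ranging over a fixed finite set since $\mathcal{Q}_n$ refines $\mathcal{Q}_{n-1}$ into boundedly many pieces — but one must check that the resulting $h[a']$ really does contain a ball of radius $\Omega_k(\rho^n)$, which requires that for every level-$n$ node $a'$ of $Y$ there is at least one full descendant sub-tree realizing an entire $\mathcal{Q}_m$-sub-cube for all $m\ge n$; this is ensured by including, in $\Lambda_Y$, a "filler" branch that subdivides a fixed sub-cube whenever the $X$-side cylinder $[a]$ is large, or alternatively by observing that $f[a]$, being the continuous image of a compact tree, has some positive-diameter structure — but the cleanest fix is simply to enlarge $h[a']$ slightly in the statement's sense by letting $h$ map onto the closure of a canonical sub-cube, which does not affect any of the downstream applications since Proposition \ref{prop:dim-tree-to-euclidean} and Proposition \ref{prop:local-entropy-bound-for-faithful-maps} only use faithfulness up to $O_{C,k}(1)$ errors.
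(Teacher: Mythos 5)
There is a genuine gap, and it sits at the very step you call ``visible'': the map $g$ you construct is not a tree morphism. You record, at level $n$, the cell of a genuine partition $\mathcal{Q}_n$ of $\mathbb{R}^k$ that contains $f(x)$. This datum is not constant on $n$-cylinders of $X$: the set $f[a]$ for $a\in\Lambda_X^n$ has diameter $O(L\rho^n)$, but nothing prevents it from straddling a boundary of $\mathcal{Q}_n$ (indeed it typically will, at infinitely many scales), so two points of the same cylinder $[a]$ get different level-$n$ labels and land in different $n$-cylinders of $Y$. Definition \ref{def:tree-morphism} requires $g[a]\subseteq[b]$ for a \emph{single} $b$. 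The obvious repair, assigning to each cylinder $[a]$ one cell of $\mathcal{Q}_n$ that $f[a]$ merely meets, breaks the recursion: if $f[a]$ meets $Q_a$ but the child image $f[ab]$ lies entirely in a neighboring cell, there is no cell of $\mathcal{Q}_{n+1}$ inside $Q_a$ meeting $f[ab]$, so you cannot keep the chosen cells nested while keeping them close to the image, and you lose either the tree structure of $Y$ or the commutativity $f=hg$. (Separately: if, as written, each $Y$-symbol also carries ``the block of $x$ read so far,'' then $Y_{n+1}$ determines $X_{n+1}$ and $H_{\rho^{n+1}}(g\mu_{[a]})\geq H_{\rho^{n+1}}(\mu_{[a]})$, which destroys the entropy comparison in part (4) already for constant $f$; so that bookkeeping must not enter the alphabet of $Y$.)

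The paper's proof is built precisely around this obstruction. Instead of a partition it uses, at each scale, a \emph{redundant cover} of the current cube by $2^kN^k$ overlapping sub-cubes of side $1/N$ with two properties: bounded multiplicity ($\leq 2^k+1$), and, crucially, that every cube of side $\leq 1/(2N)$ is \emph{contained} in one of the covering cubes. This converts ``$f[a]$ meets a cell'' into ``$f[a]$ is contained in a cell,'' which is exactly what makes the inductive definition of $g$ go through: $f[x_1\ldots x_na]\subseteq f[x_1\ldots x_n]\subseteq\widetilde{h}[g(x_1\ldots x_n)]$ and is small enough to sit inside one child cube, so a single child symbol can be assigned to the whole child cylinder. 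The price is that $h$ is only boundedly faithful rather than injective, which is all the downstream propositions need. Your ``main obstacle'' paragraph worries about the lower decay bound for $h$ (which, with the overlapping cover, is automatic since each node's cube has side $\Theta(\rho^n)$), but does not address the morphism/commutativity failure above; the filler-branch and ``enlarge $h[a']$'' fixes do not touch it. The rest of your outline (decomposing $f$ as a morphism into a discretized range followed by a faithful decoding map, and deducing (4) from faithfulness via Proposition \ref{prop:local-entropy-bound-for-faithful-maps}) matches the paper, so the missing ingredient is exactly this covering device.
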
 \begin{proof} We first note that by rescaling the metric on the range $\RR^k$, we may assume that $f$ is $1$-Lipschitz. This rescaling affects the implicit constants in the $O$ notation, but as we allow them to depend on $L$, we obtain an equivalent statement.

The construction of $Y$ and the associated
maps consists of two parts.

\textbf{Step 1: construction of $Y$ and $h$}. Since $f$ is 1-Lipschitz
and $\diam X=1$, we may assume that the image is contained in $[0,1]^{k}$.
Note that the cube $Q=[0,1]^{k}$ has the property that, given
$N$, it can be covered by $2N^{k}$ closed cubes $Q_{0},\ldots,Q_{2^{k}N^{k}-1}\subseteq Q$
such that
\begin{itemize}
\item Each $Q_{i}$ has side length $1/N$,
\item No point in $Q$ is covered by more than $2^{k}+1$ of the cubes $Q_{i}$,
\item If $Q'\subseteq Q$ is a cube of side length $\leq\frac{1}{2N}$,
then $Q'\subseteq Q_{i}$ for some $i$.
\end{itemize}
The same holds for any other cube, with side lengths scaled appropriately.

For example, for $k=1$ cover $[0,1]$ by $2N$ intervals of length
$1/N$ starting at the rational points $i/2N$.

Let $\{N_{n}\}$ be an integer sequence taking values in $\{\lfloor\rho^{-1}\rfloor,\lfloor\rho^{-1}\rfloor+1\}$,
such that for $P_{n}=\prod_{i=1}^{n}N_{i}$ we have $\frac{1}{2}\le\rho^{n}P_{n}\le1$
(if $1/\rho\in\mathbb{N}$ we can take $N_{n}=1/\rho$ and then $P_{n}=\rho^{-n}$).

Let $Y$ be the tree such that each vertex of level $n$ has $2^{k}N_{n}^{k}$
offspring, numbered $0,\ldots,2^{k}N_{n}^{k}-1$ (when $1/\rho\in\mathbb{N}$
this is a regular tree).

We inductively construct a map $\widetilde{h}$ which assigns to each
cylinder set $[a]\subseteq Y$ a cube $\widetilde{h}[a]\subseteq[0,2]^{k}$
of side length $2P_{n}^{-1}$. We start by $\widetilde{h}[\varnothing]=[0,2]^{k}$.
Suppose that $Q=\widetilde{h}[y_{1}\ldots y_{n}]$ has been defined and is a cube
of side length $2P_{n}^{-1}$. Let $Q_{0}\ldots Q_{2N_{n+1}^{k}-1}\subseteq Q$
be the sub-cubes of $Q$ with properties analogous to those listed
above for the unit cube. For $y\in\{0,\ldots,2N_{n+1}^{k}-1\}$ set
$\widetilde{h}([y_{1}\ldots y_{n}y])=Q_{y}$.

Finally, this defines $h$ by \[
\{h(y)\}=\bigcap_{n=1}^{\infty}\widetilde{h}[y_{1}\ldots y_{n}].\]
 It is easy to see that, since $h$ respects inclusion for cylinder
sets, the intersection of the right-hand side is a single point. From
the construction it is easy to check that $h$ is $O_{k}(1)$-faithful.

\textbf{Step 2: defining the morphism $g:X\rightarrow Y$}. It is
more convenient to work with the symbolic representation: we define
a morphism $g:\Lambda_{X}^{*}\rightarrow\Lambda_{Y}^{*}$ so that
$f[a]\subseteq\widetilde{h}[g(a)]$. This clearly implies that
$f=hg$.

We proceed by induction on the word length. Start with $g(\varnothing)=\varnothing$
(corresponding to $g(X)\subseteq Y$). Suppose we have defined $g(x_{1}\ldots x_{n})=y_{1}\ldots y_{n}$
and the cube $\tilde{h}[y_{1}\ldots y_{n}]$, which has side length
$2P_{n}^{-1}$, contains $f[x_{1}\ldots x_{n}]$. Since $f$ is $1$-Lipschitz,
for each $a\in\Lambda_{X}$ the set $f[x_{1}\ldots x_{n}a]$ is contained
in a cube of side length $\rho^{n+1}\le P_{n+1}^{-1}$, i.e. $1/2N_{n+1}$
times the side length of $\widetilde{h}[y_{1}\ldots y_{n}]$. Thus
by construction of $\widetilde{h}$ there is at least one $b\in\{0,\ldots,2N_{n+1}^{k}-1\}$
such that the cube $\widetilde{h}[y_{1}\ldots y_{n}b]$ contains $f[x_{1}\ldots x_{n}a]$;
set $g(x_{1}\ldots x_{n}a)=y_{1}\ldots y_{n}b$.

This completes the construction of $Y$ and of $g,h$.

Finally, the entropy estimate is a consequence of the commutativity
of the diagram, the faithfulness of $h$ and Proposition \ref{prop:local-entropy-bound-for-faithful-maps}.
\end{proof}

\section{\label{sec:Bernoulli-convolutions}Semicontinuity of dimension: Bernoulli
convolutions}

As a warm-up we demonstrate in this section how the methods introduced
so far can be used to obtain semi-continuity of the Hausdorff dimension
of Bernoulli convolutions in the parameter space (Theorem \ref{thm:BC}). Recall that for
$0<t,p<1$ the Bernoulli convolution $\nu_{t}^{p}$ is the distribution
of the random real number\[
\sum_{n=0}^{\infty}\pm t^{n}\]
 where the signs are chosen i.i.d. with marginal distribution $(p,1-p)$.
The parameter $t$ is called the contraction ratio.

Bernoulli convolutions have been studied extensively. It is known
that, with $p=\frac{1}{2}$, almost every $t\in[\frac{1}{2},1)$ leads
to a measure which is absolutely continuous with respect to Lebesgue
(in particular, it has dimension 1), with similar results available
for other values of $p$ in a smaller range of $t$. See \cite{PeresSchlagSolomyak00}
and references therein for further background.

Theorem \ref{thm:BC} can be inferred from a combination of existing
results. In \cite{PeresSolomyak00} it is shown that $\dim_{e}(\nu_{t}^{p})$
exists and is given by the supremum over a countable family of continuous
functions of $t$ and $p$, implying that  $\dim_{e}(\nu_{t}^{p})$ is lower
semicontinuous in $t,p$. On the other hand it is a rather deep fact that $\nu_{t}^{p}$
is exact-dimensional; see \cite{FengHu09} for a careful argument.
Combining these with Proposition \ref{prop:relation-dim-measure}, we find that $\dim(\nu_{t}^{p})$
is lower-semicontinuous. Theorem \ref{thm:BC} provides a direct
argument for the semicontinuity of Hausdorff dimension. We note that a simple ergodicity
argument in the coding space shows that $\dim_{*}(\nu_{t}^{p})=\dim^{*}(\nu_{t}^{p})$
for all $t,p$, so the result for lower Hausdorff dimension implies it for the Hausdorff dimension.

\begin{proof}[Proof of Theorem \ref{thm:BC}]
 Fix $(t_{0},p_{0})\in(0,1)\times(0,1)$
and $\varepsilon>0$, and choose $N$ such that \[
\frac{H_{t_{0}^{N}}\left(\nu_{t_{0}}^{p_{0}}\right)}{N\log(1/t_{0})}>\underline{\dim}_{e}(\nu_{t_{0}}^{p_{0}})-\e.\]
 Write $\Lambda=\{-1,1\}^{N}$. Given $u=(u_{0},\ldots,u_{N-1})\in\Lambda$,
let $P_{t}(u)=\sum_{i=0}^{N-1}u_{i}t^{i}$, and define $\pi_{t}:\Lambda^{\NN}\rightarrow\RR$
by \[
\pi_{t}(x)=\sum_{i=0}^{\infty}P_{t}(x_{i})\, t^{iN}.\]
 Let $\mu^{p}$ be the product measure on $\Lambda^{\mathbb{N}}$
whose marginal is
\[
\mu^p([u]) = p^{|\{i: u_i=1\}|} (1-p)^{|\{i:u_i=-1\}|}   \quad\text{for }u\in\Lambda.
\]
 Then $\nu_{t}^{p}=\pi_{t}(\mu^{p})$. It is not hard to check that
$(t,p)\mapsto H_{t^{N}}(\nu_{t}^{p})$ is continuous. Thus there is
a small square $Q=[t_{0}-\delta,t_{0}+\delta]\times[p_{0}-\delta,p_{0}+\delta]$
(with $\delta$ depending only on $\varepsilon$ since $N=N(\varepsilon)$)
such that, for $(t,p)\in Q$, \[
\gamma:=N\log(1/t)\left(\underline{\dim}_{e}(\nu_{t_{0}}^{p_{0}})-2\e\right)\leq H_{t^{N}}(\nu_{t}^{p}).\]
Fix $(t,p)\in Q$, write $\mu=\mu^{p}$ and $\pi=\pi_{t}$, and set
$\rho=t^{N}$. To complete the proof, it is enough to show that
\[
\dim_{*}(\nu_{t}^{p})  \ge\dim_{*}(\nu_{t_{0}}^{p_{0}})-2\e-O(1/N).
\]
The implicit constant in $O(1/N)$ will depend on $t_0$, but since $N$ can be taken arbitrarily large given $(p_0,t_0)$, this is of no consequence.

Note that, thinking of $X=\Lambda^{\mathbb{N}}$ as a $\rho$-tree,
$\pi_{t}$ becomes Lipschitz and we can apply Theorem \ref{thm:lifting-maps-to-tre-morphisms}
to obtain $X\xrightarrow{g}Y\xrightarrow{h}\RR$.

Since $\mu$ is a product measure, we may identify $\mu_{[a]}$
and $\mu$ under the natural identification of $[a]$ with the full
tree. Also, \[
\pi\mu_{[a]}=S\pi\mu,\]
 where $S:\mathbb{R}\rightarrow\mathbb{R}$ is a homothety that scales
by $t^{N|a|}$. Since translations do not change entropies, we conclude
\[
H_{\rho^{|a|+1}}\left(\pi\mu_{[a]}\right)\ge\gamma.\]
 By Theorem \ref{thm:lifting-maps-to-tre-morphisms}, \[
H_{\rho^{|a|+1}}\left(g\mu_{[a]}\right)\ge\gamma-O(1).\]
 This holds uniformly for all $a\in\Lambda^{*}$. Hence, using Theorem
\ref{thm:dimension-via-local-entropy-for-rho-trees}, \[
\dim_{*}(g\mu)\ge \frac{1}{\log(1/\rho)}(\gamma-O(1)).\]
 Finally, by Proposition \ref{prop:dim-tree-to-euclidean}, \begin{align*}
\dim_{*}(\nu_{t}^{p}) & \ge\dim_*g\mu-\frac{O(1)}{N\log(1/t)}\\
 &\ge \frac{\gamma-O(1)}{N\log(1/t)}\\
 & \ge\underline{\dim}_{e}(\nu_{t_{0}}^{p_{0}})-2\e-O(1/N)\\
 & \ge\dim_{*}(\nu_{t_{0}}^{p_{0}})-2\e-O(1/N),
 \end{align*}
 where we used Proposition \ref{prop:relation-dim-measure} in the
last line. This completes the proof. \end{proof}

\section{\label{sec:CP-chains-and-local-dynamics}CP-chains and local
dynamics}

In this section we formalize the notion of local dynamics of a measure
along a filtration. We then introduce a slight generalization of Furstenberg's
CP-chains, which provide a rich supply of measures with good local
dynamics.

We adopt the convention that a \emph{measure} refers to a probability
measure on Euclidean space or a tree. We use the term \emph{distribution}
for probability measures on larger spaces, such as the space of measures,
sequence spaces over measures, etc.

This section uses some basic notions from ergodic theory. A good introductory reference is \cite{Walters82}.

\subsection{\label{sub:Generic-sequences}Generic sequences}

Let $M$ be a compact metric space (which, later on, we usually
do not specify) and denote by $T$ the shift map on $M^{\mathbb{N}}$.
A sequence $\overline{\mu}=(\mu_{n})_{n=1}^{\infty}\in M^{\mathbb{N}}$
is \emph{generic} for $P\in \mathcal{P}(M^\mathbb{N})$ if the sequence of distributions \[
A_{N}(\overline{\mu},T)=\frac{1}{N}\sum_{n=1}^{N}\delta_{T^{n}\overline{\mu}}\]
 converges in the weak-{*} topology, as $N\rightarrow\infty$ (note that these distributions are the uniform measure on the initial $N$ points of the orbit of $\\overline{mu}$ under $T$). Equivalently, for any $f\in C(M^{\mathbb{N}})$,
\[
\frac{1}{N}\sum_{n=0}^{N-1}f(T^n \overline{\mu})\rightarrow\int fdP.
\]
Note that the limit distribution $P$ is $T$-invariant. Similarly,
$\overline{\mu}$ is \emph{strongly generic} for $P\in \mathcal{P}(M^\mathbb{N})$ if for each $q\geq1$, the corresponding average \[
A_{N}(\overline{\mu},T^{q})=\frac{1}{N}\sum_{n=0}^{N-1}\delta_{T^{qn}\overline{\mu}}\]
 converges as $N\rightarrow\infty$ to a distribution $P_{q}$, and $P_1 =P$.

An ergodic $T$-invariant distribution $P$ on $M^{\mathbb{N}}$ decomposes
under $T^{q}$ into $q'$ ergodic components for some $q'|q$, and
these components average to $P$. When $\overline{\mu}$ is generic
for $P$ and the averages $\lim A_{N}(\overline{\mu},T^{q})$ converge
to a distribution $P_{q}$, then $P_{q}$ is invariant under $T^{q}$
so it is a convex combination of these $q'$ components. Thus,
\[
\frac{1}{q}\sum_{i=0}^{q-1}T^{i}P_{q}=P.
\]
 This implies the following lemma which we record for later use: \begin{lem}
\label{lem:maximizing-integral-along-shifted-sequence}Suppose $\overline{\mu}\in\mathcal{M}^{\mathbb{N}}$
is generic for $P\in\mathcal{P}(\mathcal{M}^{\mathbb{N}})$. Let $q>1$
and suppose that $A_{N}(\overline{\mu},T^{q})\rightarrow Q$. Let
$f\in C(\mathcal{M}^{\mathbb{N}})$. Then there is an $i\in\{0,\ldots,q-1\}$
such that\[
\lim_{N\rightarrow\infty}\frac{1}{N}\sum_{n=0}^{N-1} f(T^{i+qn}\overline{\mu})\geq\int fdP\]
 \end{lem}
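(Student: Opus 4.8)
The plan is to exploit the decomposition of the $T^q$-invariant limit $Q$ into the $q$ ergodic components of $P$ under $T^q$, together with the averaging identity $\frac{1}{q}\sum_{i=0}^{q-1} T^i Q = P$ established just before the lemma. First I would observe that the sequence $A_N(\overline\mu,T^q)$ computes integrals of $f$ along the sub-orbit $(T^{qn}\overline\mu)_{n\ge 0}$, so by hypothesis $\frac1N\sum_{n=0}^{N-1} f(T^{qn}\overline\mu) \to \int f\,dQ$. More generally, for each fixed $i\in\{0,\ldots,q-1\}$ the shifted sub-orbit satisfies $\frac1N\sum_{n=0}^{N-1} f(T^{i+qn}\overline\mu) = \frac1N\sum_{n=0}^{N-1}(T^i f)(T^{qn}\overline\mu) \to \int T^i f\,dQ = \int f\,d(T^i Q)$, since $T^i f\in C(\mathcal M^{\mathbb N})$ as well (composition with the continuous map $T^i$). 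So the $q$ limits in question are precisely $\int f\,d(T^iQ)$ for $i=0,\ldots,q-1$.

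Next I would average these $q$ quantities: $\frac1q\sum_{i=0}^{q-1}\int f\,d(T^iQ) = \int f\,d\left(\frac1q\sum_{i=0}^{q-1} T^iQ\right) = \int f\,dP$, using exactly the identity $\frac1q\sum_{i=0}^{q-1}T^iQ = P$ recorded in the text. Since the average of the $q$ numbers $\int f\,d(T^iQ)$ equals $\int f\,dP$, at least one of them must be $\ge \int f\,dP$; choosing that index $i$ gives the claimed inequality $\lim_{N\to\infty}\frac1N\sum_{n=0}^{N-1} f(T^{i+qn}\overline\mu) \ge \int f\,dP$.

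The one point that needs a word of care — and is really the only obstacle, though a mild one — is justifying that the limit $\lim_N \frac1N\sum_{n=0}^{N-1} f(T^{i+qn}\overline\mu)$ actually exists for each $i$, not merely for $i=0$; this is needed so that the pigeonhole step applies to honest numbers. This follows because genericity of $\overline\mu$ for $P$ under $T$, combined with convergence of $A_N(\overline\mu,T^q)$ to $Q$, forces $A_N(\overline\mu,T^q)$ shifted by $T^i$ — i.e. $\frac1N\sum_{n=0}^{N-1}\delta_{T^{i+qn}\overline\mu}$ — to converge to $T^iQ$: indeed $\frac1N\sum_{n=0}^{N-1}\delta_{T^{i+qn}\overline\mu}$ is the pushforward under $T^i$ of $\frac1N\sum_{n=0}^{N-1}\delta_{T^{qn}\overline\mu}$, and pushforward by the continuous map $T^i$ is weak-$*$ continuous. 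Hence each shifted average converges, and evaluating against $f$ gives the existence of the limit. With this in hand the argument above is complete; I would write it up in just a few lines, testing $f$ against the identity $\frac1q\sum T^iQ = P$ and invoking the elementary fact that a finite average of real numbers is at most its maximum.
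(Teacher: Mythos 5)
Your proposal is correct and is exactly the argument the paper intends: the paper's proof is the single line ``immediate from $P=\frac{1}{q}\sum_{i}T^{i}Q$,'' and you have simply filled in the routine details (convergence of each shifted average to $\int f\,d(T^iQ)$ via pushforward continuity, followed by averaging and pigeonhole). No gaps.
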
 \begin{proof} Immediate from the fact that $P=\frac{1}{q}\sum_{i=0}^{q}T^{i}Q$.
\end{proof}

\subsection{\label{sub:Boxes-and-scaled-measures}Boxes and scaled measures}

A $d$-dimensional box is a product of $d$ intervals of positive
length, each of which may be open, closed or half-open. The eccentricity
of a box is the ratio of the lengths of the longest and shortest side.

A box is normalized if its volume is $1$ and its {}``lower left
corner'', i.e. the lexicographically minimal point in its closure,
is at the origin. For instance $[0,1]^{d}$ and $(0,1)^{d}$ are normalized.
Every box $B$ can be scaled and translated in a unique way to give
a normalized box, which we denote $B^{*}$. We also define the linear
operator $T_{B}$ by \[
T_{B}(x)=\frac{1}{\vol B}(x-\min B),\]
 where $\min$ refers to the lexicographical ordering. Thus $B^{*}=T_{B}B$.

Recall that for a measure $\mu$ and box $B$ with $\mu(B)>0$ we
write \[
\mu_{B}=\frac{1}{\mu(B)}\mu|_{B},\]
 which is a probability measure supported on $B$. We also define \[
\mu^{B}=T_{B}(\mu_{B})=\frac{1}{\mu(B)}T_{B}(\mu|_{B}).\]
 This is a probability measure supported on $B^{*}$. We call this
the re-scaled version of $\mu_{B}$.

\subsection{\label{sub:Dynamics-along-filtrations}Dynamics along filtrations }

Suppose now that we are given a $\rho$-regular sequence of refining
partitions $\mathcal{F}=(\mathcal{F}_{n})_{n=1}^{\infty}$ of a box
$B$ into sub-boxes. For $\mu$ supported on $B$ and $x\in\supp\mu$
we write $\mathcal{F}_{n}(x)$ for the element of $\mathcal{F}_{n}$
containing $x$. Define\begin{eqnarray*}
\mu_{x,n} & = & \mu_{\mathcal{F}_{n}(x)}\end{eqnarray*}
 and\[
\mu^{x,n}=\mu^{\mathcal{F}_{n}(x)}\]
In this way, for each $\mu$ and $x$ we obtain sequences of measures
$(\mu_{x,n})_{n=1}^{\infty}$ and $(\mu^{x,n})_{n=1}^{\infty}$. The
former sequence does not exhibit interesting dynamics, since the support
of the measures decreases to a point. However, elements of the latter sequence
have been re-scaled and the sequence potentially exhibits interesting dynamics.
Because the filtration is $\rho$-regular the eccentricities of $\mathcal{F}_{n}(x)$
are bounded, so there is a bounded region of $\mathbb{R}^{d}$ supporting
all the measures $\mu^{x,n}$. Thus all the $\mu^{x,n}$ belong to some weak-$^*$
compact set in the space of measures.

\begin{defn} \label{def:measures-generating-along-a-partition}Given
a measure $\mu$ on $\mathbb{R}^{d}$ and a $\rho$-regular sequence
$\mathcal{F}=(\mathcal{F}_{n})_{n=1}^{\infty}$ of partitions, $\mu$
\emph{generates} a distribution $P\in\mathcal{P}(M^{\mathbb{N}})$
at $x$ if $(\mu^{x,n})_{n=1}^{\infty}$ is strongly generic for $P$.
It generates $P$ along $\mathcal{F}$ if it generates $P$ at $\mu$-a.e.
point. \end{defn}

We record for later use the following useful result:

\begin{lem} \label{lem:restricting-preserves-generation}Suppose
$\mu$ generates $P$ w.r.t. a partition $(\mathcal{F}_{n})$. Let
$E$ be a set with $\mu(E)>0$. Then $\nu=\mu_{E}$ generates $P$.\end{lem}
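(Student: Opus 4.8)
The plan is to exploit the following simple observation: the re-scaled measures $\nu^{x,n}$ associated to $\nu = \mu_E$ eventually coincide with the $\mu^{x,n}$ at $\mu$-a.e. point of $E$. Indeed, fix $x \in E$ and recall that $\mathcal{F}_n(x)$ is the partition element of the $n$-th partition containing $x$. Since $\nu = \mu_E = \frac{1}{\mu(E)}\mu|_E$, for any box $B$ with $\mu(B) > 0$ we have $\nu_B = \mu_{B \cap E}$ and, crucially, if the partition element $\mathcal{F}_n(x)$ happens to be contained in $E$, then $\nu_{\mathcal{F}_n(x)} = \mu_{\mathcal{F}_n(x)}$ and hence $\nu^{x,n} = \mu^{x,n}$. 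So the first step is to show that for $\mu$-a.e. (equivalently $\nu$-a.e.) $x \in E$, one has $\mathcal{F}_n(x) \subseteq E$ for all sufficiently large $n$ — or at least for a density-one set of $n$'s.

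The key step is a density/martingale argument. Because $(\mathcal{F}_n)$ is a refining sequence of partitions that separates points (the $\rho$-regularity forces $\diam \mathcal{F}_n(x) \to 0$), the conditional expectations $\mathbb{E}(\mathbf{1}_E \mid \mathcal{F}_n)(x) = \mu(\mathcal{F}_n(x) \cap E)/\mu(\mathcal{F}_n(x))$ form a bounded martingale and converge $\mu$-a.e. to $\mathbf{1}_E(x)$. Hence for $\mu$-a.e. $x \in E$ we get $\mu(\mathcal{F}_n(x) \cap E)/\mu(\mathcal{F}_n(x)) \to 1$. If $E$ is open (or, after an inner-regularity reduction, closed with the points of density one removed from the complement) this can be upgraded to $\mathcal{F}_n(x) \subseteq E$ eventually; in general one gets that the set of $n$ with $\mathcal{F}_n(x) \subseteq E$ has density one, which is all that is needed. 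A cleaner route, avoiding topological fuss, is: replace $E$ by the set $E' = \{x : \mathbf{1}_E(x)=1 \text{ and } \mu(\mathcal{F}_n(x)\cap E)/\mu(\mathcal{F}_n(x)) \to 1\}$, note $\mu(E') = \mu(E)$ so $\mu_{E'} = \mu_E = \nu$, and then argue directly that for $x \in E'$ the asymptotic frequencies of the orbit of $(\nu^{x,n})_n$ under any power $T^q$ of the shift agree with those of $(\mu^{x,n})_n$.

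For the final step, fix $x$ in this full-measure set. Along the density-one set of indices $n$ where $\mathcal{F}_n(x) \subseteq E$, we have $\nu^{x,n} = \mu^{x,n}$; along the remaining (density zero) set the two sequences may differ but both lie in a fixed weak-$*$ compact set of measures, so for any $f \in C(\mathcal{M}^{\mathbb{N}})$ and any $q \geq 1$,
\[
\frac{1}{N}\sum_{n=0}^{N-1} f(T^{qn}\overline{\nu^{x}}) - \frac{1}{N}\sum_{n=0}^{N-1} f(T^{qn}\overline{\mu^{x}}) \longrightarrow 0,
\]
since the difference is bounded by $2\|f\|_\infty$ times the density of the bad set of indices. (One must be slightly careful that the shifted tails $T^{qn}\overline{\nu^x}$ and $T^{qn}\overline{\mu^x}$ agree, not merely the $n$-th coordinates; but if $\mathcal{F}_m(x) \subseteq E$ for all $m \geq n_0$ the whole tail from $n_0$ on agrees, and the density-one set of good starting points can be taken to be such a tail.) Therefore $(\nu^{x,n})_n$ is totally generic with the same limiting distributions $P_q$ as $(\mu^{x,n})_n$, i.e. $\nu$ generates $P$. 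The main obstacle is the bookkeeping in this last step: ensuring that "agreement on a density-one set of coordinates" propagates to "agreement of Cesàro averages of a continuous function of the shifted sequences," which requires knowing the sequences eventually agree on an entire tail, not just on a sparse-complement set of individual indices — handled by the martingale convergence giving $\mathcal{F}_n(x) \subseteq E$ for \emph{all} large $n$ on a full-measure subset once one passes to the density-one subset $E'$ and uses inner regularity to assume $E$ closed, or equivalently restricts attention to $x$ that are points of density one of $E$ from inside.
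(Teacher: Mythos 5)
Your argument hinges on upgrading the martingale convergence $\mu(\mathcal{F}_n(x)\cap E)/\mu(\mathcal{F}_n(x))\to 1$ to the set-theoretic containment $\mathcal{F}_n(x)\subseteq E$ for all large $n$ (or at least a density-one set of $n$), so that $\nu^{x,n}=\mu^{x,n}$ exactly. That step is false in the generality required. The lemma is applied to arbitrary Borel sets $E$ of positive measure (e.g.\ the level sets $A_i$ of a measurable function in the proof of Theorem \ref{thm:linear-projections-of-CP-chains}), and even for closed $E$ the containment can fail for every $n$ and every $x$: take $E$ a fat Cantor set in $[0,1]$ and $\mathcal{F}_n$ the dyadic partitions; then $E^{c}$ is open and dense, every cell $\mathcal{F}_n(x)$ contains a ball and hence meets $E^{c}$, and the set of $n$ with $\mathcal{F}_n(x)\subseteq E$ is empty, not of density one. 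Inner regularity therefore does not rescue the claim. Moreover, as you yourself note in the last paragraph, agreement of the two scenery sequences only on a sparse-complement set of indices would not control the averages of $f(T^{qn}\,\cdot\,)$ for $f\in C(M^{\mathbb{N}})$, since $f$ depends on whole tails; so your proof genuinely needs the (false) eventual containment, and the gap is not cosmetic.

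The repair is to abandon exact equality in favour of asymptotic closeness, which is what the paper does. Writing $p_n=\mu(E\cap\mathcal{F}_n(x))/\mu(\mathcal{F}_n(x))$, one has $\mu_{\mathcal{F}_n(x)}=p_n\,\mu_{E\cap\mathcal{F}_n(x)}+(1-p_n)\,\mu_{E^{c}\cap\mathcal{F}_n(x)}$, so the total variation distance between $\nu^{x,n}$ and $\mu^{x,n}$ is at most $2(1-p_n)\to 0$ for $\mu$-a.e.\ $x\in E$ by the martingale theorem. Hence all coordinates of $\overline{\nu^{x}}$ and $\overline{\mu^{x}}$ beyond a large $n_0$ are uniformly close, the shifted sequences $T^{qn}\overline{\nu^{x}}$ and $T^{qn}\overline{\mu^{x}}$ become uniformly close in the product topology on $M^{\mathbb{N}}$, and uniform continuity of $f$ on the compact space $M^{\mathbb{N}}$ gives that the two sequences of Ces\`{a}ro averages have the same limits, i.e.\ total genericity for $P$ transfers from $\mu$ to $\nu$. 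Your weak-$*$ compactness remark at the end is the right instinct, but it must be applied to \emph{nearby} measures at every coordinate rather than to a purported full tail of \emph{equal} ones.
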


\begin{proof} Write $\nu=\mu_{E}$. By the martingale theorem, for
$\mu$-a.e. $x\in E$, \[
\frac{\nu(\mathcal{F}_{n}(x))}{\mu(\mathcal{F}_{n}(x))}=\frac{\mu(E\cap\mathcal{F}_{n}(x))}{\mu(\mathcal{F}_{n}(x))}\rightarrow1\]
 which implies that, for a.e. $x\in E$ the sequences $(\mu^{x,n})_{n=1}^{\infty}$
and $(\nu^{x,n})_{n=1}^{\infty}$ are weak-{*} asymptotic as $n\rightarrow\infty$, so if one is generic for some distribution, both are.
\end{proof}

\subsection{\label{sub:CP-chains}CP-chains}

We next introduce a slightly generalized version of Furstenberg's
CP-chains, which will supply us with measures and filtrations leading
to generic sequences.

\begin{defn} \label{def:partition-operator}Let $\mathcal{E}$ be
a family of boxes. A \emph{partition operator} $\Delta$ on $\mathcal{E}$
assigns to each $E\in\mathcal{E}$ a partition $\Delta E=\{E_{i}\}\subseteq\mathcal{E}$
of $E$ in a translation and scale-invariant manner, i.e. if $Sx=ax+b$
and $E,SE\in\mathcal{E}$ then $S(\Delta E)=\Delta(SE)$. \end{defn}

Define the iterates of $\Delta$ by $B\in\mathcal{E}$ by \begin{eqnarray*}
\Delta^{0}(B) & = & \{B\},\\
\Delta^{n+1}(B) & = & \bigcup_{E\in\Delta^{n}B}\Delta(E).\end{eqnarray*}
 Thus $\Delta^{n}(B)$ form a sequence of refining partitions of $B$.
\begin{defn} \label{def:regular-partition-operators}A partition
operator $\Delta$ on $\mathcal{E}$ is $\rho$-regular if for each
$B\in\mathcal{E}$ the sequence of partitions $(\Delta^{n}B)_{n=1}^{\infty}$
is $\rho$-regular. \end{defn} For example, the base-$b$ partition
operator is defined on $\mathcal{E}=\{[u,v]^{d}\,:\, u<v\}$ by $\Delta([0,1]^{d})=\mathcal{D}_{b}$
(and extend by invariance to all cubes). Then $\Delta^{n}([0,1]^{d})=\mathcal{D}_{b^{n}}$.
This operator is $1/b$-regular. \begin{defn} \label{def:A-CP-chains}A
CP-chain for a $\rho$-regular partition operator $\Delta$ on $\mathcal{E}$
is a stationary Markov process $(\mu_{n},B_{n})_{n=1}^{\infty}$,
where:
\begin{enumerate}
\item The state space is the space of pairs $(\mu,B)$ in which $B\in\mathcal{E}$
is a box and $\mu$ is a probability measure supported on $B^{*}$.
\item The transition is given by the law \[
\mbox{ for }B\in\Delta(A^{*})\;,\;(\mu,A)\mapsto(\mu^{B},B)\;\mbox{ with probability }\mu(B).\]

\end{enumerate}
\end{defn}

We usually do not specify $\Delta$ (or $\mathcal{E}$), and use this
symbol generically for the partition operator associated to a CP-chain.

The stationary process $(\mu_{n})_{n=1}^{\infty}$ is called the \emph{measure
component} of the process. We shall not distinguish notationally between
the distribution of the CP-chain, its measure component and its
marginals. Thus if $P$ is the distribution of a CP-chain we may
write $(\mu,B)\sim P$, $\mu\sim P$ etc; the meaning should be clear
from the context.

Furstenberg's CP-chains are recovered using the base-$b$ partition
operator. We use this partition operator everywhere except in the proof of
Theorem \ref{thm:projection-of-product-invariant-measures}, where
a slightly more elaborate partition operator is needed. We remark
that one can introduce even more general CP-chains by allowing
the partition to depend also on the measure, i.e. $B_{n+1}=\Delta(B_{n},\mu_{n})$,
and also allow more general shapes than boxes; but we shall not need
this.

The following consequence of the ergodic theorem is immediate:

\begin{prop} \label{prop:CP-chain-measures-generate-the-process}Let
$(\mu_{n},B_{n})_{n=1}^{\infty}$ be an ergodic CP-chain with partition
operator $\Delta$ and distribution $P$. Then for $P-$a.e. $(\mu,B)$,
$\mu$ generates the measure component of $P$ w.r.t. the partitions
$\mathcal{F}_{n}=\Delta^{n}(B^{*})$, $n=1,2,3\ldots$.\end{prop}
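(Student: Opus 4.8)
The plan is to observe that, by the very definition of a CP-chain, the scenery of a $\mu_1$-typical point \emph{is} the measure component of the chain, so that the statement reduces to the (standard) fact that $P$-almost every realization of an ergodic process is totally generic for its law.

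First I would set up the identification of sceneries with the chain. Let $\bar P$ denote the measure component of $P$, i.e. the push-forward of $P$ under $((\mu_n,B_n))_n\mapsto(\mu_n)_n$; this is a shift-invariant distribution on $M^{\NN}$, where $M$ is the space of probability measures supported in a fixed bounded region (a compact metrizable space, so $M^{\NN}$ is compact metrizable and $C(M^{\NN})$ is separable), and $\bar P$ is ergodic because it is a factor of the ergodic system $P$. Fix a state $(\mu,B)$ and a point $x\in\supp\mu$, and put $\mathcal{F}_n=\Delta^n(B^*)$. Because $\Delta$ is translation- and scale-invariant, $\Delta(C)=T_C^{-1}\Delta(C^*)$ for every box $C$; feeding this into the transition law, an easy induction on $n$ shows that the successive re-normalizations $\mu\mapsto\mu^{B_2}\mapsto\mu^{B_3}\mapsto\cdots$ obtained by always choosing the child box containing $x$ compose to $T_{\mathcal{F}_n(x)}\mu_{\mathcal{F}_n(x)}=\mu^{x,n}$, with $B_{n+1}=\mathcal{F}_n(x)$. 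In other words the trajectory of the Markov chain started at $(\mu,B)$ and following the point $x$ is exactly $(\mu^{x,n},\mathcal{F}_n(x))_{n\ge 0}$, and therefore the map $\Psi:((\mu,B),x)\mapsto(\mu^{x,n})_{n\ge 0}$ satisfies $\Psi_*(P\ltimes\mu)=\bar P$, where $P\ltimes\mu$ is the joint law of $(\mu,B)\sim P$ together with $x\sim\mu$.

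Next I would record the ergodic-theoretic input: in the ergodic system $(M^{\NN},S,\bar P)$ almost every point $\bar\nu$ is totally generic, with $A_N(\bar\nu,S)\to\bar P$. Indeed, for each fixed $q\ge 1$, applying the pointwise ergodic theorem to $S^q$ along a countable dense subset of $C(M^{\NN})$ shows that $A_N(\bar\nu,S^q)$ converges weak-$*$ for $\bar P$-a.e. $\bar\nu$ (to the $S^q$-ergodic component through $\bar\nu$), and intersecting over all $q\in\NN$ gives total genericity for $\bar P$-a.e. $\bar\nu$; ergodicity identifies the $q=1$ limit as $\bar P$ itself. Combining this with $\Psi_*(P\ltimes\mu)=\bar P$, the set of pairs $((\mu,B),x)$ for which $(\mu^{x,n})_{n\ge 0}$ is totally generic for $\bar P$ has full $P\ltimes\mu$-measure; by Fubini, for $P$-a.e. $(\mu,B)$ one has, for $\mu$-a.e. $x$, that $(\mu^{x,n})_{n\ge 0}$ is totally generic for $\bar P$. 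Discarding the initial term does not affect Ces\`aro limits, so $(\mu^{x,n})_{n\ge 1}$ is totally generic for $\bar P$, which is precisely the statement that $\mu$ generates the measure component of $P$ with respect to the filtration $\mathcal{F}_n=\Delta^n(B^*)$.

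The only place that calls for any care is the bookkeeping in the first step — verifying that the nested re-normalizations prescribed by the transition law really do telescope into $\mu^{x,n}=T_{\mathcal{F}_n(x)}\mu_{\mathcal{F}_n(x)}$ along $\Delta^n(B^*)$ — together with the routine upgrade from genericity to total genericity via the ergodic decomposition of $S^q$. Neither is a genuine obstacle; once the definitions are unwound the proposition is, as asserted, immediate from the pointwise ergodic theorem.
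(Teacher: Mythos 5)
Your proof is correct and follows essentially the same route as the paper: both identify the forward trajectory of the chain started at $(\mu,B)$ with the scenery $(\mu^{x,n})_n$ at a $\mu$-distributed point $x$ (your $\Psi_*(P\ltimes\mu)=\bar P$ is exactly the paper's conditioning argument), and then invoke the pointwise ergodic theorem plus the standard fact that a.e.\ point of an ergodic system is totally generic. No substantive differences to report.
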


\begin{proof} Given a typical $(\mu_{1},B_{1})$, consider the conditional distribution on $(\mu_{n},B_{n})_{n=2}^{\infty}$,
obtained by running the chain forward from $(\mu_{1},B_{1})$ using the transition law in
the definition. Note that for $n\geq 2$, the random set \[
A_{n}=T_{B_{2}}^{-1}\ldots  T_{B_{n-1}}^{-1}T_{B_n}^{-1}B_{n}^{*}\]
 satisfies $A_{n}\in\Delta^{n-1}(B_{1}^{*})$ and \[
\mu_{n}=(\mu_{1})^{A_{n}}.\]
 Since $\diam A_{n}\rightarrow0$ by regularity of $\Delta$, the
intersection $\bigcap_{n=1}^{\infty}A_{n}$ consists almost surely
of a single random point $X\in B_{1}^*$. By definition\[
\mu_{n}=(\mu_{1})^{X,n},\]
 and furthermore the transition law has been so chosen that $X$ is
distributed according to $\mu_{1}$.

Now, by the ergodic theorem almost every realization $(\mu_{n},B_{n})_{n=1}^{\infty}$
is generic for the CP-chain, and in particular almost every $(\mu_{n})_{n=1}^{\infty}$
is generic for the measure component of the process. Hence for almost
every $\mu_{1}$ and almost every $(\mu_{1})_{n=1}^{\infty}$ conditioned
on $\mu_{1}$ this is true. But by the above, given $\mu_{1}$ the
conditional distribution on sequences $(\mu_{n})_{n=1}^{\infty}$
of measures is the same as the distribution $((\mu_{1})^{x,n})_{n=1}^{\infty}$
when $x$ is distributed according to $\mu$, as desired.

Strong genericity follows in the same way, because almost every point
in an ergodic system is strongly generic.\end{proof}

\begin{cor} \label{cor:non-ergodic-CP-chains-generate}Let $(\mu_{n},B_{n})_{n=1}^{\infty}$
be a CP-chain with partition operator $\Delta$. Let $P_{(\mu,B)}$
denote the ergodic component of $(\mu,B)$. Then for a.e. pair $(\mu,B)$,
$\mu$ generates the measure component of $P_{(\mu,B)}$, w.r.t. the
filtration $\mathcal{F}_{n}=\Delta^{n}(B^{*})$.\end{cor}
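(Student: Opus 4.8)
The plan is to reduce to the ergodic case, Proposition \ref{prop:CP-chain-measures-generate-the-process}, via the ergodic decomposition of the CP-chain.

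Let $P$ denote the distribution of the CP-chain $(\mu_n,B_n)_{n\ge 1}$, viewed as a $T$-invariant measure on sequence space (with $T$ the shift), and let $P=\int P_\omega\,dQ(\omega)$ be its ergodic decomposition, where $Q$ is the push-forward of $P$ under the ($P$-a.e.\ defined) map sending a realization to its ergodic component. The crucial point is that each $P_\omega$ is again the distribution of an (ergodic) CP-chain for the \emph{same} partition operator $\Delta$. For countable state spaces this is transparent: since $P$ is stationary, Poincar\'e recurrence rules out transient states in its support, so the invariant $\sigma$-algebra agrees $P$-a.s.\ with the $\sigma$-algebra generated by the (automatically closed) recurrent class of the initial state $(\mu_1,B_1)$; hence $P_\omega$ is just $P$ conditioned on landing in a fixed closed class of states, and conditioning on a closed class preserves both stationarity and the Markov property with the same transition kernel --- which here is precisely the CP law $(\mu,A)\mapsto(\mu^B,B)$ with probability $\mu(B)$ for $B\in\Delta(A^*)$. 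In the general standard Borel setting one instead invokes the corresponding standard fact that the ergodic decomposition of a stationary Markov chain consists of stationary Markov chains with the same transition kernel. In particular the ergodic component $P_{(\mu,B)}$ is a measurable function of the single state $(\mu,B)$, which is what makes the notation in the statement meaningful.

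Now apply Proposition \ref{prop:CP-chain-measures-generate-the-process} to each ergodic CP-chain $P_\omega$: for $Q$-a.e.\ $\omega$ there is a set $G_\omega$ with $P_\omega(G_\omega)=1$ such that, for every $(\mu,B)\in G_\omega$, the measure $\mu$ generates the measure component of $P_\omega$ with respect to the filtration $\mathcal F_n=\Delta^n(B^*)$ (including total genericity, exactly as in the proof of that proposition). Since the ergodic decomposition theorem also ensures that $P_\omega$-a.e.\ pair has ergodic component equal to $P_\omega$ itself, we may shrink $G_\omega$ so that additionally $P_{(\mu,B)}=P_\omega$ on $G_\omega$.

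Finally put $G=\bigcup_\omega G_\omega$; standard measurability considerations show that $G$ is measurable, and $P(G)=\int P_\omega(G_\omega)\,dQ(\omega)=1$. For every $(\mu,B)\in G$ the measure $\mu$ generates the measure component of $P_{(\mu,B)}$ along $\mathcal F_n=\Delta^n(B^*)$, which is the assertion. The one nontrivial ingredient is the claim that the ergodic components of a CP-chain are CP-chains for the same partition operator; everything else is routine bookkeeping with the ergodic decomposition.
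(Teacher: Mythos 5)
Your proposal is correct and follows exactly the paper's argument: the paper's proof of this corollary is the single observation that the ergodic components of a Markov chain are Markov chains for the same transition law, combined with Proposition \ref{prop:CP-chain-measures-generate-the-process}. Your write-up simply fills in the routine ergodic-decomposition bookkeeping around that same key fact.
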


\begin{proof} This follows from the previous proposition and the
fact that the ergodic components of a Markov chain are Markov chains
for the same transition law. \end{proof}

The next lemma is analogous to \cite[Theorem 2.1]{Furstenberg08}
(and the remark following it). In the examples we shall encounter one
can either rely on that proposition, or else the statement will be clear for other
reasons, but we outline a proof for completeness.

\begin{lem} \label{lem:dimension-of-CP-chains}For an ergodic
CP-chain $(\mu_{n},B_{n})_{n=1}^{\infty}$, a.e.
measure $\mu_{n}$ is exact dimensional, and $\dim\mu_{n}$ is almost surely constant.\end{lem}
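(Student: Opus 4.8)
The plan is to show that the pointwise dimension $\underline{\dim}(\mu_n,x)=\overline{\dim}(\mu_n,x)$ exists and equals a fixed constant for a.e. realization of the chain and $\mu_n$-a.e. $x$. By stationarity it suffices to treat $\mu_1$. By Lemma \ref{lem:entropy-along-filtration}, since $\Delta$ is $\rho$-regular and $\mathcal{F}_n=\Delta^n(B_1^*)$ is a $\rho$-regular filtration, for $\mu_1$-a.e. $x$ one has
\[
\underline{\dim}(\mu_1,x)=\liminf_{n\to\infty}\frac{\log\mu_1(\mathcal{F}_n(x))}{n\log\rho},\qquad \overline{\dim}(\mu_1,x)=\limsup_{n\to\infty}\frac{\log\mu_1(\mathcal{F}_n(x))}{n\log\rho}.
\]
Now $-\log\mu_1(\mathcal{F}_n(x))$ telescopes: running the chain forward from $(\mu_1,B_1)$, if $(\mu_k,B_k)_{k\ge 1}$ is the resulting sequence and $A_k=T_{B_k}^{-1}\cdots T_{B_1}^{-1}B_1^*\in\Delta^k(B_1^*)$ is the nested sequence of cells shrinking to the random point $X\sim\mu_1$ (as in the proof of Proposition \ref{prop:CP-chain-measures-generate-the-process}), then $\mu_k=(\mu_1)^{A_k}$ and
\[
-\log\mu_1(A_n)=\sum_{k=0}^{n-1}\bigl(-\log\mu_k(B_{k+1})\bigr)=\sum_{k=0}^{n-1}\Phi(\mu_k,B_{k+1}),
\]
where $\Phi(\mu,B)=-\log\mu(B)$ is a function on the state space (extended to pairs $(\mu_k, B_{k+1})$ in the obvious way, i.e. a function of consecutive states of the chain).

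The main step is then an application of the ergodic theorem (Birkhoff) to the ergodic chain: $\frac1n\sum_{k=0}^{n-1}\Phi(\mu_k,B_{k+1})\to \alpha:=\mathbb{E}_P[\Phi]$ for $P$-a.e. realization. One must check $\Phi\in L^1(P)$; this is where a little care is needed, since $-\log\mu(B)$ can be large when $\mu(B)$ is small. The point is that, conditioned on $(\mu,B_n)$ with $B_n^*=$ the current normalized box, the transition picks $B\in\Delta(B_n^*)$ with probability $\mu(B)$, and $\sum_{B\in\Delta(B_n^*)}-\mu(B)\log\mu(B)=H(\mu,\Delta(B_n^*))$ is bounded by $\log|\Delta(B_n^*)|$, which is a constant depending only on $\Delta$ (the number of cells in one refinement step is uniformly bounded since $\Delta$ is $\rho$-regular and scale-invariant). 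Hence $\mathbb{E}_P[\Phi\mid \text{current state}]\le\log K$ for a fixed $K$, so $\mathbb{E}_P[\Phi]\le\log K<\infty$; combined with $\Phi\ge 0$ (as $\mu(B)\le 1$) this gives integrability. I expect this integrability/uniform-entropy-bound verification to be the only genuine obstacle; everything else is bookkeeping.

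Finally, combining the ergodic theorem with the telescoping identity and Lemma \ref{lem:entropy-along-filtration}: for $P$-a.e. chain and $\mu_1$-a.e. $x$ (using that $X\sim\mu_1$ in the coupling), both the $\liminf$ and the $\limsup$ of $\frac{1}{n\log\rho}\log\mu_1(\mathcal{F}_n(x))$ equal $\frac{-\alpha}{\log\rho}=\frac{\alpha}{\log(1/\rho)}$. Thus $\mu_1$ is exact dimensional with $\dim\mu_1=\alpha/\log(1/\rho)$, a constant independent of the realization. By stationarity the same holds for every $\mu_n$, which is the claim. (If one prefers to avoid re-deriving Birkhoff's theorem in this setting, one may instead simply invoke \cite[Theorem 2.1]{Furstenberg08} and the remark following it, as noted before the lemma statement, since the CP-chains here differ from Furstenberg's only in the choice of partition operator, and the cited argument goes through verbatim with $\mathcal{D}_{b^n}$ replaced by $\Delta^n(B^*)$.)
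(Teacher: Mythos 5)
Your argument is essentially the paper's own proof: telescope $-\log\mu_1(A_n)$ into the sums $\sum_{k}-\log\mu_k(B_{k+1})$, apply Birkhoff's theorem to the ergodic chain, and convert back to pointwise dimension via Lemma \ref{lem:entropy-along-filtration} and the fact that $X=\bigcap\overline{A_n}$ is distributed according to $\mu_1$. Your explicit verification that $-\log\mu(B)$ is $P$-integrable (via the uniform bound on the entropy of one refinement step coming from $\rho$-regularity) is a detail the paper leaves implicit, but it does not change the route.
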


\begin{proof} It is easy to see that $\dim_{*}\mu_{n}$ is non-increasing in $n$,
since $\mu_{n+1}$ is, up to scale and translation, the restriction
of $\mu_{n}$ to $B_{n+1}$. Since $\dim_{*}(\cdot)$ is a Borel
function of the measure, and the process is ergodic, $\dim_{*}(\cdot)$
must be almost everywhere constant. The same argument works for $\dim^{*}$.

To see that $\mu_1$ (and hence $\mu_n$) is exact-dimensional, condition the process on $(\mu_{1},B_{1})$,
and let $A_{n}$ be defined as in the proof of Proposition \ref{prop:CP-chain-measures-generate-the-process}.
Then\begin{eqnarray*}
\lim_{N\rightarrow\infty}-\frac{\log\mu_{1}(A_{N})}{\log\diam A_{N}} & = & \lim_{N\rightarrow\infty}-\frac{1}{N\log\rho}\sum_{n=1}^{N}\frac{\log\mu_{1}(A_{n})}{\log\mu_{1}(A_{n-1})}\\
 & = & \lim_{N\rightarrow\infty}-\frac{1}{N\log\rho}\sum_{n=0}^{N-1}\log\mu_{n}(B_{n+1}).\end{eqnarray*}
 (in the first equality we used $\rho$-regularity of the partition
operator). Writing $P$ for the distribution of the process, the ergodic
theorem implies that the above converges to \[
\alpha=-\frac{1}{\log\rho}\int H(\mu,\Delta (B^*))\; dP(\mu,B)\]
 almost surely. Using the fact that $X=\cap\overline{A_{n}}$ is distributed
according to $\mu$, and using regularity of $\Delta$ again, it follows from Lemma \ref{lem:entropy-along-filtration} that \[
\overline{\dim}(\mu_{1},x)=\underline{\dim}(\mu_{1},x)=\alpha\]
 for $\mu_{1}$-a.e. $x$, which establishes the lemma. \end{proof}

In light of the previous lemma, we refer to the dimension of a typical measure for a CP-chain as the dimension of the chain.

\subsection{Micromeasures and existence of CP-chains}

The following discussion is adapted from \cite{Furstenberg08}.

Let $\mu$ be a measure on $\mathbb{R}^d$. A \emph{micromeasure} of $\mu$ is any weak limit of measures of the form $\mu^{Q_n}$, where the $Q_n$ are cubes of side length tending to $0$. The set of micromeasures of $\mu$ is denoted $\left\langle \mu\right\rangle$. Micromeasures are closely related to the tangent measures of geometric measure theory.

Starting from a measure $\mu$ on $[0,1]^d$  and the $b$-adic partition operator, one can run the chain forward. If one averages the distributions at times $1,2,\ldots,n$ one gets a sequence of distributions which in general will not converge, but one may still take weak-* limits of it. These limiting distributions can be easily shown to be CP-chains and are supported on the micromeasures of $\mu$. In this way we have associated to $\mu$ a family of CP-chains supported on $\left\langle \mu\right\rangle$, and from which one may hope to extract information about $\mu$. One result in this direction is the following theorem, which appears in another form in Furstenberg's paper \cite{Furstenberg70}. Since it is not stated there in this way, we indicate a proof.

\begin{thm} \label{thm:furstenberg} Let $\mu$ be a measure on $[0,1]^d$ and $b\geq 2$. Then there is an ergodic base-$b$ CP-chain of dimension at least $\overline{\dim}_e(\mu)$ supported on $\left\langle \mu\right\rangle$.
\end{thm}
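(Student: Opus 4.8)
The plan is to construct the desired CP-chain as a weak-$*$ limit of Cesàro averages of the $b$-adic scenery distributions of $\mu$, and then extract an ergodic component of the right dimension. First I would fix a point $x$ for which $\overline{\dim}_e(\mu)$ is ``witnessed'' along $b$-adic cells — that is, using Lemma \ref{lem:entropy-dimension} together with the definition of upper entropy dimension, for any $\e>0$ there are infinitely many scales $k$ at which $\frac{1}{k\log b}H(\mu,\mathcal{D}_{b^k})\ge\overline{\dim}_e(\mu)-\e$. The key identity is the chain rule for entropy along the refining filtration: $H(\mu,\mathcal{D}_{b^k}) = \sum_{n=0}^{k-1}\int H_{1/b}(\mu^{x,n})\,d\mu(x)$ (up to the $O_{d,b}(1)$ error of Lemma \ref{lem:entropy-dimension}), where $\mu^{x,n}$ is the scenery along $b$-adic cells and $H_{1/b}(\mu^{x,n})$ is, up to a bounded constant, the entropy of the partition of $\mathcal{D}_{b^n}(x)$ into its $b$-adic children. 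Thus averaging the ``one-step branching entropy'' $\psi(\nu)=H_{1/b}(\nu)$ over the scenery distribution $A_k=\frac1k\sum_{n=0}^{k-1}(\text{distribution of }\mu^{x,n}\text{ under }\mu)$ recovers $\frac1k H(\mu,\mathcal{D}_{b^k})$ up to a vanishing error.

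Next I would pass to a subsequence of the good scales $k_j$ along which $A_{k_j}$ converges weak-$*$ to a distribution $P$ on the space of measures. A standard argument (this is exactly the construction sketched in the paragraph preceding the theorem, and essentially Furstenberg's) shows that $P$ is the measure-component distribution of a CP-chain for the $b$-adic partition operator, and that $P$ is supported on micromeasures of $\mu$, i.e. on $\langle\mu\rangle$ — the Markov/stationarity property comes from the fact that shifting the scenery by one step corresponds to the CP transition, and the defect incurred by Cesàro averaging over a finite window vanishes in the limit. The function $\psi(\nu)=H_{1/b}(\nu)$ is bounded and continuous on the relevant weak-$*$ compact set of measures (it is continuous away from atoms, and the contribution of near-atomic measures to the entropy is uniformly small), so $\int\psi\,dP = \lim_j \int\psi\,dA_{k_j} \ge \overline{\dim}_e(\mu)-\e$ after dividing by $\log b$; more precisely $\frac{1}{\log b}\int\psi\,dP \ge \overline{\dim}_e(\mu)-\e$. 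By Lemma \ref{lem:dimension-of-CP-chains} (and the computation in its proof, which expresses the dimension of the chain as $\frac{1}{\log b}\int H_{1/b}(\mu_1)\,dP$ — the branching-entropy average), this $P$ is a CP-chain of dimension at least $\overline{\dim}_e(\mu)-\e$.

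Finally I would ergodic-decompose $P$. Since dimension of a measure is a Borel function and the dimension of the chain is the $P$-average of the per-realization entropy rate, at least one ergodic component $P'$ must have dimension $\ge \overline{\dim}_e(\mu)-\e$; moreover the ergodic components of a CP-chain are again CP-chains for the same transition law (this is noted in the proof of Corollary \ref{cor:non-ergodic-CP-chains-generate}), and they remain supported on $\langle\mu\rangle$ since $\langle\mu\rangle$ is closed and $P$-conull. To remove the $\e$: run this for a sequence $\e_i\downarrow 0$, obtaining ergodic CP-chains $P_i$ of dimension $\ge \overline{\dim}_e(\mu)-\e_i$ supported on the compact set $\langle\mu\rangle$; the space of CP-chains on $\langle\mu\rangle$ is weak-$*$ compact, so a subsequential limit $P_\infty$ exists, is a CP-chain supported on $\langle\mu\rangle$, and has dimension $\ge\overline{\dim}_e(\mu)$ by upper semicontinuity of $\nu\mapsto\dim_*\nu$-type arguments — or more cleanly, since $\frac{1}{\log b}\int H_{1/b}\,dP_i \ge \overline{\dim}_e(\mu)-\e_i$ and $H_{1/b}$ is bounded continuous, the same inequality passes to $P_\infty$ with $\e=0$; a final ergodic decomposition of $P_\infty$ produces the required ergodic chain. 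The main obstacle I anticipate is the bookkeeping around the continuity/boundedness of $\psi = H_{1/b}$ on the space of measures and making rigorous that Cesàro-averaging the sceneries yields a genuine CP-chain (rather than merely a shift-invariant distribution) supported on micromeasures; this is where one must be careful that the partition operator's Markov structure survives the limit, and it is precisely the content of the ``averaging the scenery gives a CP-chain'' assertion that the excerpt attributes to Furstenberg.
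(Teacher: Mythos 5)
Your proposal is correct and follows essentially the same route as the paper's proof, which likewise takes a weak-$*$ limit of Ces\`{a}ro-averaged scenery distributions along a subsequence of scales $\ell_j$ realizing $\overline{\dim}_e(\mu)$, notes the limit is a CP-chain supported on $\left\langle\mu\right\rangle$ with $\frac{1}{\log b}\int H(\nu,\mathcal{D}_b)\,dP(\nu)\ge\overline{\dim}_e(\mu)$, and extracts an ergodic component of at least that dimension (deferring the technical points about boundaries of cells and the survival of the Markov structure to Furstenberg's Proposition 5.2, exactly the points you flag at the end). The only difference is cosmetic: by choosing the $k_j$ so that $\frac{1}{k_j\log b}H(\mu,\mathcal{D}_{b^{k_j}})$ converges to $\overline{\dim}_e(\mu)$ itself rather than merely exceeding $\overline{\dim}_e(\mu)-\e$, you get the inequality with $\e=0$ in one pass and your final compactness/$\e$-removal step becomes unnecessary.
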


\begin{proof}

This is very similar to \cite[Proposition 5.2]{Furstenberg08}. For completeness we give a proof outline.
First, fix a base $b$ and choose a sequence $\ell(n)\rightarrow\infty$ such that \[ \limsup_{n\rightarrow\infty}\frac{1}{\ell(n)\log b}H(\mu,\mathcal{D}_{b^{\ell(n)}})\geq\overline{\dim}_{e}(\mu)\]
as we may do by Lemma \ref{lem:entropy-dimension}. Let $\mathcal{P}$ denote the space of
Borel probability measures on $[0,1]^{d}$, equipped with the weak-{*}
topology, which makes it compact and metrizable.

Given $x\in[0,1]$ let $\mu(x,i)=\mu^{\mathcal{D}_{b^{i}}(x)}$ and
$D(x,i)=T_{\mathcal{D}_{b^{i}}(x)}x$. Let $P_{n}$ denote the distribution
on $\mathcal{P}\times\mathcal{D}_{b}$, given by \[
P_{n}=\frac{1}{\ell(n)}\sum_{k=1}^{\ell(n)}\delta_{(\mu(x,k),D(x,k-1))}\]
where $x\in[0,1]^{d}$ is initially chosen with distribution $\mu$.
Let $P$ be any subsequential limit of $P_{n}$ as $n\rightarrow\infty$,
where the topology on $\mathcal{P}\times\mathcal{D}_{b}$ is the product
of the weak-{*} topology and the discrete topology on $\mathcal{D}_{b}$.
Then it is simple to verify that $P$ will be a CP-distribution as long as $P$-a.e. $(\nu,D)\in\mathcal{P}\times\mathcal{D}_{b}$
satisfies $\nu([0,1)^{d})=1$. This may not be the case in general,
since the measures $\mu(x,i)$, as $i\to\infty$, may become increasingly concentrated on the boundary of the cube. However, we can apply the following reduction.
Let $h(x)=ax+c$ denote a random homothety, where $a\in(0,\frac{1}{2})$
and $c\in[0,\frac{1}{2})^{d}$ are chosen uniformly with respect to
Lebesgue. Note that $h\mu$ is still supported on $[0,1]^{d}$, and
one can show \cite[Theorem \ref{NOT-KNOWN-YET}]{Hochman09} that for a.e. choice of $h$, if we replace
$\mu$ by $h\mu$ and carry out the construction above, then $P$-a.e.
$(\nu,D)$ does in fact satisfy $\nu([0,1)^{d})=1$, and hence $P$
is a CP-distribution. Also, $h\mu$ clearly has the same micromeasures as $\mu$. We assume that $\mu$ has been perturbed
in this manner and the condition above is satisfied. Note that the
measure component of $P$ is supported on $\left\langle \mu\right\rangle $

One may now verify that for large enough $n$,
\[
\left| \int \frac{1}{\log b}H(\nu,\mathcal{D}_{b})\, dP_{n}(\nu) - \frac{1}{\ell(n)\log b}H(\mu,\mathcal{D}_{b^{\ell(n)}})\right| \leq \frac{1}{\ell(n)}.
\]
This can be derived by integrating Equation \eqref{eq:product-fomula-for-cylinder-measures} using the tree corresponding to the partitions $\mathcal{D}_{b^{i+j}}$, $i=0,\ldots,N=[\ell(n)/k]-1$, and summing over $j=0,\ldots,b^k-1$. See also \cite[Theorem 2.1]{Furstenberg08}. It follows that\[
\int \frac{1}{\log b}H(\nu,\mathcal{D}_{b})\, dP(\nu)=\overline{\dim}_{e}(\mu)\]
(although entropy is discontinuous this follows from the assumption that $P$ is supported on pairs $(\nu,D)$ for which $\nu$ gives mass 0 to the boundaries of elements of $\mathcal{D}$. This also holds a.s. over the choice of the map $h$ above). Hence there is an ergodic component
$Q$ of $P$ such that \[
\int \frac{1}{\log b}H(\nu,\mathcal{D}_{b})\, dQ(\nu) \geq \overline{\dim}_{e}(\mu)\]
and $Q$ may be taken to be a CP-distribution, because typical ergodic
components of CP-distributions are CP-distributions.

It remains to
show that $\dim\nu\geq\overline{\dim}_{e}(\mu)$ for $Q$-a.e. $\nu$.
This is an application of the entropy averages method and the ergodic theorem, similar to the proof of  Lemma \ref{lem:dimension-of-CP-chains} (see also \cite[Theorem 2.1]{Furstenberg08}. 

\end{proof}

\section{\label{sec:Dimension-of-projections-and-CP-chains}Dimension of
projections and CP-chains}

In this section we establish some continuity results for linear and smooth projections
of typical measures for CP-chains. We show that the dimension of
these projections is controlled, or at least bounded below, by mean
projected entropies of the CP-chain.

\subsection{\label{sub:Linear-projections}Linear projections}

Fix an ergodic CP-chain $(\mu_{n},B_{n})_{n=1}^{\infty}$ with partition
operator $\Delta$ and distribution $P$. Fix $k$ and an orthogonal projection $\pi\in\Pi_{d,k}$. Given a measure $\nu$ and $q\in\mathbb{N}$, let \[
e_{q}(\nu)=\frac{1}{q\log(1/\rho)}H_{\rho^{q}}(\pi\nu),\]
 and denote the mean value of $e_{q}$ by
\[
E_{q}=\int e_{q}(\nu)dP(\nu).
\]

For the rest of the section, the constants implicit in the $O(\cdot)$
notation depend only on $\rho$, the constant in the definition of $\rho$-regularity,
$d$ and $k$. The following theorem contains the proof of Theorem \ref{thm:into-local-entropy-averages}

\begin{thm} \label{thm:linear-projections-of-CP-chains}Let $P$
be the distribution of an ergodic CP-chain, $\pi\in\Pi_{d,k}$ a projection,
and let $e_{q},E_{q}$ be defined as above. Then if $\mu$ is a measure
generating $P$ along a filtration $\mathcal{F}_{n}=\Delta^{n}B^*$,
then \begin{equation}
\dim_{*}(\pi\mu)\ge E_{q}-O(1/q)\label{eq:lower-bound-to-prove}.\end{equation}
In particular, this holds for $P$-a.e. $\mu$.
\end{thm}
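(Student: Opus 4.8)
The plan is to combine the tree-morphism machinery of Section~\ref{sec:local-entropy-and-dimension} with the lifting result Theorem~\ref{thm:lifting-maps-to-tre-morphisms} and the generating hypothesis. Fix $q$. The key observation is that a measure $\mu$ generating $P$ along $\mathcal{F}_n=\Delta^n B^*$ also generates a related distribution along the coarser filtration $\mathcal{G}_m=\Delta^{qm}B^*$, where the cells shrink by a factor $\rho^q$ at each step; by total genericity (Definition~\ref{def:measures-generating-along-a-partition}) the averages $\frac1M\sum_{m=1}^M \delta_{T^{qm}(\mu^{x,n})}$ converge for $\mu$-a.e.\ $x$, to a distribution $P_q$ whose $q$-fold average over shifts is $P$. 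Applying the continuous function $\nu\mapsto e_q(\nu)=\frac{1}{q\log(1/\rho)}H_{\rho^q}(\pi\nu)$ to the generic sequence along $\mathcal{G}$, and using Lemma~\ref{lem:maximizing-integral-along-shifted-sequence}, we get that for $\mu$-a.e.\ $x$ there is a phase $i\in\{0,\dots,q-1\}$ with
\[
\liminf_{M\to\infty}\frac{1}{M}\sum_{m=1}^{M} H_{\rho^{q}}\!\big(\pi\mu^{x,\,i+qm}\big)\ \ge\ q\log(1/\rho)\cdot E_q .
\]

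Next I would transfer this finite-scale entropy statement into a lower bound on $\dim_*\pi\mu$ via the tree picture. View the sub-box $\mathcal{F}_i(x)$ (rescaled to its normalization) as the root of a $\rho^q$-tree: the nodes of level $m$ are the cells of $\Delta^{qm}$ inside it, and the symbolic tree $X$ records the inclusion structure; $\mu$ restricted and rescaled is a measure on $X$, and the composition of $\pi$ with the rescaling is a Lipschitz map $X\to\RR^k$. Apply Theorem~\ref{thm:lifting-maps-to-tre-morphisms} to factor this Lipschitz map as $X\xrightarrow{g}Y\xrightarrow{h}\RR^k$ with $Y$ a $\rho^q$-tree, $g$ a morphism, $h$ faithful with constant $O(1)$ (depending only on $k$ and the Lipschitz constant, which is controlled by $\rho$-regularity and hence absorbed into the implicit constants). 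Part~(4) of that theorem gives $|H_{\rho^{q(m+1)}}(\pi\mu_{[a]}) - H_{\rho^{q(m+1)}}(g\mu_{[a]})| = O(1)$ for every node $a$ of level $m$, so the cylinder-entropy averages for $g\mu$ in $Y$ satisfy the same $\liminf$ bound up to an additive $O(1)$. By Theorem~\ref{thm:dimension-via-local-entropy-for-rho-trees} applied to $g\colon X\to Y$ with base $\rho^q$,
\[
\dim_*(g\mu)\ \ge\ \frac{1}{\log(1/\rho^{q})}\Big(q\log(1/\rho)E_q - O(1)\Big)\ =\ E_q - O(1/q),
\]
and then Proposition~\ref{prop:dim-tree-to-euclidean} (faithfulness of $h$, base $\rho^q$) gives $\dim_*(\pi\mu)\ge \dim_*(g\mu) - O(1)/(q\log(1/\rho)) = E_q - O(1/q)$. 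Since $\dim_*\pi\mu$ is unaffected by restricting to a positive-measure cell $\mathcal{F}_i(x)$ and by affine rescaling, and since the bound holds for $\mu$ conditioned on each cell where the chosen phase $i$ works, a decomposition of $\mu$ over a countable partition into such cells together with Lemma~\ref{lem:properties-of-lower-dim}\eqref{itm:dim-at-least-inf-over-components} yields the bound for $\mu$ itself. The final sentence follows from Proposition~\ref{prop:CP-chain-measures-generate-the-process}, which says $P$-a.e.\ $\mu$ generates the measure component of $P$ along $\Delta^n B^*$.

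I expect the main obstacle to be the bookkeeping around the \emph{phase} $i$: the generating property is about the full sequence $(\mu^{x,n})_n$, but to extract a clean scale-$\rho^q$ statement one must pass to the subsequence along multiples of $q$, and the limiting distribution $P_q$ one gets need not be $P$ itself—only its shift-average is. Lemma~\ref{lem:maximizing-integral-along-shifted-sequence} handles this by producing \emph{some} good phase for each $x$, but then the good phase is an $x$-dependent (measurable) function, which forces the conditioning-and-reassembling step at the end rather than a direct application of Theorem~\ref{thm:dimension-via-local-entropy-for-rho-trees} to $\mu$. One must also check that all the implicit constants genuinely depend only on $\rho$, the $\rho$-regularity constant, $d$, and $k$—in particular that the Lipschitz constant of the rescaled projection, which enters the faithfulness constant in Theorem~\ref{thm:lifting-maps-to-tre-morphisms}, is bounded in terms of these data—so that the $O(1/q)$ error is uniform. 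The rest is routine given the earlier results.
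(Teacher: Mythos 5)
Your proposal is correct and follows essentially the same route as the paper's proof: total genericity plus Lemma \ref{lem:maximizing-integral-along-shifted-sequence} to select a good phase $i(x)$, the $\rho^q$-tree built from the cells $\Delta^{qm}$, the lift via Theorem \ref{thm:lifting-maps-to-tre-morphisms}, the entropy-average bound fed into Theorem \ref{thm:dimension-via-local-entropy-for-rho-trees} and Proposition \ref{prop:dim-tree-to-euclidean}, and finally reassembly over the cells $A_i\cap B$ using Lemma \ref{lem:properties-of-lower-dim}. The only organizational difference is that the paper first treats the totally ergodic case (where no phase selection is needed) and then reduces the general case to it, whereas you run the phase argument uniformly; this is immaterial.
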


\begin{proof}
First, suppose that the measure component of the process
is totally ergodic. Since $\mu$ generates the measure component of
$P$, by Proposition \ref{prop:CP-chain-measures-generate-the-process},
\[
\frac{1}{N}\sum_{n=0}^{N-1}e_{q}(\mu^{x,n})\rightarrow E_{q}.
\]

Fix $q$. Using linearity of $\pi$, the $\rho$-regularity of $\mathcal{F}$
and Proposition \ref{prop:local-entropy-bound-for-faithful-maps} we see
that for every $n\in\mathbb{N}$, \[
\left|H_{\rho^{q}}(\pi\mu^{x,n})-H_{\rho^{n+q}}(\pi\mu_{x,n})\right| = O(1).\]
 Therefore for $\mu$-a.e. $x$,\[
\frac{1}{q\log(1/\rho)}\cdot\liminf_{N\rightarrow\infty}\frac{1}{N-1}\sum_{n=0}^{N}H_{\rho^{n+q}}(\pi\mu_{x,n})\ge E_{q}-O(1/q).
\]

Let $X$ be the $\rho^{q}$-tree whose nodes at level $n$ are the
atoms of $\mathcal{F}_{qn}$ with ancestry determined by inclusion.
Define $f:X\rightarrow \mathbb{R}^d$ by
\[
\{f(A_{1},A_{2},\ldots)\}=\bigcap_{n=1}^{\infty}\overline{A_{n}}.
\]
Then $f$ is Lipschitz by $\rho$-regularity of $\mathcal{F}$ (the Lipschitz
constant depends also on the constant $C$ in the definition of $\rho$-regularity).
Let $\widetilde{\mu}$ be the lift of $\mu$ to $X$.%
\footnote{If $\mu$ gives non-zero mass to the boundaries of partition elements
the lift may not be unique. Fix for example the lift defined
by the condition that the cylinder set corresponding to $E\in\mathcal{F}_{n}$
has mass $\mu(E)$, and we choose $\mu$ to be this measure. Alternatively,
we may assume that the boundary of partition elements is null by reducing,
if necessary, to a lower-dimensional case as in \cite{Furstenberg08}.%
}

For the map $\widetilde{f}=\pi f:X\rightarrow\mathbb{R}^{k}$ apply
Theorem \ref{thm:lifting-maps-to-tre-morphisms}, obtaining a $\rho^{q}$-tree
$Y$ and maps $X\xrightarrow{g}Y$$\xrightarrow{h}\mathbb{R}^{k}$
as in the theorem. It follows that for each $n$-cylinder $\widetilde{E}\subseteq X$,
corresponding to $E\in\mathcal{F}_{qn}$, we have\[
\left|H_{\rho^{q(n+1)}}(\widetilde{f}\widetilde{\mu}_{\widetilde{E}})-H_{\rho^{q(n+1)}}(g\widetilde{\mu}_{\widetilde{E}})\right|=O(1).\]
 Thus, for $g\widetilde{\mu}$-a.e. $y\in Y$, by total ergodicity, for $\widetilde{\mu}$-a.e. $x\in X$,\[
\frac{1}{q\log(1/\rho)}\liminf_{N\rightarrow\infty}\frac{1}{N}\sum_{n=0}^{N-1}H_{\rho^{q(n+1)}}(g\widetilde{\mu}_{[y_{1}\ldots y_{n}]})\geq E_{q}-O(1/q).
\]
 By Theorem \ref{thm:dimension-via-local-entropy-for-rho-trees},
this implies that \[
\dim_{*}g\widetilde{\mu}\geq E_{q}-O(1/q),\]
 and since $h$ is faithful, \[
\dim_{*}hg\widetilde{\mu}\geq E_{q}-O(1/q).\]
 As $hg\widetilde{\mu}=\pi\mu$, we are done.

Suppose now that the measure component of the process is not totally
ergodic. For $\mu$-almost every $x$, there is, by Lemma \ref{lem:maximizing-integral-along-shifted-sequence},
an $i=i(x)\in\{0,\ldots,q-1\}$ (which may be chosen measurably in $x$)
such that \[
\liminf\frac{1}{N}\sum_{n=0}^{N-1}e_{n}(\mu^{x,qn+i})\geq E_{q}.
\]
 Let $A_{i}\subseteq\mathbb{R}^{d}$ be the partition according to
$i(x)$. We may apply the argument above to $T_{B}\mu_{A_{i}\cap B}$
for each $i$ and each $B\in\mathcal{F}_{i}$ separately, using the
induced filtrations $T_{B}\mathcal{F}$ (see Lemma \ref{lem:restricting-preserves-generation}).
Since $\mu$ is a weighted average of the measures $\mu_{A_{i}\cap F}$,
this completes the proof. \end{proof}

We now let $\pi$ vary. Thus $e_{q}:\mathcal{M}\times\Pi_{d,k}\rightarrow[0,k]$
and $E_{q}:\Pi_{d,k}\rightarrow[0,k]$, and we write $e_{q}(\nu,\pi)$
and $E_{q}(\pi)$ to make the dependence on $\pi$ explicit. Note that the next theorem  implies Theorem \ref{thm:semicontinuity}.

\begin{thm} \label{thm:semicontinuity-1}Fix an ergodic CP-chain of dimension $\alpha$ (recall Lemma \ref{lem:dimension-of-CP-chains}) with  distribution $P$. Define
$e_{q}$ and $E_{q}$ as above. The limit \[
E(\pi):=\lim_{q\rightarrow\infty}E_{q}(\pi)\]
 exists and $E:\Pi_{d,k}\rightarrow[0,k]$ is lower semi-continuous. Moreover,
\begin{enumerate}
\item $E(\pi) = \min(k,\alpha)$ for almost every $\pi$. \label{itm:E-is-ae-constant}
\item For a fixed $\pi\in\Pi_{d,k}$, \label{itm:dim-equals-E-ae}
\[
\underline{\dim}_e(\pi\mu) = \dim_*\pi\mu =  E(\pi) \quad\textrm{ for } P-\textrm{a.e. } \mu.
\]
\item For any measure $\mu$ that generates (the measure component of) $P$ along a filtration $\{\Delta^n B^*\}$,
\[
\underline{\dim}_e(\pi\mu) \ge \dim_*\pi\mu \ge E(\pi) \quad \textrm{ for all } \pi\in \Pi_{d,k}.
\]
In particular, the above holds on a set $M$ with $P(M)=1$. \label{itm:itm-dim-at-least-E-for-generating-measure}
\end{enumerate}
\end{thm}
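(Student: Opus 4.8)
The plan is to deduce everything from Theorem \ref{thm:linear-projections-of-CP-chains} (the local-entropy-averages bound), the Hunt--Kaloshin theorem (Corollary \ref{cor:hunt-kaloshin-entropy}), and a uniform continuity argument for the functions $E_q$. First I would establish the existence of the limit $E(\pi)=\lim_{q\to\infty} E_q(\pi)$. For this I would use subadditivity: the key observation is that $H_{\rho^{q+q'}}(\pi\nu)$ relates to $H_{\rho^q}(\pi\nu)$ plus the average over cells at scale $\rho^q$ of the $\rho^{q'}$-entropy of the rescaled conditional measures, and because the chain is stationary, integrating against $P$ turns this into an (almost) subadditive relation $ (q+q') E_{q+q'} \le q E_q + q' E_{q'} + O(1)$, or the reverse with the sign of the $O(1)$ flipped; either way Fekete's lemma gives convergence of $E_q(\pi)$ to $\inf_q (E_q(\pi)+O(1/q))$ (or $\sup$), and the value is independent of the particular normalization. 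The same computation shows $|E(\pi)-E_q(\pi)|=O(1/q)$, which will be needed later.

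Next I would prove lower semicontinuity of $E$. Here the point is that for each fixed $q$, the function $\pi\mapsto E_q(\pi)=\int \frac{1}{q\log(1/\rho)} H_{\rho^q}(\pi\nu)\,dP(\nu)$ is continuous in $\pi$: the integrand $H_{\rho^q}(\pi\nu)$ is jointly continuous in $(\pi,\nu)$ away from atomic $\nu$ — and since $q$ is fixed and $\rho^q>0$ the discontinuity at atoms is bounded, so by dominated convergence $E_q$ is continuous (in fact one should check it is continuous, not merely bounded; continuity of $\pi\mapsto H_{\rho^q}(\pi\nu)$ for non-atomic $\nu$ plus boundedness suffices via dominated convergence since the set of atomic $\nu$ has whatever measure but the bound is uniform). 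Then $E=\lim_q E_q$ with uniform rate $O(1/q)$, so $E$ is a uniform limit of continuous functions and hence continuous — actually better than lower semicontinuous. (If the subadditivity only gives a one-sided $O(1/q)$ bound, then $E$ is an infimum or supremum of continuous functions with controlled error and one still gets lower semicontinuity, which is all that is claimed.)

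For item (1), almost every $\pi$ has $\dim_*\pi\mu = \underline{\dim}_e(\pi\mu) = \min(k,\alpha)$ for $P$-a.e.\ $\mu$ by Corollary \ref{cor:hunt-kaloshin-entropy} applied to the exact-dimensional measures of the chain (Lemma \ref{lem:dimension-of-CP-chains}); since $E_q(\pi)\to \underline{\dim}_e(\pi\mu)$ in a suitable averaged sense — more precisely, by the ergodic theorem $\frac1N\sum e_q(\mu^{x,n},\pi)\to E_q(\pi)$ and one lets $q\to\infty$ — one identifies $E(\pi)$ with $\min(k,\alpha)$ for a.e.\ $\pi$. For item (2), fix $\pi$; the lower bound $\dim_*\pi\mu \ge E(\pi)$ for $P$-a.e.\ $\mu$ is exactly Theorem \ref{thm:linear-projections-of-CP-chains} combined with the rate $E_q(\pi) \ge E(\pi)-O(1/q)$; the reverse inequality $\underline{\dim}_e(\pi\mu)\le E(\pi)$ comes from observing that the averaged $\rho^q$-entropies converge to $E_q(\pi)$ and these dominate the entropy dimension (Proposition \ref{prop:relation-dim-measure}), together with the fact that $\overline{\dim}(\pi\mu,y)\le\alpha\le E(\pi)$ when $E(\pi)=\alpha$... — more carefully, one sandwiches using $\dim_* \le \underline{\dim}_e$ always, and $\underline{\dim}_e(\pi\mu)\le \lim_q E_q(\pi)=E(\pi)$ by Fatou applied to the ergodic averages. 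Item (3) is the crucial uniformity-in-$\pi$ statement: for a \emph{fixed} generating $\mu$, Theorem \ref{thm:linear-projections-of-CP-chains} gives $\dim_*\pi\mu\ge E(\pi)$ for \emph{each} $\pi$, but a priori on a $\pi$-dependent full-measure set of $\mu$'s. The fix is that the generating property of $\mu$ along $\{\Delta^nB^*\}$ is a single almost-sure event \emph{not depending on $\pi$} (Proposition \ref{prop:CP-chain-measures-generate-the-process}), and the bound in Theorem \ref{thm:linear-projections-of-CP-chains} holds for \emph{every} measure generating $P$ and \emph{every} $\pi$ simultaneously once $\mu$ is in that event; so taking $M$ to be the generating set does it. The main obstacle I anticipate is precisely making the subadditivity/Fekete argument for $E(\pi)$ clean enough to extract the $O(1/q)$ rate uniformly in $\pi$ — the telescoping of $\rho^q$-scale entropies through the partition operator, controlling the error terms from $\rho$-regularity and from faithfulness uniformly, is where the bookkeeping is delicate, though conceptually routine.
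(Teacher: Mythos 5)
Your treatment of items (1), (2) and (3) is essentially the paper's: (1) is Hunt--Kaloshin plus Fubini, (2) is the sandwich $\limsup_q E_q(\pi)\le\dim_*\pi\mu\le\underline{\dim}_e(\pi\mu)$ (a.e.\ $\mu$, via Theorem \ref{thm:linear-projections-of-CP-chains}) against the Fatou bound $\int\underline{\dim}_e(\pi\mu)\,dP\le\liminf_q E_q(\pi)$, and (3) correctly exploits the fact that the hypothesis of Theorem \ref{thm:linear-projections-of-CP-chains} --- that $\mu$ generates $P$ --- is a single $\pi$-independent event. Note that the paper obtains the existence of $\lim_q E_q(\pi)$ as a free by-product of exactly this sandwich in (2); your separate Fekete argument is therefore unnecessary, and it is also the weakest part of the proposal.

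The genuine gap is in that Fekete step and in the conclusion you draw from it. You assert an approximate \emph{two-sided} additivity $(q+q')E_{q+q'}=qE_q+q'E_{q'}+O(1)$ ("either way"), and from the resulting uniform rate $|E-E_q|=O(1/q)$ you conclude that $E$ is a uniform limit of continuous functions, hence continuous, "actually better than lower semicontinuous." Only one direction of the additivity is available: decomposing $H_{\rho^{q+q'}}(\pi\nu)$ through the domain cells at scale $\rho^q$ uses concavity of entropy (equivalently, that conditioning on the finer domain $\sigma$-algebra $\mathcal{F}_{n-1}$ rather than the range $\sigma$-algebra $\mathcal{G}_{n-1}$ can only decrease conditional entropy), which gives superadditivity up to $O(1)$; the reverse inequality would require the accumulated information gap $\sum_n\bigl(H(Y_n\mid\mathcal{G}_{n-1})-\mathbb{E}(H(Y_n\mid\mathcal{F}_{n-1}))\bigr)$ to be $O(1)$, and it is typically $\Theta(q)$. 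Consequently you get $E_q\le E+O(1/q)$ but not $E\le E_q+O(1/q)$, so $E$ is a supremum of continuous functions (lower semicontinuous) and nothing more. The continuity claim is not a harmless overstatement: combined with your item (1) it would force $E\equiv\min(k,\alpha)$ on all of $\Pi_{d,k}$, which would make every projection of every CP-chain-generic measure attain the expected dimension and render the minimality hypotheses of Theorems \ref{thm:projection-of-product-invariant-measures}--\ref{thm:projections-of-self-similar-measures-with-irrational-rotation} superfluous. The coordinate projection of the product of two copies of the middle-thirds Cantor measure (which generates an ergodic CP-chain of dimension $2\log 2/\log 3>1$ but projects to dimension $\log 2/\log 3$) shows $E$ is genuinely discontinuous. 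Your fallback sentence does rescue the stated conclusion, but the correct route to semicontinuity is the paper's: for large $q$ one has $E_q(\pi)-O(1/q)>E(\pi)-\e$, this persists on a neighborhood of $\pi$ by continuity of the fixed-$q$ function $E_q$, and Theorem \ref{thm:linear-projections-of-CP-chains} together with item (2) then gives $E(\pi')=\dim_*\pi'\mu\ge E_q(\pi')-O(1/q)>E(\pi)-\e$ there.
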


\begin{proof} We first establish convergence of $E_q$ and claim \eqref{itm:dim-equals-E-ae}. It follows from Theorem \ref{thm:linear-projections-of-CP-chains} that if $\mu$ generates $P$, then
we obtain \begin{equation}
\dim_{*}(\pi\mu)\ge\limsup E_{n}(\pi).\label{eq:lower-bound-for-H-dim-of-projection}\end{equation}
 On the other hand, by definition of entropy dimension, we have \[
\underline{\dim}_{e}(\pi\mu)=\liminf_{n\rightarrow\infty}e_{n}(\mu,\pi),\]
 Integrating, we have by Fatou that \begin{equation}
\int\underline{\dim}_{e}(\pi\mu)dP(\mu)\le\liminf_{n}E_{n}(\pi).\label{eq:upper-bound-for-e-dim-of-projection}\end{equation}
Since $\dim_{*}\pi\mu\leq\underline{\dim}_{e}\pi\mu$ holds for any
measure by equation \eqref{eq:ineq-dimensions} in Proposition \ref{prop:relation-dim-measure},
combining \eqref{eq:lower-bound-for-H-dim-of-projection} and \eqref{eq:upper-bound-for-e-dim-of-projection}
we see that $E_{n}(\pi)$ converges and the limit is the common value
of $\dim_{*}\pi\mu=\underline{\dim}_{e}\pi\mu$ for almost every $\mu$ (possibly depending on $\pi$).

To prove \eqref{itm:E-is-ae-constant}, write $\beta = \min\{k,\alpha\}$ for the expected dimension of the image measure. For almost every $(\mu,\pi)$, by Theorem \ref{thm:hunt-kaloshin}
we have $\dim\pi\mu=\beta$. By Fubini, for a.e. $\pi$ this holds
for a.e. $\mu$, hence $E(\pi)=\beta$ for a.e. $\pi$ (we remark that entropy dimension of a measure is a Borel function of the measure).

We next establish semicontinuity of $E$. Given $\pi\in\Pi_{d,k}$
and $\varepsilon>0$ there is a $q$ so that $E_{q}(\pi)-O(1/q)>E(\pi)-\varepsilon$,
where $O(1/q)$ is the error term in Theorem \ref{thm:linear-projections-of-CP-chains}.
Since $E_{q}$ is continuous, this continues to hold in a neighborhood
$\mathcal{U}$ of $\pi$. By Theorem \ref{thm:linear-projections-of-CP-chains},
for almost every $\mu$, if $\pi'\in\mathcal{U}$ then by letting $q\to\infty$ we get $\dim_{*}\pi'\mu>E(\pi)-\varepsilon$.
This implies that $E(\pi')>E(\pi)-\varepsilon$ for $\pi'\in\mathcal U$, and since $\varepsilon$
was arbitrary, semicontinuity follows.

The last statement follows  from \eqref{eq:lower-bound-for-H-dim-of-projection}.
\end{proof}

We shall also encounter non-ergodic CP-chains. In this case much
of the above fails; for example, the dimension of the projection through
$\pi$ of need not be a.s. constant, as they may differ by ergodic
component (nor do measures for the process have to have the same dimension
a.s.). However, we have the following substitute: \begin{thm} \label{thm:open-dense-set-of-good-directions-non-ergodic-case}Let
$(\mu_{n},B_{n})_{n=1}^{\infty}$ be a CP-chain whose measures almost
surely have exact dimension $\alpha$, and write $\beta=\min\{k,\alpha\}$.
Then for any $\varepsilon>0$ there is an open dense set $\mathcal{U}\subseteq\Pi_{d,k}$
and a set $M_{\varepsilon}$ of measures with $P(M_{\varepsilon})>1-\varepsilon$
and such that \[
\dim_{*}\pi\mu>\beta-\varepsilon\]
 for every $\pi\in\mathcal{U}$ and $\mu\in M_{\varepsilon}$. \end{thm}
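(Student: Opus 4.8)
The strategy is to reduce the non-ergodic case to the ergodic one via the ergodic decomposition of the CP-chain. First I would decompose $P = \int P_\omega \, d\omega$ into ergodic components, each $P_\omega$ being itself a CP-chain for the same partition operator $\Delta$ (as noted in the proof of Corollary \ref{cor:non-ergodic-CP-chains-generate}, ergodic components of a Markov chain are Markov chains for the same transition law). Since almost every measure for $P$ has exact dimension $\alpha$, and since $\dim\mu$ is constant on each ergodic component by Lemma \ref{lem:dimension-of-CP-chains}, almost every ergodic component $P_\omega$ has dimension $\alpha$ as well. To each such $P_\omega$ we attach its lower-semicontinuous function $E_\omega : \Pi_{d,k} \to [0,k]$ from Theorem \ref{thm:semicontinuity-1}, which satisfies $E_\omega(\pi) = \beta := \min\{k,\alpha\}$ for a.e.\ $\pi$.

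Next I would build the open dense set. Fix $\varepsilon > 0$. By Corollary \ref{cor:open-dense-set} applied to each ergodic component $P_\omega$, there is, for that component, a dense open $\mathcal{U}_\omega \subseteq \Pi_{d,k}$ and a set of measures $M_\omega$ with $P_\omega(M_\omega) = 1$ such that $\dim_* \pi\mu > \beta - \varepsilon$ for all $\pi \in \mathcal{U}_\omega$, $\mu \in M_\omega$. The subtlety is that $\mathcal{U}_\omega$ varies with $\omega$, and an uncountable intersection of dense open sets need not be dense. To get around this I would proceed at finite scale: recall $E_q^{(\omega)}(\pi) = \int e_q(\nu,\pi)\, dP_\omega(\nu)$, and that $E_q^{(\omega)} \to E_\omega$ pointwise. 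Averaging over the ergodic decomposition, $\int e_q(\nu,\pi)\, dP(\nu) = \int E_q^{(\omega)}(\pi)\, d\omega$, and this is a continuous function of $\pi$ (the integrand $e_q(\cdot,\cdot)$ is jointly continuous up to the $\rho$-scale error, uniformly bounded by $k$). For a.e.\ $\pi$ we have $E_\omega(\pi) = \beta$ for a.e.\ $\omega$, so by dominated convergence $\int E_q^{(\omega)}(\pi)\, d\omega \to \beta$ for a.e.\ $\pi$; pick $q$ large (depending on $\varepsilon$) so that the set where $\int E_q^{(\omega)}(\pi)\, d\omega > \beta - \varepsilon/2$ has full measure, hence is dense, and by continuity in $\pi$ its interior $\mathcal{U}$ is dense open.

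Then on $\mathcal{U}$, for each $\pi \in \mathcal{U}$ we have $\int E_q^{(\omega)}(\pi)\, d\omega > \beta - \varepsilon/2$ with $E_q^{(\omega)}(\pi) \le k$ for all $\omega$, so by a Markov/Chebyshev argument the set of $\omega$ with $E_q^{(\omega)}(\pi) > \beta - \varepsilon$ has measure bounded below by a constant $c(\varepsilon,k) > 0$; but I actually want a \emph{single} set $M_\varepsilon$ of measures working for all $\pi \in \mathcal{U}$ simultaneously. Here I would instead fix one base point $\pi_0 \in \mathcal{U}$, use $\int E_q^{(\omega)}(\pi_0)\, d\omega > \beta - \varepsilon/2$ to find (by Chebyshev, using the uniform bound $k$) a set $\Omega_\varepsilon$ of components with $\omega$-measure $> 1 - \varepsilon$ on which $E_q^{(\omega)}(\pi_0) > \beta - C\varepsilon$ for a suitable $C$, shrink $\mathcal{U}$ to a smaller dense open neighborhood of $\pi_0$ where continuity of $E_q^{(\omega)}$ keeps this inequality valid for all $\omega \in \Omega_\varepsilon$ — but $E_q^{(\omega)}$ is continuous \emph{individually} in $\pi$, not uniformly in $\omega$, so this last shrinking is the genuine obstacle. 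The clean fix is to observe that the modulus of continuity of $E_q(\cdot) = H_{\rho^q}(\pi\,\cdot)/(q\log(1/\rho))$ in $\pi$, on measures supported in a fixed bounded region, is \emph{uniform} (it is governed by how $\pi$ moves balls of radius $\rho^q$, independent of the measure), so $E_q^{(\omega)}(\pi)$ is equicontinuous in $\pi$ across all $\omega$. With that equicontinuity, a neighborhood of $\pi_0$ on which $E_q^{(\omega)}(\pi) > \beta - \varepsilon$ for all $\omega \in \Omega_\varepsilon$ exists; intersecting this with $\mathcal{U}$ and taking $M_\varepsilon = \bigcup_{\omega \in \Omega_\varepsilon} M_\omega$ (which has $P(M_\varepsilon) > 1-\varepsilon$), Theorem \ref{thm:linear-projections-of-CP-chains} applied componentwise gives $\dim_*\pi\mu \ge E_q^{(\omega)}(\pi) - O(1/q) > \beta - 2\varepsilon$ for all $\pi$ in this neighborhood and all $\mu \in M_\varepsilon$. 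Renaming $2\varepsilon \to \varepsilon$ and covering $\Pi_{d,k}$ by translating $\pi_0$ (i.e.\ running the argument around a countable dense set of base points and taking a union of the resulting neighborhoods, which is then dense open) finishes the proof. The main obstacle, as indicated, is exactly the interchange of "dense open" with the uncountable ergodic decomposition; equicontinuity of the finite-scale entropy $E_q$ in $\pi$ is what makes it go through.
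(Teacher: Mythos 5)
Your overall architecture matches the paper's: a countable dense set of Marstrand-good projections obtained from Theorem \ref{thm:hunt-kaloshin} plus Fubini, the finite-scale entropies $E_q$, their (equi)continuity in $\pi$ via Lemma \ref{lem:perturbed_entropy}, a uniformization over a large set of measures/components, and the componentwise lower bound from Theorem \ref{thm:linear-projections-of-CP-chains}. Your explicit routing through the ergodic decomposition and the functions $E_q^{(\omega)}$ is a reasonable reorganization of the same ideas.

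However, the uniformization step as you state it is wrong. From $\int E_q^{(\omega)}(\pi_0)\,d\omega>\beta-\e/2$ and the bound $E_q^{(\omega)}(\pi_0)\le k$, Markov/Chebyshev does \emph{not} produce a set of components of measure $>1-\e$ on which $E_q^{(\omega)}(\pi_0)>\beta-C\e$. When $\beta<k$ this fails badly: the distribution of $\omega\mapsto E_q^{(\omega)}(\pi_0)$ could put half its mass at $k$ and half at $2\beta-k$, giving mean $\beta$ while half the components are far below $\beta$. (Even when $\beta=k$, Markov forces a trade-off between the threshold and the measure of the good set, e.g.\ threshold $k-\sqrt{\e}$ with exceptional measure $O(\sqrt{\e})$, not $\beta-C\e$ with exceptional measure $\e$.) You correctly noted earlier in your write-up that Chebyshev only yields a set of measure bounded below by some $c(\e,k)>0$; the subsequent claim that it yields measure $>1-\e$ contradicts that and is the real gap. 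The correct tool is the one the paper uses: for a Marstrand-good $\pi_0$ one has the \emph{pointwise} a.e.-$\omega$ convergence $E_q^{(\omega)}(\pi_0)\to E_\omega(\pi_0)=\beta$ (via Theorem \ref{thm:semicontinuity-1}\eqref{itm:dim-equals-E-ae}), and Egorov then gives a set $\Omega_\e$ of measure $>1-\e$ on which this convergence is uniform; no appeal to the integrated quantity is needed. A second, minor, bookkeeping point: since $\Omega_\e$ (hence $M_\e$) depends on the base point $\pi_0^{(j)}$, running the argument over a countable dense family requires taking exceptional sets of measure $\e/2^j$ so that the final single $M_\e$ retains measure $>1-\e$; as written, intersecting countably many sets each of measure $>1-\e$ gives no lower bound. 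With these two repairs your argument becomes essentially the paper's proof.
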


\begin{proof} Applying Theorem \ref{thm:hunt-kaloshin} and Fubini,
we can find a dense set of projections $\{\pi_{i}\}_{i=1}^{\infty}\subseteq\Pi_{d,k}$
such that for $P$-a.e. $\mu$, \[
\dim_{*}\pi_{i}\mu=\beta\qquad\mbox{ for }i\in\mathbb{N}.
\]
 This together with equation \eqref{eq:ineq-dimensions} imply that for almost every $\mu$,\[
\liminf_{q\rightarrow\infty}e_{q}(\mu,\pi_{i})\geq\beta\]
for every $i$.

Fix $\varepsilon>0$ and  $i\in\mathbb{N}$. By the previous theorem applied to the ergodic components of $P$, for a.e. every $\mu$ there is an open neighborhood of $\pi_i$ such that for $\pi$ in this neighborhood we have $\dim_* (\pi\mu)>\beta-\varepsilon$ (in fact this neighborhood can be taken to depend only on the ergodic component of $\mu$). It is then clear that there is a set $M_{\varepsilon,i}$ satisfying $P(M_{\varepsilon,i})>1-\varepsilon/2^i$, and an open neighborhood $\mathcal{U}_{\varepsilon,i}$ of $\pi_i$, such that the same holds for $\mu\in M_{\varepsilon,i}$ and $\pi\in\mathcal{U}_{\varepsilon,i}$ (to see this, fix a metric on $\Pi_{d,k}$ and let $r(\mu)>0$ denote the largest number such that the ball of radius $r(\mu)$ around $\pi_i$ has this property. Clearly $r(\mu)$ depends measurably on $\mu$, so, since $r(\cdot)$ is a strictly positive function, we have $\lim_{t\to 0}P(\mu\,:\,r(\mu)>t)=0$. Hence for small enough $t$ we can take $M_{\varepsilon,i}=\{\mu\,:\,r(\mu)>t\}$ and $\mathcal{U}_{\varepsilon,i}$ to be the ball of radius $t$ around $\pi_i$).

Set $M_\varepsilon =\bigcap_{i=1}^\infty M_{\varepsilon,i}$ and $\mathcal{U}=\bigcup_{i=1}^\infty \mathcal{U}_{\varepsilon,i}$; then $P(M_\varepsilon)>1-\varepsilon$ and $\mathcal{U}$ is a dense open neighborhood of $\{\pi_1,\pi_2,\ldots\}$ with the desired properties.
\end{proof}

\subsection{\label{sub:Smooth-images-of-measures}Non-linear images of measures}

Fix an ergodic CP-chain, let $\mu$ be a typical measure for it,
and define $e_{n},E_{n}$ etc. as in the previous section. In the
proof of Theorem \ref{thm:linear-projections-of-CP-chains} linearity
of $\pi$ was used only for the bound\[
\left|H_{\rho^{q}}(\pi\mu^{x,n})-H_{\rho^{n+q}}(\pi\mu_{\mathcal{F}_{n}(x)})\right|<O(1).\]
Now replace $\pi$ with a differentiable but non-linear map $f$. For $n$ large enough, $f|_{\mathcal{F}_n(x)}$ approaches the linear map $D_x f$, and hence, after a little work, the bound above can be replaced by \[
\left|H_{\rho^{q}}((D_x f)\mu^{x,n})-H_{\rho^{n+q}}(f\mu_{\mathcal{F}_{n}(x)})\right|<O(1).\]
We have omitted a few details here but this is essentially how the following proposition is proved.

\begin{prop} \label{prop:smooth-projections-of-CP-chains}Fix
an ergodic CP-chain with distribution $P$, a projection $\pi\in\Pi_{d,k}$,
and define $e_{q}$ and $E_{q}$ as before. Then for all $C^{1}$
maps $\varphi:[0,1]^{d}\rightarrow\RR^{k}$ such that $\sup_{x\in\supp\mu}\|D_{x}\varphi-\pi\|<\rho^{q}$,
we have \[
\dim_{*}(\varphi\mu)\ge E_{q}-O(1/q)
\]
for all $\mu$ that strongly generate the measure component of $P$ along a sequence of partitions $\mathcal{F}_n = \Delta^n(B^*)$ (and in particular for $P$-a.e. $\mu$).
 \end{prop}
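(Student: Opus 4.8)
The plan is to mimic the proof of Theorem \ref{thm:linear-projections-of-CP-chains}, replacing the linear projection $\pi$ by the $C^1$ map $f$ everywhere, and showing that the single place where linearity was used survives with an $O(1)$ error. As before I would first reduce to the totally ergodic case using Lemma \ref{lem:maximizing-integral-along-shifted-sequence} and the restriction trick of Lemma \ref{lem:restricting-preserves-generation}: we may split $\mu$ along the finite partition $A_{i(x)}$ and treat each piece separately, so from now on assume the measure component of $P$ is totally ergodic and fix $q$. Since $\mu$ generates $P$ along the filtration $\mathcal{F}_n = \Delta^n(B^*)$, Proposition \ref{prop:CP-chain-measures-generate-the-process} and the ergodic theorem give, for $\mu$-a.e. $x$,
\[
  \frac{1}{N}\sum_{n=0}^{N-1} \frac{1}{q\log(1/\rho)}H_{\rho^q}(\pi\mu^{x,n}) \longrightarrow E_q .
\]

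The crux is the following replacement for the estimate $|H_{\rho^q}(\pi\mu^{x,n}) - H_{\rho^{n+q}}(\pi\mu_{\mathcal{F}_n(x)})| = O(1)$. Recall $\mu^{x,n} = T_{\mathcal{F}_n(x)}\mu_{\mathcal{F}_n(x)}$, where $T_{\mathcal{F}_n(x)}$ is a homothety scaling by a factor comparable to $\rho^{-n}$ (by $\rho$-regularity of $\Delta$). Conjugating, $H_{\rho^q}(\pi\mu^{x,n})$ equals, up to the $O(1)$ error from Lemma \ref{lem:entropy_comparable_radius} coming from the bounded eccentricity, the quantity $H_{\rho^{n+q}}(\pi' \mu_{\mathcal{F}_n(x)})$ where $\pi'$ is the conjugate of $\pi$ by the homothety — but a homothety commutes with a linear map, so $\pi' = \pi$ up to translation, which is why linearity entered. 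For a $C^1$ map $f$ with $\sup_{x}\|D_x f - \pi\| < \rho^q$, I would instead argue as follows: on the cube $\mathcal{F}_n(x)$, which has diameter $\Theta(\rho^n)$, write $f(y) = f(x) + D_x f\,(y-x) + R_n(y)$ with $\|R_n(y)\| = o(\rho^n)$ by differentiability (uniformly for $\mu$-a.e. $x$, after restricting to an Egorov set on which the modulus of continuity of $Df$ is controlled — this is where I would be most careful). Rescaling $\mathcal{F}_n(x)$ to the unit cube by $T_{\mathcal{F}_n(x)}$, the map $f$ becomes, up to translation, a $C^1$ map $g_n$ with $\sup\|D g_n - \pi\| \le \sup\|D_x f - \pi\| + o(1) < \rho^q$ for $n$ large (using the assumed gap $\rho^q$ plus the vanishing remainder). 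Then Lemma \ref{lem:perturbed_entropy} with $r = \rho^q$ gives
\[
  \bigl| H_{\rho^q}(\pi\mu^{x,n}) - H_{\rho^q}(f_n\mu^{x,n})\bigr| = O(1),
\]
and unwinding the rescaling, $\bigl|H_{\rho^q}(\pi\mu^{x,n}) - H_{\rho^{n+q}}(f\mu_{\mathcal{F}_n(x)})\bigr| = O(1)$, where the implicit constant depends only on $d,k,\rho$ and the regularity constant. This is exactly the analogue of the displayed bound in the linear case.

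Granted this estimate, the rest of the argument is verbatim the proof of Theorem \ref{thm:linear-projections-of-CP-chains}: I would build the $\rho^q$-tree $X$ whose level-$n$ nodes are the atoms of $\mathcal{F}_{qn}$, let $\widetilde\mu$ be the lift of $\mu$, observe that the induced map $X \to \RR^k$ coming from $f$ is Lipschitz (by $\rho$-regularity), apply Theorem \ref{thm:lifting-maps-to-tre-morphisms} to factor it as $X \xrightarrow{g} Y \xrightarrow{h} \RR^k$ with $h$ faithful, transfer the local-entropy average bound to $g\widetilde\mu$ via part (4) of that theorem, invoke Theorem \ref{thm:dimension-via-local-entropy-for-rho-trees} to conclude $\dim_* g\widetilde\mu \ge E_q - O(1/q)$, and finally use Proposition \ref{prop:dim-tree-to-euclidean} (faithfulness of $h$) together with $hg\widetilde\mu = f\mu$ to get $\dim_*(f\mu) \ge E_q - O(1/q)$. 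The main obstacle is the uniformity in the remainder estimate: differentiability of $f$ at a point gives $o(\rho^n)$ only pointwise, so I would need an Egorov-type reduction to a set of measure arbitrarily close to one on which $\|R_n(y)\|/\rho^n \to 0$ uniformly (for $C^1$ maps this is automatic from uniform continuity of $Df$ on the compact set $\supp\mu$), and then the usual remark that it suffices to prove the bound for $\mu$ restricted to such a set, since $\dim_*$ of a measure is the $\essinf$ of $\dim_*$ over the pieces of any countable decomposition (Lemma \ref{lem:properties-of-lower-dim}\eqref{itm:dim-at-least-inf-over-components}).
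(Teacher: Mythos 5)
Your proposal is correct and follows essentially the same route as the paper: reduce to the totally ergodic case, run the tree-lifting argument of Theorem \ref{thm:linear-projections-of-CP-chains} verbatim, and replace the one use of linearity by the chain of $O(1)$ entropy comparisons ``$f$ on $\mathcal{F}_{qn}(x)$ $\approx$ $D_xf$ $\approx$ $\pi$'' via the Taylor remainder, Lemma \ref{lem:perturbed_entropy}, and Lemma \ref{lem:entropy_comparable_radius}. The only cosmetic difference is that you fold the differentiability remainder directly into the hypothesis of Lemma \ref{lem:perturbed_entropy}, while the paper first passes from $f$ to the constant linear map $D_xf$ and then invokes that lemma; both are the same estimate, and your explicit remark about uniformity of the remainder (automatic from uniform continuity of $Df$ on the compact support) is a point the paper leaves implicit.
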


\begin{proof} The proof is completely analogous to that of Proposition
\ref{thm:linear-projections-of-CP-chains}, and we only indicate
the differences.

Construct $X$ from $\mathcal{F}$ and $f:X\rightarrow\mathbb{R}^{d}$
precisely as before, lift $\mu$ to $\widetilde{\mu}$ and let $\widetilde{\varphi}=\varphi f:X\rightarrow\mathbb{R}^{k}$.
Construct $X\xrightarrow{g}Y\xrightarrow{h}\mathbb{R}^{k}$ as before.
We wish to estimate the dimension of $g\widetilde{\mu}$ and for this
we must estimate \[
\liminf_{N\rightarrow\infty}\frac{1}{N}\sum_{n=0}^{N-1}H_{\rho^{q(n+1)}}(g\widetilde{\mu}_{[x_{1}\ldots x_{n}]})\]
 for $\widetilde{\mu}$-typical $x\in X$. Briefly, the point is that
as $n\rightarrow\infty$ the map $g$ at $[x_{1}\ldots x_{n}]$ looks
more and more like $\varphi$ on $\mathcal{F}_{qn}(x)$, which looks
more and more like $D_{x}\varphi$, which is not far from $\pi$,
so we are essentially averaging $H_{\rho^{q(n+1)}}(\pi\mu_{\mathcal{F}_{qn}(x)})$.
By $\rho$-regularity of the filtration, this is almost the same as
the average of $H_{\rho^{q}}(\pi T_{\mathcal{F}_{qn}(x)}\mu_{\mathcal{F}_{qn}(x)})$,
which is $H_{\rho^{q}}(\pi\mu^{x,qn})$ and we get our bound from the fact that $\mu$ strongly generates $P$.

Here are the details. Losing an $O(1)$ term, by Theorem \ref{thm:dimension-via-local-entropy-for-rho-trees}
it suffices to estimate \[
\liminf_{N\rightarrow\infty}\frac{1}{N}\sum_{n=0}^{N-1}H_{\rho^{q(n+1)}}(\varphi\mu_{x,qn})\]
 for $\mu$-typical $x$. Now, we change scale: letting $A_{n}$ denote
scaling by $\rho^{-nq}$ on $\mathbb{R}^{k}$, \[
H_{\rho^{q(n+1)}}(\varphi\mu_{x,qn})=H_{\rho^{q}}(A_{n}(\varphi\mu_{x,qn}))\]
 Since $\varphi$ is differentiable at $x$, we see that\[
\lim_{n\rightarrow\infty}\left(A_{n}(\varphi\mu_{x,qn})-A_{n}\circ(D_{x}\varphi)(\mu_{x,qn})\right)=0,\]
 and since $A_{n}\circ D_{x}\varphi=D_{x}\varphi\circ\widetilde{A}_{n}$, where
$\widetilde{A}_{n}$ is scaling by $\rho^{-nq}$ on $\mathbb{R}^{d}$,
we have\[
\lim_{n\rightarrow\infty}\left(A_{n}(\varphi\mu_{x,qn})-(D_{x}\varphi)\circ\widetilde{A}_{n}(\mu_{x,qn})\right)=0.
\]
 The same is true  after we apply $H_{\rho^{q}}$ to these measures,
so it suffices to estimate\[
\liminf_{N\rightarrow\infty}\frac{1}{N}\sum_{n=0}^{N-1}H_{\rho^{q}}((D_{x}\varphi)\circ\widetilde{A}_{n}(\mu_{x,qn})).\]
 Since $\left\Vert D_{x}\varphi-\pi\right\Vert <\rho^{q}$ we have by Lemma \ref{lem:perturbed_entropy} that \[
\left|H_{\rho^{q}}(\pi\nu)-H_{\rho^{q}}((D_{x}\varphi)\nu)\right|=O(1)\]
 for any $\nu$, in particular for $\nu=\widetilde{A}_{n}(\mu_{x,qn})$;
thus we only need to estimate \[
\liminf_{N\rightarrow\infty}\frac{1}{N}\sum_{n=0}^{N-1}H_{\rho^{q}}(\pi\circ\widetilde{A}_{n}(\mu_{x,qn})).\]
Finally, since $H_{\rho^{q}}(\cdot)$ is invariant under translations
and $\mathcal{F}$ is $\rho$-regular, by Lemma \ref{lem:entropy_comparable_radius}
we may replace $\widetilde{A}_{n}$ by $T_{\mathcal{F}_{qn}(x)}$
at the cost of losing another $O(1)$. Since $T_{\mathcal{F}_{qn}(x)}\mu_{x,qn}=\mu^{x,qn}$,
we have reduced the problem to estimating the ergodic averages
\[
\liminf_{N\rightarrow\infty}\frac{1}{N}\sum_{n=0}^{N-1}H_{\rho^{q}}(\pi(\mu_{x,qn}))\]
which, by the fact that $\mu$ strongly generates $P$, is equal to \[
\int H_{\rho^{q}}(\pi\nu)dP(\nu)=q\log(1/\rho)\cdot E_{q}(\pi),
\]
as desired. \end{proof}

\begin{proof}[Proof of Theorem \ref{thm:lower-semicontiunity-in-C1}]
 The theorem is an immediate consequence of Theorem \ref{thm:semicontinuity-1}\eqref{itm:dim-equals-E-ae} and Proposition \ref{prop:smooth-projections-of-CP-chains}.
\end{proof}

We also have the following strengthening of Corollary \ref{cor:open-dense-set}:

\begin{cor} \label{cor:open-good-set-in-C1}
For every $\varepsilon>0$ there is an open set $\mathcal{U}_{\varepsilon}\subseteq C^1([0,1]^d,\RR^k)$ with the following properties:
 \begin{enumerate}
 \item $\mathcal{U}_\e\cap \Pi_{d,k}$ is open, dense and has full measure in $\Pi_{d,k}$.
\item If $\mu$ strongly generates the measure component of an ergodic $CP$-chain along a filtration $\{\Delta^n(B^*)\}$, then
\[
\dim_{*}f\mu>\min(k,\alpha)-\varepsilon \quad\mbox{ for all }f \in \mathcal{U}_{\e},
\]
where $\alpha$ is the dimension of the CP-chain.
 \end{enumerate}
\end{cor}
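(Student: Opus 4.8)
The plan is to assemble $\mathcal{U}_\e$ from $C^1$-neighborhoods of the ``good'' linear projections, locating those projections with the lower-semicontinuous function $E$ of Theorem~\ref{thm:semicontinuity-1} and controlling a $C^1$-ball around each of them with Proposition~\ref{prop:smooth-projections-of-CP-chains}. Fix the ergodic CP-chain (distribution $P$, dimension $\alpha$, partition operator $\Delta$) and write $\beta=\min(k,\alpha)$; recall from Theorem~\ref{thm:semicontinuity-1} that $E$ is lower semicontinuous, that $E(\pi)=\lim_{q\to\infty}E_q(\pi)$, and that $E(\pi)=\beta$ for almost every $\pi$.

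First I would set $V_\e=\{\pi\in\Pi_{d,k}:E(\pi)>\beta-\e\}$. Lower semicontinuity makes $V_\e$ open; since it contains the full-measure set $\{E=\beta\}$ --- which is dense because the natural measure on $\Pi_{d,k}$ has full support --- it is open, dense and of full measure in $\Pi_{d,k}$. This set will become the ``linear part'' of $\mathcal{U}_\e$.

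Next, for each $\pi\in V_\e$ I would choose $q=q(\pi)\in\NN$ with $E_q(\pi)-O(1/q)>\beta-\e$, which is possible because $E_q(\pi)\to E(\pi)>\beta-\e$ while the error term $O(1/q)$ of Proposition~\ref{prop:smooth-projections-of-CP-chains} tends to $0$. Put
\[
  \mathcal{U}_\pi=\Bigl\{g\in C^1([0,1]^d,\RR^k)\ :\ \sup_{x\in[0,1]^d}\|D_xg-\pi\|_\infty<\rho^{q(\pi)}\Bigr\},
\]
which is open in $C^1([0,1]^d,\RR^k)$ and contains $\pi$ itself (as $D_x\pi\equiv\pi$). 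By Proposition~\ref{prop:smooth-projections-of-CP-chains}, every $g\in\mathcal{U}_\pi$ and every $\mu$ generating the measure component along a filtration $\{\Delta^nB^*\}$ satisfy $\dim_*g\mu\ge E_{q(\pi)}(\pi)-O(1/q(\pi))>\beta-\e$. I would then set $\mathcal{U}_\e=\bigcup_{\pi\in V_\e}\mathcal{U}_\pi$, an open subset of $C^1([0,1]^d,\RR^k)$. Property~(2) is then immediate; for property~(1), the inclusion $\Pi_{d,k}\hookrightarrow C^1([0,1]^d,\RR^k)$ being continuous, $\mathcal{U}_\e\cap\Pi_{d,k}$ is open in $\Pi_{d,k}$, and since $V_\e\subseteq\mathcal{U}_\e\cap\Pi_{d,k}\subseteq\Pi_{d,k}$ with $V_\e$ dense and of full measure, so is $\mathcal{U}_\e\cap\Pi_{d,k}$.

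I do not expect any real obstacle, since the substantive work is already contained in Theorem~\ref{thm:semicontinuity-1} and Proposition~\ref{prop:smooth-projections-of-CP-chains} and the argument above is essentially bookkeeping. The two points that warrant a line of care are that Proposition~\ref{prop:smooth-projections-of-CP-chains} only asks for $\sup_{x\in\supp\mu}\|D_xg-\pi\|<\rho^q$, so to obtain a single $C^1$-open set working for all generating $\mu$ simultaneously I strengthen this to a supremum over all of $[0,1]^d$; and that one should check $V_\e\subseteq\mathcal{U}_\e$, which holds simply because $\pi\in\mathcal{U}_\pi$ for every $\pi\in V_\e$.
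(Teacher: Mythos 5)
Your argument is correct and is exactly the route the paper intends: the paper's own proof is a one-line citation of Theorem \ref{thm:semicontinuity-1}(1) and Proposition \ref{prop:smooth-projections-of-CP-chains}, and your construction (the open full-measure set $V_\e$ from lower semicontinuity of $E$, plus a $C^1$-ball of radius $\rho^{q(\pi)}$ around each $\pi\in V_\e$ controlled by the proposition) is precisely the bookkeeping that citation leaves implicit. The two points you flag at the end are the right ones and are handled correctly.
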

 \begin{proof}
 This is an immediate consequence of Theorem \ref{thm:semicontinuity-1}\eqref{itm:E-is-ae-constant} and Proposition \ref{prop:smooth-projections-of-CP-chains}.
 \end{proof}

Finally, we obtain Theorem \ref{thm:bound-for-smooth-images} as another consequence of Proposition \ref{prop:smooth-projections-of-CP-chains}.

\begin{proof}[Proof of Theorem \ref{thm:bound-for-smooth-images}]
Fix a measure $\mu$ that generates the measure component of the CP-chain along a filtration $\{\Delta^n(B^*)\}$ and a $C^1$ map $g:\supp\mu\rightarrow\RR^k$ without singular points. It follows from Proposition \ref{prop:smooth-projections-of-CP-chains} that, given a $q\in\NN$, for every $x\in\supp(\mu)$ there is $r=r(x)$ such that
\[
\dim_*(g\mu_{B_r(x)}) \ge E_q(D_x g) - O(1/q).
\]
It easily follows that
\[
\dim_*g\mu \ge \essinf_{x\sim \mu} E_q(D_x g) - O(1/q),
\]
and we obtain the theorem by letting $q\rightarrow\infty$.
\end{proof}

\section{Self-similar measures}

\label{sec:self-similar-measures}

\subsection{\label{sub:self-similar-sets-and-measures}Self-similar sets and measures}

In this section we prove Theorem \ref{thm:projections-of-self-similar-measures-with-irrational-rotation}.
We begin by briefly recalling the main definitions involved.

A map
$f$ on $\RR^{d}$ is called a \emph{contraction} if it is $C$-Lipschitz
for some $C<1$. Let $\Lambda$ be a finite index set; a collection
$\{f_{i}:i\in\Lambda\}$ of contractions on $\RR^{d}$ is called an
\emph{iterated function system} or IFS for short. As is well-known,
there is a unique nonempty compact set $X$, called the \emph{attractor}
of the IFS, such that $X=\bigcup_{i\in\Lambda}f_{i}(X)$. For $a=a_1\ldots a_n \in\Lambda^n$, write \[
  f_a = f_{a_1} \circ f_{a_2} \circ \ldots \circ f_{a_n}.
\]
Given $a\in\Lambda^{\NN}$ and $x\in\RR^{d}$, the sequence
$f_{a_{1}\ldots a_{n}}(x)$ has a limit which lies in
$X$ and is independent of $x$. This defines a continuous and surjective
map $\Phi:\Lambda^{\NN}\rightarrow X$, called the \emph{coding map}.

We say that the \emph{strong separation condition} holds for $\{f_{i}:i\in\Lambda\}$
if the sets $f_{i}(X)$ are pairwise disjoint. This implies that the coding map is injective on the attractor.

Given an iterated function system $\{f_{i}:i\in\Lambda\}$ and
a probability vector $(p_{i})_{i\in\Lambda}$, one can form the product measure on $\Lambda^{\mathbb{N}}$ with marginal $\{p_{i}:i\in\Lambda\}$. The push-forward of this measure is the unique probability measure on $\mathbb{R}^d$ satisfying
\[
\mu=\sum_{i\in\Lambda}p_{i}\, f_{i}\mu.\]
The collection of pairs $\{(f_{i},p_{i})\}_{i\in\Lambda}$ is called
a \emph{weighted iterated function system}.

When all the maps $f_{i}$ of an IFS are contracting similarities, one says
that $X$ is a \emph{self-similar set}, and a measure as above is a \emph{self-similar
measure}. Under the strong separation condition self-similar measures are quite well understood; in particular, they are exact dimensional. In general, however, projections of self-similar sets or measures are not self-similar, and may have complicated overlaps even if the original set does not.

\subsection{\label{sub:semicontinuity-projections-of-self-similar-measures} Proof of Theorem \ref{thm:projections-of-self-similar-measures-with-irrational-rotation}}

Let $\{f_{i}:i\in\Lambda\}$ be an iterated function system satisfying the hypotheses of Theorem \ref{thm:projections-of-self-similar-measures-with-irrational-rotation}. As with Theorem \ref{thm:projection-of-product-invariant-measures}, the proof of Theorem \ref{thm:projections-of-self-similar-measures-with-irrational-rotation} has two parts: first we establish a continuity result (or, rather, a topological statement about the set of nearly-good projections), and then use invariance of (pieces of) the self-similar measure under a sufficiently rich set of orthogonal maps to conclude that all linear projections are good.

We shall rely on the existence of a CP-chain supported on measures closely related to the self-similar measure from which we begin.

\begin{prop} \label{prop:CP-chain-for-self-similar-measure}
Let $\mu$ be a self-similar measure for an IFS satisfying the strong separation property. Then there is a CP-chain supported on measures $\nu$ such that, for some similarities $S,S'$ and Borel sets $B,B'$ (depending on $\nu$), we have $\mu=S\nu_B$ and $\nu=S'\mu_{B'}$.
\end{prop}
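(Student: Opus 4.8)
The statement asserts that a self-similar measure $\mu$ (for an IFS with strong separation) is, up to a similarity and restriction, a typical measure for some CP-chain, and conversely that typical measures for that chain are, up to similarity and restriction, equal to $\mu$. My plan is to build the CP-chain directly on the coding space and then transport it to Euclidean space via the coding map $\Phi$, exploiting the fact that under strong separation $\Phi$ is injective and bi-Lipschitz onto its image (on cylinders), so that rescaled restrictions of $\mu$ correspond to rescaled restrictions of the Bernoulli measure on $\Lambda^\NN$.

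\textbf{Step 1: choice of the partition operator and reduction to a single scale.} The contraction ratios $r_i$ of the similarities $f_i$ need not be commensurable, so the naive $b$-adic partition operator will not interact well with the IFS. Instead I would work with a partition operator $\Delta$ adapted to the IFS: start from the attractor $X$ (or a box containing it), and let the atoms be the images $f_a(X)$ grouped into ``generations'' chosen so that the diameters lie in a fixed band $[\rho^{n+1},\rho^n)$ for a suitable fixed $\rho$; strong separation makes these genuinely disjoint. Concretely, define a stopping-time family: for a cylinder $[a]$, its children are the $[ab]$ where $b$ is the shortest extension making the diameter of $f_{ab}(X)$ drop below $\rho$ times that of $f_a(X)$. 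This yields a $\rho$-regular partition operator on the natural family of boxes (or, more honestly, on a family of sets of bounded eccentricity — here one either replaces boxes by sets of bounded eccentricity as remarked after Definition~\ref{def:A-CP-chains}, or absorbs the set into a slightly larger box and checks $\rho$-regularity survives). The point of this choice is that the rescaled conditional measure $\mu^B$, for $B$ an atom at level $n$, is \emph{exactly} $S\mu$ restricted to $B^*$ for the similarity $S = T_B$ — so the state space of the chain consists of measures each of which is a similar copy of a restriction of $\mu$.

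\textbf{Step 2: verifying the Markov and stationarity properties.} With this $\Delta$ in hand, I would define the chain's transition law by the CP-chain recipe in Definition~\ref{def:A-CP-chains}: from $(\mu^B, B)$ move to $(\mu^{B'}, B')$ with probability $\mu^B(B'^*)$ for $B' \in \Delta(B^*)$. The key algebraic input — and this is what makes the process \emph{stationary} rather than merely Markov — is the self-similarity identity $\mu = \sum_i p_i f_i\mu$, iterated along cylinders: it forces the distribution of $\mu^B$ (over $B$ chosen with the right weights) to be reproduced after one step. So I would define the initial distribution $P$ to be the distribution of $(\mu^{B}, B)$ where $B$ ranges over level-$1$ atoms of $\Delta$ chosen with probability $\mu(B)$ (equivalently: push forward the Bernoulli measure on $\Lambda^\NN$ through the stopping-time coding). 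Stationarity of this $P$ under the CP transition is then a direct computation from the self-similarity relation and the scale/translation-invariance of $\Delta$. Because the Bernoulli measure is mixing under the shift, the resulting chain is ergodic (or, if not, one passes to an ergodic component — but ergodicity should hold here).

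\textbf{Step 3: reading off the two identities.} By construction every measure $\nu$ in the support of $P$ has the form $\nu = \mu^B = T_B(\mu_B)$ for some atom $B$; taking $S = T_B^{-1}$ (a similarity) and $B' = B$ gives $\nu = S^{-1}\mu_{B'}$ rescaled, i.e. $\mu_{B'} = S\nu$ up to the obvious bookkeeping, which is the ``$\mu = S\nu_B$'' direction after relabelling. Conversely, since $B$ is a union of first-level-cylinder images $f_{a}(X)$ and $\mu_B$ is the corresponding sub-sum in $\mu = \sum_i p_i f_i\mu$ iterated, one recovers $\mu$ from $\nu$ by the reverse similarity: $\mu = f_a(\text{blow-down of }\nu)$ on the relevant piece, which gives a similarity $S'$ and set $B'$ with $\nu = S'\mu_{B'}$ in the stated form. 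I would then just transcribe this into the precise wording of the proposition.

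\textbf{Main obstacle.} The genuinely delicate point is Step~1: choosing a partition operator that is simultaneously (i) $\rho$-regular in the sense of Definition~\ref{def:regular-partition-operators}, (ii) scale- and translation-invariant on an appropriate family $\mathcal{E}$, and (iii) compatible with the IFS so that conditional measures on atoms are \emph{exact} similar copies of restrictions of $\mu$ — the incommensurability of the contraction ratios means the stopping-time construction produces atoms of slightly varying diameters, and one must check the eccentricity and the regularity constant stay bounded, and that one can legitimately fit this into the ``box'' framework of Section~\ref{sub:CP-chains} (or its bounded-eccentricity generalization). The other steps are essentially bookkeeping around the self-similarity identity; I expect the proof to be short modulo getting this partition operator right, and it is plausible the authors simply invoke strong separation to make the atoms literally the sets $f_a(X)$ at an appropriate generation and check $\rho$-regularity in a line or two.
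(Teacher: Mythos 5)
The paper does not actually prove this proposition in-house: it observes that one can run the chain forward from $\mu$ and take a weak-* limit point of Ces\`{a}ro averages as in Theorem \ref{thm:furstenberg}, and then defers the real work---showing that the limiting chain is supported on measures that are, up to similarity, restrictions of $\mu$ and contain $\mu$ as a restriction---to \cite{Gavish09} (homothetic case) and \cite{Hochman09} (general case). Your proposal takes a genuinely different route, attempting to exhibit a stationary chain explicitly via a stopping-time partition adapted to the IFS. That strategy is reasonable in spirit, but as written it has two gaps, both sitting exactly where you place your ``main obstacle'' and going beyond what you acknowledge there.

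First, the partition operator. Definition \ref{def:partition-operator} requires $\Delta E$ to depend on $E$ only up to translation and scaling. Your partition of the atom corresponding to $f_a(X)$ into atoms $f_{ab}(X)$ depends on the word $a$ (through the accumulated contraction ratios and the continuation of the stopping rule), not only on the normalized box: when the ratios $r_i$ are incommensurable, two cylinders with congruent bounding boxes generally require different partitions. So you need the measure-dependent generalization $B_{n+1}=\Delta(B_n,\mu_n)$ that the paper mentions after Definition \ref{def:A-CP-chains} but deliberately does not develop; also the sets $f_{ab}(X)$ do not partition a box, so complementary pieces must be adjoined and handled. Second, and more seriously, stationarity is not ``a direct computation from the self-similarity relation.'' The state $(\mu^B,B)$ for an $n$-th generation atom equals $T_Bf_a\mu$ up to restriction, and its type records the orthogonal part $O_{a_1}\cdots O_{a_n}$ (which the homothety $T_B$ cannot remove) and the overshoot $\diam(f_aX)/\rho^n$. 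The joint distribution of these over the $n$-th stopping set is not independent of $n$ when the $\log r_i$ are incommensurable or the $O_i$ generate an infinite group; it merely converges (renewal theory for the scale, equidistribution of the random walk on the closure of the generated rotation group). Hence your level-one distribution is not fixed by the transition law. Repairing this forces you either to take a weak-* limit of Ces\`{a}ro averages---at which point you have rederived the construction of Theorem \ref{thm:furstenberg} and still owe the identification of the limit measures, which is precisely what the paper outsources---or to build the stationary distribution on a two-sided extension (shift $\times$ scale circle $\times$ rotation group), as in \cite{Hochman09} and as the paper does by hand for $\times 2,\times 3$ products in Section \ref{sub:Products-of-xmxn-measures}. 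Your Step 3 is fine once a correct chain exists.
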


There are a number of ways to construct such a CP-chain. One is to rely on Theorem \ref{thm:furstenberg}, but then one must work a bit to show that micromeasures on which the chain is supported have the desired property; not all micromeasures do, since, for example, one can always obtain micromeasures which give positive mass to some affine subspace, even when the original measure gives zero mass to all subspaces. This approach for the homothetic case is discussed in \cite{Gavish09}. The general case is proved in \cite{Hochman09}.

Given a measure $\mu$ on $\RR^d$ and $\alpha\ge 0$, let
\[
\mathcal{U}_\alpha(\mu) = \{ \pi\in \Pi_{d,k} : \dim_*\pi\mu > \alpha \}.
\]
The following observation is immediate from the definition of lower dimension:

\begin{lem} \label{lem:transformation-under-similarity}
Let $\mu, \nu$ be two measures on $\RR^d$, and suppose that $S\mu = \nu_Q$ for some similarity $S$ and some set $Q$ with $\nu(Q)>0$. Let $O$ be the orthogonal part of $S$. Then for any $\alpha$,\[
  \mathcal{U}_\alpha(\mu) \supseteq \{  gO: g\in \mathcal{U}_\alpha(\nu) \}
\]
\end{lem}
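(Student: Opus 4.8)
The plan is to unwind both definitions and chase a projection through the similarity $S$. Fix $\pi = gO \in \Pi_{d,k}$ with $g \in \mathcal{U}_\alpha(\nu)$, so that $\dim_*(g\nu) > \alpha$; the goal is to show $\dim_*(\pi\mu) > \alpha$, i.e. $\pi \in \mathcal{U}_\alpha(\mu)$. First I would write $S x = \lambda O x + v$ for the scalar $\lambda > 0$, orthogonal $O$ and translation $v$, and observe that $\pi \circ S = g O(\lambda O x + v) = \lambda\, (gO)(Ox) + g(v)$ — wait, more carefully: $\pi S = gOS$ and $OS(x) = O(\lambda O x + v) = \lambda (O O) x + O v$, which is not quite what we want. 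The cleaner route is to note that the composition $\pi \circ S^{-1}$ relates $\pi\mu$ to a linear image of $\nu_Q$, using $S\mu = \nu_Q$, so $\mu = S^{-1}(\nu_Q)$ and hence $\pi\mu = \pi S^{-1}(\nu_Q) = (\pi S^{-1})(\nu_Q)$. Now $S^{-1}$ has orthogonal part $O^{-1}$ and some scalar $\lambda^{-1}$, so $\pi S^{-1}$ is, up to a scalar homothety and a translation on $\mathbb{R}^k$ (which affect no notion of dimension), equal to the orthogonal projection $\pi O^{-1} = (gO)O^{-1} = g$. Therefore $\pi\mu$ equals, up to an affine change of coordinates on $\mathbb{R}^k$, the measure $g(\nu_Q)$.

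The second step is to pass from $\nu_Q$ back to $\nu$ as far as lower dimension is concerned. By Lemma~\ref{lem:properties-of-lower-dim}, item on equivalent measures, or more simply directly from the definition of $\dim_*$: since $\nu(Q) > 0$, the measure $\nu_Q = \frac{1}{\nu(Q)}\nu|_Q$ is absolutely continuous with respect to $\nu$ with a bounded density, and conversely $\nu|_Q \le \nu(Q)\cdot \nu_Q$ fails in general, but what we actually need is only the one-sided statement $\dim_*(g\nu_Q) \ge \dim_*(g\nu)$. This follows because any set $A$ with $(g\nu)(A) > 0$ need not have $(g\nu_Q)(A) > 0$, so one should argue the other way: restriction can only increase $\dim_*$. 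Concretely, if $\dim_*(g\nu) > \alpha$ then by Lemma~\ref{lem:properties-of-lower-dim}\eqref{eq:Hausdorff-dim-in-terms-of-local-dim} we have $\underline{\dim}(g\nu, y) > \alpha$ for $(g\nu)$-a.e. $y$; since $g\nu_Q \ll g\nu$, the same pointwise inequality holds $(g\nu_Q)$-a.e., and applying the same lemma in reverse gives $\dim_*(g\nu_Q) \ge \alpha$. To get the strict inequality, note that in our applications $\dim_* \pi\mu$ being $> \alpha$ follows once we know $\underline{\dim}(g\nu_Q,\cdot) > \alpha$ a.e., which we have; so in fact $\dim_*(g\nu_Q) > \alpha$ whenever $\dim_*(g\nu) > \alpha$, using the characterization as a supremum of $\alpha$ for which the pointwise lower dimension exceeds $\alpha$.

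Combining the two steps: $\pi\mu$ agrees with $g\nu_Q$ up to an affine coordinate change on $\mathbb{R}^k$, which by the remarks in Section~\ref{sub:orth-projections-of-measures} leaves $\dim_*$ unchanged, and $\dim_*(g\nu_Q) > \alpha$ since $g \in \mathcal{U}_\alpha(\nu)$. Hence $\dim_*(\pi\mu) > \alpha$, i.e. $\pi = gO \in \mathcal{U}_\alpha(\mu)$, which is exactly the asserted inclusion.

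The only genuinely delicate point — and the one I would be most careful about — is the bookkeeping in the first step: verifying that $\pi S^{-1}$, as a map $\mathbb{R}^d \to \mathbb{R}^k$, really is an orthogonal projection composed with a homothety and translation, and that this composed orthogonal projection is exactly $g$ rather than $g$ precomposed with some residual rotation. This is where the hypothesis $\pi = gO$ (and not merely $\pi = g$ for some arbitrary $g$) is used: $S^{-1}$ contributes precisely the factor $O^{-1}$ on the orthogonal side, which cancels the $O$ in $gO$ to leave $g$. Everything else is routine invariance of dimension under bi-Lipschitz (indeed affine-similarity) coordinate changes and under restriction to positive-measure sets.
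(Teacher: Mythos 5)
Your proof is correct and is exactly the unwinding the paper has in mind (the paper offers no argument beyond calling the lemma ``immediate from the definition of lower dimension''): pull $\pi=gO$ back through $S^{-1}$, whose orthogonal part $O^{-1}$ cancels the $O$, leaving $g\nu_Q$ up to an affine change of coordinates on $\RR^k$, and then use that restriction to a positive-measure set cannot decrease $\dim_*$. The one simplification worth noting is that the strict inequality $\dim_*(g\nu_Q)>\alpha$ needs no detour through pointwise dimensions: since $g\nu_Q\ll g\nu$, any $A$ with $(g\nu_Q)(A)>0$ satisfies $(g\nu)(A)>0$ and hence $\dim A\ge\dim_*(g\nu)>\alpha$, which gives $\dim_*(g\nu_Q)\ge\dim_*(g\nu)>\alpha$ directly from the definition.
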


With this machinery in place, we have:

\begin{prop} \label{prop:semicontinuity-projections-of-self-similar-measures}
Let $\mu$ be a self-similar measure for an IFS satisfying the strong separation condition. Then for every $\e>0$ there exists an open and dense set $\mathcal{U}\subseteq \Pi_{d,k}$ such that \[
  \dim_{*}(\pi\mu)\ge\min(k,\dim\mu)-\e\quad\text{for all }\pi\in\mathcal{U}.
\]
\end{prop}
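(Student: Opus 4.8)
The idea is purely one of assembly: Proposition~\ref{prop:CP-chain-for-self-similar-measure} produces a CP-chain supported on measures genuinely similar to $\mu$ (after restriction), Corollary~\ref{cor:open-dense-set}/Theorem~\ref{thm:open-dense-set-of-good-directions-non-ergodic-case} gives an open dense set of good projections for that chain, and Lemma~\ref{lem:transformation-under-similarity} transports a good set back and forth across the similarity relations.

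First I would pin down the dimension of the CP-chain $P$ furnished by Proposition~\ref{prop:CP-chain-for-self-similar-measure}. Since $P$ is supported on measures $\nu$ with $\nu = S'\mu_{B'}$ for a similarity $S'$ and a Borel set $B'$ with $\mu(B')>0$, and since a self-similar measure under the strong separation condition is exact dimensional with dimension $\alpha:=\dim\mu$, each such $\nu$ is exact dimensional with $\dim\nu=\alpha$: restricting an exact-dimensional measure to a set of positive measure does not change the a.e.\ local dimension (by the Lebesgue density / martingale argument), and pushing forward under a similarity multiplies all distances by a constant. Thus every $\nu$ in the support of $P$ has exact dimension $\alpha=\min(k,\alpha)$ when $\alpha\le k$, and in any case the hypothesis of Theorem~\ref{thm:open-dense-set-of-good-directions-non-ergodic-case} holds with $\beta=\min(k,\dim\mu)$. (If $P$ happens to be ergodic one may instead invoke Corollary~\ref{cor:open-dense-set} directly; the non-ergodic version suffices in general.)

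Now fix $\varepsilon>0$; we may assume $\varepsilon<\min(k,\dim\mu)$, else take $\mathcal{U}=\Pi_{d,k}$. Set $\alpha_0=\min(k,\dim\mu)-\varepsilon$. By Theorem~\ref{thm:open-dense-set-of-good-directions-non-ergodic-case} there are an open dense set $\mathcal{U}'\subseteq\Pi_{d,k}$ and a set $M_\varepsilon$ of measures with $P(M_\varepsilon)>1-\varepsilon>0$ such that $\dim_*(\pi\nu)>\alpha_0$ for all $\pi\in\mathcal{U}'$ and $\nu\in M_\varepsilon$; in the notation of Lemma~\ref{lem:transformation-under-similarity} this reads $\mathcal{U}'\subseteq\mathcal{U}_{\alpha_0}(\nu)$ for every $\nu\in M_\varepsilon$. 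Since $M_\varepsilon$ has positive $P$-mass and $P$-a.e.\ $\nu$ satisfies $\mu=S\nu_B$ for a similarity $S$ and a Borel set $B$ with $\nu(B)>0$, choose one such $\nu\in M_\varepsilon$. Applying Lemma~\ref{lem:transformation-under-similarity} to the relation $S^{-1}\mu=\nu_B$, with $O$ the orthogonal part of $S^{-1}$, yields $\mathcal{U}_{\alpha_0}(\mu)\supseteq\{\,gO:g\in\mathcal{U}_{\alpha_0}(\nu)\,\}\supseteq\{\,gO:g\in\mathcal{U}'\,\}$.

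Finally, right composition with the fixed orthogonal map $O$ is a homeomorphism of $\Pi_{d,k}$ onto itself, so $\mathcal{U}:=\{\,gO:g\in\mathcal{U}'\,\}$ is open and dense, and by the previous inclusion $\dim_*(\pi\mu)>\alpha_0=\min(k,\dim\mu)-\varepsilon$ for all $\pi\in\mathcal{U}$, as required. The only points requiring care are the identification of the dimension of the CP-chain with $\dim\mu$ and the observation that openness and density on the Grassmannian survive the fixed orthogonal change of coordinates coming from the similarity relating $\nu$ to $\mu$; the genuinely substantial input — that one can find a CP-chain supported on measures similar (up to restriction) to $\mu$, rather than on degenerate micromeasures that might charge a subspace — is exactly Proposition~\ref{prop:CP-chain-for-self-similar-measure}, which is where the strong separation condition enters.
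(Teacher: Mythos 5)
Your proof is correct and follows essentially the same route as the paper: obtain the CP-chain from Proposition \ref{prop:CP-chain-for-self-similar-measure}, extract an open dense set of good projections via Corollary \ref{cor:open-dense-set} (or its non-ergodic substitute), and transport it back to $\mu$ with Lemma \ref{lem:transformation-under-similarity}. The only differences are that you spell out why the chain's dimension equals $\dim\mu$ and cover the non-ergodic case explicitly, both of which the paper leaves implicit.
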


\begin{proof}
Choose an ergodic CP-chain supported on measures which, up to similarity, contain a copy of $\mu$ as a restriction. Choose $\varepsilon >0$. By Corollary \ref{cor:open-dense-set}, there is an open dense set of projections $\mathcal{U}'\subseteq\Pi_{d,k}$ so that for a.e. measure for the chain, the image under any $\pi\in\mathcal{U}'$ has dimension at least $\min (k,\dim\mu)-\e$. Choosing a typical measure and applying the previous lemma, we see that \[
  \mathcal{U}_{\alpha-\varepsilon} \supseteq \{\pi O \,:\, \pi\in\mathcal{U}'\},
\]
for some orthogonal map $O$. This completes the proof.
\end{proof}

We first establish Theorem \ref{thm:projections-of-self-similar-measures-with-irrational-rotation} in the case of linear maps:

\begin{prop} \label{prop:almost-linear-projection-good}
For every $\pi\in\Pi_{d,k}$,
\[
\dim(\pi\mu) = \min(k,\dim\mu).
\]
\end{prop}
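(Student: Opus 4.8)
The plan is to reduce everything to a lower bound on $\dim_*\pi\mu$ and then exploit the self-similarity of $\mu$ to propagate the ``good'' directions furnished by Proposition \ref{prop:semicontinuity-projections-of-self-similar-measures} over all of $\Pi_{d,k}$. Since a self-similar measure under the strong separation condition is exact dimensional, Corollary \ref{cor:proving-expected-dim} reduces the proposition to proving $\dim_*(\pi\mu)\ge\min(k,\dim\mu)$ for every $\pi$. Writing $\beta=\min(k,\dim\mu)$ and recalling $\mathcal{U}_\alpha(\mu)=\{\pi:\dim_*\pi\mu>\alpha\}$, this amounts to showing $\mathcal{U}_{\beta-\e}(\mu)=\Pi_{d,k}$ for every $\e>0$.

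The second step is to record the self-similarity of the sets $\mathcal{U}_\alpha(\mu)$. Let $\Lambda$ be the index set, $\{f_i\}$ the maps, $O_i$ the orthogonal part of $f_i$, and for a word $a=a_1\cdots a_n$ let $O_a=O_{a_1}\cdots O_{a_n}$ be the orthogonal part of $f_a=f_{a_1}\circ\cdots\circ f_{a_n}$. Under the strong separation condition the cylinders $f_a(X)$ are pairwise disjoint for $|a|=n$, and iterating the relation $\mu=\sum_i p_i f_i\mu$ gives $\mu|_{f_a(X)}=\big(\prod_i p_{a_i}\big)f_a\mu$, hence $\mu_{f_a(X)}=f_a\mu$ (I may assume all $p_i>0$, which is harmless). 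Applying Lemma \ref{lem:transformation-under-similarity} with $S=f_a$, $Q=f_a(X)$ and $\nu=\mu$ then yields $\mathcal{U}_\alpha(\mu)\,O_a\subseteq\mathcal{U}_\alpha(\mu)$ for all finite words $a$; in other words $\mathcal{U}_{\beta-\e}(\mu)$ is invariant under right composition by the semigroup $\Gamma\le O(d)$ generated by $\{O_i\}$. Combining this with an open dense set $\mathcal{V}\subseteq\mathcal{U}_{\beta-\e}(\mu)$ coming from Proposition \ref{prop:semicontinuity-projections-of-self-similar-measures} (applied with $\e/2$ to get the strict inequality), and using that each $O\in\Gamma$ acts on $\Pi_{d,k}$ as a homeomorphism, the set $W=\bigcup_{O\in\Gamma}\mathcal{V}\,O$ is a nonempty open subset of $\mathcal{U}_{\beta-\e}(\mu)$ with $W\,O\subseteq W$ for all $O\in\Gamma$.

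The final and genuinely substantive step is to upgrade ``$W$ nonempty, open, forward $\Gamma$-invariant'' to ``$W=\Pi_{d,k}$'', and this is where the minimality assumption enters. I would use that a closed subsemigroup of a compact group is a subgroup, so $\overline{\Gamma}$ is a closed subgroup $H\le O(d)$; minimality says $\pi\Gamma$ is dense for some $\pi$, i.e. $\pi H=\Pi_{d,k}$, and since $H$ is a group this forces $H$ to act transitively on $\Pi_{d,k}$. Since inversion is a homeomorphism of $O(d)$, $\overline{\Gamma^{-1}}=(\overline{\Gamma})^{-1}=H$ as well, so every orbit of the semigroup $\Gamma^{-1}$ generated by $\{O_i^{-1}\}$ is dense in $\Pi_{d,k}$. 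Now $W^c$ is closed, and from $W\,O\subseteq W$ (equivalently $W\subseteq O^{-1}W$) one gets $O^{-1}(W^c)\subseteq W^c$, so $W^c$ is $\Gamma^{-1}$-invariant; being closed and $\Gamma^{-1}$-invariant, if it were nonempty it would contain the closure of a dense orbit, hence equal all of $\Pi_{d,k}$, contradicting $\emptyset\ne\mathcal{V}\subseteq W$. Therefore $W^c=\emptyset$, so $\mathcal{U}_{\beta-\e}(\mu)=\Pi_{d,k}$; letting $\e\to0$ gives the desired lower bound, and Corollary \ref{cor:proving-expected-dim} then yields equality. The main obstacle is precisely this last step: the orthogonal parts generate only a semigroup, not a group, and the compactness argument above is what lets the open dense set be ``pushed around'' to cover everything; the remaining points are routine.
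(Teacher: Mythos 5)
Your proof is correct and follows essentially the same route as the paper: reduce to the lower bound $\dim_*\pi\mu\ge\min(k,\dim\mu)-\e$ via Corollary \ref{cor:proving-expected-dim}, observe that the set of $\e$-good projections is forward-invariant under right composition with the semigroup generated by the $O_i$ (using $\mu_{f_a(X)}=f_a\mu$ and Lemma \ref{lem:transformation-under-similarity}), note that it has nonempty interior by Proposition \ref{prop:semicontinuity-projections-of-self-similar-measures}, and conclude by minimality. The only difference is that you make explicit the final step --- passing to the closure of the semigroup in $O(d)$, which is a group, so that the complement is a closed set forward-invariant under the inverse semigroup whose orbits are also dense --- a step the paper's proof compresses into ``by minimality, therefore $\mathcal{U}_\e=\Pi_{d,k}$''; your elaboration is accurate and worthwhile, since forward semigroup invariance plus nonempty interior alone would not suffice without this group-closure argument.
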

\begin{proof}
Fix $\varepsilon>0$. By Corollary \ref{cor:proving-expected-dim}, it suffices to show that for every $\pi\in\Pi_{d,k}$, \[
\dim_{*}(\pi\mu)>\min(k,\dim\mu) - \varepsilon.
\]
Let $\mathcal{U}_\varepsilon\subseteq\Pi_{d,k}$ denote the set of projections with this property and let $\pi\in\mathcal{U}_\varepsilon$.  For $a\in\Lambda$ and $B=f_a(X)$, where $X$ is the attractor, the same inequality holds with $\pi\mu_B$ in place of $\pi\mu$. Since $\mu_B = f_a(\mu)$ by strong separation, we conclude that \[
  \dim_* \pi\circ f_a(\mu) > \min(k,\dim\mu) - \varepsilon.
\]
But clearly $\pi\circ f_a\mu$ has the same dimension as $\pi\circ O_a\mu$, where $O_a$ is the orthogonal part of $f_a$, so $\pi\circ O_a\in\mathcal{U}_\varepsilon$. Hence $\mathcal{U}_\varepsilon$ is invariant under the (semi)group action on $\Pi_{d,k}$ generated by pre-composition with$\{O_a\,:\,a\in\Lambda\}$. By Proposition \ref{prop:semicontinuity-projections-of-self-similar-measures} the set $\mathcal{U}_\varepsilon\subseteq\Pi_{d,k}$ has non-empty interior, and by assumption the action in question is minimal; therefore $\mathcal{U}_\varepsilon=\Pi_{d,k}$.
\end{proof}

Finally, to prove the assertion of Theorem \ref{thm:projections-of-self-similar-measures-with-irrational-rotation} about non-linear images of $\mu$, we rely on Theorem \ref{thm:bound-for-smooth-images}. Let $P$ be the distribution of the CP-chain found in Proposition \ref{prop:CP-chain-for-self-similar-measure}. It suffices to show that  the function $E(\pi)$ associated to $P$ in Theorem \ref{thm:semicontinuity} is equal, for every $\pi$, to the expected dimension, i.e. $\alpha=\min\{k,\dim\mu\}$. For this we need only note that, for a $P$-typical measure $\nu$, we have $\nu=S\mu_B$ for some Borel set $B$ and similarity $S$. Since $\dim_*\pi\mu=\alpha$ for every $\pi\in\Pi_{d,k}$, the same holds for $\nu$. Thus the same also holds for entropy dimension $\dim_e\pi\nu$, $\pi\in\Pi_{d,k}$. But since $\nu$ was an arbitrary $P$-typical measure, we see from Theorem \ref{thm:semicontinuity-1} that $E(\pi)=\alpha$ for every $\pi\in\Pi_{d,k}$, as desired.

\section{\label{sec:Measures-invariant-under-xm}Furstenberg's conjecture and measures invariant under
$\times m$}

In this section we prove Theorem \ref{thm:projection-of-product-invariant-measures}:
if $m,n$ are not powers of a common integer
and $\mu$, $\nu$ are respectively $T_{m}$ and
$T_{n}$-invariant measures on $[0,1]$, with $T_{k}x=kx\bmod1$,
then $\mu\times\nu$ projects to a measure of the largest possible
dimension for any projection other than the coordinate projections.

The proof has two parts. The first is to associate to $\mu\times\nu$
a CP-chain and derive topological information about the set of projections
which have the desired property. The second part of the proof uses
irrationality of $\log m/\log n$ to boost this information to the
desired result using a certain invariance of the set of {}``approximately
good'' projections.

\subsection{\label{sub:Invariant-measures-and-CP-chains}Invariant measures
and CP-chains}

We first demonstrate how a $T_{m}$-invariant measure gives rise to
a CP-chain. We do not use this directly here, but it serves
to explain the construction in the next section.

Lift $[0,1]$ to $X^{+}=\{0,\ldots,m-1\}^{\mathbb{N}}$ via base-$m$
coding and denote by $T$ the shift map, which is conjugated to $T_{m}$.
Let $\mu$ be a non-atomic $T_m$ invariant measure on $[0,1]$,
which we identify with its lift to $X^{+}$. Let $\mu$ also denote
the shift-invariant measure on
\[
X=\{0,1,\ldots,m-1\}^{\mathbb{Z}},
\]
obtained as the natural extension of $(X^{+},\mu,T)$; the shift on
$X$ is also denoted $T$. Let $(X_{n})_{n=-\infty}^{\infty}$be the
coordinate functions\[
X_{n}(x)=x_{n}\qquad x\in X\]

Disintegrate $\mu$ with respect to the $\sigma$-algebra $\mathcal{F}^{-}=\sigma(X_{i}\,:\, i\leq0)$.
For $\mu$-a.e. $x\in X$ we obtain the measure $\mu^{x}$ on $X^{+}$,
depending only on $x^{-}=(\ldots x_{-2},x_{-1},x_{0})$, such that
for any $A\subseteq X^{+}$ we have $\mu(A)=\int\mu^{x}(A)d\mu(x)$.
For $\mu$-typical $x\in X$ construct the sequence \[
(\mu_{n},B_{n})=\left(\mu^{T^{n}x},\left[\frac{x_{n-1}}{m},\frac{x_{n-1}+1}{m}\right)\right)\in\mathcal{P}([0,1])\times\mathcal{D}_{m}.\]
Pushing the measure $\mu$ forward via $x\mapsto ((\mu_n,B_n))_{n=1}^{\infty}$ we obtain a stationary $\mathcal{P}([0,1])\times\mathcal{D}_{m}$-valued
process which is seen to be a CP-chain for the base-$m$ partition
operator. This is analogous to the second example in \cite[page 409]{Furstenberg08}.

We record the following well-known fact: If $\mu$ is an ergodic $T_{m}$-invariant
measure, then both $\mu$ and $\mu$-a.e. $\mu^{x}$ are exact dimensional, with
\begin{equation}  \label{eq:entropy-equal-dimension}
\dim\mu = \dim\mu^x = h(\mu,T_{m})/\log m,
\end{equation}
Here $h$ is the Kolmogorov-Sinai
entropy (for $\mu$ this follows from Shannon-McMillan-Breiman, for $\mu^x$ one uses e.g. Lemma \ref{lem:local-entropy-lemma}). When $\mu$ is not ergodic and $\mu=\int\mu_{\omega}d\sigma(\omega)$
is its ergodic decomposition, then $\mu$ is exact dimensional if
and only if almost all ergodic components have the same dimension
(i.e. the same entropy). More generally, one can show that \cite[Theorem 9.1]{LindenstraussPeres99}
\[
h(\mu,T_{m})=\int h(\mu_{\omega},T_{m})d\sigma(\omega)\]
 and \begin{equation} \label{eq:dim-invariant-is-inf-along-components}
\dim_{*}\mu=\essinf_{\omega\sim\sigma}\dim_{*}\mu_{\omega}.
\end{equation}

\subsection{\label{sub:Products-of-xmxn-measures}Products of $\times m$,$\times n$
invariant measures}

We now deal with the more delicate case of a measure \[
\theta=\mu\times\nu\]
where $\mu,\nu$ are measures on $[0,1)$ invariant, respectively,
under $T_{m},T_{n}$. When $\frac{\log m}{\log n}\in\mathbb{Q}$ the
product measure is invariant under the action $T_{k}\times T_{k}$
for some $k\in\mathbb{N}$ which is a common power of $m,n$, and
we get a CP-chain in much the same manner as before. In the case
that $\frac{\log m}{\log n}\notin\mathbb{Q}$ the product action,
and also all actions of products of powers of $T_{m},T_{n}$, are
not local homotheties, and the iterates of the product partitions
do not have bounded eccentricity. Instead, we shall show that this
measure is associated to a CP-chain using a more involved partition operator and some additional randomization.

For concreteness we fix $m=2$ and $n=3$, and assume as we may that
$\mu,\nu$ are non-atomic. As before, identify $\mu,\nu$ with shift-invariant
measures on $X^{+}=\{0,1\}^{\mathbb{N}}$ and $Y^{+}=\{0,1,2\}^{\mathbb{N}}$
and extend to the two sided versions on the corresponding two-sided
subshifts $X,Y$. Let $(X_{n})_{n=-\infty}^{\infty}$, $(Y_{n})_{n=-\infty}^{\infty}$
denote the coordinate functions, and let $\mu^{x},x\in X$ and $\nu^{y},y\in Y$
denote the disintegrations with respect to $\mathcal{F}^{-}=\sigma(X_{n}\,:\, n\leq0)$
and $\mathcal{G}^{-}=\sigma(Y_{n}\,:\, n\leq0\}$, respectively.

To construct our CP-chain we first describe our partition operator.
For $w\in[0,\log3)$ define the rectangle \[
R_{w}=[0,1]\times[0,e^{w}]\]
 and let $\mathcal{E}$ be all rectangles similar to some $R_{w},w\in[0,\log3)$.
We shall define the partition operator $\Delta$ on the sets $R_{w}$
and extend by similarity to the rest of $\mathcal{E}$.

To apply $\Delta$ to $R_{w}$, first split $R_{w}$ into $R'=[0,\frac{1}{2}]\times[0,e^{w}]$
and $R''=(\frac{1}{2},1]\times[0,e^{w}]$. Then, if $w>\log(3/2)$,
split $R'$ into three sub-rectangles with the same base and heights
$\frac{1}{3}e^{w}=e^{w-\log3}$, and similarly $R''$. The partition
obtained is $\Delta(R_{w})$; see Figure \ref{fig:partition}. It is a partition of $R_{w}$ into either
two or six copies of $R_{w'}$, where \begin{equation}
w'=w+\log2-1_{\{w\geq\log3-\log2\}}\cdot\log3\in[0,\log3)\label{eq:irrational-rotation-in-x2x3-measures}\end{equation}

\begin{figure}
\begin{center}
\includegraphics[width=0.8\textwidth]{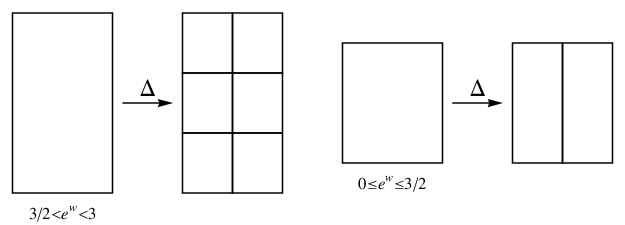}
\end{center}
\caption{The partition operator $\Delta$. If the eccentricity of $R$ is $e^w$, then for the rectangles in $\Delta(R)$ the eccentricity is $e^{w'}$ where $w' = w+\log 2 \bmod\log 3$.} \label{fig:partition}
\end{figure}

 Thus, applying $\Delta$ repeatedly to a rectangle $R_{w_{1}}$ results
in partitions into rectangles similar to $R_{w_{n}}$, $n=1,2,3\ldots$
and the sequence $(w_{n})_{n=1}^{\infty}$ evolves according to an
irrational rotation by $\log2/\log3$.

\begin{lem} \label{lem:regularity-of-irrational-partition}For any
$w\in[0,\log3)$, the filtration $\mathcal{F}_{n}=\Delta^{n}R_{w}$
is $\frac{1}{2}$-regular.\end{lem}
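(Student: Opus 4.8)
The plan is to track the Euclidean dimensions of the rectangles obtained by iterating $\Delta$ from $R_w$ and to observe that, although the horizontal and vertical sides contract by different amounts at different steps, both sides have size $\Theta(2^{-n})$ at every level $n$, which is exactly what $\tfrac12$-regularity requires.

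First I would record the effect of one application of $\Delta$. Reading off the two cases of the definition: if $e^{w}<3/2$ (equivalently $w<\log 3-\log 2$), then $R_w=[0,1]\times[0,e^w]$ is cut into the two rectangles $[0,1/2]\times[0,e^w]$ and $(1/2,1]\times[0,e^w]$, each of which is the normalized rectangle $R_{w'}$ with $w'=w+\log 2$ scaled by the factor $1/2$; and if $e^{w}\geq 3/2$, then each of those two halves is further divided into the three rectangles $[0,1/2]\times[\tfrac{j}{3}e^w,\tfrac{j+1}{3}e^w)$ and $(1/2,1]\times[\tfrac{j}{3}e^w,\tfrac{j+1}{3}e^w)$ for $j=0,1,2$, each of which is $R_{w'}$ with $w'=w+\log 2-\log 3$ scaled by $1/2$. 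Thus in \emph{both} cases $\Delta$ sends a copy of $R_{w}$ scaled by a factor $c$ to finitely many copies of $R_{w'}$ scaled by $c/2$, where $w'=w+\log 2\bmod\log 3$; this is exactly the rotation \eqref{eq:irrational-rotation-in-x2x3-measures}.

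Starting from $R_w$ (where $c=1$), an immediate induction then shows that every $B\in\mathcal{F}_n=\Delta^n R_w$ is an axis-parallel rectangle with sides $2^{-n}$ and $2^{-n}e^{w_n}$, where $w_0=w$ and $w_n=w+n\log 2\bmod\log 3\in[0,\log 3)$ (so $(w_n)$ advances by the rotation by $\log 2/\log 3$, as already noted). Since $e^{w_n}\in[1,3)$ for every $n$, both sides of $B$ lie in $[2^{-n},3\cdot 2^{-n})$. Hence, in the sup metric on $\mathbb{R}^2$, the rectangle $B$ contains the ball of radius $\tfrac12\cdot 2^{-n}$ about its center and is contained in the ball of radius $2\cdot 2^{-n}$ about its center. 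Taking $C=2$, this is precisely the condition defining a $\tfrac12$-regular filtration.

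There is no genuine obstacle here: the argument is just the bookkeeping of the scaling factor together with the observation that the eccentricity of the rectangles stays uniformly bounded, which is forced by the construction since $w'$ is defined precisely so that $e^{w}$ remains in $[1,3)$. The one point worth keeping in mind is that the horizontal side is halved at \emph{every} step, no matter which of the two cases applies, so the common scale is pinned at $2^{-n}$; the bounded eccentricity then controls the vertical side.
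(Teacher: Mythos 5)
Your proof is correct and follows essentially the same route as the paper's one-line argument: the base of every rectangle in $\Delta^{n}R_{w}$ is exactly $2^{-n}$ and the eccentricity $e^{w_n}$ stays in $[1,3)$ by construction, which immediately gives $\tfrac12$-regularity with a uniform constant. You have simply written out the bookkeeping (the induction on the scaling factor and the explicit ball radii) that the paper leaves implicit.
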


\begin{proof} Immediate, since the base of rectangles in $\Delta^{n}R_{w}$
are of length $(\frac{1}{2})^{n}$ and by definition their eccentricity
is uniformly bounded. \end{proof}

Let \[
S_{w}=\left(\begin{array}{cc}
e^{-w/2} & 0\\
0 & e^{w/2}\end{array}\right),\]
 so $S_{w}([0,1]^{2})=R_{w}^{*}$ and the axes are eigen-directions.
Choose $(x,y,w)\in X\times Y\times[0,\log3)$ according to $\mu\times\nu\times\lambda$,
where $\lambda$ is normalized Lebesgue measure, and associate to it the measure
\[
\tau=\tau_{x,y,w}=S_{w}(\mu^{x}\times\nu^{y}).
\]
The distribution of $\tau$ is the marginal of the measure component
of a CP-chain. To get the distribution on pairs which is the marginal
of the full CP-chain, choose $B'\in\Delta(R_{w})$ with weights
$\tau(B')$ and setting $\tau'=\tau^{B'}$; then $(\tau',B')$ is
the desired marginal (or course, $\tau'$ has the same distribution
as $\tau$). One verifies this by checking that the distribution is
fixed by the transition law with respect to $\Delta$.%
\footnote{The dynamics of this process may also be described as follows. Let
$\Sigma=X\times Y\times[0,\log3)$ with the measure $\mu\times\nu\times\lambda$.
Let $T$ denote the shift on $X,Y$. Define a transformation $S:\Sigma\rightarrow\Sigma$
by\[
S(\omega,\omega',w)=(T\omega,T^{\{w\geq\log3-\log2)}\omega',w+\log2\bmod\log3)\]
 where $T^{\sigma}=T$ if the event $\sigma$ occurs and is the identity
otherwise. This represents the dynamics as a skew product $X\times[0,\log3)$
with fiber $Y$, in which the second coordinate $w$ is used to control
the average speed with which the third coordinate (i.e. $Y$) is advanced.%
}

Note that, if $\mu$ and $\nu$ are ergodic, then $S_w(\mu^{x}\times\nu^{y})$ almost surely has the same dimension
as $\theta=\mu\times\nu$.

\begin{prop} \label{prop:large-set-of-good-dirs-for-x2x3-product}Suppose $\mu$ and $\nu$ are ergodic. For
every $\varepsilon>0$ there is a subset $A_{\varepsilon}\subseteq[0,1]^{2}$
with $\theta(A_{\varepsilon})>1-\varepsilon$ and a dense, open set
$\mathcal{U}_{\varepsilon}\subseteq\Pi_{2,1}$ such that for $\pi\in\mathcal{U}_{\varepsilon}$,
\[
\dim_*\pi(\theta|_{A_{\varepsilon}})>\min\{1,\dim\theta\}-\varepsilon.\]
 \end{prop}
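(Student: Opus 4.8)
The plan is to feed the CP-chain constructed above into the non-ergodic semicontinuity result, Theorem~\ref{thm:open-dense-set-of-good-directions-non-ergodic-case}, and then carry out the bookkeeping needed to pass from its conclusion (a dense open set of directions, and all but an $\varepsilon$-fraction of measures for the chain) to a statement about $\theta|_{A_\varepsilon}$. Recall that the chain has partition operator $\Delta$, is $\tfrac12$-regular (Lemma~\ref{lem:regularity-of-irrational-partition}), and its measure component is distributed as $\tau_{x,y,w}=S_w(\mu^x\times\nu^y)$ with $(x,y,w)\sim\mu\times\nu\times\lambda$. First I would record that the chain's measures have a common exact dimension: since $\mu,\nu$ are ergodic, the conditionals $\mu^x,\nu^y$ are a.s.\ exact dimensional with $\dim\mu^x=\dim\mu$ and $\dim\nu^y=\dim\nu$ (cf.\ \eqref{eq:entropy-equal-dimension}); a product of exact dimensional measures is exact dimensional of dimension the sum (in the sup norm the local dimension of $\mu^x\times\nu^y$ at $(\xi,\eta)$ is the sum of those of $\mu^x$ at $\xi$ and $\nu^y$ at $\eta$); and the invertible linear map $S_w$, whose bi-Lipschitz constants are bounded for $w\in[0,\log 3)$, does not change dimension. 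Hence $\tau_{x,y,w}$ is a.s.\ exact dimensional of dimension $\alpha:=\dim\mu+\dim\nu=\dim\theta$, and I set $\beta:=\min\{1,\alpha\}$.

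Applying Theorem~\ref{thm:open-dense-set-of-good-directions-non-ergodic-case} to this chain yields, for the given $\varepsilon$, a dense open $\mathcal{U}'\subseteq\Pi_{2,1}$ and a set of measures $M_\varepsilon$ with $P(M_\varepsilon)>1-\varepsilon$ such that $\dim_*\pi\tau>\beta-\varepsilon$ for all $\pi\in\mathcal{U}'$ and $\tau\in M_\varepsilon$. Pulling $M_\varepsilon$ back through $(x,y,w)\mapsto\tau_{x,y,w}$, the set $\Omega:=\{(x,y,w):\tau_{x,y,w}\in M_\varepsilon\}$ has $(\mu\times\nu\times\lambda)(\Omega)=P(M_\varepsilon)>1-\varepsilon$. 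To remove the $S_w$ twist, write $\pi\in\Pi_{2,1}$ as the functional $z\mapsto\langle z,v\rangle$ with $v$ a unit vector; then $\pi\circ S_w=\langle\,\cdot\,,S_w v\rangle=|S_w v|\,\pi_w$, where $\pi_w\in\Pi_{2,1}$ is the projection onto the line through $S_w v$, so $\dim_*\pi\tau_{x,y,w}=\dim_*\pi_w(\mu^x\times\nu^y)$, and for each fixed $w$ the map $\pi\mapsto\pi_w$ is a homeomorphism of $\Pi_{2,1}$ (as $S_w$ is invertible). A Fubini argument gives a value $w_*\in[0,\log 3)$ with $(\mu\times\nu)(\Omega^{w_*})>1-\varepsilon$, where $\Omega^{w_*}$ is the corresponding slice. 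Put $\mathcal{U}_\varepsilon:=\{\pi_{w_*}:\pi\in\mathcal{U}'\}$, which is dense and open in $\Pi_{2,1}$; then for every $\pi'\in\mathcal{U}_\varepsilon$ and every $(x,y)\in\Omega^{w_*}$ we have $\dim_*\pi'(\mu^x\times\nu^y)>\beta-\varepsilon$.

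It remains to manufacture $A_\varepsilon$. From the disintegrations $\mu=\int\mu^x\,d\mu(x)$ and $\nu=\int\nu^y\,d\nu(y)$ we obtain $\theta=\int_{X\times Y}(\mu^x\times\nu^y)\,d(\mu\times\nu)(x,y)$. Let $\theta''$ be the sub-probability measure obtained by integrating only over $(x,y)\in\Omega^{w_*}$, so $\theta''\le\theta$ and $\theta''([0,1]^2)>1-\varepsilon$, and set $A_\varepsilon:=\{\,d\theta''/d\theta>0\,\}$. Since $0\le d\theta''/d\theta\le 1$ and it is positive precisely on $A_\varepsilon$, the measures $\theta''$ and $\theta|_{A_\varepsilon}$ are mutually absolutely continuous, and $\theta(A_\varepsilon)\ge\int (d\theta''/d\theta)\,d\theta=\theta''([0,1]^2)>1-\varepsilon$. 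Equivalent measures have equivalent push-forwards, so $\dim_*\pi'(\theta|_{A_\varepsilon})=\dim_*\pi'\theta''$ for every $\pi'$ (Lemma~\ref{lem:properties-of-lower-dim}, invariance of $\dim_*$ under equivalence). Finally, after normalization $\theta''$ is an average of the measures $\mu^x\times\nu^y$ over $(x,y)\in\Omega^{w_*}$, so by Lemma~\ref{lem:properties-of-lower-dim}\eqref{itm:dim-at-least-inf-over-components}, for $\pi'\in\mathcal{U}_\varepsilon$, $\dim_*\pi'\theta''\ge\essinf_{(x,y)\in\Omega^{w_*}}\dim_*\pi'(\mu^x\times\nu^y)>\beta-\varepsilon$ (adjusting $\varepsilon$ by a fixed factor if one insists on the strict inequality). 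Combining, $\dim_*\pi(\theta|_{A_\varepsilon})>\min\{1,\dim\theta\}-\varepsilon$ for all $\pi\in\mathcal{U}_\varepsilon$.

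The main obstacle is that Theorem~\ref{thm:open-dense-set-of-good-directions-non-ergodic-case} supplies only $P(M_\varepsilon)>1-\varepsilon$ rather than a full-measure set (the loss comes from the Egorov step forcing uniform convergence of the entropy averages), which is precisely why the conclusion must be stated for $\theta|_{A_\varepsilon}$ on a proper subset and cannot yet be asserted for $\theta$ itself; the slightly subtle point is that restricting the barycenter representation of $\theta$ to the good parameters $(x,y)\in\Omega^{w_*}$ produces a sub-measure of $\theta$ rather than an honest restriction, a gap bridged by the absolute-continuity observation together with invariance of $\dim_*$ under equivalence. The $S_w$-homeomorphism bookkeeping is routine but must be done with care so that the resulting set of directions is genuinely open and dense; everything else is immediate from facts already in hand (the verified CP-chain property and $\tfrac12$-regularity of the $\tau_{x,y,w}$, and Theorem~\ref{thm:open-dense-set-of-good-directions-non-ergodic-case}).
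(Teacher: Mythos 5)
Your proposal is correct and follows essentially the same route as the paper: apply Theorem \ref{thm:open-dense-set-of-good-directions-non-ergodic-case} to the CP-chain built from $\tau_{x,y,w}=S_w(\mu^x\times\nu^y)$, use Fubini to fix a good $w$, transport the dense open set of projections through the (diagonal, hence harmless) map $S_w$, and convert the barycentric sub-measure $\int_{E}\mu^x\times\nu^y\,d\theta$ into a restriction $\theta|_{A_\varepsilon}$ via equivalence of measures and Lemma \ref{lem:properties-of-lower-dim}. Your extra care in verifying the a.s.\ exact-dimensionality hypothesis and in explicitly constructing $A_\varepsilon$ from the Radon--Nikodym density only fills in details the paper leaves implicit.
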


\begin{proof} Let $P$ denote the distribution of the CP-chain,
and write \[
\alpha=\min\{1,\dim\theta\}.\]
 This is the expected dimension of the projection of a typical measure for
a typical ergodic component of the process. By Theorem \ref{thm:open-dense-set-of-good-directions-non-ergodic-case}
we can find a dense open set $\mathcal{U}'_{\varepsilon}\subseteq\Pi_{2,1}$
and $A'_{\varepsilon}\subseteq\mathcal{P}(\mathbb{R}^{d})$ such that
$P(A'_{\varepsilon})>1-\varepsilon$, and $\dim_{*}\pi\tau>\alpha-\varepsilon$
for all $\pi\in\mathcal{U}'_{\varepsilon}$ and $\tau\in A'_{\varepsilon}$.

Using the fact that, conditioned on $w$, the distribution of $\tau=\tau_{x,y,w}$
is $S_{w}(\mu^{x}\times\nu^{y})$, with $(x,y)\sim\theta$, we can
apply Fubini to find a $w_{0}\in[0,\log3)$ and a Borel set $E\subseteq X\times Y$
such that $\theta$$(E)>1-\varepsilon$ and for every $(x,y)\in E$,
\[
\dim_{*}\pi(S_{w_{0}}\mu^{x}\times\nu^{y})>\alpha-\varepsilon\qquad\mbox{ for all }\pi\in\mathcal{U}'_{\varepsilon}.\]
For an affine map $S$ we identify $\pi\circ S$ with the projection $\pi'$ whose pre-image sets partition $\mathbb{R}^2$ into the same lines as $\pi\circ S$. Let \[
\mathcal{U}_{\varepsilon}=\{\pi\circ S_{w_{0}}\,:\,\pi\in\mathcal{U}'_{\varepsilon}\}.\]
 Then $\mathcal{U}_{\varepsilon}$ is open and dense in $\Pi_{2,1}$
and for $(x,y)\in E$, \[
\dim_{*}\pi(\mu^{x}\times\nu^{y})>\alpha-\varepsilon\qquad\mbox{ for all }\pi\in\mathcal{U}_{\varepsilon}\]
Define $\eta\in\mathcal{P}([0,1]^{2})$ by \[
\eta=\int_{E}\mu^{x}\times\nu^{y}\; d\theta(x,y)\]
 Then by Lemma \ref{lem:properties-of-lower-dim}, \[
\dim_{*}\pi\eta>\alpha-\varepsilon\qquad\mbox{ for all }\pi\in\mathcal{U}_{\varepsilon}.\]
 Since \[
\theta=\mu\times\nu=\int\mu^{x}\times\nu^{y}\; d\theta(x,y),\]
 we have $\eta\ll\mu\times\nu$, and since $\theta(E)>1-\varepsilon$
there is a subset $A_{\varepsilon}\subseteq[0,1]^{2}$ such that $\theta|_{A_{\varepsilon}}\sim\eta$
and $\theta(A_{\varepsilon})>1-\varepsilon$. This is the desired
set. \end{proof}

\subsection{\label{sub:Proof-of-x2x3}Proof of Theorem \ref{thm:projection-of-product-invariant-measures}}

Let $\mu,\nu$ be ergodic $T_{2},T_{3}$-invariant measures on $[0,1]$,
respectively, such that $\dim_{*}\mu,\dim_{*}\nu>0$. Let $\theta=\mu\times\nu$.
Write\[
\alpha=\min\{1,\dim\theta\},\]
 and for $\varepsilon>0$ let $\mathcal{V}_{\varepsilon}\subseteq\Pi_{2,1}$
 denote the set \[
\mathcal{V}_{\varepsilon}=\left\{ \pi\in\Pi_{2,1}\,:\,\begin{array}{c}
\mbox{There exists }A\subseteq[0,1]^{2}\mbox{ with }\\
\theta(A)>1-\varepsilon\mbox{ and }\dim_*\pi(\theta|_{A})>\alpha-\varepsilon\end{array}\right\} \]

In the previous section we saw that the interior of $\mathcal{V}_{\varepsilon}$
is open and dense in $\Pi_{2,1}$. We now establish some invariance.
For this it is convenient to represent projections $\pi\in\Pi_{2,1}$
by the slope of their image, i.e. $\pi_{a}$ projects onto the line
$y=ax+b$ (we cannot represent projection to the $y$-axis, but also
do not want to). We note that this identification of $\mathcal{V}_{\varepsilon}$
as a subset of $\mathbb{R}$ is consistent with the topology on $\Pi_{2,1}$.
\begin{prop} \label{prop:invariance-of-almost-good-directions}$\mathcal{V}_{\varepsilon}$
is invariant under the action by multiplication of the semigroup $\mathcal{S}=\{\frac{3^{i}}{2^{j}}\,:\, i,j\in\mathbb{N}\}\subseteq(\mathbb{N},\times)$,
i.e. if $\pi_{a}\in\mathcal{V}_{\varepsilon}$ then $\pi_{3a,}\pi_{a/2}\in\mathcal{V}_{\varepsilon}$.\end{prop}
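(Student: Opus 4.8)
The plan is to turn the two invariances available to us --- $T_2$-invariance of $\mu$ and $T_3$-invariance of $\nu$ --- into statements about linear projections, by noting how the ``$\times 2$ in the first coordinate'' and ``$\times 3$ in the second coordinate'' maps act on slopes. Concretely, set $D(x,y)=(2x\bmod 1,\ y)$ on $[0,1)^2$; since $\mu$ is $T_2$-invariant we have $D_*\theta=\theta$. On each of the two vertical strips $B_0=[0,\tfrac12)\times[0,1)$ and $B_1=[\tfrac12,1)\times[0,1)$ the map $D$ agrees with an affine bijection $G_i$ onto $[0,1)^2$, and $G_i$ has linear part $L=\operatorname{diag}(2,1)$. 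Identifying each orthogonal projection $\pi_a$ with the linear functional $p_a(x,y)=x+ay$ (they differ by a homothety of the image line, which affects no notion of dimension) and recalling the paper's convention of identifying $\pi\circ S$ with the projection having the same fibres, a one-line computation gives $p_a\circ L = 2\,p_{a/2}$; thus $p_{a/2}$ restricted to $B_i$ equals $\tfrac12\,(p_a\circ G_i)$ up to a harmless translation of $\mathbb{R}$. Symmetrically, $E(x,y)=(x,\ 3y\bmod 1)$ satisfies $E_*\theta=\theta$, is affine with linear part $\operatorname{diag}(1,3)$ on each horizontal third $C_j=[0,1)\times[\tfrac{j}{3},\tfrac{j+1}{3})$, and $p_a\circ\operatorname{diag}(1,3)=p_{3a}$.

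Now suppose $\pi_a\in\mathcal{V}_\varepsilon$, witnessed by a Borel set $A$ with $\theta(A)>1-\varepsilon$ and $\dim_*\pi_a(\theta|_A)>\alpha-\varepsilon$. I would take $A'=D^{-1}(A)$, so $\theta(A')=D_*\theta(A)=\theta(A)>1-\varepsilon$, and decompose $\theta|_{A'}=\eta_0+\eta_1$ along the strips $B_0,B_1$. On $B_i$ one has $\eta_i=\theta|_{G_i^{-1}(A)}$, hence $(p_{a/2})_*\eta_i$ has the same lower dimension as $(p_a)_*\big((G_i)_*\eta_i\big)$. Using the product structure $\theta=\mu\times\nu$ together with $T_2$-invariance of $\mu$ --- which yields $\mu=\mu[0]\,\mu^{[0]}+\mu[1]\,\mu^{[1]}$, where $\mu^{[i]}$ is the renormalized rescaled restriction of $\mu$ to the $i$-th dyadic half, and in particular $\mu^{[i]}\ll\mu$ whenever $\mu[i]>0$ --- one checks $(G_i)_*\eta_i$ is supported on $A$ and satisfies $(G_i)_*\eta_i \le (\mu[i]\mu^{[i]})\times\nu \ll \theta$, hence $(G_i)_*\eta_i\ll\theta|_A$. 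Two elementary facts about lower dimension finish the job: first, $\lambda\ll\lambda'$ implies $\dim_*\lambda\ge\dim_*\lambda'$ (immediate from $\dim_*\lambda=\inf\{\dim F:\lambda(F)>0\}$), and this monotonicity is preserved by pushing forward under $p_a$, so $\dim_*(p_{a/2})_*\eta_i=\dim_*(p_a)_*\big((G_i)_*\eta_i\big)\ge\dim_*\pi_a(\theta|_A)>\alpha-\varepsilon$ for each $i$ with $\eta_i\neq 0$; second, $\dim_*(\lambda+\lambda')=\min(\dim_*\lambda,\dim_*\lambda')$ (again immediate from the definition), so $\dim_*\pi_{a/2}(\theta|_{A'})>\alpha-\varepsilon$. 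Therefore $\pi_{a/2}\in\mathcal{V}_\varepsilon$, and the argument for $\pi_{3a}$ is word-for-word the same, replacing $D$ by $E$, the dyadic halves by the triadic thirds, and $T_2$-invariance of $\mu$ by $T_3$-invariance of $\nu$.

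The one point that requires care --- and that dictates the choice $A'=D^{-1}(A)$ --- is that one cannot simply restrict attention to a single ``good'' strip, because a strip may carry very little $\theta$-mass, possibly zero; one is forced to show that \emph{every} branch of $\theta|_{A'}$ projects well and then invoke $\dim_*(\lambda+\lambda')=\min(\dim_*\lambda,\dim_*\lambda')$. Taking $A'$ to be the full $D$-preimage of $A$ is exactly what makes each branch pushforward land inside $A$ and, via the product structure and invariance, be absolutely continuous with respect to $\theta|_A$, hence no worse. The remaining subtleties are bookkeeping: keeping the direction of the absolute-continuity inequality for $\dim_*$ straight (it runs opposite to the naive guess), absorbing the per-branch translations of $\mathbb{R}$, and observing that multiplying a slope by $3$ or by $\tfrac12$ never produces the (deliberately excluded) vertical projection, so the statement genuinely lives inside the slope parametrization of $\Pi_{2,1}$.
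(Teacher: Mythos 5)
Your proof is correct and follows essentially the same route as the paper's: pull $A$ back under $T_2\times\mathrm{id}$ (resp.\ $\mathrm{id}\times T_3$), split the restricted measure along the two vertical (resp.\ three horizontal) strips, identify $\pi_{a/2}$ (resp.\ $\pi_{3a}$) on each strip with an affine image of $\pi_a$ applied to the pushed-forward branch, and conclude with $\dim_*(\lambda+\lambda')=\min(\dim_*\lambda,\dim_*\lambda')$. The only cosmetic difference is that you bound each branch via the absolute continuity $(G_i)_*\eta_i\ll\theta|_A$, whereas the paper writes the exact identity $\pi_a(\theta|_A)=S_0\pi_{a/2}(\eta_0)+S_1\pi_{a/2}(\eta_1)$; both yield the same conclusion.
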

\begin{proof} Suppose $\pi_{a}\in\mathcal{V}_{\varepsilon}$ and
let $A\subseteq[0,1]^{2}$ be a set of measure $>1-\varepsilon$ as
in the definition of $\mathcal{V}_{\varepsilon}$.

Write $T_2\times \id$  for the map $[0,1]^2\to[0,1]^2$ given by \[
(T_{2}\times\id)(x,y)=(T_{2}x,y),\]
 and let\begin{eqnarray*}
A_{0} & = & (T_{2}\times\id)^{-1}A\cap([0,\frac{1}{2})\times[0,1]),\\
A_{1} & = & (T_{2}\times\id)^{-1}A\cap([\frac{1}{2},1)\times[0,1]),\end{eqnarray*}
 so $(T_{2}\times\id)^{-1}A=A_{0}\cup A_{1}$. By the invariance properties
of $\mu,\nu$ we see that $\mu\times\nu$ is invariant under $T_2\times\id$,
so \[
\mu\times\nu(A_0\cup A_1)=\mu\times\nu((T_{2}\times\id)^{-1}A)>1-\varepsilon.\]
 Also, write\begin{eqnarray*}
\mu_{0} & = & \mu|_{[0,1/2)\times[0,1]}\\
\mu_{1} & = & \mu|_{[1/2,1]\times[0,1]}\end{eqnarray*}

Now, by invariance we have\begin{equation}
\mu\times\nu=(T_{2}\mu_{0}\times\nu)+(T_{2}\mu_{1}\times\nu)\label{eq:decomposition-of-preimage-of-2x3-measure}\end{equation}
It follows that there are affine maps $S_{0},S_{1}$
of $\mathbb{R}$ with \[
\pi_{a}(\mu\times\nu|_{A})=S_{0}\pi_{a/2}(\mu_{0}\times\nu|_{A_{0}})+S_{1}\pi_{a/2}(\mu_{1}\times\nu|_{A_{1}})\]
 One way to see this is to note that the fibers of $\pi_{a/2}$ are
lines of slope $-2/a$ and are mapped under $(x,y)\mapsto(2x,y)$
to the fibers of $\pi_{a}$, which have slope $-1/a$.

By assumption $\dim_*\pi_a(\mu\times\nu|_A)>\alpha-\varepsilon$. Affine
maps preserve dimension, so \begin{eqnarray*}
\alpha-\varepsilon & < & \dim_*\pi_{a}(\mu\times\nu|_{A})\\
 & = & \min\left\{ \dim_* S_{0}\pi_{a/2}(\mu_{0}\times\nu|_{A_{0}}),\dim_* S_{1}\pi_{a/2}(\mu_{1}\times\nu|_{A_{1}})\right\} \\
 & = & \min\left\{ \dim_*\pi_{a/2}(\mu_{0}\times\nu|_{A_{0}}),\dim_*\pi_{a/2}(\mu_{1}\times\nu|_{A_{1}})\right\} \\
 & = & \dim_*\pi_{a/2}(\mu\times\nu|_{(T_2\times \id)^{-1}A}),\end{eqnarray*}
 whence $\pi_{a/2}\in\mathcal{V}_{\varepsilon}$.

A similar analysis, the measures  $\nu_{i}=\nu|_{[0,1]\times[i/3,(i+1)/3)}$ and the identity \[
\mu\times\nu=T_{3}(\mu\times\nu_{0})+T_{3}(\mu\times\nu_{1})+T_{3}(\mu\times\nu_{2})\]
show that if $\pi_{a}\in\mathcal{V}_{\varepsilon}$, then $\pi_{3a}\in\mathcal{V}_{\varepsilon}$. \end{proof}

We can now conclude the proof of Theorem \ref{thm:projection-of-product-invariant-measures}:

\begin{proof}[Proof of Theorem \ref{thm:projection-of-product-invariant-measures}]
Using \eqref{eq:dim-invariant-is-inf-along-components} for $\mu,\nu$ and Lemma \ref{lem:properties-of-lower-dim}\eqref{itm:dim-at-least-inf-over-components} applied to $\pi(\mu\times\nu)$, the general case is reduced to the case in which $\mu$ and $\nu$ are ergodic. Furthermore, by Corollary \ref{cor:proving-expected-dim} it is enough to show that $\dim_*\pi(\mu\times \nu) \ge \alpha$.

Since $\log3/\log2\notin\mathbb{Q}$, the semigroup \[
\mathcal{S}=\left\{\frac{3^{i}}{2^{j}}\,:\, i,j\in\mathbb{N}\right\}
\]
 is dense in $\mathbb{R}^{+}$ \cite{Furstenberg67}; this is the only place in the proof where we use the arithmetic properties of $m,n$. For each $\varepsilon>0$
we have seen that the interior of $\mathcal{V}_{\varepsilon}$ is
open and dense, in particular contains an open set in each of the
rays $\mathbb{R}^{+},\mathbb{R}^{-}$; and since it is invariant under
$\mathcal{S}$ we see that \[
\mathcal{V}_{\varepsilon}=\{\pi_{a}\,:\, a\neq0\}.\]
 For any $\pi_a$, $a\neq0$ we have thus shown that there is a set $A_{\varepsilon}$
with $\mu\times\nu(A_{\varepsilon})>1-\varepsilon$ and\[
\dim_{*}\pi_a(\mu\times\nu|_{A_{\varepsilon}})>\alpha-\varepsilon,\]
 which implies\[
\dim_{*}\pi(\mu\times\nu)\geq\alpha,\]
 as desired. \end{proof}

\subsection{\label{sub:Proof-of-topological-conj}Proof of the topological conjecture}

We briefly show how to derive the topological version, conjecture
\ref{con:Furstenberg-conjecture}, from the measure one above. Suppose
that $X,Y\subseteq[0,1]$ are closed and invariant under $T_{2},T_{3}$,
respectively. Using the variational principle (see e.g. \cite[Theorem 14.1]{PollicottYuri})
we can find probability measures $\mu,\nu$ on $[0,1)$, invariant
and ergodic under $T_{2},T_{3}$, respectively, such that $\dim\mu=\dim A$
and $\dim\nu=\dim B$.

Let $\theta=\mu\times\nu$, so that $\dim\theta=\dim A\times B$.
For any projection $\pi\in\Pi_{2,1}\setminus\{\pi_x,\pi_y\}$, we have by the theorem above that \[
\dim\pi\theta=\min\{1,\dim\theta\}\]
 Since $\pi\theta$ is supported on $\pi(A\times B)$, we have \[
\dim\pi(A\times B)\geq\min\{1,\dim\theta\}=\min\{1,\dim(A\times B)\},\]
 The right hand side is also an upper bound, so we are done.

\subsection{\label{sub:Rudolph-Johnson-theorem}The Rudolph-Johnson theorem}

In this section we show how Theorem \ref{thm:projection-of-product-invariant-measures}
implies the Rudolph-Johnson theorem. We first prove the theorem under the hypothesis that the measure has positive dimension rather than positive entropy of ergodic components.

\begin{thm} Let $\mu$ be a probability measure on $[0,1]$ that is invariant under $T_{m}$ and $T_{n}$, with $m,n$ not powers of the same integer. Suppose that $\dim_*\mu>0$. Then $\mu$=Lebesgue measure. \end{thm}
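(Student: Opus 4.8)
The plan is to deduce this from Theorem \ref{thm:projection-of-product-invariant-measures} by a convolution trick. Working on the circle $\mathbb{R}/\mathbb{Z}$, for $k\ge 1$ let $\lambda_k$ denote the $k$-fold convolution power of $\mu$, i.e.\ the push-forward of $\mu^{\times k}$ under $(x_1,\dots,x_k)\mapsto x_1+\dots+x_k \bmod 1$. Since a probability measure $\lambda$ on the circle is $T_m$-invariant exactly when $\widehat{\lambda}(mj)=\widehat{\lambda}(j)$ for all $j\in\mathbb{Z}$, and $\widehat{\lambda_k}=\widehat{\mu}^{\,k}$, every $\lambda_k$ is invariant under both $T_m$ and $T_n$. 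The first goal will be to show that $\lambda_k=\mathrm{Leb}$ for some $k$; granting this, $\widehat{\mu}(j)^k=\widehat{\mathrm{Leb}}(j)=0$ for every $j\ne 0$, hence $\widehat{\mu}=\widehat{\mathrm{Leb}}$ and $\mu=\mathrm{Leb}$.

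To carry out the first goal I would show, writing $\alpha=\dim_*\mu>0$, that $\dim_*\lambda_k\ge\min\{1,k\alpha\}$ by induction on $k$ (the base case $k=1$ being the definition of $\alpha$). For the inductive step, note that $\lambda_{k+1}=\lambda_k*\mu$ agrees, up to an affine rescaling of the line and reduction modulo $1$ (a finite-to-one piecewise isometry) --- neither of which changes $\dim_*$ --- with $\pi(\lambda_k\times\mu)$ for the projection $\pi(x,y)=\tfrac12(x+y)\in\Pi_{2,1}\setminus\{\pi_x,\pi_y\}$. Viewing $\lambda_k$ as a $T_m$-invariant measure and $\mu$ as a $T_n$-invariant measure, Theorem \ref{thm:projection-of-product-invariant-measures} gives
\[
\dim_*\lambda_{k+1}=\min\{1,\dim_*(\lambda_k\times\mu)\}\ge\min\{1,\dim_*\lambda_k+\dim_*\mu\},
\]
where the last inequality is the superadditivity $\dim_*(\eta\times\eta')\ge\dim_*\eta+\dim_*\eta'$, which follows from Lemma \ref{lem:properties-of-lower-dim}\eqref{eq:Hausdorff-dim-in-terms-of-local-dim} together with the (pointwise) superadditivity of lower local dimensions under products. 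Since $\alpha>0$, the induction yields $\dim_*\lambda_k=1$ as soon as $k\ge 1/\alpha$; fix such a $k$ and put $\lambda=\lambda_k$.

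It remains to prove that a $T_m$-invariant measure $\lambda$ on $[0,1]$ with $\dim_*\lambda=1$ must be Lebesgue. I would pass to the $T_m$-ergodic decomposition $\lambda=\int\ell_\omega\,d\sigma(\omega)$: by \eqref{eq:dim-invariant-is-inf-along-components}, $\essinf_\omega\dim_*\ell_\omega=\dim_*\lambda=1$, and since $\dim_*\ell_\omega\le 1$ always, $\dim_*\ell_\omega=1$ for $\sigma$-a.e.\ $\omega$. Each such $\ell_\omega$ is $T_m$-ergodic, hence by \eqref{eq:entropy-equal-dimension} exact dimensional with $h(\ell_\omega,T_m)=\log m\cdot\dim\ell_\omega=\log m$ (in particular $\ell_\omega$ is non-atomic). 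Since $\log m$ is the topological entropy of $T_m$, and Lebesgue measure is its unique measure of maximal entropy --- because $T_m$ is, off a countable set, conjugate to the full one-sided $m$-shift, which is intrinsically ergodic with maximal-entropy measure the uniform Bernoulli measure $=\mathrm{Leb}$ --- we get $\ell_\omega=\mathrm{Leb}$ for a.e.\ $\omega$, so $\lambda=\mathrm{Leb}$, and then $\mu=\mathrm{Leb}$ by the first paragraph.

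I expect the only genuinely external inputs to be Theorem \ref{thm:projection-of-product-invariant-measures} itself and the uniqueness of the maximal-entropy measure for $T_m$; the point requiring the most care is the treatment of non-ergodic $\lambda$ (where \eqref{eq:dim-invariant-is-inf-along-components} is exactly what is needed), together with the routine verification that affine rescaling and reduction modulo $1$ leave $\dim_*$ unchanged.
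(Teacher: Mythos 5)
Your proposal is correct and follows essentially the same route as the paper: take convolution powers on the circle (the paper doubles, $\mu\mapsto\mu\circ\mu$, while you add one factor at a time, but this is immaterial), apply Theorem \ref{thm:projection-of-product-invariant-measures} at each step via the identification of convolution with a non-coordinate projection of the product, conclude that some power has lower dimension $1$ and hence equals Lebesgue by the ergodic decomposition together with \eqref{eq:dim-invariant-is-inf-along-components} and \eqref{eq:entropy-equal-dimension}, and finish with the Fourier-coefficient argument. Your write-up is in fact slightly more careful than the paper's at the points it glosses over (superadditivity of $\dim_*$ under products, invariance of the convolution powers, and the maximal-entropy characterization of Lebesgue measure).
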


\begin{proof}
Write $\alpha=\dim_{*}\mu>0$. Denote by $*$ the convolution in $\mathbb{R}$ and by $\circ$ the convolution
in $\mathbb{R}/\mathbb{Z}$. Notice that, up to an affine map, $\mu*\mu$
and $\pi_{1}(\mu\times\mu)$ are the same (recall that $\pi_{1}$
is projection to the line $y=x$). Thus, by Theorem \ref{thm:projection-of-product-invariant-measures}
with $\mu=\nu$, \[
\dim_*(\mu*\nu)=\dim_*\pi_{1}(\mu\times\nu)=\min\{1,2\alpha\}\]
 Also, since\[
\mu\circ\mu=\mu*\mu\bmod1\]
 and reduction modulo $1$ is a countable-to-one local isometry,\[
\dim_{*}(\mu\circ\mu)=\min\{1,2\alpha\}.\]
 Finally, it is easy to check that $\mu\circ\mu$ is again invariant
under $T_{m}$ and $T_{n}$, because $T_{m},T_{n}$ are endomorphisms
 of $\mathbb{R}/\mathbb{Z}$.

Iterating this argument, we see that for each $k$,\[
\dim_{*}\underset{2^{k}\mbox{ times}}{\underbrace{\mu\circ\mu\circ\ldots\circ\mu}}=\min\{1,2^{k}\alpha\},\]
 and since $\alpha>0$ there is a $k$ so that, for $\nu=\circ^{2^{k}}\mu$,\[
\dim_{*}\nu=1\]
 But $\nu$ is a $T_{2}$-invariant measure, and by \eqref{eq:entropy-equal-dimension} and \eqref{eq:dim-invariant-is-inf-along-components} Lebesgue measure
is the only $T_{m}$ invariant measure of lower dimension $1$. Thus
\[
\nu=\lambda.\]
 To establish $\mu=\lambda$, we look at the Fourier coefficients,
where convolution translates to multiplication. For $i\neq0$ we have
\[
\widehat{\mu}(i)^{2^{k}}=\widehat{\nu}(i)=\widehat{\lambda}(i)=0\]
 Thus $\widehat{\mu}(i)=0$ for $i\neq 0$, so $\mu=\lambda$.
\end{proof}

For measures which are ergodic under $T_m$ or even under the joint action of $T_m,T_n$, positive dimension and positive entropy are equivalent conditions. However, the condition that all ergodic components have positive entropy is weaker; entropy behaves like the mean of the entropies of ergodic components, while dimension behaves like the essential infimum. Nevertheless, there is a simple reduction which recovers the non-ergodic case.

\begin{thm}
[Rudolph-Johnson \cite{Rudolph90,Johnson92}]
Let $\mu$ be a probability measure on $[0,1]$ that is invariant under $T_{m}$ and $T_{n}$, with $m,n$ not powers of the same integer. Suppose that all ergodic components of $\mu$ with respect to $T_m$ have positive entropy. Then $\mu$=Lebesgue measure. \end{thm}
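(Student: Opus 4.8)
The plan is to deduce the statement from the positive‑dimension version just proven, by passing to the ergodic decomposition with respect to the \emph{joint} action of $T_m$ and $T_n$. Write $\mu=\int\mu_\omega\,d\sigma(\omega)$, where the $\mu_\omega$ are the ergodic components of $\mu$ for the (abelian) semigroup generated by $T_m$ and $T_n$; each $\mu_\omega$ is then invariant under both $T_m$ and $T_n$. By Lemma \ref{lem:properties-of-lower-dim}\eqref{itm:dim-at-least-inf-over-components} applied to $\mu$ (and trivially to $\int\lambda\,d\sigma=\lambda$), it suffices to prove that $\mu_\omega=\lambda$ for $\sigma$-a.e.\ $\omega$. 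In turn, by the theorem just proven, this will follow once we know that $\dim_*\mu_\omega>0$ for $\sigma$-a.e.\ $\omega$, since $\mu_\omega$ is invariant under both $T_m$ and $T_n$.

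The only point requiring care is exactly the passage from \emph{positive entropy of the ergodic components} to \emph{positive lower dimension}, and it is here that one must be careful about the discrepancy between the ``mean'' behavior of entropy and the ``essential infimum'' behavior of $\dim_*$. Fix a joint‑ergodic component $\mu_\omega$; by hypothesis $h(\mu_\omega,T_m)>0$. Decompose $\mu_\omega=\int\mu_{\omega,\xi}\,d\sigma_\omega(\xi)$ into its $T_m$-ergodic components. Since $T_n$ commutes with $T_m$ and preserves $\mu_\omega$, it induces a $\sigma_\omega$-preserving transformation of the index space, which is ergodic precisely because $\mu_\omega$ is ergodic for the joint action. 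As $T_n$ is a finite‑to‑one measure‑preserving factor map intertwining $T_m$, it preserves $T_m$-entropy (the Rokhlin relative entropy over a finite‑to‑one factor vanishes), so $\xi\mapsto h(\mu_{\omega,\xi},T_m)$ is invariant under this ergodic transformation, hence $\sigma_\omega$-a.e.\ constant; being also affine‑averaging to $h(\mu_\omega,T_m)$, that constant equals $h(\mu_\omega,T_m)>0$. By \eqref{eq:entropy-equal-dimension} each $\mu_{\omega,\xi}$ is exact dimensional of dimension $h(\mu_\omega,T_m)/\log m$, and then by \eqref{eq:dim-invariant-is-inf-along-components} so is $\mu_\omega$; in particular $\dim_*\mu_\omega=h(\mu_\omega,T_m)/\log m>0$ (and, by symmetry, this also equals $h(\mu_\omega,T_n)/\log n$, which is why one may equivalently assume positive entropy for either map).

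Putting the two steps together finishes the proof: every joint‑ergodic component $\mu_\omega$ is a $T_m$- and $T_n$-invariant measure of positive lower dimension, hence equals Lebesgue measure by the positive‑dimension case, and integrating over $\omega$ yields $\mu=\lambda$. The substantive point — and the only place beyond bookkeeping — is the middle step above, i.e.\ checking that positive entropy of a joint‑ergodic measure forces positive (indeed exact) dimension; this is handled by the ergodicity of the induced $T_n$-action on the $T_m$-ergodic components together with the invariance of $T_m$-entropy under the finite‑to‑one map $T_n$.
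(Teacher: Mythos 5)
Your proof is correct, and it arrives at the statement by a route that is parallel to, but not identical with, the paper's. The paper does not pass to the joint ergodic decomposition; instead it restricts $\mu$ to the $T_m$-ergodic components of entropy at least $t$, observes that this restriction $\mu_{\ge t}$ is still invariant under both maps because the induced action of $T_n$ on the space of $T_m$-ergodic components is measure-preserving and cannot decrease entropy (hence a.e.\ preserves the sub-level sets), notes $\dim_*\mu_{\ge t}\ge t/\log m>0$ via \eqref{eq:entropy-equal-dimension} and \eqref{eq:dim-invariant-is-inf-along-components}, applies the positive-dimension theorem to get $\mu_{\ge t}=\lambda$, and finally lets $t\to 0$. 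You instead decompose into joint-ergodic components and, within each, prove that the $T_m$-ergodic component entropies are a.e.\ \emph{constant}, using ergodicity of the induced $T_n$-action on the component space together with exact invariance of $T_m$-entropy under the finite-to-one intertwining map $T_n$; this gives positive lower dimension of each joint component directly. The essential mechanism is the same in both arguments ($T_n$ permutes $T_m$-ergodic components without changing their entropy, and $\dim_*$ is the essential infimum over components), but your version trades the limiting step $t\to0$ for the joint ergodic decomposition and the (correctly justified) ergodicity of the induced action, and it has the incidental benefit of making transparent why the hypothesis may be read equivalently for either map or for the joint action.
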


\begin{proof} Let $\mu=\int \mu_\omega d\nu(\omega)$ denote the ergodic decomposition of $\mu$ with respect to $T_m$, where $\nu$ is defined on an auxiliary space $\Omega$. Since $T_n$ is an endomorphism of $([0,1],\mu,T_m)$ it acts on the ergodic components, i.e. on $(\Omega,\nu)$, in a measure-preserving manner. Furthermore, since the joint action of $T_m,T_n$ on $\mu$ is ergodic, $T_n$ acts ergodically on the space $(\Omega,\nu)$ of ergodic components.

Fix $t>0$, let $\Omega_t = \{\omega\in\Omega \,:\, h(T_m,\mu_\omega)\geq t\}$, and let $\mu_{\geq t}=\int_{\Omega_t} \mu_\omega d\nu(\omega)$ . Since $T_n$ is a factor map between $([0,1],T_m,\mu_\omega)$ and $([0,1],T_m,T_n\mu_\omega)$, and since factor maps don't increase entropy, we have $h(T_m,\mu_\omega)\geq h(T_m,T_n\mu_\omega)$, and therefore $T_n^{-1}\Omega_t \subseteq\Omega_t$. Since $T_n$ is $\nu$-preserving, $\Omega_t$ is $T_n$-invariant up to measure $0$.

Hence $\mu_{\geq t}$ is $T_n$-invariant. It is also clearly $T_m$ invariant, and its dimension is \[
  \dim_* \mu_{\geq t} = \essinf_{\omega\in\Omega_t} \frac{h(\mu_\omega,T_m)}{\log m} \geq \frac{t}{\log m} > 0.
\]
Hence by the previous theorem $\mu_{\geq t}$ is Lebesgue measure. Since $\mu=\lim\mu_{\geq t}$, we are done.
\end{proof}

\section{Convolutions of Gibbs measures}

\label{sec:convolutions-Gibbs-measures}

\subsection{\label{sub:Gibbs-preliminaries}Preliminaries}

Let $\mathcal{I}=\{f_{i}:i\in\Lambda\}$ be an iterated function system
on the interval $[0,1]$. The IFS $\mathcal{I}$
is called a \emph{regular IFS} if the following conditions hold:
\begin{enumerate}
\item Regularity: There is $\e>0$ such that each $f_{i}$ is a $C^{1+\e}$
map on a neighborhood of $[0,1]$.
\item Contraction and orientation: $0<Df_{i}(x)<1$ for all $i$ and all
$x\in[0,1]$.
\item Separation: The sets $f_{i}((0,1))$ are pairwise disjoint
subsets of $(0,1)$.
\end{enumerate}
We say that a closed set $X\subseteq[0,1]$ is a \emph{regular
Cantor set} if it is the attractor of a regular IFS. This definition
is more restrictive than the one in e.g. \cite{MoreiraYoccoz01}. Our methods can handle the more general setting with slight modifications, but for simplicity we concentrate on the case above.

We recall the definition and some basic facts about Gibbs measures;
a clearly written introduction to this topic can be found in \cite[Chapter 5]{Falconer97}.
Let $\Lambda$ be a finite set. If $\varphi:\Lambda^{\NN}\rightarrow\mathbb{R}$
is a H\"{o}lder-continuous function, then there exist a unique real number
$P(\varphi)$ and a unique ergodic measure $\mu_{\varphi}$ on $\Lambda^{\NN}$,
such that \begin{equation}
\mu_{\varphi}([x_{1}\ldots x_{n}])=\Theta_{\varphi}\left(\exp\left(-nP(\varphi)+\sum_{j=0}^{n-1}\varphi(T^{j}x)\right)\right),\label{eq:gibbs}\end{equation}
 for all $x\in\Lambda^{\NN}$, where $T$ is the shift on $\Lambda^\NN$. (Recall that $A=\Theta_{\varphi}(B)$ means that $C^{-1}A \le B\le C A$ for a constant $C>0$ depending only on $\varphi$.) The number $P(\varphi)$ is
known as the \emph{topological pressure} of $\varphi$, and the
measure $\mu_{\varphi}$ as the \emph{Gibbs measure for the potential}
 $\varphi$.

We say that two measures defined on the same measure space are
$C$-\emph{equivalent} if they are mutually absolutely continuous
and both Radon-Nikodym derivatives are bounded by $C$; this is denoted by $\sim_{C}$. If $x,y$ are numbers, we also write
$x\,\sim_{C}\, y$ to denote that $x\le Cy$ and $y\le Cx$.

Given a cylinder set $[a]$ and a measure $\mu$ on $\Lambda^{\mathbb{N}} $, let $\mu^{[a]}$ denote the probability measure on $\Lambda^{\mathbb{N}}$ given by \[
  \mu^{[a]}([b])=\frac{1}{\mu([a])}\mu([ab]).
\]
This is the symbolic analogue of rescaling a measure on $\mathbb{R}^d$.

While Gibbs measures are not generally product measures, they do satisfy a slightly weaker property, which is the only one of their properties that we  use:

\begin{lem} Let $\mu$ be a Gibbs measure on $\Lambda^{\NN}$ for
some H\"{o}lder potential $\varphi$. Then there is $C=C(\varphi)>0$
such that for any word $a\in\Lambda^*$, $\mu\sim_{C}\mu^{[a]}$.
\end{lem}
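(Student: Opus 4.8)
The plan is to use the Gibbs property \eqref{eq:gibbs} directly. Fix a word $a\in\Lambda^n$. The key computation is to compare $\mu^{[a]}([b])$, which by definition equals $\mu([ab])/\mu([a])$, with $\mu([b])$, uniformly over all $b\in\Lambda^*$. Writing $|b|=k$, the Gibbs estimate \eqref{eq:gibbs} gives
\[
\mu([ab]) = \Theta_\varphi\left(\exp\left(-(n+k)P(\varphi) + \sum_{j=0}^{n+k}\varphi(T^j x)\right)\right)
\]
for any $x\in[ab]$, and similarly $\mu([a])=\Theta_\varphi(\exp(-nP(\varphi)+\sum_{j=0}^n\varphi(T^j y)))$ for any $y\in[a]$, and $\mu([b])=\Theta_\varphi(\exp(-kP(\varphi)+\sum_{j=0}^k\varphi(T^j z)))$ for any $z\in[b]$.

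The main step is then to choose the representative points compatibly: pick $z\in[b]$, and let $x\in[ab]$ be any point whose tail past coordinate $n$ agrees with $z$, i.e. $x = a z$ (concatenation), and let $y=x\in[a]$. Then $\sum_{j=0}^{n+k}\varphi(T^jx) - \sum_{j=0}^n\varphi(T^jx) = \sum_{j=n+1}^{n+k}\varphi(T^jx) = \sum_{i=1}^{k}\varphi(T^i z)$, since $T^{n+i}x = T^i z$ for $i\ge 1$ (the tails agree). Comparing with $\sum_{j=0}^k\varphi(T^jz)$, the two sums differ only by the single term $\varphi(z)$ versus $\varphi(T^kz)$ — or more precisely, careful bookkeeping of the index ranges shows they differ by a bounded number of evaluations of $\varphi$, hence by at most $2\|\varphi\|_\infty$. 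Therefore
\[
\frac{\mu^{[a]}([b])}{\mu([b])} = \frac{\mu([ab])}{\mu([a])\mu([b])} = \Theta_\varphi(1)\cdot\exp\left(O(\|\varphi\|_\infty)\right),
\]
with a bound depending only on $\varphi$ (through the Hölder constant implicit in $\Theta_\varphi$ and through $\|\varphi\|_\infty$), and in particular independent of $a$, $b$, $n$, $k$. This gives a constant $C=C(\varphi)$ with $C^{-1}\mu([b]) \le \mu^{[a]}([b]) \le C\mu([b])$ for all cylinders $[b]$.

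Finally, since cylinders generate the Borel $\sigma$-algebra on $\Lambda^\NN$ and the bound $C^{-1}\mu([b])\le\mu^{[a]}([b])\le C\mu([b])$ holds on the generating semiring, it passes to all Borel sets by a standard approximation argument (monotone class / outer regularity). Hence $\mu$ and $\mu^{[a]}$ are mutually absolutely continuous with Radon--Nikodym derivatives bounded by $C$, i.e. $\mu\sim_C\mu^{[a]}$. The only mildly delicate point is the precise index bookkeeping in the exponent — making sure that the discrepancy between $\sum_{j=0}^{n+k}$, $\sum_{j=0}^{n}$ and $\sum_{j=0}^{k}$ after the substitution amounts to only boundedly many terms — but this is elementary once the representative points are chosen to have matching tails, and is the only place any care is needed.
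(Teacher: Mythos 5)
Your proof is correct and follows essentially the same route as the paper, whose proof is a one-line appeal to the Gibbs property \eqref{eq:gibbs}; you have simply written out the cancellation of the Birkhoff sums and pressure terms (using a point $x=az$ with matching tail) and the routine extension from cylinders to Borel sets that the paper leaves implicit.
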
 \begin{proof} It follows from \eqref{eq:gibbs} that \[
\mu^{[a]}[b]=\frac{\mu[ab]}{\mu[a]}=\Theta_{\varphi}(\mu[b]).\]
 This shows that $\mu^{[a]}$ and $\mu$ are $C$-equivalent for
some $C$ that depends on $\varphi$ only.
\end{proof}

Motivated by the previous lemma, we say that a measure $\mu$
on $\Lambda^{\NN}$ is a $C$-\emph{quasi-product measure} if $C>0$ and $\mu^{[a]}\,\sim_{C}\,\mu$ for
all $a\in\Lambda^*$ or, equivalently, \[
C^{-1}\mu([a b])\le\mu([a])\mu([b])\le C\mu([ab]).\]

A few comments about this notion are in order. It is easy to see that the support of a quasi-product measure on
$\Lambda^{\NN}$ is $\Xi^{\NN}$ for some subset $\Xi\subseteq\Lambda$.
Thus by replacing $\Lambda$ by $\Xi$ we can always assume that quasi-product
measures are globally supported. Each $T^n \mu$ is $C$-equivalent to $\mu$, and taking any weak-* limit point $\nu$ of $\frac{1}{n}\left(\mu+T\mu+T^2\mu+\ldots+ T^{n-1}\mu\right)$, as $n\to\infty$, we obtain a $T$-invariant measure equivalent to $\mu$ and with Radon-Nikodym derivative bounded between $C^{-1}$ and $C$. Also, it is easy to show that $\mu$ has a rudimentary mixing property: there is a constant $K$ such that if $A,B$ are sets then $\mu(T^{-n} A\cap B)>K\mu(A)\mu(B)$ for large enough $n$. It follows that the same is true of $\nu$, perhaps with a different constant, and so $\nu$ is ergodic. It follows now that $\nu$, and hence $\mu$, are exact dimensional.

Finally, we transfer these notions to the geometric attractor: A Gibbs measure on $X$ is the projection of a Gibbs measure
on $\Lambda^{\NN}$ under the coding map. In the same way we define
quasi-product measures on $X$. Thus, Gibbs measures on $X$ are quasi-product
measures.

We have the following slight strengthening of Theorem \ref{thm:convolutions-of-Gibbs-measures}:

\begin{thm} \label{thm:convolutions-of-quasi-product-measures} Theorem \ref{thm:convolutions-of-Gibbs-measures} holds for quasi-product measures $\mu_i$ on the attractors $X_i$.
\end{thm}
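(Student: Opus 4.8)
The plan is to follow the same two-part strategy used for Theorems~\ref{thm:projection-of-product-invariant-measures} and \ref{thm:projections-of-self-similar-measures-with-irrational-rotation}: first produce, for each $\varepsilon>0$, an open dense (indeed full-measure) set of projections under which the product $\mu_1\times\cdots\times\mu_d$ projects to nearly the expected dimension, and then exploit an invariance of the set of ``nearly-good'' projections to upgrade this to \emph{all} non-degenerate projections. The minimality assumption on $L(\mathcal{I}^1)\times\cdots\times L(\mathcal{I}^d)$ in $(\RR^d,+)/\Delta$ is what plays the role that density of $\{3^i/2^j\}$ played in Section~\ref{sec:Measures-invariant-under-xm} and that topological minimality of the rotation semigroup played in Section~\ref{sec:self-similar-measures}.

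\textbf{Step 1: a CP-chain attached to the product.} For each regular IFS $\mathcal{I}^{(i)}$, conjugating by the inverse of the branch derivatives, one associates to the quasi-product measure $\mu_i$ on $X_i\subseteq[0,1]$ a one-dimensional CP-chain whose ``zoom-in'' dynamics at a typical point, after rescaling each cylinder $f_a(X_i)$ back to unit size, is governed by the Gibbs/quasi-product structure; the key point is that because $\mathcal{I}^{(i)}$ is only $C^{1+\e}$ and not linear, the rescaled cylinder maps are not exact homotheties, so one must use the bounded-distortion estimates for $C^{1+\e}$ maps to see that the rescaled pieces converge, and that the quasi-product property (Lemma on $\mu^{[a]}\sim_C\mu$) guarantees the rescaled conditional measures stay in a fixed compact set with controlled local entropy. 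The product $\mu_1\times\cdots\times\mu_d$ then generates a $d$-dimensional CP-chain, but — exactly as in the $\times 2,\times 3$ case — when the $\lambda(f^{(i)}_j)$ are not commensurable the product zooming is not by homotheties, so one must introduce an auxiliary randomized partition operator on boxes whose eccentricity evolves; here the set of achievable ``log-eccentricity vectors'' is precisely (the closure of) $L(\mathcal{I}^1)\times\cdots\times L(\mathcal{I}^d)$ modulo the diagonal $\Delta$, and the minimality assumption is exactly what makes the resulting skew-product ergodic on the full eccentricity space. Concretely I would build $\Delta$ so that one step splits each of the $d$ coordinates according to the $\mathcal{I}^{(i)}$-partition of the appropriate length, renormalizes, and tracks the eccentricity; by Corollary~\ref{cor:non-ergodic-CP-chains-generate} and Lemma~\ref{lem:restricting-preserves-generation}, a $\mu_1\times\cdots\times\mu_d$-positive-measure restriction generates (a component of) this chain. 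Invoking Theorem~\ref{thm:open-dense-set-of-good-directions-non-ergodic-case} (or Corollary~\ref{cor:open-dense-set} if one arranges ergodicity), we obtain for each $\varepsilon>0$ an open dense $\mathcal{U}'_\varepsilon\subseteq\Pi_{d,1}$ and a set $A_\varepsilon$ of measures of measure $>1-\varepsilon$ with $\dim_*\pi\tau>\min(1,\sum_i\dim\mu_i)-\varepsilon$ for $\pi\in\mathcal{U}'_\varepsilon$, $\tau\in A_\varepsilon$; pulling back through the conjugating similarities (as in Proposition~\ref{prop:large-set-of-good-dirs-for-x2x3-product}) gives the same for $\pi(\mu_1\times\cdots\times\mu_d|_A)$ on an open dense set of $\pi$.

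\textbf{Step 2: invariance and conclusion.} Define, for $\varepsilon>0$,
\[
\mathcal{V}_\varepsilon=\left\{(t_1,\ldots,t_d):\exists A,\ (\mu_1\times\cdots\times\mu_d)(A)>1-\varepsilon,\ \dim_*\pi_{t}(\mu_1\times\cdots\times\mu_d|_A)>\alpha-\varepsilon\right\},
\]
where $\alpha=\min(1,\sum_i\dim\mu_i)$ and $\pi_t(x)=\sum t_ix_i$, viewed projectively (one only tracks the direction). Step~1 shows the interior of $\mathcal{V}_\varepsilon$ is open and dense. For the invariance, fix $i$ and $j\in\Lambda_i$ and use the quasi-product decomposition of $\mu_i$ along the cylinder $f^{(i)}_j(X_i)$: up to a $C$-equivalence $\mu_i$ contains $(f^{(i)}_j)^{-1}$-rescaled copies of itself, and applying this in the $i$-th coordinate replaces $\pi_t$ by $\pi_{t'}$ where $t'_\ell=t_\ell$ for $\ell\neq i$ and $t'_i$ is multiplied by $e^{\lambda(f^{(i)}_j)}$ (the branch is nonlinear, but bounded distortion means the fibers of the new projection are, up to a bi-Lipschitz map of bounded constant, mapped to fibers of the old one — and bi-Lipschitz maps preserve $\dim_*$, which is all we need since we have an $\varepsilon$ of room; more carefully one iterates $f^{(i)}_{a}$ for long words $a$ so the distortion is uniformly small). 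Hence $\mathcal{V}_\varepsilon$ is invariant under multiplying the $i$-th coordinate by any element of $e^{L(\mathcal{I}^i)}$, for every $i$; equivalently, as a subset of $\RR^d$ modulo scaling, it is invariant under translation (in log coordinates) by $L(\mathcal{I}^1)\times\cdots\times L(\mathcal{I}^d)/\Delta$. Since $\mathcal{V}_\varepsilon$ has nonempty interior and, by the minimality assumption, this translation action is minimal on the relevant torus/space, $\mathcal{V}_\varepsilon$ is everything, i.e. it contains every direction with all $t_i\neq0$. Letting $\varepsilon\to0$ and using Lemma~\ref{lem:properties-of-lower-dim}(2) over the restrictions $A_\varepsilon$ gives $\dim_*\pi(\mu_1\times\cdots\times\mu_d)\ge\alpha$, and the reverse inequality together with Corollary~\ref{cor:proving-expected-dim} (note exact dimensionality of quasi-product measures and of their product is available) yields $\dim\pi(\mu_1\times\cdots\times\mu_d)=\alpha$.

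\textbf{Main obstacle.} The delicate point is Step~1: correctly setting up the randomized partition operator so that (a) it is $\rho$-regular with uniformly bounded eccentricities despite the incommensurable contraction logarithms, (b) the rescaled conditional measures genuinely converge — here the $C^{1+\e}$ bounded-distortion property of regular IFSs is essential and must be used to show the non-linear cylinder maps are, in the limit, homotheties — and (c) the eccentricity dynamics is ergodic precisely under the stated minimality hypothesis, so that Theorem~\ref{thm:open-dense-set-of-good-directions-non-ergodic-case} applies with the expected dimension. A secondary technical nuisance is handling the non-linearity in the invariance step: one cannot literally factor $\pi_t\circ(f^{(i)}_a)$ as an affine map composed with $\pi_{t'}$, only up to a bounded bi-Lipschitz correction, so one must phrase everything in terms of $\dim_*$ (which tolerates bi-Lipschitz maps) and absorb the distortion by passing to deep cylinders and using the $\varepsilon$-slack already present in $\mathcal{V}_\varepsilon$.
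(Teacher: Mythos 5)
Your high-level strategy (an open dense set of nearly-good projections, then invariance under a minimal action) is the right one, but both steps have genuine gaps, and in both cases the missing ingredient is the same: the machinery of \emph{limit geometries}. In Step 1 you propose to build an explicit CP-chain for $\mu_1\times\cdots\times\mu_d$ by a randomized partition operator tracking eccentricities, as in the $\times 2,\times 3$ case. For non-linear $C^{1+\e}$ IFSs the rescaled cylinder maps $f_a^*$ are not homotheties at any finite stage, so the rescaled conditional measures do not form a stationary process in any obvious state space; you flag this as ``the delicate point'' but do not resolve it. The paper sidesteps it entirely: it applies the abstract existence theorem (Theorem \ref{thm:furstenberg}) to get an ergodic CP-chain of full dimension supported on \emph{micromeasures} of $\mu$, and then uses Sullivan's theorem (Theorem \ref{thm:sullivan}) and Proposition \ref{prop:put-derived-measure-into-limit-geometry} to show every relevant micromeasure is, up to $C$-equivalence and an affine diagonal map, the image of $\mu$ under a limit diffeomorphism $F=F_1\times F_2$. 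The minimality hypothesis is not used at all at this stage; it enters only in the invariance step.

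In Step 2 the invariance argument as written would fail. You claim $\pi_t\circ(f^{(i)}_a\times\id)$ agrees with $\pi_{t'}$ up to a bounded bi-Lipschitz correction, but the level sets of $\pi_t\circ(f^{(i)}_a\times\id)$ are curves, not lines, so no bi-Lipschitz map of the range can convert one into the other; passing to deep cylinders makes $f_a^*$ close to a limit diffeomorphism, which is still non-linear. Two ideas from the paper are needed here. First, the good set must be defined as an open set of $C^1$ maps, $\mathcal{U}_\alpha(\nu)=\mathrm{interior}\{g\in C^1:\dim_*g\nu>\alpha\}$, so that the $C^1$-stability results (Proposition \ref{prop:smooth-projections-of-CP-chains}, Corollary \ref{cor:open-good-set-in-C1}) let non-linear compositions remain in the good set; a set of linear projections only, as in your $\mathcal{V}_\e$, does not tolerate the non-linear branches. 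Second, to realize the multiplicative action by $e^{\lambda(f_a)}$ one must \emph{exactly} linearize $f_a$: note $\lambda(f_a)$ is the log-derivative at the fixed point of $f_a$, not a sum of single-letter contributions, and $L(\mathcal{I}^i)$ is not a semigroup. The paper achieves this via Corollary \ref{cor:consequences-Sullivan}: conjugating by the limit diffeomorphism $F_{\overline{a}}$ makes $f_a$ genuinely affine with derivative $e^{-\lambda(f_a)}$, after which Lemma \ref{lem:transformation-under-diagonal-map} moves the open set around by honest affine diagonal maps, and Lemma \ref{lem:dense-intersection-passes-to-limit} plus Proposition \ref{prop:convolutions-of-quasi-product-on-all-limits} transfer the conclusion back from the limit geometry to $\mu$ itself by covering $\supp\mu$ with deep cylinders $I(a_1)\times I(a_2)$ of bounded eccentricity ratio. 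Without these two devices the distortion cannot be absorbed into the $\e$-slack as you suggest.
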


For notational simplicity we prove Theorem \ref{thm:convolutions-of-quasi-product-measures} for $d=2$; the proof is the same in higher dimensions. The structure of the proof resembles that of Theorem
\ref{thm:projections-of-self-similar-measures-with-irrational-rotation}, but is technically more difficult. The main differences are, first, that micromeasures of Gibbs measures on regular IFSs are harder to relate to the original measure; and, second, that moving the open set around $\Pi_{2,1}$ is more involved due to nonlinearity of the IFSs. In the next section we present some standard tools for dealing with these problems.

Before embarking on the proof we explain its relation to Moreira's proof of \eqref{eq:projections-of-sets} in the strictly non-linear
case. The key device there was the so-called Scale Recurrence
Lemma (SRL) of \cite{MoreiraYoccoz01}. This technical result, which relies on the non-linearity of the system, gives information about the quotients
\[
\frac{\diam(f_{a}^{(1)}([0,1]))}{\diam(f_{b}^{(2)}([0,1]))}\]
 for many (but not all) pairs $a\in\Lambda_1^*,b\in\Lambda_2^*$. In the proof one uses only the pairs of words that are ``good'' in the sense of the
SRL in order to construct subsets $X'_{i}\subseteq X_{i}$
which approximate $X_{i}$ well in dimension, but have zero measure
with respect to any Gibbs measure. Hence it seems unlikely that Moreira's
proof can be adapted to give information about convolutions of Gibbs
measures. Our proof uses the machinery of CP processes in place of the SRL, specifically in the proof of Proposition \ref{prop:semicontinuity-convolutions-of-quasi-product-measures}. After that our proof follows Moreira's original argument.

\subsection{Limit geometries and micromeasures}

Throughout this sub-section, $\mathcal{I}=\{f_{i}:i\in\Lambda\}$
is a regular IFS with attractor $X$, and $\mu$ is a $C$-quasi-product
measure on $X$. By definition, $\mu=\Phi\eta$, where $\Phi$ is the
coding map and $\eta$ is a $C$-quasi-product measure on $\Lambda^{\NN}$.

Given a closed interval $[a,b]$,
denote by $T_{[a,b]}$ the unique orientation-preserving
affine map sending $[a,b]$ to $[0,1]$. If $f:[0,1]\rightarrow\RR$
is a continuous injection, write \[
  T_{f}=T_{f([0,1])}
\] and \[
  f^* = T_f f.
\]
Hence $f^*$ is a bijection of $[0,1]$. Also,  for $a\in\Lambda^*$ write \[
  I(a)=f_{a}([0,1]).
\]The following simple lemma is a consequence of the separation assumption on the IFS and $C$-equivalence.

\begin{lem} \label{lem:restrictions-of-quasi-product-measures} For
every $a\in\Lambda^{*}$, \[
\mu_{I(a)}\,\sim_{C}\, f_{a}(\mu).\]
 \end{lem}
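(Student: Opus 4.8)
The plan is to push everything through the coding map. Write $\mu=\Phi\eta$, where $\Phi:\Lambda^{\NN}\to X$ is the coding map and $\eta$ is a $C$-quasi-product measure on $\Lambda^{\NN}$. For $a\in\Lambda^{*}$ let $\beta_{a}:\Lambda^{\NN}\to[a]$ be the map $x\mapsto ax$; it is a homeomorphism onto $[a]$, it satisfies $\Phi\circ\beta_{a}=f_{a}\circ\Phi$ (immediate from the definition of the coding map), and, comparing the masses it assigns to cylinders, the normalized restriction $\eta_{[a]}$ equals $\beta_{a}(\eta^{[a]})$. The proof is then the chain
\[
\mu_{I(a)}\;=\;\Phi(\eta_{[a]})\;=\;\Phi(\beta_{a}(\eta^{[a]}))\;\sim_{C}\;\Phi(\beta_{a}(\eta))\;=\;f_{a}(\Phi\eta)\;=\;f_{a}(\mu),
\]
and the work is in justifying its steps.

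The middle equivalence is the only place the quasi-product hypothesis is used: $\eta^{[a]}\sim_{C}\eta$ by definition of a $C$-quasi-product measure, and pushing a measure forward under any measurable map $g$ preserves $C$-equivalence, since if $\eta_{1}=h\,\eta_{2}$ with $C^{-1}\le h\le C$ then $g\eta_{1}(E)=\int_{g^{-1}E}h\,d\eta_{2}$ lies between $C^{-1}g\eta_{2}(E)$ and $C\,g\eta_{2}(E)$ for every Borel $E$. The last two equalities follow from $\Phi\circ\beta_{a}=f_{a}\circ\Phi$ and $\mu=\Phi\eta$. The first equality, $\mu_{I(a)}=\Phi(\eta_{[a]})$ — equivalently $\mu|_{I(a)}=\Phi(\eta|_{[a]})$ together with $\mu(I(a))=\eta[a]$ — is where separation enters. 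Since $\Phi([a])\subseteq f_{a}(X)\subseteq I(a)$ one has $[a]\subseteq\Phi^{-1}(I(a))$, so it suffices that $\Phi^{-1}(I(a))\setminus[a]$ be $\eta$-null. For distinct $a,b$ of equal length, $I(a)$ and $I(b)$ are closed intervals with disjoint interiors — this follows by induction on the longest common prefix, using that the separation condition makes $f_{i}((0,1))$ and $f_{j}((0,1))$ disjoint for $i\neq j$ — so $I(a)\cap I(b)$ is at most one point, and $\Phi^{-1}(I(a))\setminus[a]$ maps into the finite set $\bigcup_{|b|=|a|,\,b\neq a}(I(a)\cap I(b))$. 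This set is $\mu$-null because $\mu$ is non-atomic (quasi-product measures are non-atomic, as follows from the discussion preceding Theorem \ref{thm:convolutions-of-quasi-product-measures}, where $\mu$ is seen to be equivalent to an ergodic, fully supported shift-invariant measure).

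I do not expect a real obstacle here; the lemma is essentially bookkeeping around two facts — push-forward preserves $C$-equivalence, and under the separation condition the cylinder images $I(a)$ meet in at most one point — with the quasi-product property entering only to replace $\eta$ by the rescaling $\eta^{[a]}$, which is the object that descends correctly along $\beta_{a}$. The one point worth flagging is the degenerate case $\eta[a]=\mu(I(a))=0$, where $\mu_{I(a)}$ is undefined and there is nothing to prove; for every $a$ with $\eta[a]>0$ the displayed chain applies as written.
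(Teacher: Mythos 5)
Your proof is correct and is essentially the paper's argument in different packaging: the paper verifies $\mu_{I(a)}(I(ab))\sim_C (f_a\mu)(I(ab))$ directly on the generating family $\{I(ab)\}_{b\in\Lambda^*}$ using the quasi-product inequality $\mu(I(ab))\sim_C\mu(I(a))\mu(I(b))$, while you lift to the symbolic space via $\Phi\circ\beta_a=f_a\circ\Phi$ and push the equivalence $\eta^{[a]}\sim_C\eta$ forward — the same two ingredients (quasi-product property plus separation). Your explicit treatment of the possible endpoint overlaps of the intervals $I(a)$ (which the paper's "basis of closed sets" remark glosses over) is a welcome extra; the only caveat is the degenerate case where the quasi-product measure is supported on a single symbol and hence atomic, where the lemma is trivially true anyway.
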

 \begin{proof}
For any $b\in\Lambda^{*}$, \[
\mu_{I(a)}(I(ab))=\frac{\mu(I(ab))}{\mu(I(a))}\,\sim_{C}\,\mu(I(b))=\left(f_{a}\mu\right)(I(ab)).\]
 Since $\{I(ab)\}$ is a basis of closed sets of $X\cap I(a)$,
this yields the lemma.
\end{proof}

The following result goes back to Sullivan \cite{Sullivan88},
with an explicit proof given by Bedford and Fisher \cite{BedfordFisher97}.
It will be a key tool in the proof, in particular allowing us to describe the micromeasures of a Gibbs measure.

\begin{thm} \label{thm:sullivan} For every left-infinite sequence $x=(\ldots x_{-2}x_{-1})$,
the sequence of rescaled diffeomorphisms $ (f_{x_{-n}} \cdots f_{x_{-1}})^*$ converges in the $C^{1}$ topology to a diffeomorphism $F_{x}:[0,1]\rightarrow[0,1]$,
uniformly in $x$. In particular, the map $x\mapsto F_x$ is continuous. \end{thm}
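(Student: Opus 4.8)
The plan is to exploit the $C^{1+\e}$ hypothesis on the maps $f_i$ together with the contraction property to get a geometric (exponentially fast) estimate on how the rescaled compositions vary. First I would set up notation: for a finite word $a = x_{-n}\cdots x_{-1}$, write $g_a = f_{x_{-n}}\circ\cdots\circ f_{x_{-1}}$ and $g_a^* = T_{g_a}\,g_a$, so $g_a^*$ is an orientation-preserving diffeomorphism of $[0,1]$. The key object is the logarithmic derivative $\varphi_a = \log (g_a^*)' = \log g_a' - (\text{const})$, and since $g_a^*$ fixes $[0,1]$ one has $\int_0^1 (g_a^*)' = 1$, so $(g_a^*)'$ takes the value $1$ somewhere; thus controlling $g_a^*$ in $C^1$ reduces to controlling the oscillation of $\log (g_a^*)'$, equivalently of $\log g_a'$.

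The main step is a distortion estimate: using the chain rule,
\[
\log g_{x_{-n}\cdots x_{-1}}'(s) - \log g_{x_{-n}\cdots x_{-1}}'(t) = \sum_{j=1}^{n} \Big( \log f_{x_{-j}}'\big(g_{x_{-(j-1)}\cdots x_{-1}}(s)\big) - \log f_{x_{-j}}'\big(g_{x_{-(j-1)}\cdots x_{-1}}(t)\big)\Big).
\]
Here I would use that $\log f_i'$ is $\e$-H\"older (consequence of $f_i\in C^{1+\e}$, as $f_i'$ is bounded away from $0$), say with constant $H$, and that the intermediate maps contract distances: there is $0<\lambda<1$ with $|g_b(s)-g_b(t)|\le \lambda^{|b|}|s-t|$, because each $f_i$ has derivative bounded by some $\lambda<1$ on $[0,1]$. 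Hence the $j$-th summand is bounded by $H\lambda^{\e(j-1)}|s-t|^{\e}$, and the whole sum by $H(1-\lambda^{\e})^{-1}|s-t|^\e =: M|s-t|^\e$, uniformly in $n$ and in the word. This gives a uniform bound on $\|\log(g_a^*)'\|_{C^0}$ (via the normalization above) and a uniform $\e$-H\"older modulus, so the family $\{g_a^*\}$ is precompact in $C^1([0,1])$ by Arzel\`a--Ascoli (applied to the derivatives).

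To upgrade precompactness to convergence along the nested words $x_{-n}\cdots x_{-1}$, I would show the sequence is Cauchy in $C^1$. Writing $g_{x_{-n}\cdots x_{-1}} = g_{x_{-n}\cdots x_{-(k+1)}}\circ g_{x_{-k}\cdots x_{-1}}$ and passing to the starred (normalized) maps, one checks $g_{x_{-n}\cdots x_{-1}}^* = \big(g_{x_{-n}\cdots x_{-(k+1)}}\circ g_{x_{-k}\cdots x_{-1}}\big)^*$ depends on the new blocks only through a contraction: comparing $\log (g_{x_{-n}\cdots x_{-1}}^*)'$ and $\log (g_{x_{-k}\cdots x_{-1}}^*)'$, the difference is $\log g_{x_{-n}\cdots x_{-(k+1)}}' \circ g_{x_{-k}\cdots x_{-1}}$ minus a constant, evaluated on the interval $g_{x_{-k}\cdots x_{-1}}([0,1])$ which has length $\le\lambda^k$; by the distortion estimate this oscillation is $\le M\lambda^{\e k}$, hence tends to $0$ as $k\to\infty$ uniformly in $n\ge k$ and in $x$. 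So $(g_{x_{-n}\cdots x_{-1}}^*)_n$ is uniformly Cauchy in $C^1$, converging to a limit $F_x$; the estimates being uniform in $x$ gives uniform convergence, and since each $g_a^*$ depends continuously (indeed only on finitely many symbols) the limit map $x\mapsto F_x$ is continuous. I expect the main obstacle to be bookkeeping the normalizing affine maps $T_{g_a}$ cleanly — one must be careful that the derivative bound, not just the oscillation bound, transfers through the normalization — but this is handled by the observation that $(g_a^*)'$ integrates to $1$ on $[0,1]$.
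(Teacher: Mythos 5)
Your proposal is correct: the uniform bounded-distortion estimate via the H\"older continuity of $\log f_i'$ and the exponential contraction of the intermediate intervals, followed by the Cauchy argument in which the new outer blocks contribute only an oscillation of order $\lambda^{\e k}$ (pinned down to a sup-norm bound by the normalization $\int_0^1 (g_a^*)'=1$), is precisely the standard proof of this result. The paper itself does not prove the theorem --- it cites Sullivan and Bedford--Fisher and offers only the heuristic that the nonlinear distortion of $f_{x_{-i}}$ acts on an exponentially small interval --- and your argument is exactly the rigorous implementation of that heuristic.
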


In this case, the sets $F_{x}(X)$ are known as \emph{limit
geometries} of $X$.
We  also refer to the maps $F_{x}$
themselves as \emph{limit diffeomorphisms}.

Heuristically, the reason
for the validity of Sullivan's Theorem is the following: when composing
$f_{x_{-n}}\circ\cdots\circ f_{x_{-1}}$, the strongest nonlinear distortion
comes from the first map applied, $f_{x_{-1}}$. Since each map is a contraction,
$f_{x_{-i}}$ is applied to an interval of length exponentially small
in $i$. As $i$ gets large, the nonlinear effect of $f_{x_{-i}}$ becomes
negligible (the details of this argument require $C^{1+\e}$-regularity of the contractions).

Note that when the contractions are similarities the maps $F_{a}$ are
all the identity, and Sullivan's Theorem becomes trivial.

If $x\in\Lambda^{-\NN}$, one may consider
the conjugated IFS \[
\mathcal{I}_{x}=\{F_{x}f_{i}F_{x}^{-1}:i\in\Lambda\},\]
whose attractor is the limit geometry $F_x(X)$. More generally, if $\Phi_x$ is the coding map for $\mathcal{I}_{x}$, then $\Phi_x=F_x\Phi$.

\begin{cor} \label{cor:consequences-Sullivan}
\begin{enumerate}
\item[(i)] \label{cor:conjugate-to-get-linear-map}
Choose $a\in\Lambda^*$, and write $\overline{a}=(\ldots a a)\in\Lambda^{-\NN}$. Then  $F_{\overline{a}}f_{a}F_{\overline{a}}^{-1}$
is an affine map, and $\lambda(F_{\overline{a}}f_{a}F_{\overline{a}}^{-1})=\lambda(f_a)$. (Recall \eqref{eq:def-lambda} for the definition of $\lambda(\cdot)$.)
\item[(ii)] Let $g\in C^1([0,1],\RR)$, and consider the conjugated IFS $\mathcal{I}' = \{ g f_i g^{-1} :i\in\Lambda\}$. Then for any $x\in\Lambda^{-\NN}$, $\mathcal{I}_x = \mathcal{I}'_x$. In particular, this holds when $g$ is itself a limit diffeomorphism $F_y$.
\end{enumerate}
\end{cor}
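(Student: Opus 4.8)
The plan is to deduce both statements from Sullivan's theorem (Theorem~\ref{thm:sullivan}), together with one elementary principle: a $C^1$ map renormalized on a shrinking interval converges in the $C^1$ topology to an affine map, so a \emph{single} nonlinear conjugacy disappears in the scaling limit. The technical core is a composition formula for limit diffeomorphisms. For $c\in\Lambda^{-\NN}$ and a word $b\in\Lambda^*$, write $cb\in\Lambda^{-\NN}$ for $c$ with $b$ appended on the right; then I claim
\[
F_{cb} \;=\; T_{F_c(I(b))}\circ F_c\circ f_b .
\]
To see this, set $g_m = f_{c_{-m}}\circ\cdots\circ f_{c_{-1}}$, so that $f_{(cb)_{-(|b|+m)}}\circ\cdots\circ f_{(cb)_{-1}} = g_m\circ f_b$ and hence $(g_m\circ f_b)^*\to F_{cb}$ by Theorem~\ref{thm:sullivan}. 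Expanding $(g_m\circ f_b)^* = T_{g_m(I(b))}\circ g_m\circ f_b$ and substituting $g_m = T_{g_m([0,1])}^{-1}\circ g_m^*$, the affine prefactor collapses to $T_{g_m^*(I(b))}$, while $g_m^*\to F_c$ in $C^1$ (uniformly in $c$); letting $m\to\infty$ gives the formula. This is pure affine bookkeeping around Sullivan's $C^1$ convergence.

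For part~(i) I apply this with $c=\overline a$ and $b=a$. Since $\overline a a=\overline a$, the formula reads $F_{\overline a} = T_{F_{\overline a}(I(a))}\circ F_{\overline a}\circ f_a$, which rearranges to
\[
F_{\overline a}\circ f_a\circ F_{\overline a}^{-1} \;=\; T_{F_{\overline a}(I(a))}^{-1},
\]
an affine, orientation-preserving contraction of $[0,1]$. Because it is conjugate (via the diffeomorphism $F_{\overline a}$) to $f_a$, the chain rule gives that its multiplier at its fixed point equals that of $f_a$, namely $f_a'(x_a)=e^{-\lambda(f_a)}$ with $x_a$ the fixed point of $f_a$; being affine, this is its derivative everywhere. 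That is the content of~(i).

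For part~(ii), replacing $g$ by an affine conjugate (which alters $\mathcal{I}'$ only by an affine conjugacy, affecting neither side of the claimed identity) we may assume $g$ is a $C^1$ diffeomorphism of $[0,1]$. Writing $f_i'=gf_ig^{-1}$ and $g_m = f_{x_{-m}}\circ\cdots\circ f_{x_{-1}}$, we have $f'_{x_{-m}}\circ\cdots\circ f'_{x_{-1}} = g\,g_m\,g^{-1}$, whose image is $g(J_m)$ with $J_m=g_m([0,1])$; note $|J_m|\to 0$ because each $f_i$ is a uniform contraction. A short computation gives
\[
(g\,g_m\,g^{-1})^* \;=\; \bigl(T_{g(J_m)}\circ g\circ T_{J_m}^{-1}\bigr)\circ g_m^*\circ g^{-1},
\]
where the parenthesized map is $g$ renormalized from $J_m$ to $g(J_m)$; by the elementary principle above it tends to the identity in $C^1$, uniformly. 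Together with $g_m^*\to F_x$ in $C^1$, this shows the renormalized compositions for $\mathcal{I}'$ converge, so the limit diffeomorphism of $\mathcal{I}'$ at $x$ exists and equals $F_x' = F_x\circ g^{-1}$. Hence
\[
\mathcal{I}'_x = \{F_x' f_i' (F_x')^{-1}:i\in\Lambda\} = \{(F_xg^{-1})(gf_ig^{-1})(gF_x^{-1}):i\in\Lambda\} = \{F_xf_iF_x^{-1}:i\in\Lambda\} = \mathcal{I}_x,
\]
and the last assertion of~(ii) is the case $g=F_y$, which is a $C^1$ diffeomorphism of $[0,1]$ by Theorem~\ref{thm:sullivan}.

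The one point needing care is that the conjugated system $\mathcal{I}'$ in~(ii) is only $C^1$, not $C^{1+\e}$, so Sullivan's theorem does not apply to it directly; this is why I establish convergence of its renormalized compositions by hand. It works precisely because the nonlinearity $g$ is applied only once, not iterated, so a single $C^1$ renormalization — which is all the available smoothness affords — suffices; the rest is affine bookkeeping.
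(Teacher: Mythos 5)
Your proof is correct, and it is the same basic strategy as the paper's (Sullivan's theorem plus affine bookkeeping), but you organize it around different intermediate statements and supply substantially more detail. For (i), the paper telescopes directly: writing $T_n=T_{f_{a^n}}$ it shows $F_{\overline a}f_aF_{\overline a}^{-1}=\lim_n T_nT_{n+1}^{-1}$, hence affine as a limit of affine maps; you instead derive the general composition identity $F_{cb}=T_{F_c(I(b))}\circ F_c\circ f_b$ and specialize to $c=\overline a$, $b=a$, which has the advantage of identifying the affine map explicitly as $T_{F_{\overline a}(I(a))}^{-1}$ (and your statement of the multiplier, $e^{-\lambda(f_a)}$, is the correct reading of the paper's loosely phrased "derivative $\lambda(f_a)$"). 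For (ii), the paper offers only the one-line assertion that limit diffeomorphisms are "an infinitesimal property of the IFS" and hence invariant under smooth coordinate changes; your renormalization computation $(gg_mg^{-1})^*=\bigl(T_{g(J_m)}\circ g\circ T_{J_m}^{-1}\bigr)\circ g_m^*\circ g^{-1}$, with the middle factor tending to the identity in $C^1$, is an actual proof of that assertion, and you are right to flag that the conjugated system is only $C^1$ so Sullivan's theorem cannot be invoked for $\mathcal{I}'$ directly --- your by-hand argument is exactly what closes that gap. The only points worth making explicit are that $g$ must be assumed injective with nonvanishing derivative for $\mathcal{I}'$ to be well defined (your affine reduction then goes through), and that the convergence $T_{g_m^*(I(b))}\to T_{F_c(I(b))}$ uses that $F_c(I(b))$ has positive length, which holds since $F_c$ is a diffeomorphism; neither affects the validity of the argument.
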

\begin{proof} (i) Write $T_{n}=T_{f_{a^n}}$, where $a^n$ is the $n$-fold concatenation of $a$.
We have \begin{align*}
F_{\overline{a}}f_{a}F_{\overline{a}}^{-1} & =\lim_{n}T_{n}f_{a^{n}}f_{a}F_{\overline{a}}^{-1}\\
 & =\lim_{n}\left(T_{n}T_{n+1}^{-1}\right)T_{n+1}f_{a^{n+1}}F_{\overline{a}}^{-1}\\
 & =\lim_{n}T_{n}T_{n+1}^{-1},\end{align*}
 which is affine as a limit of affine maps. The last assertion follows since $\lambda(\cdot)$ is conjugacy-invariant.

(ii) Limit diffeomorphisms are clearly invariant under affine changes of coordinates and, being an infinitesimal property of the IFS, therefore also under smooth changes of coordinates.
 \end{proof}

Next, we state a well-known consequence of the principle of bounded
distortion, which can be easily deduced from Theorem \ref{thm:sullivan}. For the rest of the section, the constants implicit in the $O(\cdot)$ notation depend only on the IFSs involved.
\begin{prop} \label{prop:bounded-distortion} Let $\mathcal{I}=\{f_{i}:i\in\Lambda\}$
be a regular IFS. Then for any finite word $a\in\Lambda^{*}$
and any $i,j\in\Lambda$, \[
\dist(I(a i),I(a j))=\Theta(|I(a)|),\]
 and \[
|I(a i)|=\Theta(|I(a)|).
\]
\end{prop}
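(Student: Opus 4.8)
The plan is to deduce both claims from the $C^{1+\e}$-regularity of the maps $f_i$ via the classical bounded-distortion estimate for the composition $f_a$. First I would prove the key lemma: there is a constant $K=K(\mathcal I)>1$ such that for every $a\in\Lambda^*$ and all $x,y\in[0,1]$,
\[
K^{-1}\le \frac{Df_a(x)}{Df_a(y)}\le K.
\]
To see this, write $f_a=f_{a_1}\circ\cdots\circ f_{a_n}$ and take logarithms:
\[
\log Df_a(x)-\log Df_a(y)=\sum_{k=1}^{n}\bigl(\log Df_{a_k}(x_k)-\log Df_{a_k}(y_k)\bigr),
\]
where $x_k=f_{a_{k+1}}\circ\cdots\circ f_{a_n}(x)$ and similarly $y_k$. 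Since each $f_i$ is a $C^{1+\e}$ contraction with $Df_i$ bounded away from $0$ and $1$, the function $\log Df_i$ is $\e$-H\"older, say with constant $L$, and $|x_k-y_k|\le c^{\,n-k}$ for some $c<1$ (the maximal contraction ratio), because $f_{a_{k+1}}\circ\cdots\circ f_{a_n}$ is a composition of $n-k$ contractions. Hence the sum is bounded by $L\sum_{k=1}^{n}c^{\e(n-k)}\le L/(1-c^{\e})=:\log K$, uniformly in $a$, $x$, $y$.

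From this, the second claim is immediate by the mean value theorem: $|I(a)|=|f_a([0,1])|=Df_a(\xi)$ for some $\xi$, and likewise $|I(ai)|=\int_0^1 Df_{ai}=\int_0^1 Df_a(f_i(t))\,Df_i(t)\,dt$, so $|I(ai)|\,\sim_K\, Df_a(\eta)\cdot|I(i)|\,\sim_{K}\,|I(a)|\cdot|f_i([0,1])|$, and since $|f_i([0,1])|$ ranges over a finite set of positive numbers we get $|I(ai)|=\Theta(|I(a)|)$. For the first claim, $I(ai)$ and $I(aj)$ are both contained in $I(a)=f_a([0,1])$, and $\dist(I(ai),I(aj))=|f_a(I'_{ij})|$ where $I'_{ij}$ is a subinterval of $[0,1]$ separating $f_i([0,1])$ from $f_j([0,1])$; by the separation hypothesis there is a fixed $\delta>0$ (depending only on $\mathcal I$) with $|I'_{ij}|\ge\delta$ for all $i\ne j$, and applying bounded distortion to $f_a$ on this interval gives $\dist(I(ai),I(aj))\ge K^{-1}\delta\,|I(a)|$; the upper bound $\le |I(a)|$ is trivial. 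This yields $\dist(I(ai),I(aj))=\Theta(|I(a)|)$.

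The only mildly delicate point — and the step I would write most carefully — is the uniform H\"older bound on $\log Df_i$: one needs $Df_i$ bounded below away from $0$ on a neighbourhood of $[0,1]$ so that $\log$ is Lipschitz on the relevant range, which is exactly guaranteed by conditions (1) and (2) in the definition of a regular IFS (each $f_i$ is $C^{1+\e}$ on a neighbourhood of $[0,1]$ and $0<Df_i<1$ there, so $Df_i$ attains a positive minimum). Everything else is a routine geometric-series estimate. I do not expect any real obstacle here; this is a standard lemma recorded for later use.
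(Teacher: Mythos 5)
The paper offers no proof of this proposition: it is stated as ``a version of the well-known principle of bounded distortion'' and used as a black box. Your argument is the standard proof of exactly that principle --- chain rule, $\e$-H\"older continuity of $\log Df_i$ (available since $Df_i$ is continuous and positive on a compact neighbourhood of $[0,1]$), exponential decay of $|x_k-y_k|$, geometric series --- and it is correct; the deductions of the two displayed estimates from the distortion bound are also fine.

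One remark, which concerns the paper's statement rather than your proof. Your lower bound on $\dist(I(ai),I(aj))$ uses a uniform $\delta>0$ with $\dist(f_i([0,1]),f_j([0,1]))\ge\delta$ for $i\neq j$. The separation condition as literally written only asserts that the \emph{open} images $f_i((0,1))$ are pairwise disjoint, which would permit two of the closed intervals $f_i([0,1])$ to share an endpoint, in which case $\dist(I(ai),I(aj))=0$ and the first estimate fails. So the condition must be read (as the authors clearly intend, and as is standard for regular Cantor sets) as disjointness of the closed intervals, whence $\delta>0$ exists by finiteness of $\Lambda$. With that reading, everything in your write-up goes through.
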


The next step is to relate $\mu$ to its micromeasures. Recall that the family of micromeasures of a measure $\eta$ on $\RR^k$ is denoted $\left\langle \eta\right\rangle$. The following theorem is analogous to Proposition \ref{prop:CP-chain-for-self-similar-measure}:

\begin{prop} \label{prop:put-derived-measure-into-limit-geometry}
For any $\nu\in \left\langle \mu\right\rangle$ which is not supported on $\{ 0,1\}$, there is a limit diffeomorphism $F$ and an interval $J$ such that
\[
  F\mu \sim_{C} \nu^J
\]
\end{prop}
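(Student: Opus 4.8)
The plan is to unwind the definition of a micromeasure and use bounded distortion together with Sullivan's theorem to identify the rescaling maps that produce $\nu$ with (compositions of) the $f_a$, up to controlled affine corrections, and then recognize the limit of the nonlinear parts as a limit diffeomorphism. Concretely: write $\nu$ as a weak-$*$ limit $\nu = \lim \mu^{Q_n}$, where $Q_n$ are cubes (here intervals) with $|Q_n| \to 0$. The first step is to replace the abstract intervals $Q_n$ by intervals of the form $I(a^{(n)})$, $a^{(n)} \in \Lambda^*$. Since $\nu$ is not supported on $\{0,1\}$, it assigns positive mass to a compact subinterval of $(0,1)$ well inside the attractor; using Proposition \ref{prop:bounded-distortion} (bounded distortion: $|I(ai)| = \Theta(|I(a)|)$ and $\dist(I(ai),I(aj)) = \Theta(|I(a)|)$), every interval $Q_n$ meeting the attractor non-trivially at small scales contains, and is contained in, cylinder intervals $I(a)$ of comparable size, with a bounded change of measure by the quasi-product property (Lemma \ref{lem:restrictions-of-quasi-product-measures}). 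So after passing to a subsequence we may assume $\nu = \lim \mu^{I(a^{(n)})}$, losing only a $\sim_C$ factor (with a possibly larger but still uniform constant, since bounded distortion and quasi-product both give uniform bounds).

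Next I would analyze the rescaling map $T_{I(a^{(n)})}$. Write $a^{(n)} = a^{(n)}_1 \ldots a^{(n)}_{k_n}$. By Lemma \ref{lem:restrictions-of-quasi-product-measures}, $\mu^{I(a^{(n)})} = T_{I(a^{(n)})}(\mu_{I(a^{(n)})}) \sim_C T_{f_{a^{(n)}}}(f_{a^{(n)}} \mu) = f_{a^{(n)}}^* \mu$. So the key is to understand $f_{a^{(n)}}^* = T_{f_{a^{(n)}}} f_{a^{(n)}}$ as $n \to \infty$. Now decompose $f_{a^{(n)}} = f_{b^{(n)}} \circ f_{c^{(n)}}$, where $c^{(n)}$ is a long \emph{terminal} segment of $a^{(n)}$ and $b^{(n)}$ the complementary initial segment; the point (exactly as in the heuristic for Sullivan's theorem) is that the nonlinear distortion of $f_{a^{(n)}}^*$ is controlled entirely by the first few symbols, i.e. by $b^{(n)}$, while $f_{c^{(n)}}$, being applied on an exponentially small scale relative to its own domain (equivalently: $(f_{c^{(n)}})^*$ is exponentially close to a map depending only on the symbols of $c^{(n)}$ read from the left), contributes a piece that is $C^1$-close to affine. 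After passing to a further subsequence we may assume the left-infinite sequences $\overline{b^{(n)}} := (\ldots\, b^{(n)}_2\, b^{(n)}_1)$ — read in the order appropriate for Sullivan's theorem — converge, say to $x \in \Lambda^{-\NN}$; by Theorem \ref{thm:sullivan} the rescaled diffeomorphisms $(f_{b^{(n)}})^*$ then converge in $C^1$ to the limit diffeomorphism $F_x$. Chasing the affine corrections through (everything affine gets absorbed into $T_{I(a^{(n)})}$, which we are free to adjust), one obtains that $f_{a^{(n)}}^* \mu$ is, up to a uniform affine change of coordinates and a $C^1$-small error, equal to $F_x(g_n\mu)$ for some affine $g_n$; taking weak limits and using continuity of pushforward under $C^1$-convergence of maps yields $\nu \sim_C F_x \mu^{J}$ for a suitable interval $J = g_\infty([0,1])$ (into which the affine parts have been collected). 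Setting $F = F_x^{-1}$ and rewriting gives $F\nu \sim_C \mu^J$; but wait — the statement wants $F\mu \sim_C \mu^J$, so I should instead note that $\mu$ itself occurs (up to similarity/affine change, which is the content of also conjugating by $F_x$ via Corollary \ref{cor:consequences-Sullivan}(ii): the limit geometry is insensitive to smooth coordinate changes) as a restriction of $\nu$, and re-derive the statement in the direction asserted; the bookkeeping of which measure sits on which side is routine once the limiting identification is in hand.

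The main obstacle — and the step that needs genuine care rather than bookkeeping — is the splitting $f_{a^{(n)}} = f_{b^{(n)}} \circ f_{c^{(n)}}$ and the quantitative claim that the renormalized tail $(f_{c^{(n)}})^*$ is $C^1$-close to a map depending only on finitely many leading symbols of $c^{(n)}$, \emph{uniformly} in the (growing) length $k_n$. This is precisely the uniform convergence in Sullivan's theorem (Theorem \ref{thm:sullivan}), but one must apply it at both ends: to $(f_{b^{(n)}})^*$ to extract $F_x$, and to control $(f_{c^{(n)}})^*$ so that it does not introduce uncontrolled nonlinearity into the composition. The $C^{1+\e}$ hypothesis on the IFS enters exactly here (via bounded distortion, Proposition \ref{prop:bounded-distortion}, and the summability of $\sum |I(\cdot)|^\e$-type errors hidden inside Theorem \ref{thm:sullivan}). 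A secondary technical point is ensuring, when we replace $Q_n$ by cylinder intervals, that $\nu$'s hypothesis of not being supported on $\{0,1\}$ really does force the $Q_n$ to sit at a definite "depth" inside the attractor so that the comparable cylinder $I(a^{(n)})$ exists with length $\Theta(|Q_n|)$; this is where one invokes separation of the IFS (the $f_i((0,1))$ are disjoint compact-inside $(0,1)$) so that points near $0$ or $1$ are, after renormalization, carried to measures supported near $\{0,1\}$, contradicting the hypothesis on $\nu$ unless the depth is bounded below.
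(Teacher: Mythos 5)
Your overall architecture does match the paper's: pass from the defining intervals $I_n$ to cylinder intervals $I(a^{(n)})$ using bounded distortion and the hypothesis that $\nu$ is not supported on $\{0,1\}$, use the quasi-product property (Lemma \ref{lem:restrictions-of-quasi-product-measures}) to get $\mu^{I(a^{(n)})}\sim_C f_{a^{(n)}}^*\mu$, and pass to a subsequential limit via Theorem \ref{thm:sullivan}. But two of your steps, as written, are wrong. The more serious one is your treatment of $\lim_n f_{a^{(n)}}^*$, which reverses the roles of the two ends of the word. With the convention $f_a=f_{a_1}\circ\cdots\circ f_{a_k}$, the first map applied is $f_{a_k}$: it acts on all of $[0,1]$ and carries the bulk of the nonlinear distortion, while the initial maps $f_{a_1},f_{a_2},\ldots$ act on exponentially small intervals and are nearly affine there. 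This is why $x\mapsto F_x$ is continuous in the product topology on $\Lambda^{-\NN}$: $F_x$ is determined, up to exponentially small errors, by the \emph{terminal} symbols of the truncations. You claim the opposite — that the limit diffeomorphism is extracted from the initial segments $b^{(n)}$ while $(f_{c^{(n)}})^*$, for a long terminal segment $c^{(n)}$, is $C^1$-close to affine. It is not: $(f_{c^{(n)}})^*$ converges (subsequentially) to a generally nonlinear limit diffeomorphism, and it is $f_{b^{(n)}}$ restricted to the tiny interval $I(c^{(n)})$ that is nearly affine after renormalization. As written, your extraction of $F_x$ would fail. The decomposition is also unnecessary: since the convergence in Theorem \ref{thm:sullivan} is uniform and $\Lambda^{-\NN}$ is compact, one gets directly that $f_{a^{(n)}}^*$ converges in $C^1$ along a subsequence to a limit diffeomorphism $F$, which is all the paper uses.

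Second, replacing $I_n$ by $I(a^{(n)})$ does \emph{not} change the limit "only by a $\sim_C$ factor". The two renormalized measures are related by restriction to a subinterval $J_n$ of definite size (recording the position of $I(a^{(n)})$ inside $I_n$) followed by rescaling: $\mu^{I(a^{(n)})}=T_{J_n}\bigl((\mu^{I_n})_{J_n}\bigr)$. Passing to a subsequence with $J_n\to J$, the right-hand side converges to $\nu^J$, and this is exactly where the interval $J$ of the conclusion comes from; it is not a $C$-equivalence and cannot be discarded. Your endgame — obtaining the relation in the wrong direction and then proposing to fix it by noting that "$\mu$ itself occurs as a restriction of $\nu$" — is not correct bookkeeping. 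The limit identity is $F\mu=\lim_n f_{a^{(n)}}^*\mu\sim_C\lim_n T_{J_n}\bigl((\mu^{I_n})_{J_n}\bigr)=\nu^J$, which is already the asserted conclusion (the $\mu^J$ in the statement is to be read as $\nu^J$); no inversion of $F$ or reversal of the roles of $\mu$ and $\nu$ is needed.
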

\begin{proof} By definition, there are intervals $I_{n}\subseteq[0,1]$ such that \[
\nu=\lim_{n\rightarrow\infty}\mu^{I_{n}},\]
 where $\mu^{I}=T_{I}(\mu_{I})$ is the rescaling of $\mu_{I}$
back to the unit interval. For each $n$ let $a^{(n)}$ be a minimal word such that $I(a^{(n)})\subseteq I_n$ (there could be several such words; pick one of them). Since $\nu$ is not supported on $\{0,1\}$, this is well-defined for large $n$, and moreover $|I_n|= O(|I(a^{(n)})|)$ by bounded distortion (i.e. Proposition \ref{prop:bounded-distortion}).

By Lemma \ref{lem:restrictions-of-quasi-product-measures}, writing $f_n = f_{a^{(n)}}$,
\[
 \mu^{I(a^{(n)})} \,\sim_{C}\, f_{n}^*\mu.
\]
On the other hand, for each $n$ there is an interval $J_n$ (corresponding to the relative position of $I(a^{(n)})$ inside $I_n$) such that $|J_n|=\Theta(1)$, and
\[
\mu^{I(a^{(n)})} = T_{J_n} ((\mu^{I_n})_{J_n}.)
\]
By passing to a subsequence we can assume that $T_{J_n}$ converges
to an affine map $T_J$, and $f_n^*$ to a limit diffeomorphism $F$. Taking weak limits yields the
proposition. \end{proof}

\subsection{Proof of Theorem \ref{thm:convolutions-of-quasi-product-measures}}

Throughout this section, $\mu_1,\mu_2$ are quasi-product measures associated to regular IFSs $\mathcal{I}_i= \{ f_j^{(i)} : j\in\Lambda_i\} $, $i=1,2$. We write $\mu = \mu_1\times \mu_2$ and
\[
\gamma = \min(1,\dim\mu).
\]
We say that $F:[0,1]^2\rightarrow [0,1]^2$ is a limit diffeomorphism if $F=F_1\times F_2$ where $F_i$ is a limit diffeomorphism for $\mathcal{I}_i$.

We next state the main three steps in the proof of Theorem \ref{thm:convolutions-of-quasi-product-measures}. The first one is, as usual, a topological version of the projection theorem, which does not require any minimality assumptions; compare with Proposition \ref{prop:semicontinuity-projections-of-self-similar-measures}.
Given a measure $\nu$ on $\RR^d$ and $\alpha\ge 0$, let
\begin{equation} \label{eq:def-good-C1-maps}
\mathcal{U}_\alpha(\nu) = \text{interior}\{ g\in C^1(\supp\nu,\RR^k) : \dim_*g\nu > \alpha \}.
\end{equation}
Note that these sets are by definition open, though a priori may be empty.

\begin{prop} \label{prop:semicontinuity-convolutions-of-quasi-product-measures}
For every $\e>0$, there exists a limit diffeomorphism $F$ such that
\[
\mathcal{U}_{\gamma-\e}(F\mu) \text{ contains a dense (and automatically open) subset of } \Pi_{2,1}.
\]
\end{prop}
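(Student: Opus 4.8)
The plan is to combine the existence of a CP-chain closely related to $\mu$, provided by the micromeasure analysis of the previous subsection, with the semicontinuity machinery for CP-chains (Corollary \ref{cor:open-dense-set} / Theorem \ref{thm:semicontinuity-1}), and then transfer the resulting open dense set of good projections back to $\mu$ via a limit diffeomorphism. Concretely, I would first invoke Theorem \ref{thm:furstenberg} applied to $\mu=\mu_1\times\mu_2$ (which is exact dimensional of dimension $\dim\mu$, hence $\overline{\dim}_e(\mu)=\dim\mu$) to obtain an ergodic CP-chain $P$ of dimension $\dim\mu$ supported on micromeasures $\left\langle\mu\right\rangle$. The point of working with $\mu=\mu_1\times\mu_2$ rather than a single Gibbs measure is that its micromeasures, after applying a limit diffeomorphism, are genuinely comparable to $\mu$ itself (up to $C$-equivalence): this is exactly what Proposition \ref{prop:put-derived-measure-into-limit-geometry} gives coordinatewise, so a micromeasure of $\mu$ is, up to the product limit diffeomorphism $F=F_1\times F_2$ and a $C$-equivalence, of the form $\mu^J$ for a product of intervals $J$.

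Next I would apply Corollary \ref{cor:open-dense-set} (or rather Corollary \ref{cor:open-good-set-in-C1}, since we will ultimately want to perturb within $C^1$): there is a set $M$ of measures with $P(M)=1$ and, for each $\e>0$, a dense open set $\mathcal{U}'_\e\subseteq\Pi_{2,1}$ with $\dim_*\pi\nu>\min(1,\dim\mu)-\e=\gamma-\e$ for all $\pi\in\mathcal{U}'_\e$ and all $\nu\in M$. Pick a $P$-typical $\nu\in M$; by Proposition \ref{prop:put-derived-measure-into-limit-geometry} there is a product limit diffeomorphism $F$ and a box $J$ with $F\mu\sim_C \mu^J=T_J(\mu_J)$, equivalently $T_J^{-1}(F\mu)\sim_C \mu_J$. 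Since $\dim_*$ is invariant under $C$-equivalence (Lemma \ref{lem:properties-of-lower-dim}(3)) and under the affine map $T_J$, and since restricting to a positive-measure set does not decrease $\dim_*$ of the restriction relative to that of projections (more precisely $\dim_*\pi\mu_J \le \dim_*\pi\mu$ fails in general, so one argues the other direction: we have $\dim_*\pi\nu>\gamma-\e$, hence $\dim_*\pi(F\mu)>\gamma-\e$ via the $C$-equivalence, hence $\dim_*(\pi\circ F)\mu>\gamma-\e$), we conclude that $\pi\circ F\in\mathcal{U}_{\gamma-\e}(\mu)$ for every $\pi\in\mathcal{U}'_\e$. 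Equivalently, writing things the other way as in the statement, $\pi\in\mathcal{U}_{\gamma-\e}(F^{-1}\mu)$... — but it is cleaner to phrase the conclusion as: $\{g\in C^1:\dim_* g\mu>\gamma-\e\}$ contains $\{\pi\circ F:\pi\in\mathcal{U}'_\e\}$, so that $\mathcal{U}_{\gamma-\e}(F\mu)=\text{interior}\{g:\dim_* g(F\mu)>\gamma-\e\}$ contains $\{\pi:\pi\in\mathcal{U}'_\e\}$, which is dense and open in $\Pi_{2,1}$. (Here one uses Lemma \ref{lem:perturbed_entropy}/the analysis behind Corollary \ref{cor:open-good-set-in-C1} to know the good set is open in the $C^1$ topology, so its trace on $\Pi_{2,1}$ is open there.)

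The one technical wrinkle I anticipate, and the main obstacle, is the bookkeeping around micromeasures supported on $\{0,1\}^2$ (or on a face of the square): Proposition \ref{prop:put-derived-measure-into-limit-geometry} is stated with the hypothesis that the micromeasure is not supported on $\{0,1\}$, and for the product there are also "mixed" degenerate micromeasures (one coordinate degenerate, one not). I would handle this by the standard device of noting that such degenerate micromeasures form a closed invariant subset; either one shows a $P$-typical $\nu$ avoids it (because $\mu_1,\mu_2$ are globally supported quasi-product measures, so generic rescalings do not concentrate on $\{0,1\}$ — this needs a short argument using the $C$-quasi-product property and Proposition \ref{prop:bounded-distortion}), or, if $P$ were entirely supported on degenerate micromeasures, one passes to a lower-dimensional situation and argues by induction on $d$ (here $d=2$, so the base case $d=1$ is trivial since then $\gamma=\min(1,\dim\mu_1)$ and the only excluded projection is already the coordinate one). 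Once that case analysis is dispatched, the rest is the routine transfer argument above; everything else is supplied by the cited results.
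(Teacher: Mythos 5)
Your proposal is correct and follows essentially the same route as the paper's proof: Theorem \ref{thm:furstenberg} together with Corollary \ref{cor:open-good-set-in-C1} yields a dense open set of good projections for a typical micromeasure $\nu=\nu_1\times\nu_2$, and Proposition \ref{prop:put-derived-measure-into-limit-geometry}, applied coordinatewise, transfers this to $F\mu$ for a limit diffeomorphism $F$. The only point to tidy is that this transfer goes through an affine diagonal map (the content of the paper's Lemma \ref{lem:transformation-under-diagonal-map}), so the dense subset of $\Pi_{2,1}$ you obtain for $F\mu$ is a rescaled copy $\{\pi_{s/a}:\pi_s\in\mathcal{U}'_\e\}$ rather than $\mathcal{U}'_\e$ itself --- which is of course still dense, so the conclusion stands.
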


Identify $\Pi_{2,1}\backslash\{\pi_x,\pi_y\}$ with $\mathbb{R}\backslash\{0\}$ via $s\rightarrow \pi_s(x,y)=x+sy$ (the latter is not technically an element of $\Pi_{2,1}$ but there is an obvious identification). This is similar but not the same as the identification in Section \ref{sec:Measures-invariant-under-xm}. Given $K\gg 1$, write
\[
I_K = [-K, -K^{-1} ] \cup [K^{-1},K].
\]
Using the irreducibility assumption to move the open and dense set around as usual (but with some extra technical complications), one obtains:
\begin{prop} \label{prop:convolutions-of-quasi-product-on-some-limit}
If additionally the minimality condition in Theorem \ref{thm:convolutions-of-Gibbs-measures} holds, then for every $\e>0$ and every $K>1$ there is a limit diffeomorphism $F$ such that
\[
\{ \pi_s : s\in I_K \} \subseteq \mathcal{U}_{\gamma-\e}(F\mu).
\]
\end{prop}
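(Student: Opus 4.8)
The plan is to bootstrap from Proposition \ref{prop:semicontinuity-convolutions-of-quasi-product-measures}, which supplies a limit diffeomorphism $F_0$ and a dense open set $\mathcal{W}\subseteq\Pi_{2,1}$ with $\mathcal{W}\subseteq\mathcal{U}_{\gamma-\e}(F_0\mu)$, and then to transport the set of good directions around $\Pi_{2,1}$ using the minimality assumption. The key point is that because $F_0\mu$ is, up to the $C$-equivalence of Lemma \ref{lem:restrictions-of-quasi-product-measures}, built from the quasi-product measures $\mu_i$ and these are invariant (up to $\sim_C$) under passing to cylinder restrictions, we may pre-compose with the conjugated contractions $g f^{(i)}_a g^{-1}$ for limit diffeomorphisms $g$. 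First I would record the transformation rule: if $g\in \mathcal{U}_\alpha(F\mu)$ for a limit diffeomorphism $F=F_1\times F_2$, and we replace a coordinate $x_i$ by $f^{(i)}_a(x_i)$ (equivalently restrict $\mu_i$ to $I(a)$ and rescale, using quasi-product invariance), then $g$ composed with the appropriate rescaling of $f^{(i)}_a$ is good for another limit diffeomorphism, and, crucially, by Corollary \ref{cor:consequences-Sullivan}(i) the conjugate $f_{\overline a} f_a f_{\overline a}^{-1}$ is the \emph{affine} map with derivative $\lambda(f_a)$. This is exactly the nonlinear analogue of the argument in Proposition \ref{prop:almost-linear-projection-good}, the role of the orthogonal part $O_a$ there being played here by scaling the two coordinates by $e^{-\lambda(f^{(1)}_a)}$ and $e^{-\lambda(f^{(2)}_b)}$.

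Next I would make this precise. Identifying $\pi_s(x,y)=x+sy$, pre-composing $\pi_s$ with the diagonal affine map $(x,y)\mapsto(e^{-\lambda_1}x, e^{-\lambda_2}y)$ sends $\pi_s$ to (an affine image of) $\pi_{s\cdot e^{\lambda_1-\lambda_2}}$, i.e. it multiplies the slope parameter $s$ by $e^{\lambda_1-\lambda_2}$, where $\lambda_i=\lambda(f^{(i)}_{a_i})$. Thus the set of good slopes, for a suitable limit diffeomorphism chosen to straighten out the nonlinear conjugation error, is invariant under multiplication by the semigroup generated by all the numbers $\exp(\lambda(f^{(1)}_a)-\lambda(f^{(2)}_b))$, $a\in\Lambda_1^*$, $b\in\Lambda_2^*$. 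Taking logarithms, this is the subgroup generated by $L(\mathcal{I}^1)-L(\mathcal{I}^2)\subseteq\RR$, which by the minimality assumption (density of $L(\mathcal{I}^1)\times L(\mathcal{I}^2)$ in $\RR^2/\Delta$, i.e. density of $L(\mathcal{I}^1)-L(\mathcal{I}^2)$ in $\RR$) is dense in $\RR$. Hence the (interior of the) good set, being nonempty open by Proposition \ref{prop:semicontinuity-convolutions-of-quasi-product-measures} and invariant under a dense multiplicative subgroup of $\RR^+$, contains all of $I_K$ — indeed all of $\RR\setminus\{0\}$, but we only need $I_K$, and stating it for $I_K$ makes the bookkeeping of which single limit diffeomorphism $F$ works slightly cleaner since only finitely many conjugations are needed to cover the compact set $I_K$ from an open piece.

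The one subtlety — and the main obstacle — is that, unlike in the self-similar case, the conjugating diffeomorphism is not globally the identity, so composing several contractions $f^{(i)}_a$ does not produce an affine map but only one that becomes affine in the limit geometry picture; and different choices of $a$ land us in different limit geometries. So I must keep careful track of the limit diffeomorphism, and show that a \emph{single} $F$ works for all of $I_K$. The cleanest way: by compactness of $I_K$ and density of the multiplicative group action, finitely many translates $s_1 \mathcal{W}_{s_1}, \ldots, s_N\mathcal{W}_{s_N}$ cover $I_K$, where each $\mathcal{W}_{s_j}$ is a good-set for a limit diffeomorphism $F_{j}=F^{(1)}_{x^{(j)}}\times F^{(2)}_{y^{(j)}}$ obtained by conjugating $F_0\mu$ by a composition $f^{(1)}_{a^{(j)}}\times f^{(2)}_{b^{(j)}}$; each $F_j$ is itself a limit diffeomorphism by Corollary \ref{cor:consequences-Sullivan}(ii). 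Since all of these are built from the \emph{same} underlying measure $\mu$ via quasi-product restriction ($\mu_i^{I(a)}\sim_C\mu_i$) and the lower dimension $\dim_*$ is unaffected by $\sim_C$ (Lemma \ref{lem:properties-of-lower-dim}(3)) and by affine changes of coordinates, one checks that the conjunction holds for $F=F_N\circ\cdots\circ F_1$ — or, more simply, one observes that for quasi-product measures the family $\{F\mu : F \text{ a limit diffeomorphism}\}$ is, up to $\sim_C$, closed under these operations and we only ever need one representative, so we may fix any $F$ compatible with all finitely many $a^{(j)}, b^{(j)}$, e.g. $F = F^{(1)}_{\overline{a}}\times F^{(2)}_{\overline{b}}$ where $\overline a, \overline b$ are chosen so the relevant conjugates are affine. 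Verifying that this single $F$ simultaneously works for every $s\in I_K$ — i.e. that the affine rescalings genuinely patch together — is the technical heart and where I would spend the most care; everything else is a routine translation of the self-similar argument.
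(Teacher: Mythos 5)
Your outline matches the paper's strategy — start from the dense open set of Proposition \ref{prop:semicontinuity-convolutions-of-quasi-product-measures}, linearize via limit geometries using Corollary \ref{cor:consequences-Sullivan}, and move the good set around by the dense multiplicative semigroup coming from the minimality assumption — but the step you yourself flag as ``the technical heart'' is genuinely missing, and it is not routine. The difficulty is that there is no single set that is ``invariant under the semigroup generated by all $\exp(\lambda(f^{(1)}_a)-\lambda(f^{(2)}_b))$'': each generator of that semigroup acts by an affine diagonal map only after conjugating into the limit geometry $F^{(1)}_{\overline a}\times F^{(2)}_{\overline b}$ adapted to that particular pair $(a,b)$, and different pairs require different limit geometries. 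One can linearize only one word per IFS at a time, so the ``invariance under a dense subgroup'' picture does not literally apply, and your fallback suggestions ($F=F_N\circ\cdots\circ F_1$, or ``fix any $F$ compatible with all finitely many $a^{(j)},b^{(j)}$'') do not produce a well-defined single limit diffeomorphism.

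The paper closes this gap in two moves. First, it splits into cases: if some single pair satisfies $\lambda(f^{(1)}_{a_1})/\lambda(f^{(2)}_{a_2})\notin\mathbb{Q}$, then after relabeling and passing (via Corollary \ref{cor:consequences-Sullivan} and Lemma \ref{lem:dense-intersection-passes-to-limit}) to the one limit geometry in which $g^{(1)}_1,g^{(2)}_1$ are exactly affine, the \emph{entire} two-parameter semigroup $g_{n_1,n_2}=(g^{(1)}_1)^{n_1}\times(g^{(2)}_1)^{n_2}$ consists of genuine affine diagonal maps acting on the single measure $F\mu$, with eccentricities dense in $(0,\infty)$; in the general case, minimality is used to cover $I_K$ by finitely many multiplicative translates \emph{before} linearizing, and Lemma \ref{lem:dense-intersection-passes-to-limit} transfers that finite covering to the linearizing limit geometry at the cost of replacing $I_K$ by $I_{\Omega(K)}$. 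Second — and this is the patching mechanism you were looking for — the translated good sets land in $\mathcal{U}_{\gamma-\e}\bigl((F\mu)_{Q_{n_1,n_2}}\bigr)$ for \emph{restrictions} of the one measure $F\mu$, and since $\mathcal{U}_\alpha$ only grows under further restriction, a single deep restriction $Q_{N_1,N_2}$ with $N_1,N_2\ge N$ absorbs all finitely many of them; choosing $N_1,N_2$ so that $g_{N_1,N_2}$ has eccentricity less than $2$ and pulling back by $g_{N_1,N_2}^{-1}$ via Lemma \ref{lem:transformation-under-diagonal-map} returns everything to $F\mu$ itself, losing only a factor of $2$ in $K$ (harmless since $K$ is arbitrary). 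Without some version of this restriction-monotonicity-plus-bounded-pullback argument, your proof does not establish that one limit diffeomorphism serves all of $I_K$.
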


A priori, this holds for just one limit diffeomorphism $F=F_1\times F_2$. But this automatically implies it holds in fact for all limit diffeomorphisms:

\begin{prop} \label{prop:convolutions-of-quasi-product-on-all-limits}
Under the hypotheses of Proposition \ref{prop:convolutions-of-quasi-product-on-some-limit}, for every $\e>0$, every $K>1$, and \emph{every} limit diffeomorphism $F$,
\[
\{ \pi_s : s\in I_K \}  \subseteq \mathcal{U}_{\gamma-\e}(F\mu).
\]
\end{prop}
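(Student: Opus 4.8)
The plan is to deduce the statement for an arbitrary limit diffeomorphism $F' = F'_1\times F'_2$ from the already-established statement for the single distinguished limit diffeomorphism $F = F_1\times F_2$ of Proposition \ref{prop:convolutions-of-quasi-product-on-some-limit}. The basic mechanism is: any two limit geometries of a regular IFS differ by a $C^1$ diffeomorphism, and this diffeomorphism, together with quasi-product equivalence, lets us transport a dense open set of good projections from $F\mu$ to $F'\mu$. The key structural input is part (ii) of Corollary \ref{cor:consequences-Sullivan}: the set of limit diffeomorphisms is invariant under smooth coordinate changes, so if $h_i := F'_i \circ F_i^{-1}$ then conjugating $\mathcal{I}_i$ by $F_i$ and then by $h_i$ gives the same IFS as conjugating directly by $F'_i$, and in particular $h_i$ carries the limit geometry $F_i(X_i)$ onto $F'_i(X_i)$.

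First I would fix $\e>0$ and $K>1$ and let $F$ be the limit diffeomorphism provided by Proposition \ref{prop:convolutions-of-quasi-product-on-some-limit}, so that $\{\pi_s : s\in I_K\}\subseteq \mathcal{U}_{\gamma-\e}(F\mu)$; choose $\e'<\e$ and use this with $\e'$ in place of $\e$, so we have a little room. Given an arbitrary limit diffeomorphism $F'=F'_1\times F'_2$, set $h = h_1\times h_2$ with $h_i = F'_i F_i^{-1}$, a $C^1$ diffeomorphism of $[0,1]^2$, so that $F'\mu = h(F\mu)$. Now the point is that the map $g\mapsto g\circ h$ is a homeomorphism of $C^1(\supp(F\mu),\RR)$ onto $C^1(\supp(F'\mu),\RR)$ which preserves the dimension of the pushed-forward measure: $(g\circ h)(F\mu) = g(h(F\mu)) = g(F'\mu)$. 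Wait — this runs the wrong way. Instead I would argue: for $s\in I_K$, $\pi_s(F'\mu) = \pi_s h (F\mu) = (\pi_s\circ h)(F\mu)$, and $\pi_s\circ h$ is a $C^1$ map whose differential at a point $x$ is $\pi_s\circ D_x h$, which for $x$ in a neighborhood of $\supp(F\mu)$ is close to a projection $\pi_{s'}$ with $s'$ depending continuously (via $D h$) on $x$ and $s$. The trouble is that $Dh$ is not constant, so $\pi_s\circ h$ is not literally a linear projection; it is only a nonlinear map without singular points.

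This is where the main obstacle lies, and the fix is Theorem \ref{thm:bound-for-smooth-images} together with Corollary \ref{cor:conjugate-to-get-linear-map}. The clean route is to localize: decompose $F\mu$ using the quasi-product structure over cylinders $I(a^{(1)})\times I(a^{(2)})$ as in the proof of Proposition \ref{prop:semicontinuity-convolutions-of-quasi-product-measures}, and on each such piece, after rescaling, $F\mu$ restricted there is (up to $C$-equivalence) of the form $G\mu$ for a limit diffeomorphism $G$; on that piece $h$ is, to first order, the affine map given by its differential, and $\pi_s\circ(\text{that affine map})$ is a genuine projection $\pi_{s'}$. One checks that as $a^{(i)}$ range over all words and $s$ over $I_K$, the resulting slopes $s'$ — being obtained from $s$ by the action of the derivative cocycle of $h$, which because $h$ conjugates one regular IFS to another has derivatives with ratios controlled by limit geometries — still range over a set which, for each fixed $\e$, contains $I_{K'}$ for a slightly smaller $K'$; shrinking $K'$ back up by iterating (exactly the argument already used to prove the earlier propositions) recovers all of $I_K$. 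Finally, since the bound $\dim_*\pi_{s'}(G\mu)\ge \gamma-\e'$ holds on each piece with a uniform constant, Lemma \ref{lem:properties-of-lower-dim}(\ref{itm:dim-at-least-inf-over-components}) gives $\dim_* \pi_s(F'\mu)\ge \gamma-\e' > \gamma-\e$, and since $\mathcal{U}_{\gamma-\e}$ is open by definition and we have produced an explicit slope interval, we conclude $\{\pi_s:s\in I_K\}\subseteq \mathcal{U}_{\gamma-\e}(F'\mu)$, as required. The hardest part to get right is the bookkeeping showing that the slope-set $\{s'\}$ generated by the derivative cocycle of $h$ still fills $I_{K'}$ — this is exactly where the regularity (bounded distortion, Proposition \ref{prop:bounded-distortion}) and the structure of limit diffeomorphisms (Theorem \ref{thm:sullivan}) are used.
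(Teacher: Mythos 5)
Your underlying mechanism --- transport the good slope-interval from the one known-good limit measure $F\mu$ to an arbitrary $F'\mu$ via the $C^1$ diffeomorphism $h=F'\circ F^{-1}$, localizing to cylinders to linearize $h$ and paying only a $\Theta(1)$ multiplicative loss in the slopes --- is sound and is morally the same as the paper's. The paper, however, packages all of this into two ingredients it already has: Corollary \ref{cor:consequences-Sullivan}(ii) shows that the limit diffeomorphisms of the IFS conjugated by the good limit diffeomorphism coincide with those of the original IFS, so that every $F'\mu$ is itself a limit-geometry image of the good measure; Lemma \ref{lem:dense-intersection-passes-to-limit} (whose proof is exactly your localization: a limit measure is, up to affine rescaling and $C$-equivalence, a $C^1$-limit of restrictions of the base measure to small rectangles of eccentricity $\Theta(1)$) then transports $\{\pi_s: s\in I_K\}$ to $\{\pi_s : s\in I_{\Omega(K)}\}$, and the constant is absorbed by invoking Proposition \ref{prop:convolutions-of-quasi-product-on-some-limit} with a larger $K$ to begin with. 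So you are re-proving Lemma \ref{lem:dense-intersection-passes-to-limit} by hand rather than citing it; nothing wrong with that, but it is where all of your ``hardest part'' bookkeeping lives, and it is already done in the paper.

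Two places in your writeup need care. First, the per-piece bound $\dim_*\pi_{s'}(G\mu)\ge\gamma-\e'$ is unjustified as stated, and this is the one point where the argument could become circular: you may not assume anything about the arbitrary limit diffeomorphisms $G$ arising from your decomposition, since their goodness is precisely the proposition being proved. The bound must instead come from the fact that each piece is a restriction of the known-good $F\mu$: if $\tau\ll\pi_s(F\mu)$ then $\dim_*\tau\ge\dim_*\pi_s(F\mu)$, so restrictions inherit the good set of $F\mu$ verbatim, and only the subsequent affine rescaling (of eccentricity $\Theta(1)$) moves the slopes. Second, the conclusion is membership in $\mathcal{U}_{\gamma-\e}(F'\mu)$, which is by definition a $C^1$-interior, so producing an open interval of good slopes is not literally enough; your argument does deliver this --- compactness of $I_K$ and openness of $\mathcal{U}_{\gamma-\e'}(F\mu)$ give a uniform $\delta$ such that every $g$ with $\left\Vert g-\pi_s\right\Vert_{C^1}$ small satisfies the same estimates via $g\circ h$ --- but you should say this rather than appeal to ``$\mathcal{U}$ is open by definition.'' Finally, ``shrinking $K'$ back up by iterating'' is unnecessary machinery: Proposition \ref{prop:convolutions-of-quasi-product-on-some-limit} holds for every $K$, so simply start with $O(K)$ in place of $K$.
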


Before proving these propositions we use them to deduce Theorem \ref{thm:convolutions-of-quasi-product-measures}.

We say that a map $A:\RR^2\rightarrow\RR^2$ is \emph{affine diagonal} if it is affine with a diagonal linear part. In this case, we can write $A= H D$, where $H$ is an homothety and $D(x,y) = (x, a y)$ for some $a\in\RR$. We refer to $a$ as the \emph{eccentricity} of $A$.

\begin{lem} \label{lem:transformation-under-diagonal-map}
Let $\nu, \tau$ be two measures on $\RR^d$, and suppose that $A\nu \,\sim_C\, \tau_Q$ for some affine diagonal map $A$ and some set $Q$ with $\tau(Q)>0$. Then for any $\alpha$,
\begin{align*}
\mathcal{U}_\alpha(\nu) &\supseteq \{  g A: g\in \mathcal{U}_\alpha(\tau) \},\\
\mathcal{U}_\alpha(\nu)\cap \Pi_{2,1} &\supseteq \{  \pi_{s a} : \pi_s\in\mathcal{U}_\alpha(\tau)  \},
\end{align*}
where $a$ is the eccentricity of $A$.
\end{lem}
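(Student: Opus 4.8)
The plan is to unwind the hypothesis $A\nu \sim_C \tau_Q$ and push it through the two operations that preserve lower dimension: restriction of a measure, and composition with maps. The key facts are already available: Lemma \ref{lem:properties-of-lower-dim}(3) says $\dim_*$ is invariant under passing to an equivalent measure (so $\dim_*(A\nu) = \dim_*(\tau_Q)$, and of course $\dim_*(\tau_Q) = \dim_*\tau$ is not needed — we only need the comparison with $\tau$ on the set $Q$), and Lemma \ref{lem:properties-of-lower-dim}(1) characterizes $\dim_*$ via lower local dimension a.e., which is what makes restriction to a positive-measure set behave well. Since an affine diagonal map $A = HD$ has $H$ an homothety (which does not affect any notion of dimension) and $D(x,y)=(x,ay)$, the substance is: for a map $g$ and the diagonal $D$, $\dim_*\big((gD)\nu\big) = \dim_*\big(g(D\nu)\big)$, trivially, since $(gD)\nu = g(D\nu)$ as pushforward measures.

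First I would take $g\in\mathcal{U}_\alpha(\tau)$; by definition (and by definition of $\mathcal{U}_\alpha$ as an interior) we have $\dim_*(g\tau) > \alpha$, and moreover this persists for $g$ in a neighborhood, so it suffices to establish $\dim_*\big((gA)\nu\big) > \alpha$ to conclude $gA \in \mathcal{U}_\alpha(\nu)$ — we must be a little careful here that $gA$ is genuinely an interior point, which follows because $h\mapsto hA$ is a homeomorphism of $C^1(\supp\nu,\RR^k)$ onto $C^1(A(\supp\nu),\RR^k)$ composed with the open condition defining $\mathcal{U}_\alpha(\tau)$ (or one simply notes $\mathcal{U}_\alpha(\nu)$ is already defined as an interior and it suffices to show $\{gA : g\in\mathcal{U}_\alpha(\tau)\}$ is contained in $\{h : \dim_* h\nu > \alpha\}$, since the latter's interior is $\mathcal{U}_\alpha(\nu)$ and pre-composition by the fixed invertible affine map $A$ is a homeomorphism carrying the open set $\mathcal{U}_\alpha(\tau)$ into an open subset). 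Now compute: $(gA)\nu = g(A\nu)$, and $A\nu \sim_C \tau_Q$, so by Lemma \ref{lem:properties-of-lower-dim}(3), $g(A\nu)$ is equivalent to $g(\tau_Q)$. But $g(\tau_Q) = \frac{1}{\tau(Q)} g(\tau|_Q) = (g\tau)_{g(Q)}$ up to the constant $\tau(Q)$, i.e. $g(\tau_Q)$ is a positive multiple of $g(\tau)|_{g(Q)}$ plus possibly mass that $g$ collapses onto $g(Q)$ from outside — to be safe, observe $g(\tau_Q) \ll g(\tau)$, hence by Lemma \ref{lem:properties-of-lower-dim}(1) applied through Lemma \ref{lem:decay-and-dimension-in-rho-trees}-style monotonicity, $\dim_*\big(g(\tau_Q)\big) \geq \dim_*(g\tau)$. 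Actually the cleanest route: $\tau_Q \ll \tau$ gives $g(\tau_Q)\ll g\tau$, and for any measures $\rho_1\ll\rho_2$ one has $\dim_*\rho_1 \geq \dim_*\rho_2$ directly from the definition $\dim_*\rho = \inf\{\dim E : \rho(E)>0\}$ (a $\rho_1$-positive set is $\rho_2$-positive). Combining, $\dim_*\big((gA)\nu\big) = \dim_*\big(g(A\nu)\big) = \dim_*\big(g(\tau_Q)\big) \geq \dim_*(g\tau) > \alpha$, which is the first inclusion.

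For the second inclusion I would specialize $g = \pi_s$ (viewing $\pi_s(x,y)=x+sy$). Then $\pi_s \circ A = \pi_s\circ H\circ D$; the homothety $H$ post-composes and so $\pi_s\circ H = H'\circ\pi_s$ for a scalar affine $H'$ on $\RR$, which does not change dimension, so we may ignore it. It remains to identify $\pi_s\circ D$: $(\pi_s\circ D)(x,y) = \pi_s(x,ay) = x + sa\,y = \pi_{sa}(x,y)$. Hence if $\pi_s\in\mathcal{U}_\alpha(\tau)$ then by the first part applied with $g=\pi_s$ we get $\pi_s\circ A \in\mathcal{U}_\alpha(\nu)$ as a $C^1$ map, and since $\pi_s\circ A$ agrees with $\pi_{sa}$ up to the affine map $H'$ (which, under the identification of $\Pi_{2,1}\setminus\{\pi_x,\pi_y\}$ with lines / slopes, is exactly the identification used in the statement), we conclude $\pi_{sa}\in\mathcal{U}_\alpha(\nu)\cap\Pi_{2,1}$. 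This gives $\mathcal{U}_\alpha(\nu)\cap\Pi_{2,1}\supseteq\{\pi_{sa} : \pi_s\in\mathcal{U}_\alpha(\tau)\}$.

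I do not expect any real obstacle here — this is a bookkeeping lemma. The only points requiring a moment's care are: (a) making sure the inequality $\dim_*\rho_1\geq\dim_*\rho_2$ for $\rho_1\ll\rho_2$ is used in the correct direction (restriction/absolute-continuity increases lower dimension), which is immediate from the infimum definition; (b) checking that post-composition by the homothety $H$ and the induced scalar affine map on the range genuinely leaves $\dim_*$ of the image unchanged — this is because bi-Lipschitz maps preserve all Hausdorff dimensions, hence $\dim_*$; and (c) confirming that pre-composition by the fixed invertible affine $A$ is a homeomorphism on the relevant $C^1$ spaces so that images of the open sets $\mathcal{U}_\alpha(\cdot)$ land inside (the interiors defining) the target sets, which is where the "interior" in \eqref{eq:def-good-C1-maps} is used. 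Each of these is routine, so the proof will be short.
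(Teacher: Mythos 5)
Your argument is correct and is essentially the paper's own proof, which is stated in two sentences ("the first part is immediate; for the second, $\pi_sD=\pi_{as}$ and homotheties commute with linear maps and preserve dimension"): you have simply expanded the "immediate" part via $(gA)\nu=g(A\nu)\sim g(\tau_Q)\ll g\tau$ together with the monotonicity of $\dim_*$ under absolute continuity, and the second part via the same computation $\pi_s\circ D=\pi_{sa}$. The extra care you take about interiors and about absorbing the homothety into an affine map of the range is consistent with what the paper leaves implicit, so there is nothing further to add.
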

\begin{proof}
The first part is immediate. For the second, note that if $D(x,y)=(x,ay)$ then $\pi_s D=\pi_{as}$, and note that homotheties commute with linear maps and preserve dimension.
\end{proof}

\begin{proof}[Proof of Theorem \ref{thm:convolutions-of-quasi-product-measures} (assuming Proposition \ref{prop:convolutions-of-quasi-product-on-all-limits})]
By Corollary \ref{cor:proving-expected-dim}, it is enough to show that
\[
\dim_*(\pi(\mu_1\times\mu_2))\ge \gamma.
\]
Fix $\e>0$ and $K>1$ for the rest of the proof. For $i=1,2$ let $x^{(i)} \in \Lambda_i^{-\NN}$, and write $F_i = F_{x^{(i)}}$, $F=F_1\times F_2$.
Since, by Proposition \ref{prop:convolutions-of-quasi-product-on-all-limits},
\begin{equation} \label{eq:bounded-linear-maps-are-good-for-F}
\{ \pi_s : s\in I_K \} \subseteq  \mathcal{U}_{\gamma-\e}(F\mu),
\end{equation}
we have by compactness of $I_K$ and openness of $\mathcal{U}_{\gamma-\e}(F\mu)$ that the same is true if one replaces $F$ by a sufficiently close diffeomorphism. Since the convergence of $f_{y_n \ldots y_1}^*$ to $F_y$ is uniform, it follows that that for sufficiently long initial segments $a_1, a_2$ of $x^{(1)}, x^{(2)}$ (where the threshold length is also independent of $x^{(i)}$), if we write $f^* = f_{a_1}^* \times f_{a_2}^*$, then \eqref{eq:bounded-linear-maps-are-good-for-F} holds with $f^*\mu$ instead of $F\mu$.

By the principle of bounded distortion, we can choose the lengths of $a_i$ such that $I(a_1),I(a_2)$ have lengths which differ by a factor of $O(1)$, and still ensure that the length of both is bounded above independently of other parameters. Consider now the affine diagonal map $A = T_{I(a_1)}\times T_{I(a_2)}$, i.e. $A$ is such that $f^* = A f$, where $f= f_{a_1}\times f_{a_2}$. By construction the eccentricity of $A$ is $O(1)$. Since $A(f\mu) = f^*\mu$, we get from Lemma \ref{lem:transformation-under-diagonal-map} that
\[
\{ \pi_s :s\in I_{K/O(1)} \} \subseteq \mathcal{U}_{\gamma-\e}(f\mu).
\]
Recall that $f\mu \,\sim_C\, \mu_{I(a_1)\times I(a_2)}$ by the quasi-product property. We may cover $\supp(\mu)$ by rectangles of the form $I(a_1)\times I(a_2)$, with $a_1,a_2$ chosen to be long enough for the above to hold. Thus since, $\e$ and $K$ were arbitrary, the proof is complete.
\end{proof}

\subsection{Proofs of the remaining propositions}

\begin{proof}[Proof of Proposition \ref{prop:semicontinuity-convolutions-of-quasi-product-measures}]
The proof is analogous to the proof of Proposition \ref{prop:semicontinuity-projections-of-self-similar-measures}. We apply Theorem \ref{thm:furstenberg} to obtain an ergodic CP-chain $(\mu_{n},Q_{n})_{n=1}^{\infty}$
such that almost surely $\mu_{1}\in \left\langle \mu\right\rangle$ and $\dim\mu_{1}=\dim\mu$.
Hence, we can find a micromeasure $\nu$ such that $\dim\nu\ge\dim\mu$,
and the conclusion of Corollary \ref{cor:open-good-set-in-C1} applies
to $\nu$: given $\e>0$, the family $\mathcal{U}_{\gamma-\e}(\nu)$ has dense intersection with $\Pi_{2,1}$.

By compactness, $\nu=\nu_{1}\times\nu_{2}$, where $\nu_{i}\in\left\langle \mu_i\right\rangle$. By Proposition \ref{prop:put-derived-measure-into-limit-geometry}, we can find a limit diffeomorphism $F$, an affine diagonal map $A$, and a rectangle $Q$, such that $A\mu \,\sim_C\, \nu_Q$. The proposition then follows from Lemma \ref{lem:transformation-under-diagonal-map}. \end{proof}

For the proof of Proposition \ref{prop:convolutions-of-quasi-product-on-some-limit} we need the following lemma:
\begin{lem} \label{lem:dense-intersection-passes-to-limit}
Suppose $U\subseteq\mathbb{R}\setminus\{0\}$, and
\[
\{ \pi_s : s\in U \} \subseteq \mathcal{U}_\alpha(\mu).
\]
Then for any limit diffeomorphism $F$ there is a $t= t(F) = \Theta(1)$ such that
\[
\{ \pi_{t s} : s\in U \} \subseteq \mathcal{U}_\alpha(F\mu).
\]
\end{lem}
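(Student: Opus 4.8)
The plan is to exploit the fact that a limit diffeomorphism $F = F_1 \times F_2$ is, by Corollary \ref{cor:consequences-Sullivan}(ii), the limit geometry that one obtains after conjugating each $\mathcal{I}_i$ by $F_i$; and that for the conjugated IFSs the quasi-product measure $F\mu$ is still a quasi-product measure (up to $C$-equivalence) associated to a regular IFS, so that all the structural results of the section apply to it as well. Concretely, I would first observe that $F\mu \,\sim_C\, \mu'$ for a quasi-product measure $\mu'$ on the attractor $F(X_1)\times F(X_2)$ of the conjugated IFS $\mathcal{I}' = \mathcal{I}'_1 \times \mathcal{I}'_2$, where $\mathcal{I}'_i = \{F_i f_j^{(i)} F_i^{-1}\}$. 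Since $C$-equivalence does not change $\dim_*$ (Lemma \ref{lem:properties-of-lower-dim}(3)), it suffices to prove the statement with $F\mu$ replaced by $\mu'$, i.e. to relate $\mathcal{U}_\alpha(\mu)$ and $\mathcal{U}_\alpha(\mu')$.

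Next I would use Proposition \ref{prop:put-derived-measure-into-limit-geometry} (or rather the structure underlying it) to express both $\mu$ and $\mu'$ as rescaled restrictions of one another. The key point: $F(X_i)$ is itself a limit geometry of $\mathcal{I}_i$, and limit geometries are mutually related to the original attractor by quasi-product restrictions. More directly, by Lemma \ref{lem:restrictions-of-quasi-product-measures}, for any long word $a^{(i)}$, the rescaled restriction $f_{a^{(i)}}^* \mu_i$ is $C$-equivalent to $\mu_i^{I(a^{(i)})}$, and as $a^{(i)}$ runs through initial segments of a left-infinite sequence whose limit diffeomorphism is $F_i$, the maps $f_{a^{(i)}}^*$ converge in $C^1$ to $F_i$ (Theorem \ref{thm:sullivan}). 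So one picks a cylinder $Q = I(a^{(1)}) \times I(a^{(2)})$ with $|I(a^{(1)})|/|I(a^{(2)})| = \Theta(1)$ (possible by bounded distortion, Proposition \ref{prop:bounded-distortion}), so that the affine diagonal map $A = T_{I(a^{(1)})} \times T_{I(a^{(2)})}$ has eccentricity $t' = \Theta(1)$; then $A\mu_{Q} \,\sim_C\, f^*\mu$, where $f^* = f_{a^{(1)}}^* \times f_{a^{(2)}}^*$ is $C^1$-close to $F$.

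Then I would invoke Lemma \ref{lem:transformation-under-diagonal-map}: since $A \mu \,\sim_C\, \mu_{Q}$... wait — the direction needs care. What we have is $A \mu_{Q} \,\sim_C\, f^*\mu$, i.e. a rescaled copy of $f^*\mu$ sits inside $\mu$ (up to $C$-equivalence) as $\mu_Q$; equivalently $A\mu \sim_C (f^*\mu)_{Q'}$ for a suitable $Q'$ once we account for which direction the restriction goes. Applying Lemma \ref{lem:transformation-under-diagonal-map} with $\nu = \mu$, $\tau = f^*\mu$ gives $\mathcal{U}_\alpha(\mu)\cap\Pi_{2,1} \supseteq \{\pi_{s t'} : \pi_s \in \mathcal{U}_\alpha(f^*\mu)\}$, and reversing the roles (the relation is symmetric up to replacing $A$ by $A^{-1}$ and adjusting constants) gives the reverse inclusion with $t'$ replaced by $1/t'$. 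Combining, $\{\pi_s : s \in U\} \subseteq \mathcal{U}_\alpha(\mu)$ forces $\{\pi_{s t'} : s\in U\} \subseteq \mathcal{U}_\alpha(f^*\mu)$. Finally, because $\mathcal{U}_\alpha(\cdot)$ is defined as an interior in the $C^1$ topology (see \eqref{eq:def-good-C1-maps}), and because $C^1$-perturbations of the \emph{map} $g$ and $C^1$-perturbations of $f^*$ toward $F$ can be absorbed into one another — precisely the content of Proposition \ref{prop:smooth-projections-of-CP-chains} / the uniformity in Lemma \ref{lem:perturbed_entropy} — we may pass to the $C^1$-limit $f^* \to F$ and conclude $\{\pi_{s t} : s \in U\} \subseteq \mathcal{U}_\alpha(F\mu)$ for $t = t(F) = \lim t' = \Theta(1)$ (the factor $t'$ is uniformly bounded above and below as the word lengths grow, by bounded distortion, so the limit is $\Theta(1)$).

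\textbf{The main obstacle} I anticipate is the last step: justifying that $\mathcal{U}_\alpha$ "passes to the limit" under $f^* \to F$. The sets $\mathcal{U}_\alpha(\cdot)$ are open in the $C^1$ topology on maps, but $F\mu$ and $f^*\mu$ are different \emph{measures}, not different maps applied to a fixed measure, so openness alone is not enough — one genuinely needs that $g \mapsto \dim_* g\nu$ is controlled uniformly as $\nu$ itself varies along $f^*\mu \to F\mu$. This is exactly where the entropy-averaging machinery pays off: $F\mu$ and $f^*\mu$ are $C^1$-conjugate-images of the same underlying symbolic quasi-product measure, so the local-entropy averages $\frac1N\sum H_{\rho^n}(\pi(\cdot)^{x,n})$ agree up to an $O(1)$ error (Theorem \ref{thm:lifting-maps-to-tre-morphisms} and Proposition \ref{prop:local-entropy-bound-for-faithful-maps}), uniformly in the word length; feeding this through Theorem \ref{thm:dimension-via-local-entropy-for-rho-trees} gives the desired uniform lower bound on $\dim_* \pi(F\mu)$ for all $\pi$ in a fixed neighborhood. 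I would therefore structure the proof so that this uniformity is extracted directly from the tree-morphism estimates rather than from abstract semicontinuity, mirroring the proof of Proposition \ref{prop:smooth-projections-of-CP-chains}.
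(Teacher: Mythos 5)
Your proposal follows essentially the same route as the paper's (very brief) proof: realize $F\mu$, up to $C$-equivalence and an affine diagonal rescaling of eccentricity $\Theta(1)$, as the $C^1$-limit of rescaled restrictions $\mu^{Q_n}$ of $\mu$ to cylinder rectangles $Q_n$ of bounded eccentricity, transfer $\mathcal{U}_\alpha(\mu)$ through Lemma \ref{lem:transformation-under-diagonal-map}, and pass to the limit. The one bookkeeping slip --- your first invocation of Lemma \ref{lem:transformation-under-diagonal-map} with $\nu=\mu$, $\tau=f^*\mu$ points the wrong way (what one actually has is $T_{Q_n}^{-1}(f^*\mu)\sim_{C}\mu_{Q_n}$, i.e. one should take $\nu=f^*\mu$, $\tau=\mu$, $A=T_{Q_n}^{-1}$ in a single application) --- is self-corrected by your final displayed conclusion, and your handling of the limit step $f^*\to F$ is, if anything, more explicit than the paper's, which simply asserts it.
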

\begin{proof}
This is very similar to the proof of Theorem \ref{thm:convolutions-of-quasi-product-measures} above: one uses the fact that, up to affine rescaling and $C$-equivalence, the limit measure $F\mu$ is $C^1$-close to the restriction of $\mu$ to a small rectangle $Q$ of eccentricity $\Theta(1)$. Since $\mathcal{U}_\alpha(\mu_Q) \supseteq \mathcal{U}_\alpha(\mu)$, and the rescaling induced on $\Pi_{2,1}$ by pre-composition with $T_Q$ transforms $\pi_s$ to $\pi_{ts}$, where $t$ is the eccentricity of $Q$, the lemma follows by applying this procedure to a sequence of rectangles $Q_n$ of side length tending to $0$ and eccentricities converging to some $t=\Theta(1)$.
\end{proof}

\begin{proof}[Proof of Proposition \ref{prop:convolutions-of-quasi-product-on-some-limit}]
To begin, note that a priori we cannot ``move around'' the dense set given by Proposition \ref{prop:semicontinuity-convolutions-of-quasi-product-measures} as done in e.g. Proposition \ref{prop:almost-linear-projection-good}, since for this we need the action of an affine map, and the IFSs involved are a priori nonlinear. Instead we linearize the relevant map in each IFS by passing to a new limit geometry.

We first present the proof in the case $\lambda(f^{(1)}_{a_1})/\lambda(f^{(2)}_{a_2})\notin\mathbb{Q}$ for some $a_i\in\Lambda_i^*$. By iterating the IFSs and relabeling we may then assume that $\lambda(f^{(1)}_{1})/\lambda(f^{(2)}_{1})\notin\mathbb{Q}$

Let $F=F_1\times F_2$ be the limit diffeomorphism given by Proposition \ref{prop:semicontinuity-convolutions-of-quasi-product-measures}. The conjugated IFS $\mathcal{I}'_i=\{ F_i f^{(i)}_j F_i^{-1}:j\in\Lambda_i\}$ satisfies the same hypotheses as the original one. Let $x=(\ldots,1,1)\in\Lambda_i^{-\NN}$. By Corollary \ref{cor:consequences-Sullivan}(i), the first map in each limit IFS $(\mathcal{I}_i)_x$ is affine. But by Corollary \ref{cor:consequences-Sullivan}(ii), $(\mathcal{I}_i)_x = (\mathcal{I}'_i)_x$. On the other hand, by Lemma \ref{lem:dense-intersection-passes-to-limit}, the conclusion of Proposition \ref{prop:semicontinuity-convolutions-of-quasi-product-measures} is inherited by $(\mathcal{I}'_i)_x$. We have therefore shown that there is no loss of generality in assuming that the conjugated IFSs $\{ g^{(i)}_j = F_i f^{(i)}_j F_i^{-1} : j\in\Lambda_i\}$ are such that $g^{(1)}_1$, $g^{(2)}_1$ are affine. Moreover, we still have
\begin{equation} \label{eq:irrationality-after-linearizing}
\lambda\left(g^{(1)}_1\right)/\lambda\left(g^{(2)}_1\right) \notin\mathbb{Q}.
\end{equation}
Now we are ready to cover the whole of $I_K$ by using the action of $g^{(1)}_1\times g^{(2)}_1$ on $\mathcal{U}_{\gamma-\e}(F\mu)$.  By \eqref{eq:irrationality-after-linearizing}, the collection of eccentricities of the (affine diagonal) maps
\[
g_{n_1,n_2} := \left(g^{(1)}_1\right)^{n_1}\times \left(g^{(2)}_1\right)^{n_2}
\]
is dense in $(0,\infty)$. Also note that
\[
g_{n_1,n_2}^{-1} (F\mu)_{Q_{n_1,n_2}}\,\sim_C\, F\mu,
\]
where $Q_{n_1,n_2} =g_{n_1,n_2}([0,1]^2)$. It then follows from Lemma \ref{lem:transformation-under-diagonal-map} that there is $N$ such that
\[
\{  \pi_s: s\in I_K\} \subseteq \bigcup_{n_1,n_2=1}^N \mathcal{U}_{\gamma-\e}((F\mu)_{Q_{n_1,n_2}}) .
\]
Pick $N_1, N_2\ge N$ such that the eccentricity of $g_{N_1, N_2}$ is less than, say, $2$. By the above, $\mathcal{U}_{\gamma-\e}\left((F\mu)_{Q_{N_1,N_2}}\right)$ contains $\{ \pi_s:s\in I_K\}$. But then using Lemma \ref{lem:transformation-under-diagonal-map} again we conclude that $\mathcal{U}_{\gamma-\e}(F\mu)$ contains $\{ \pi_s:s\in I_{K/2}\}$, as desired.

In the general case we can still, by minimality and Proposition \ref{prop:semicontinuity-convolutions-of-quasi-product-measures}, find $a_i\in\Lambda_i^*$, $i=1,2$, and an $N$ such that
\[
I_K \subseteq \bigcup_{n_1, n_2=1}^N \left\{ \lambda\left(f^{(1)}_{a_1}\right) ^{n_1} \lambda\left(f^{(2)}_{a_2}\right)^{-n_2} t: \pi_t\in \mathcal{U}_{\gamma-\e}(F\mu)\right\}.
\]
By Lemma \ref{lem:dense-intersection-passes-to-limit} the same holds after passing to the limit IFSs which linearize the maps $f^{(i)}_{a_i}$, at the cost of replacing $I_K$ by $I_{\Omega(K)}$. The argument then proceeds in the same manner as before. \end{proof}

\begin{proof}[Proof of Proposition \ref{prop:convolutions-of-quasi-product-on-all-limits}]
By Lemma \ref{lem:dense-intersection-passes-to-limit}, if $\mathcal{U}_\alpha(\mu)$ contains $\{ \pi_s :s\in I_K\}$, then the same is true with $K$ replaced by $\Omega(K)$ and $\mu$ replaced by $F\mu$ for any limit diffeomorphism $F$. We know that $\mathcal{U}_\alpha(G\mu)$ contains $I_K$ for some limit diffeomorphism $G$. Thus Corollary \ref{cor:consequences-Sullivan}(ii) and the previous observation applied to the conjugated IFSs $\{ G_i f^{(i)}_j G_i^{-1} :i\in\Lambda_i \}$ (i.e. to the measure $G\mu$) yield the result.
\end{proof}

\noindent{\bf Acknowledgement}.
This project began during the program ``Ergodic theory and additive combinatorics'' at MSRI, and we are grateful to the organizers and MSRI for the stimulating atmosphere. We also thank Yuval Peres for useful comments, and particularly for suggesting a simplification to the proof of Lemma \ref{lem:local-entropy-lemma}.

\bibliographystyle{plain}
\bibliography{dimofprojections}

\end{document}